\newtheorem{thm}{Theorem}[section]
\newtheorem{prop}[thm]{Proposition}
\newtheorem{lem}[thm]{Lemma}
\newtheorem{cor}[thm]{Corollary}
\newtheorem{claim}[thm]{Claim}
\newtheorem{thmi}{Theorem}
\newtheorem{cori}[thmi]{Corollary}
\theoremstyle{definition}
\newtheorem{defn}[thm]{Definition}
\newtheorem{remark}[thm]{Remark}
\newtheorem{notation}[thm]{Notation}
\newtheorem{construction}[thm]{Construction}
\newtheorem{hyp}[thm]{Hypothesis}
\newenvironment{claimproof}{\begin{proof}}{\end{proof}}
\newcommand*\mf[1]{\mathfrak{#1}}
\newcommand*\mc[1]{\mathcal{#1}}
\newcommand{\ofS}{\overline{\mf S}}
\newcommand{\wfS}{\widetilde{\mf S}}
\newcommand{\N}{\mathbb{N}}
\newcommand{\R}{\mathbb{R}}
\newcommand{\Z}{\mathbb{Z}}
\newcommand*\nest{\sqsubseteq}
\newcommand*\propnest{\sqsubsetneq}
\newcommand*\trans{\pitchfork}
\newcommand*\orth{\bot}
\newcommand{\llangle}{\left\langle\!\left\langle}
\newcommand{\rrangle}{\right\rangle\!\right\rangle}
\renewcommand{\hat}{\widehat}
\newcommand{\dist}{\textup{\textsf{d}}}
\newcommand{\diam}{\textup{\textsf{diam}}}
\newcommand{\Cay}{\text{Cay}}
\newcommand{\Aut}{\operatorname{Aut}}
\newcommand{\qs}{\mf{S}/N}
\newcommand{\ol}{\overline}
\newcommand{\Stab}{\operatorname{Stab}}
\newcommand{\Supp}{\mathrm{Supp}}
\newcommand{\link}{\mathrm{Link}}
\newcommand{\tsh}[1]{\left\{\kern-.7ex\left\{#1\right\}\kern-.7ex\right\}}
\newcommand{\Tsh}[2]{\tsh{#2}_{#1}}
\newcommand{\ignore}[2]{\Tsh{#2}{#1}}
\renewcommand{\tilde}{\widetilde}
\DeclareSymbolFont{cyrletters}{OT2}{wncyr}{m}{n}
\DeclareMathSymbol{\Sha}{\mathalpha}{cyrletters}{"58}
\DeclareMathSymbol{\Zhe}{\mathalpha}{cyrletters}{"11}
\newcounter{acomments}
\newcounter{ccomments}
\newcounter{dcomments}
\newcounter{tcomments}
\newcounter{gcomments}
\title{Random quotients preserve acylindrical and hierarchical hyperbolicity}
\author{Carolyn Abbott}\email{carolynabbott@brandeis.edu}\address{Brandeis University, Waltham, Massachusetts, US}
\author{Daniel Berlyne}\email{danberlyne@gmail.com}\address{ThinkTank Maths, Edinburgh, UK}
\author{Giorgio Mangioni}\email{gm2070@hw.ac.uk}\address{Maxwell Institute and Department of Mathematics, Heriot-Watt University, Edinburgh, UK}
\author{Thomas Ng}\email{thomas.ng.math@gmail.com}\address{Brandeis University, Waltham, Massachusetts, US}
\author{Alexander J. Rasmussen}\email{ajrmath@gmail.com}\address{Tatari, US}
\shortauthors{Abbott, Berlyne, Mangioni, Ng, Rasmussen}
\begin{document}

\begin{abstract} 
We propose a new model for random quotients of groups using independent random walks. 
In this model, we show that random quotients of acylindrical hyperbolic groups asymptotically almost surely remain acylindrically hyperbolic. 
Our main tools relate the theories of spinning families and projection complexes to random walks. 

In the presence of a hierarchical hyperbolic structure on the group, we leverage the fine control of projections to show that this structure is preserved in the quotient asymptotically almost surely.  The same techniques yield that random quotients of a non-elementary hyperbolic group (relative to any finite collection of finitely generated peripheral subgroups) are asymptotically almost surely hyperbolic (relative to commensurable peripheral subgroups).

Finally, we also prove that any two groups that are both acylindrically and hierarchically hyperbolic have a common quotients which is itself acylindrically and hierarchically hyperbolic.  This produces ``exotic’’ hierarchically hyperbolic groups with strong fixed point properties, such as Kazhdan’s property (T).
\end{abstract}

\maketitle

\setcounter{tocdepth}{1}
\tableofcontents

\section{Introduction}
Two important classes of  groups with strong negative curvature features are acylindrically hyperbolic groups and hierarchically hyperbolic groups.  Acylindrically hyperbolic groups were introduced by Osin \cite{Osin_acyl_hyp} as a generalization of non-elementary hyperbolic and relatively hyperbolic groups.  This broad class includes non-exceptional mapping class groups, non-virtually cyclic CAT(0) groups that do not split as direct products, $\operatorname{Out}(\mathbb F_n)$ for $n\geq 2$, the Cremona group, and any group that admits a presentation with at least two more generators than relators.  Hierarchically hyperbolic groups (HHG) were introduced by Behrstock, Hagen, and Sisto in \cite{BHS_HHSI} to generalize the subsurface projection machinery of mapping class groups \cite{MMI,MMII} to a wider class of groups, including right-angled Artin and Coxeter groups \cite{BHS_HHSI} and, more generally, most cubulated groups, and most 3-manifold groups \cite{BHS_HHSII,HRSS-HHG_3mfld}.  There is a large overlap between these two classes of groups: any HHG that is not quasi-isometric to a non-trivial product or a line is acylindrically hyperbolic. However, several prominent examples of acylindrically hyperbolic groups, including outer automorphism groups of free groups, are not HHG.

While acylindrical hyperbolicity is known to be fairly common,  evidence has recently been mounting that hierarchical hyperbolicity is also widespread, as it has been shown for a large class of Artin groups, graph products, lattices, and group extensions \cite{BHS_HHS_AsDim,graph_prod,HagenMartinSisto_XL_Artin,Hughes,HughesValiunas,multicurve,central_ext}. 

In this direction, one can ask which quotients of an acylindrically hyperbolic group (resp., HHG) are themselves acylindrically hyperbolic (resp., HHG). Indeed, quotients are a common tool for constructing negatively curved groups. Random groups in both the few relators model and the density model (with small enough density) are hyperbolic with overwhelming probability \cite{gromov2,Olshanskii-random, Ollivier}. Moreover, Delzant showed that quotients of hyperbolic groups by elements with large translation length are again hyperbolic \cite{Delzant_Sous-groupes}.
In fact, Groves and Manning, and independently Osin, generalized Thurston's hyperbolic Dehn filling theorem to show that all peripheral quotients of relatively hyperbolic groups whose kernels avoid a finite set of elements are again relatively hyperbolic. These quotients are also typically hyperbolic; see \cite[Theorem~7.2]{Groves_Manning_Rel_Hyp_Dehn_Filling} and \cite[Theorem~1.1]{Osin-RelHypDehnFilling}.

This paper is a continuations of the above themes. More precisely, we consider quotients by the $n$th steps of finitely many independent random walks $w_{1,n},\ldots,w_{k,n}$ associated to \emph{permissible} probability measures $\mu_1,\ldots,\mu_k$.  We postpone the definition of a permissible probability measure to \Cref{def:permissible}, but the reader should have in mind the case when $\mu$ is supported on a finite, symmetric generating set for $G$. The quotient $G/\llangle  w_{1,n},\ldots,w_{k,n}\rrangle$ is a \emph{random quotient} of $G$.  Given a property $P$, we  say that a random quotient of $G$ has property $P$ \emph{asymptotically almost surely} (a.a.s.) if the probability that $G/\llangle  w_{1,n},\ldots,w_{k,n}\rrangle$ has property $P$ approaches 1 as $n$ tends to infinity. 

\begin{thmi}\label{thm:AHQuotient}
    Let $G$ be a 
    acylindrically hyperbolic group, and let $\mu_1,\dots, \mu_k$ be permissible probability measures on $G$.  A random quotient of $G$ is a.a.s.~acylindrically hyperbolic.
\end{thmi}

Moreover, when $G$ is hierarchically hyperbolic, we obtain a stronger structural result for a random quotient, providing  further evidence that hierarchically hyperbolic groups are common:

\begin{thmi}\label{thm:main_intro}
    Let $G$ be an acylindrically hyperbolic (relative) HHG, and consider $k$ permissible probability measures $\mu_1,\dots, \mu_k$ on $G$. A random quotient of $G$ is a.a.s.~an acylindrically hyperbolic (relative) HHG. 
\end{thmi}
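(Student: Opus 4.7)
The plan is to decouple the proof into a deterministic structural input and a probabilistic input, in the spirit of Dahmani--Guirardel--Osin's work on rotating families and of the usual framework for random quotients of hyperbolic groups. The paper itself advertises ``spinning families and projection complexes" as its main tools, so I would organize the argument around those two ingredients.

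First, I would establish (or invoke, depending on what has already been proved in the body of the paper) a deterministic \emph{spinning quotient theorem} for (relative) HHGs: given an acylindrically hyperbolic (relative) HHG $G$, and a finite collection of normal subgroups $N_1,\dots,N_k\normal G$ each generated by a loxodromic element on the top-level hyperbolic space $\mc CS$ with very large translation length and large projection constants in the Bestvina--Bromberg--Fujiwara projection complex associated to the conjugates of their axes, the quotient $\bar G=G/\llangle N_1,\dots,N_k\rrangle$ inherits a (relative) HHG structure. Morally, the new top-level hyperbolic space is obtained from $\mc CS$ by coning off the $G$-translates of the spun axes and then quotienting by $N=\llangle N_1,\dots,N_k\rrangle$; the remaining coordinate spaces (nested or orthogonal domains, product regions) of $G$ are transported to $\bar G$ via the quotient map, and the HHG axioms are verified using the geometry of the spinning family on the projection complex.

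Second, I would verify that random walks a.a.s. produce such a spinning family. By work of Maher--Tiozzo (and its refinements for acylindrically hyperbolic groups), if each $\mu_i$ is permissible, then the $n$-th step $w_{i,n}$ is a.a.s.\ loxodromic on $\mc CS$ with translation length growing linearly in $n$. Permissibility should moreover yield quantitative control on projections: the axis of $w_{i,n}$ has sparse nearest-point projections to the $G$-orbits of the other $w_{j,n}$'s axes, and the relevant projection constants exceed any prescribed threshold with probability tending to $1$. A union bound over the finite collection $i=1,\dots,k$ then shows that, a.a.s., the hypotheses of the deterministic spinning quotient theorem hold, so $\bar G$ is a (relative) HHG.

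Third, to upgrade the conclusion to acylindrical hyperbolicity of $\bar G$, I would fix (or generically produce) a loxodromic element $g\in G$ with axis far from all the spun axes and show that its image $\bar g\in\bar G$ acts loxodromically on the new top-level hyperbolic space, with an independent ping-pong partner obtained by conjugating $g$. Here the projection complex framework is helpful: elements whose axes avoid the spun loci survive the quotient dynamically, and a.a.s.\ one can arrange the random walks to avoid the axis of such a $g$.

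The hardest step will be the deterministic spinning quotient theorem. Verifying the full list of HHG axioms (in particular bounded geodesic image, consistency, and uniqueness) after quotienting by a normal subgroup is considerably more delicate than verifying that the top-level cone-off is hyperbolic, which is the classical output of the projection-complex/spinning-family machinery. Additional care is needed to handle the \emph{relative} case so that the peripheral substructures are preserved and the resulting HHG structure remains relative with isomorphic peripheral subgroups; ensuring that the nesting, orthogonality, and transversality relations descend coherently to $\bar G$ is where the bulk of the technical work lies.
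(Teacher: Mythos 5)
Your proposal matches the paper's proof structure: a deterministic spinning-quotient theorem (\Cref{thm:quotientishhg}) yielding a (relative) HHG structure on $G/N$ when $N$ is normally generated by a sufficiently spinning family, a verification via drift and matching estimates that independent random walks a.a.s.\ produce such a family (\Cref{prop:RandomSubgroupIsSpinning}), and a final step producing independent loxodromics surviving in the quotient to guarantee acylindrical hyperbolicity. The only cosmetic difference is that the paper obtains the surviving loxodromics from two extra independent random walks (using the same matching machinery) rather than a fixed $g$, but your ``generically produce'' qualifier accommodates exactly this.
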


Considering only acylindrically hyperbolic HHGs is necessary, as product HHGs include groups such as the Burger--Mozes group, which is simple and so has no non-trivial quotients. 

\Cref{thm:main_intro} is already new for random quotients of mapping class groups. In this setting, the explicit description of the HHG structure has already been used by the third author to prove that random quotients of mapping class groups are quasi-isometrically rigid, in the sense that if a finitely generated group is quasi-isometric to such a quotient, then it is weakly commensurable to it \cite{mangioni2023rigidityresultslargedisplacement}. Other hierarchically hyperbolic quotients of mapping class groups include quotients by suitable powers of all Dehn twists \cite{BHMS_Comb}; by suitable powers of a pseudo-Anosov element, for surfaces without boundary \cite{BHS_HHS_AsDim} and with one boundary component \cite{central_ext, Tao_ext}; and by deep enough subgroups of certain convex-cocompact subgroups \cite{BHS_HHS_AsDim}. 
\par\medskip

In the case of non-elementary hyperbolic groups, we can deduce stronger results about the random quotient. 
\begin{cori}\label{cor:hyp}
    Let $G$ be a non-elementary hyperbolic group, and let $\mu_1,\dots, \mu_k$ be permissible probability measures on $G$.  A random quotient of $G$ is a.a.s.~non-elementary hyperbolic.
\end{cori}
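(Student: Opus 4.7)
The plan is to combine \Cref{thm:main_intro} with Delzant's theorem \cite{Delzant_Sous-groupes} on quotients of hyperbolic groups. A non-elementary hyperbolic group is, in particular, an acylindrically hyperbolic HHG (with a trivial HHG structure in which the only unbounded domain is the whole group), so \Cref{thm:main_intro} directly applies and yields that the random quotient $\overline{G}_n := G/\llangle w_{1,n},\dots,w_{k,n}\rrangle$ is a.a.s.~an acylindrically hyperbolic (relative) HHG. Non-elementarity is then automatic: every acylindrically hyperbolic group contains a non-abelian free subgroup, and is in particular infinite and not virtually cyclic.

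To upgrade the ``(relative) HHG'' conclusion to ordinary hyperbolicity, I would invoke Delzant's theorem: if $h_1,\dots,h_k \in G$ are loxodromic elements whose stable translation lengths exceed a constant depending only on the hyperbolicity constant of $G$, and whose quasi-axes are pairwise sufficiently far apart in the Cayley graph, then $G/\llangle h_1,\dots,h_k \rrangle$ is again hyperbolic. To check that these hypotheses hold a.a.s.~for $h_i = w_{i,n}$, I would appeal to the Maher--Tiozzo linear progress theorem: random walks driven by permissible (hence non-elementary) measures on a non-elementary hyperbolic group have positive drift and linearly growing stable translation length in the Cayley graph. Independence of the $k$ walks, together with non-atomicity of the hitting measures on the Gromov boundary, further guarantees that for large $n$ the quasi-axes of $w_{1,n},\dots,w_{k,n}$ are pairwise a.a.s.~arbitrarily far apart, so Delzant's transversality hypothesis is also satisfied.

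The main obstacle will be unpacking the definition of ``permissible'' given in \Cref{def:permissible} to verify that permissible random walks on a non-elementary hyperbolic group fall within the scope of the Maher--Tiozzo machinery: concretely, that the semigroup generated by the support of each $\mu_i$ contains two independent loxodromics. Once this non-elementarity input is in place, positivity of drift, linear growth of translation length, and the boundary convergence statements used above are all standard. Assembling the two paragraphs, $\overline{G}_n$ is a.a.s.~both hyperbolic (by Delzant's theorem) and non-elementary (by acylindrical hyperbolicity inherited from \Cref{thm:main_intro}), proving the corollary.
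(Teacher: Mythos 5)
Your proposal is correct in outline; the authors themselves note in the introduction that the result follows from Delzant's theorem together with the a.a.s.\ linear growth of translation length. The paper's actual proof, however, takes a different and more economical route that stays inside the HHG framework built for \Cref{thm:main_intro}: viewing $G$ as an HHG with singleton index set $\mf S = \{S\}$ and $\mc CS$ a Cayley graph, \Cref{constr:HHGStructureQuotient} produces a quotient structure $\mf S/N = \{\ol S\}$ with no orthogonality, i.e.\ a rank-one HHG, which is hyperbolic by \cite[Corollary~2.16]{BHS_HHS_Quasiflats}, and non-elementarity is supplied by the independent loxodromics constructed in \Cref{prop:RandomSubgroupIsSpinning}. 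Your Delzant route is viable but essentially re-derives the small-cancellation input that \Cref{prop:RandomSubgroupIsSpinning} already establishes, and, unlike the HHG argument, does not extend nearly verbatim to the relative setting of \Cref{cor:relhyp} (where identifying the peripheral structure of the quotient is the real content). One caution about how you state Delzant's hypothesis: ``quasi-axes ... arbitrarily far apart'' is not the right condition. Distinct $G$-translates of the quasi-axes can, and a.a.s.\ do, overlap for lengths that grow linearly in $n$; what a Delzant-type small cancellation argument actually requires is that such overlaps are short \emph{relative to the translation length}. That ratio bound is exactly what the matching estimates of \Cref{prop:Matching_new} deliver (yielding $M_0 \approx \varepsilon\Delta n$ against translation length $\gtrsim \Delta n$), and non-atomicity of the harmonic measure on the Gromov boundary alone does not give it.
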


To the best of our knowledge, this result is not explicitly written down in the literature.  However, it follows quickly from a theorem of Delzant \cite[Th\'{e}or\`{e}me I]{Delzant_Sous-groupes} (originally stated by Gromov \cite[Theorem 5.5.D]{gromov1}), combined with the fact that the translation length of each $w_{i,n}$ is a.a.s.~linear in $n$.

\Cref{thm:main_intro} holds for the larger class of \emph{relative} HHGs, which includes relatively hyperbolic groups. Using this, we also show that a random quotients of a non-elementary relative hyperbolic group is again non-elementary hyperbolic, relative to commensurable peripherals:
\begin{cori}\label{cor:relhyp}
    Let $G$ be a group that is non-elementary hyperbolic relative to a collection $\{H_1,\ldots H_\ell\}$ of finitely generated, infinite subgroups. If $ G/\mc K$ is a random quotient of $G$, then the following hold a.a.s.
    \begin{enumerate}
        \item\label{item:peripherals} For every $i$, $|H_i\cap \mc K|<\infty$. If, moreover, the maximal finite normal subgroup of $G$ coincides with the center, then $H_i\cap \mc K=\{1\}$, so every $H_i$ injects in the quotient.
        \item The quotient $G/\mc K$ is non-elementary hyperbolic relative to $\{\ol H_1,\ldots, \ol H_\ell\}$, where $\ol H_i\coloneq H_i/(H_i\cap \mc K)$.
    \end{enumerate}
\end{cori}

It was recently shown that a version of \Cref{cor:relhyp} holds for random quotients of free products using a density model of randomness \cite[Theorem~1.2]{EMMNS}. Although the notions of randomness are different, both frameworks for random quotients yield relatively hyperbolic structures on the quotient with commensurable peripheral subgroups. In contrast, Dehn filling quotients of relatively hyperbolic groups are obtained by taking the quotient by sufficiently deep subgroups of the peripherals, and therefore never preserve the peripheral structure.

In \cite{shortHHG_I}, the third author introduced \emph{short HHGs}, a simple class of HHGs including extra-large type Artin groups, numerous RAAGs, and non-geometric graph manifolds groups. In the spirit of \Cref{cor:hyp} and \Cref{cor:relhyp}, we expect that one could deduce from \Cref{thm:main_intro} that random quotients of short HHGs are themselves short HHGs a.a.s. As a consequence, such quotients would be fully residually hyperbolic \cite[Corollary H]{shortHHG_II}.

While the main results in this paper are about random quotients, we also use the probabilistic techniques we develop to prove an existence result: any two acylindrically hyperbolic (relative) HHGs have a common quotient.

\begin{thmi}\label{thm:commonquot_intro}
    If $G_1$ and $G_2$ are acylindrically hyperbolic (relative) HHGs, then there exists an acylindrically hyperbolic (relative) HHG $H$ and surjections $G_1\twoheadrightarrow H$ and $G_2\twoheadrightarrow H$.  
\end{thmi}

\noindent Since there exists a non-elementary hyperbolic group with Kazhdan's property (T) by, e.g., \cite{Generalised_triangle}, and since the latter property passes to quotients by, e.g., \cite[Theorem 1.3.4]{(T)}, \Cref{thm:commonquot_intro} readily implies the following:
\begin{cori}
    Every acylindrically hyperbolic (relative) HHG has a quotient that is an acylindrically hyperbolic (relative) HHG with property (T).
\end{cori}

Our methods can be used to prove analogous results with property (T) replaced by any property satisfied by an acylindrically hyperbolic (relative) HHG that passes to quotients.

This corollary generalizes a similar result by Gromov for non-virtually cyclic hyperbolic groups \cite{gromov1} and by Hull for acylindrically hyperbolic groups \cite{H16}.  Combining the proof techniques of \Cref{thm:commonquot_intro} and \Cref{cor:relhyp}, we obtain the following corollary.

\begin{cori}\label{cor:common_rel_hyp_quotient_intro}
 Every non-elementary relatively hyperbolic group has a quotient that is non-elementary relatively hyperbolic with property (T). 
\end{cori}

Moreover, we can find a quotient as in \Cref{cor:common_rel_hyp_quotient_intro} so that every peripheral subgroup in the quotient is commensurable to a peripheral subgroup of the original group.  As in \Cref{cor:relhyp}, if the original relatively hyperbolic group has central maximal finite normal subgroup, then we can find such a quotient with isomorphic peripheral subgroups.

\subsection*{Outline of tools and arguments.}
\subsubsection*{Spinning families from geometrically separated subspaces}
For much of the paper, we work in the following general context. Let $G$ be a group with a non-elementary action on a hyperbolic space $X$. Assume there exist quasiconvex, geometrically separated subspaces $Y_1,\dots, Y_k\subseteq X$ with metrically proper, cobounded actions of subgroups $H_1,\dots, H_k$, and denote by $\hat X$ the \emph{cone-off} of $X$ with respect to the translates of the $Y_i$. These assumptions allow one to build a \textit{projection complex}, as defined by Bestvina, Bromberg, and Fujiwara \cite{BBF:quasitree}, with respect to the translates of $Y_i$; see \Cref{sec:projcplx} for generalities on projection complexes. In fact, we show in \Cref{cor:proj_complex_with_points} that one can build a projection complex with respect to $\hat X^{(0)}$, that is, with respect to the union of points in $X$ and translates of $Y_i$. We also require several additional assumptions to hold, namely \Cref{hyp:metric} and \Cref{hyp:spinning}, which ensure that the subgroups $H_1,\dots, H_k$ form a \emph{``sufficiently'' spinning family} with respect to the action of $G$ on $\hat X$. Spinning families are defined precisely in \Cref{sec:spinningQuotient}, but can be thought of as a geometric/dynamical generalization of satisfying a small cancellation condition.  

\subsubsection*{Acylindrically hyperbolic quotients from spinning families}
Clay and Mangahas \cite{ClayMangahas} and Clay, Mangahas, and Margalit \cite{ClayMangahasMargalit} studied spinning families in actions on projection complexes. In particular, Clay and Mangahas showed (among other things) that if a collection of subgroups $H_i\le G$ form an equivariant spinning family with respect to the action of $G$ on a projection complex $\mc P$, then the quotient $\mc P/\llangle H_1,\dots, H_k\rrangle$ is hyperbolic \cite[Theorem~1.1]{ClayMangahas}. In a similar fashion, in \Cref{sec:spinningQuotient} we prove hyperbolicity of $\hat X/N$ and establish a criterion for acylindrical hyperbolicity of the quotient; see \Cref{thm:hyperbolicity_olX_with_points} and \Cref{cor:preserveAH}, respectively. The core of the proof of \Cref{thm:hyperbolicity_olX_with_points} is to produce a closed lift $T\subseteq \hat X$ of a given geodesic triangle $\ol T\subseteq \hat X/N$. Since $\hat X$ is hyperbolic, the triangle $T$ is uniformly slim, and therefore so is $\ol T$, as the quotient map $\hat X\to\hat X/N$ is $1$--Lipschitz. In turn, in order to find a closed lift of $\ol T$, we start with an open lift of the triangle $\ol T$ and define a ``bending procedure'' that eventually yields a closed lift; see \Cref{lem:bendinggeodesics} and the surrounding discussion. The main ingredient in the above procedure is the existence of a \emph{shortening pair} (\Cref{prop:new_shortening_pair}), which can be thought of as a version of Greendlinger's Lemma for a spinning family \cite{Greendlinger, ClayMangahas}.  An analogous procedure shows that we can also lift quadrangles from $\hat X/N$ to $\hat X$, which we use to prove that acylindrical hyperbolicity is preserved under taking quotients by sufficiently spinning families. Similar ideas of lifting polygons from quotients by collections of subgroups satisfying similar small-cancellation-like conditions appear in \cite{Dahmani,DHS_dfdt, BHMS_Comb,rigidity_MCG/DT,shortHHG_II, ClayMangahas}.  Indeed, our arguments follow similar lines as those of Clay--Mangahas, with necessary adjustments to handle the fact that $\hat X$ itself is not a projection complex. 

\subsubsection*{Quotients of HHGs by spinning families}
In \Cref{sec:hierarchical}, we focus on the case that $G$ is a hierarchically hyperbolic group.  Hierarchically hyperbolic groups are defined precisely in \Cref{defn:rel_HHG}, but we describe a few key aspects here.  Roughly, an HHG structure on a group $G$ consists of a collection of projections from $G$ onto hyperbolic spaces $\{\mc CU\mid U\in \mf S\}$ indexed by a set $\mf S$.  There are three relations on $\mf S$: nesting, transversality, and orthogonality.  Intuitively, the projections onto hyperbolic spaces encode ``hyperbolic pieces'' of $G$, while the relations encode how the various pieces fit together to build the entire group.  The most relevant relation for this outline is nesting, which is a partial order on $\mf S$ with a unique largest element, typically denoted by $S$ and called the \textit{top-level domain}.  The hyperbolic space associated with the top-level domain is called the \emph{top-level space}.

The main technical result of  \Cref{sec:hierarchical} is \Cref{thm:quotientishhg}, which states that the quotient of a HHG by the normal closure of a sufficiently spinning family of subgroups is a HHG.  We present an informal version of this result and a brief description of the HHG structure of the quotient here; see \Cref{constr:HHGStructureQuotient} for further details on the structure.

\begin{thmi}
\label{thm:spinning_quotient}
    Let $(G,\mf S)$ be a (relative) HHG with top-level coordinate space $X$.  Suppose there exist quasiconvex, geometrically separated subspaces $Y_1,\dots, Y_k\subseteq X$ with metrically proper, cobounded actions of subgroups $H_1,\dots, H_k$. Let $\hat X$ be the cone-off of $X$ with respect to the translates of $\{Y_1,\ldots,Y_k\}$. If the subgroups $H_1,\dots, H_k$ form a sufficiently spinning family with respect to $\hat X$, then the quotient of $G$ by $N=\llangle H_1,\dots, H_k\rrangle$ is a (relative) HHG, with the following structure.
    \begin{itemize}
        \item The set of domains is $\mf S/N=\{\ol U\mid U\in \mf S\}$. 
        \item If $\ol U\neq \ol S$, then the associated hyperbolic space is isometric to $\mc CU$ for some (equivalently, any) representative $U\in \mf S$ of $\ol U$. 
        \item The top-level domain is $\hat X/N$, which is quasi-isometric to $X/N$.
        \item Two domains $\ol U, \ol V\in \mf S/N$ are orthogonal or nested if they admit orthogonal or nested representatives, respectively, and are transverse otherwise.
    \end{itemize}
\end{thmi}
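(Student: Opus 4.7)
The plan is to build the candidate (relative) HHG structure on $G/N$ exactly as prescribed in the statement, and then verify each HHS axiom by lifting to $G$. Two organizing principles drive the argument: the non-top-level data is $G$-equivariant and $N$-invariant, and so descends to $\mf S/N$ canonically; while the top-level space is the quotient cone-off $\hat X/N$, whose hyperbolicity is furnished by the spinning-family machinery developed earlier in the paper. The action of $G/N$ on $\mf S/N$ is inherited from the $G$-action on $\mf S$, since $N$ acts trivially on $\mf S/N$ by definition.

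For each $\ol U\in \mf S/N$ with $\ol U\neq \ol S$, one defines $\mc C\ol U := \mc CU$ for any chosen representative $U$; this is well-defined up to canonical isometry because any $g\in N$ with $gU=U'$ induces an isometry $\mc CU\to \mc CU'$ from the original $G$-action on $\mf S$. The projection $\pi_{\ol U}$, the relative projections $\rho^{\ol U}_{\ol V}$, and the relations of nesting, orthogonality, and transversality are defined by choosing representatives and invoking the corresponding data in $\mf S$, verifying $N$-invariance in each case. The HHS axioms restricted to non-top-level pairs (finite complexity, large links, consistency, bounded geodesic image, partial realization, and uniqueness) are then obtained by lifting pairs of points from $G/N$ to $G$, applying the axiom in $(G,\mf S)$, and noting that the bounds are independent of the chosen lift since $N$ acts by isometries on each $\mc CU$.

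The main obstacle is the axioms involving the top-level domain $\ol S$. The essential input is that $\hat X/N$ is hyperbolic and quasi-isometric to $X/N$, which is exactly what the sufficiently-spinning hypothesis supplies: a geodesic in the quotient cone-off lifts to a controlled path in $\hat X$ that is a local quasi-geodesic away from the deeply-spun cosets. With hyperbolicity of $\hat X/N$ in hand, bounded geodesic image from $\ol S$ to a non-top-level $\ol U$ reduces to lifting a geodesic in $\hat X/N$ to $\hat X$, projecting to $\mc CU$, and combining the original bounded geodesic image axiom with the geometric separation of the $Y_i$, which bounds the contribution of cone-point excursions unless they occur near $U$. Analogous lifting arguments, together with the metrically proper cobounded actions of the $H_i$ on the $Y_i$, yield the remaining top-level axioms, the cocompactness and properness of the $G/N$-action, and the preservation of peripheral structure in the relative case.
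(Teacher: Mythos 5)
The central gap in your proposal is the claim that the non-top-level projections and relative projections descend canonically because ``the bounds are independent of the chosen lift since $N$ acts by isometries on each $\mc CU$.'' This is not so, and it is precisely the point the paper's machinery is designed to handle. If $n\in N$ sends $x$ to $x'$ but does not stabilize $U$, then $n$ induces an isometry $\mc CU\to \mc C(nU)$, not a self-isometry of $\mc CU$; consequently $\pi_U(x)$ and $\pi_U(x')$ can be arbitrarily far apart in $\mc CU$. Likewise, for two domains $\ol U,\ol V\in \mf S/N-\{\ol S\}$, the relation (nesting, orthogonality, transversality) between representatives $U,V$ is \emph{not} independent of the choice: a generic pair of representatives is transverse even when an orthogonal or nested pair exists. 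To repair this one must single out which pairs of representatives to use, and prove that the answer is well-defined up to bounded error. The paper's answer is the notion of \emph{minimal representatives} (\Cref{def:minrep}), which picks out $x,U$ so that $\dist_{\hat X}(x,v_U)$ is minimized over the $N$-orbit, and the well-definedness statements \Cref{prop:LiftsProjCloseInU} and \Cref{prop:projections_are_welldef}, whose proofs require the shortening-pair/bending machinery and the strong bounded geodesic image property. Your proposal contains none of this, and without it the ``canonical descent'' step fails at the outset.

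A second, smaller, issue is that the lifting arguments you sketch are substantially harder than suggested. One does not get pairwise minimal representatives for free: the paper proves several lifting lemmas (e.g.\ \Cref{lem:LiftingTriples}, \Cref{lem:lifting_trans_and_points}, \Cref{non_trans_dom_and_two_points_lift}) with case analyses governed by where the shortening cone point sits, and these are needed even to \emph{state} the verification of consistency or BGI. The uniqueness and large-links (passing-up) axioms are genuinely delicate in the paper and require, respectively, a decomposition of a hierarchy path into coset words controlled by the de-electrified quasigeodesic, and a counting argument for cone points along a quotient geodesic; neither reduces to a one-line ``lift, apply, push down.'' Finally, the cone-off used in the actual proof is not the one in your description (or in the theorem statement): the paper first cones off neighborhoods of every $\rho^U_S$ for $U\propnest S$, and only then the family $\mc Y$, because the cone points $v_U$ are needed to define minimality for domains; this modified $\hat X$ is only quasi-isometric to the one you describe, and the distinction matters for the strong BGI statement \Cref{lem:SBGIinHHS}.
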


The key idea in showing that the above candidate structure satisfies the axioms of an HHG is to define preferred representatives of elements of $G/N$, $\hat X/N$, and $\mf S/N$ in $G, \hat X$, and $\mf S$, respectively. For this, we introduce the notion of \textit{minimal representatives} in \Cref{def:minrep}. Briefly, if $\ol x, \ol y \in \hat X/N$, for example, then representatives $x, y$ of $\ol x, \ol y$, respectively, are minimal if $\dist_{\hat X}(x,y)=\dist_{\hat X/N}(\ol x, \ol y)$. For domains, the rough idea is that $U,V\in \mf S$ are minimal representatives if the distance between the images of $\rho^U_S$ and $\rho^V_S$ in $\hat X$ is minimal among all possible representatives. The relationship (i.e., nesting, transversality, or orthogonality) between domains in $\mf S/N$ is then defined to be the relationship between minimal representatives of those domains in $\mf S$, and we similarly use minimal representatives to define the various projections. To verify each axiom, the general strategy is to use minimal representatives to `lift' the setup of the axiom from $(G/N,\mf S/N)$ to $(G,\mf S)$, where the axiom is satisfied, and then push the result back down to $(G/N,\mf S/N)$. The technicalities involved in ``lifting'' the setup of the axioms are dealt with in \Cref{sec:minimal} and \Cref{sec:HHGStructure}, and the axioms themselves are verified in \Cref{sec:axiomproofs}.

\subsubsection*{Random walks form spinning families}
In \Cref{sec:RW}, we study random walks on an acylindrically hyperbolic group.  We show that subgroups generated by finitely many independent random elements a.a.s.~satisfy \Cref{hyp:metric},  \Cref{hyp:spinning}, and the assumptions of \Cref{cor:preserveAH}, thus proving \Cref{thm:AHQuotient}. Under then additional assumption that the group is an HHG, we  show that such subgroups a.a.s.~satisfy the assumptions of \Cref{thm:quotientishhg}, and this is then used to prove  \Cref{thm:main_intro}, \Cref{cor:hyp}, and \Cref{cor:relhyp} in \Cref{sec:RandomQuotients}.

\subsubsection*{Common quotients of HHGs}
Given two acylindrically hyperbolic HHGs $G_1$ and $G_2$, say generated by finite sets $S_1$ and $S_2$, one can build a common quotient by considering the free product $G_1*G_2$ and identifying every element $s_i\in S_1$ with a word $w_i$ in the alphabet $S_2$, and similarly every element $t_j\in S_2$ with some $v_j\in S_2^*$. To ensure the relations form a spinning family, one can choose $w_i$ and $v_j$ to be the $n$th steps of random walks on $G_1$ and $G_2$, respectively. We do just that in \Cref{sec:commonquot}. Our arguments are similar in spirit to a proof of Hull about the existence of common quotients of acylindrically hyperbolic groups \cite{H16}. However, while the elements $w_i$ and $v_j$ in Hull's paper are chosen to satisfy a suitable small cancellation condition, here we use random walks, as most of the requirements of our machinery are already verified in \Cref{sec:RW}.

\subsubsection*{Quotient by the finite radical}
Let $G$ be an acylindrically hyperbolic HHG, and let $\mc E(G)$ be its maximal finite normal subgroup, whose existence is granted by, e.g., \cite[Theorem 6.14]{DGO}. As explained in \Cref{rem:E_is_trouble}, if $\mc E(G)$ is larger than the center of $G$, then the conjugates of the $n$th step of a random walk $w$ might fail to form a spinning family. To circumvent this problem, we first prove our results under the assumption that $\mc E(G)$ is central in $G$. Then in the general setting we reduce to the above case using the following result from \Cref{sec:appendix}, which is a tool of independent interest in the study of hierarchically hyperbolic groups:
\begin{thmi}
    Let $(G,\mf S)$ be a (relative) HHG, and let $K\unlhd G$ be a finite normal subgroup. Then $G/K$ is a (relative) HHG, and if $G$ is  acylindrically hyperbolic, then so is $G/K$.
\end{thmi}

\subsection*{Comparison with very rotating quotients}
     \Cref{thm:quotientishhg} should be compared to \cite[Theorem~6.1]{BHS_HHS_AsDim}, which implies the same result for a \emph{very rotating family} of subgroups, rather than a sufficiently spinning family. While spinning families and rotating families capture similar behavior, it is not clear that \cite[Theorem~6.1]{BHS_HHS_AsDim} can be used to prove that random quotients of HHGs are HHGs. The key difference in the construction of a HHG structure on the quotient in each case is the relationship between the various constants involved. 

    In \cite{BHS_HHS_AsDim}, the first step is to modify the top-level hyperbolic space $\mc CS$ by gluing on ``hyperbolic cones'' as in \cite{DGO} to obtain a new hyperbolic space $\widehat{\mc CS}$. The collection of random subgroups $\langle w_{i,n}\rangle$ would then have to be an $r$--rotating family with respect to $\widehat{\mc CS}$ for sufficiently large $r$. The problem with using this construction is controlling the growth of $r$ as $n$ tends to infinity. Precisely how large the constant $r$ needs to be is not completely clear, but a careful reading of the proofs in \cite{BHS_HHS_AsDim} shows that it depends at least linearly on the geometric separation constant of the hyperbolic cones. Roughly, to be an $r$--rotating family, the translation length of the random walks must be sufficiently large with respect to $r$, and, in fact, exponential in $r$ \cite[Theorem~6.35]{DGO}. Work of Maher--Tiozzo \cite{MaherTiozzo18, MaherTiozzoCremona} and Maher--Sisto \cite{MaherSisto} show that the geometric separation constant of the random walk grows linearly in $n$, as discussed in \Cref{sec:RWBackground}, and so the translation length of the random walk would need to grow \textit{exponentially} in $n$ to be able to use this construction. However, the translation length grows only linearly in $n$ \cite{MaherTiozzo18}.

    If one could improve the geometric separation constants of the random walk to grow logarithmically in $n$, it might be possible to use \cite[Theorem~6.1]{BHS_HHS_AsDim} to obtain the results in this paper, though one would still need a better understanding of the precise relationship between $r$ and the geometric separation constant. Instead, in this paper we use the more straightforward cone-off procedure described above and replace rotating families with spinning families. With this construction, we show that the collection of random subgroups $\langle w_{i,n}\rangle$ needs to to be an $L$--spinning family for a constant $L$ that is again linear in the geometric separation constant of the random walks; see \Cref{rem:l_hyp}. To form an $L$--spinning family, however, we only need the translation length to be linear in $L$, which holds by \Cref{thm:RandomSubgroupIsSpinning}.

    Since the precise constants are critical in this paper, we have included a summary of their definitions and relative dependencies in \Cref{tableofconstants}. We suggest keeping it handy while reading through the paper.

\begin{acknowledgements}
We thank Alex Sisto for helpful conversations, and particularly for pointing us toward spinning families. We thank Yair Minsky for asking the question about random walks on mapping class groups which was the genesis of this project. The third author is grateful to the other authors for the opportunity to join this project.
We also thank Inhyeok Choi for comments on an early draft and encouragement to include \Cref{thm:AHQuotient}.

The first author is partially supported by NSF grants DMS-2106906 and DMS-2340341. 
The second author was supported by the Additional Funding Programme for Mathematical Sciences, delivered by EPSRC (EP/V521917/1) and the Heilbronn Institute for Mathematical Research. 
The fourth author was partially supported by ISF grant 660/20, 
and by a Zuckerman Fellowship at the Technion. The fifth author was partially supported by NSF grants DMS-2202986 and DMS-1840190.
\end{acknowledgements}


\section{Background on hyperbolicity and projection complexes} \label{sec:background}
\subsection{Hyperbolic spaces}\label{sec:HypBackground}
In this section we review some basic properties of metric spaces, including hyperbolicity. More details can be found in \cite{BH}. Most lemmas in this section are standard, but we provide proofs for completeness and to make explicit all constants, as they will play an important role later in the paper.

In what follows we consider subsets of a fixed space $X$ with metric $\dist = \dist_X$. Every metric space (possibly equipped with an action of a group $G$ by isometries) is ($G$-equivariantly) quasi-isometric to a simplicial graph, by e.g. \cite[Lemma 3.B.6]{cornulier2014metric}, so we can and will assume that all metric spaces we consider are simplicial graphs.

Given a subspace $A\subseteq X$, the closed $R$-neighborhood of $A$ in $X$ is denoted by $\mc N^X_R(A)$, or simply $\mc N_R(A)$ when the ambient space is understood.  Let $\lambda\geq 1$ and $c\geq 0$.  A map $f\colon (X,\dist_X)\to (Y,\dist_Y)$ of metric spaces is a \emph{$(\lambda,c)$--quasi-isometric embedding} if for all $x,y\in X$, we have that
$$\lambda^{-1}\dist_X(x,y)-c\le \dist_Y(f(x),f(y))\le \lambda \dist_X(x,y)+c.$$ 
A \emph{quasi-isometry} is a quasi-isometric embedding which is also \emph{coarsely surjective}, meaning that $Y\subseteq \mc N_R(f(X))$ for some constant $R\ge 0$. 

A \emph{$(\lambda,c)$--quasigeodesic} in $X$ is a $(\lambda,c)$--quasi-isometric embedding of an interval into $X$. When the constants $\lambda$ and $c$ are the same, we simply call such path a $\lambda$-quasigeodesic. A \emph{geodesic} is a $(1,0)$-quasigeodesic, that is, an isometric embedding of an interval. Given points $x,y\in X$, we denote a geodesic from $x$ to $y$ by $[x,y]^X$; if the space $X$ is clear from context we simply write $[x,y]$.

A metric space is \emph{geodesic} (resp. $(\lambda, c)$-quasigeodesic) if any two points are connected by a geodesic (resp. $(\lambda, c)$-quasigeodesic). For $\delta\geq 0$, a geodesic metric space $X$ is \emph{$\delta$--hyperbolic} if, for every three points $x,y,z\in X$, we have $[x,y] \subseteq \mc{N}_\delta( [x,z] \cup [z,y])$; we say that geodesic triangles in a $\delta$-hyperbolic space are $\delta$-\emph{slim}. If the particular choice of $\delta$ is not important, we simply say that $X$ is \emph{hyperbolic}. The \emph{boundary} $\partial X$ of a hyperbolic space is the set of quasigeodesic rays $[0,\infty)\to X$ up to bounded Hausdorff distance (see e.g. \cite[III.H.3]{BH}).

In this paper, all quasigeodesics $\gamma\colon I\to X$ will be continuous. In a hyperbolic space, this is no loss of generality by \cite[Lemma~III.H.1.11]{BH}. We denote the \emph{length} of the quasigeodesic $\gamma$ in $X$ by $\ell_X(\gamma)$.

Hyperbolic spaces satisfy the following \emph{Morse} property, which states that quasigeodesics with the same endpoints remain in a uniform neighborhood of each other (see, e.g., \cite[III.H.1.7]{BH}). This is also known as \emph{quasigeodesic stability}.
  \begin{lem}\label{lem:stability}
    For all $\delta,c\ge 0$, $\lambda\ge 1$, there is a constant $\Phi=\Phi(\lambda, c,\delta)\ge 0$ satisfying the following.  Let $X$ be a $\delta$--hyperbolic space, and let $\gamma_1$, $\gamma_2$ be $(\lambda,c)$--quasigeodesics with the same endpoints in $X\cup \partial X$.  Then the Hausdorff distance between $\gamma_1$ and $\gamma_2$ is at most $\Phi$.
  \end{lem}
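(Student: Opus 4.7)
The plan is to reduce to the special case of comparing a single $(\lambda,c)$-quasigeodesic with a geodesic sharing its endpoints, and then recover the general statement by the triangle inequality for Hausdorff distance. So I first aim to prove: there exists $\Phi_0=\Phi_0(\lambda,c,\delta)$ such that, whenever $\gamma$ is a continuous $(\lambda,c)$-quasigeodesic in $X$ from $p$ to $q$ and $[p,q]$ is any geodesic, the Hausdorff distance between $\gamma$ and $[p,q]$ is at most $\Phi_0$. Then $\Phi := 2\Phi_0$ works for two quasigeodesics with the same endpoints.

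The main estimate splits into two containments. For $\gamma\subseteq \mathcal{N}_{\Phi_0}([p,q])$ I would use the classical ``exponential divergence'' argument: pick $x\in\gamma$ realizing the maximum distance $D$ to $[p,q]$, and look at the subarc of $\gamma$ consisting of points within $2D$ of $x$. On the one hand, by the $(\lambda,c)$-quasigeodesic property the length of this subarc is at most linear in $D$. On the other hand, $\delta$-hyperbolicity forces any path from a point near one endpoint of this subarc to a point near the other endpoint, while avoiding $\mathcal{N}_D([p,q])$, to have length at least exponential in $D$ (iteratively subdividing the arc and applying the thin-triangle condition to nested geodesic triangles gives doubling at each step down to scale $\delta$). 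Comparing these two bounds yields an explicit upper bound for $D$ in terms of $\lambda,c,\delta$, which is the desired $\Phi_0$. For the reverse containment $[p,q]\subseteq\mathcal{N}_{\Phi_0'}(\gamma)$, continuity of $\gamma$ lets me walk along $[p,q]$ and use connectedness together with the already-established bound: any point of $[p,q]$ too far from $\gamma$ would separate $\gamma$ into two components that are forced, by the first containment applied to sub-quasigeodesics, to meet a neighborhood of $[p,q]$ on either side, a contradiction. Adjusting constants gives a single $\Phi_0$ bounding the Hausdorff distance.

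To handle endpoints in $\partial X$, I would approximate. If $\gamma_1,\gamma_2$ share one or both endpoints in $\partial X$, pick points $x_n\in\gamma_1$ and corresponding $y_n\in\gamma_2$ tending to the same boundary point, so that the sub-quasigeodesics of $\gamma_1,\gamma_2$ between chosen pairs have endpoints in $X$. The finite case gives a uniform $\Phi$-bound on the Hausdorff distance between these truncations. Using that the two asymptotic rays stay at bounded Hausdorff distance by the definition of $\partial X$ (points along $\gamma_i$ past $x_n,y_n$ determine the same equivalence class), any point of $\gamma_1$ is within $\Phi$ of some point of $\gamma_2$ by taking $n$ large enough, and vice versa; the constant does not deteriorate in the limit.

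The step I expect to be most delicate is the exponential-divergence estimate, which requires keeping careful track of how the constant $\Phi_0$ depends on $\lambda$, $c$, and $\delta$, since the paper emphasizes that explicit constants matter for later use. Everything else (the second containment, and the boundary extension) is essentially bookkeeping once this key inequality is in hand.
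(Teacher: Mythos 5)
The paper does not prove \Cref{lem:stability} itself; it cites \cite[III.H.1.7]{BH}, and your proposal essentially reconstructs the standard proof given there: reduce to comparing a quasigeodesic with the geodesic joining its endpoints, use the exponential-divergence estimate (equivalently, the fact that a rectifiable path with endpoints $p,q$ comes within $\delta\log_2(\ell)+O(1)$ of every point of $[p,q]$) to bound the maximal distance, and then use connectedness of the quasigeodesic for the reverse containment. That part of your outline is sound and matches the cited argument; the only issue to keep in mind is that the length comparison requires the quasigeodesic to be continuous/rectifiable, which the paper arranges separately via \cite[Lemma III.H.1.11]{BH}.

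The one genuine soft spot is the extension to endpoints in $\partial X$. You invoke ``the two asymptotic rays stay at bounded Hausdorff distance by the definition of $\partial X$,'' but the paper's definition of $\partial X$ only asserts that two rays representing the same boundary point have \emph{finite} Hausdorff distance, not one bounded by a function of $(\lambda,c,\delta)$. So as stated this is circular: a uniform bound is exactly what you are trying to prove. The fix is a short extra argument using hyperbolicity. Suppose $\gamma_1,\gamma_2$ are $(\lambda,c)$-quasigeodesic rays from $p$ asymptotic to the same $\xi\in\partial X$, with (a priori unbounded) Hausdorff distance $K<\infty$. Given $x=\gamma_1(t)$, choose $T\gg t+K+\delta$ and a point $y\in\gamma_2$ with $\dist(\gamma_1(T),y)\le K$. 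The finite Morse lemma puts $x$ within $\Phi_0$ of the geodesic $[p,\gamma_1(T)]$; by $\delta$-thinness of the triangle $(p,\gamma_1(T),y)$, the portion of $[p,\gamma_1(T)]$ at distance more than $K+\delta$ from $\gamma_1(T)$ lies in $\mc N_\delta([p,y])$; and $[p,y]$ lies within $\Phi_0$ of $\gamma_2$. Hence $\dist(x,\gamma_2)\le 2\Phi_0+\delta$, with $K$ dropping out. With that patch the proof is complete and agrees with the approach in the reference the paper cites.
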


\noindent A subspace $Y$ of a hyperbolic space $X$ is \emph{$K$-quasiconvex} if $[x,y]\subset \mc N_K(Y)$ whenever $x,y\in Y$. 

\begin{lem}\label{lem:n_A_unif_qc}
    Let $X$ be $\delta$-hyperbolic, and let $Z\subseteq X$ be a subset of diameter $E$. For every $A\ge 0$, the neighborhood $\mc N_A(Z)$ is $(2\delta+E)$-quasiconvex.
\end{lem}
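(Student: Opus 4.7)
The plan is a double application of the $\delta$-slim triangle condition. Fix $x,y\in\mc N_A(Z)$ and choose $x',y'\in Z$ with $\dist(x,x')\le A$ and $\dist(y,y')\le A$. Let $p$ be an arbitrary point on a geodesic $[x,y]$; we want to find a point of $\mc N_A(Z)$ within distance $2\delta+E$ of $p$.

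First I would look at the triangle with vertices $x,y,x'$. By $\delta$-slimness, $p$ lies within $\delta$ of some point $q$ on $[x,x']\cup[x',y]$. If $q\in[x,x']$, then along this geodesic $\dist(q,x')\le\dist(x,x')\le A$, so $q\in\mc N_A(Z)$ and we are done with constant $\delta$. Otherwise $q\in[x',y]$, and I would then apply slimness again to the triangle with vertices $x',y,y'$: there is a point $r\in[x',y']\cup[y',y]$ with $\dist(q,r)\le\delta$. If $r\in[y',y]$, the same geodesic argument gives $r\in\mc N_A(Z)$ and $\dist(p,r)\le 2\delta$. If instead $r\in[x',y']$, then $\dist(r,x')\le\dist(x',y')\le\diam(Z)=E$, and since $x'\in Z\subseteq\mc N_A(Z)$, we conclude $\dist(p,x')\le\delta+\delta+E=2\delta+E$.

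In every case $p$ is within $2\delta+E$ of $\mc N_A(Z)$, which is the desired quasiconvexity bound. There is no real obstacle here; the only thing to check carefully is the trivial fact that along a geodesic $[u,v]$, every point is within $\dist(u,v)$ of each endpoint, which lets us reduce the ``side'' cases to the trivial containment $Z\subseteq\mc N_A(Z)$. The bound $2\delta+E$ reflects the two uses of slimness plus a single crossing of the diameter of $Z$.
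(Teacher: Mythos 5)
Your proof is correct and follows the same approach as the paper: both work with the quadrilateral $x,x',y',y$, where the paper invokes $2\delta$-slimness of geodesic quadrangles while you derive it explicitly from two applications of triangle slimness, arriving at the identical constant $2\delta+E$.
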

\begin{proof}
    Let $x,y\in \mc N_A(Z)$, and let $x',y'\in Z$ be such that $\dist(x,x')\le A$ and $\dist(y,y')\le A$. From the definition of hyperbolicity, it readily follows that geodesic quadrangles in $X$ are $2\delta$-slim. Hence any geodesic $[x,y]$ is in the $2\delta$-neighborhood of geodesics $[x,x']\cup [x',y']\cup [y',y]$. Since $\diam (Z)\le E$, $[x,y]$ is actually in the $(2\delta+E)$-neighborhood of $[x,x']\cup [y',y]$, and these two geodesics lie in $\mc N_A(Z)$.
\end{proof}

From now on, we assume that $X$ is a $\delta$-hyperbolic \emph{graph}, and every point in $X$ is thought of as a vertex, so that distances between points are integer-valued. If $Y\subseteq X$ is quasiconvex, then for every $z\in X$ one can define a coarse closest point projection $\pi_Y\colon X\to Y$ by mapping every $z\in X$ to the collection
\begin{equation}\label{eqn:pi_Y} \pi_Y(z)=\{y\in Y\mid \dist(z,Y)=\dist(z,y)\}.\end{equation}
For any such $Y \subseteq X$ and every $A,B\subseteq X$ we set $\dist^\pi_Y(A,B)=\diam(\pi_Y(A)\cup \pi_Y(B))$.

\begin{lem}[Closest point projections are uniformly Lipschitz]\label{lem:lipschitzproj}
     Let $X$ be a $\delta$--hyperbolic graph, $Y\subseteq X$ a $K$--quasiconvex subspace, and $\pi_Y\colon X\to Y$ a coarse closest point projection. Let $x,y\in X$ be such that $\dist(x,y)\le 1$. Then $\dist^\pi_Y(x, y)\le J\coloneq 2K+10\delta+2$. In particular, for any $w,z\in X$, $\dist^\pi_Y(w,z)\le J\max\{\dist_X(w,z),1\}.$
\end{lem}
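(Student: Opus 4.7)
The plan is to establish the claim first in the case $\dist(x,y) \le 1$ by analyzing a geodesic quadrilateral, and then bootstrap to arbitrary pairs $w, z$ by concatenation along a geodesic in the graph $X$.

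For the unit-distance case, let $x' \in \pi_Y(x)$ and $y' \in \pi_Y(y)$, and consider the geodesic quadrilateral with sides $[x, y], [y, y'], [y', x'], [x', x]$. Since $\delta$-slimness of triangles implies $2\delta$-slimness of quadrilaterals, every point $p \in [y', x']$ admits some $q$ in the union of the other three sides with $\dist(p, q) \le 2\delta$. Moreover, since $x', y' \in Y$, the $K$-quasiconvexity of $Y$ yields $[y', x'] \subseteq \mc N_K(Y)$, so for every such $p$ there is $z \in Y$ with $\dist(p, z) \le K$. I would take $p$ to be the midpoint of $[y', x']$ and argue by three cases on where $q$ lies.

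If $q \in [y, y']$, then $q$ lies on a geodesic from $y$ to its closest-point projection $y'$ and is within $K + 2\delta$ of some $z \in Y$; the defining property of $y'$ forces $\dist(y, z) \ge \dist(y, y')$, which combined with $q \in [y, y']$ pins $q$ within $K + 2\delta$ of $y'$, and hence $\dist(p, y') \le K + 4\delta$. The case $q \in [x', x]$ is symmetric. If $q \in [x, y]$, the hypothesis $\dist(x, y) \le 1$ places $q$ within $1$ of $x$ or $y$; combining with $p \in \mc N_K(Y)$ forces the corresponding $\dist(x, Y)$ or $\dist(y, Y)$ to be small, so $p$ is close to the corresponding endpoint $x'$ or $y'$. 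Because $p$ is the midpoint of $[y', x']$, the common value $\dist(x', y')/2$ is bounded by the worst of these estimates; carefully tracking constants yields $\dist(x', y') \le J$.

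For the `in particular' statement, given arbitrary $w, z \in X$, I would take a geodesic path $w = v_0, v_1, \ldots, v_n = z$ in the graph $X$ with $\dist(v_i, v_{i+1}) \le 1$ and $n = \max\{\dist(w, z), 1\}$. For any $a \in \pi_Y(w)$ and $b \in \pi_Y(z)$, I would choose $a_i \in \pi_Y(v_i)$ with $a_0 = a$ and $a_n = b$, and telescope:
\[
\dist(a, b) \le \sum_{i=0}^{n-1} \dist(a_i, a_{i+1}) \le \sum_{i=0}^{n-1} \dist^\pi_Y(v_i, v_{i+1}) \le n J,
\]
using the unit-distance case on each edge. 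Taking the supremum over $a, b$ gives $\dist^\pi_Y(w, z) \le J \max\{\dist_X(w, z), 1\}$. The only real obstacle is bookkeeping: the quadrilateral-plus-quasiconvexity argument is standard, but pinning down the exact constant $J = 2K + 10\delta + 2$ requires care in organizing the slimness estimates and deciding which summand absorbs each additive term.
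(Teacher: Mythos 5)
Your argument is essentially the same as the paper's: analyze the geodesic quadrilateral with vertices $x,x',y',y$ using $2\delta$-slimness, $K$-quasiconvexity of $Y$, the closest-point property, and the hypothesis $\dist(x,y)\le 1$; then telescope along a geodesic for the general bound. The only difference is framing: the paper argues by contradiction (assume $\dist(x',y')>J$, pick $z\in[x',y']$ at distance at least $K+5\delta$ from each endpoint, and show the nearby point $w$ on the other three sides cannot exist), whereas you bound $\dist(x',y')$ directly from the midpoint of $[x',y']$. Both routes go through the same three-case analysis.

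One caveat on constants. In the case $q\in[x,y]$, your estimate ``$q$ within $1$ of $x$ or $y$'' leads to roughly $\dist(x',y')\le 2K+8\delta+5$, which exceeds $J=2K+10\delta+2$ once $\delta<3/2$. The paper's standing convention, announced just before this lemma, is that every point of $X$ is a vertex of the graph; since $\dist(x,y)\le 1$, a point of $[x,y]$ must then be $x$ or $y$, so you may replace that $1$ by $0$, tightening the bound to $\dist(x',y')\le 2K+8\delta+1\le J$. The paper uses exactly this vertex convention to dismiss the third case outright as impossible. Finally, in the supremum over $a,b$ for the telescoping step, note that $a,b$ may both lie in $\pi_Y(w)$ (or both in $\pi_Y(z)$); this is covered by the $x=y$ instance of the unit-distance case together with $\max\{\dist_X(w,z),1\}\ge 1$.
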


\begin{proof}
Let $x'\in \pi_Y(x)$ and $y'\in \pi_Y(y)$, and consider a geodesic quadrangle with vertices $\{x,x', y',y\}$. Towards a contradiction, if  $\dist(x', y')>J$, we can find a point $z\in [x',y']$ such that both $\dist(x', z)$ and $ \dist(y',z)$ are at least $K+5\delta$. Let $z'\in Y$ be such that $\dist(z,z')\le K$, which exists as $Y$ is $K$--quasiconvex. Since quadrangles in $X$ are $2\delta$--slim, there exists $w\in [x',x]\cup[x,y]\cup[y,y']$ within distance $2\delta$ from $z$. If $w\in [x',x]$ then 
$$\dist(x,x')=\dist(x, w)+\dist(w, x')\ge \dist(x, w)+\dist(z, x')-\dist(w, z)\ge \dist(x, w)+ K+3\delta,$$ 
while 
$$\dist(x, z')\le \dist(x, w)+\dist(w, z')\le \dist(x, w)+K+2\delta.$$ 
This would contradict the definition of $x'$ as a closest point to $x$ in $Y$. For the same reason, $w$ cannot lie on $[y, y']$, and so $w\in [x,y] - \{x,y\}$. However, this is impossible, as $\dist(x,y)\le 1$. 
\end{proof}

\begin{lem}[{\cite[Proposition 10.2.1]{CDP}}]\label{lem:NppGivesQgeo}
Given $\delta, K \geq 0$, there is a constant $\Omega = \Omega(\delta,K)$ such that the following holds. Let $Y$ be a $K$-quasiconvex subset of a $\delta$-hyperbolic graph $X$. For any pair of points $x,y\in X$, any $x'\in \pi_Y(x)$, and any $y'\in \pi_Y(y)$, if $\dist(x',y')\ge \Omega$, then the \emph{nearest point path} $[x, x'] \cup [x', y'] \cup [y', y]$ is a $(1,\Omega)$-quasigeodesic.
\end{lem}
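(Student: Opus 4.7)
The plan is to verify the two inequalities implicit in the $(1,\Omega)$-quasigeodesic condition for the nearest-point path $\gamma=[x,x']\cup[x',y']\cup[y',y]$. The upper bound $\dist(p,q)\le\ell(\gamma|_{[p,q]})$ for $p,q\in\gamma$ is the triangle inequality, so the content is to show $\ell(\gamma|_{[p,q]})\le\dist(p,q)+\Omega$.

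The first ingredient is a ``perpendicularity'' fact: there is a constant $C_1=C_1(\delta,K)$ such that for every $z\in Y$, any geodesic $[x,z]$ passes within $C_1$ of $x'$. I would prove this by taking $m\in[x,x']$ at distance slightly larger than $K+\delta$ from $x'$ and applying $\delta$-slimness to the triangle $x,x',z$. If $m$ were $\delta$-close to $[x',z]\subseteq\mc N_K(Y)$, then $m$ would be within $K+\delta$ of $Y$; but every point of $[x,x']$ realises its distance to $Y$ at $x'$ (by the triangle inequality and $x'\in\pi_Y(x)$), which would force $\dist(m,x')\le K+\delta$, contradicting the choice of $m$. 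Hence $m$ is $\delta$-close to $[x,z]$, and one can take $C_1=K+2\delta+1$. Decomposing $[x,z]$ at such a near point to $x'$ then yields $\dist(x,z)\ge\dist(x,x')+\dist(x',z)-2C_1$, i.e.\ the Gromov product $(x\mid z)_{x'}$ is at most $C_1$ for every $z\in Y$.

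Applying this to $z=y'$ and, symmetrically, to $(y,x')$ in place of $(x,y')$ gives
$$\dist(x,y')+\dist(y,x')\ge \dist(x,x')+\dist(y,y')+2\dist(x',y')-4C_1.$$
The $4$-point condition for $\delta$-hyperbolic spaces states that the two largest of the three sums $A=\dist(x,y)+\dist(x',y')$, $B=\dist(x,x')+\dist(y,y')$, $C=\dist(x,y')+\dist(y,x')$ differ by at most $2\delta$. The inequality above reads $C\ge B+2\dist(x',y')-4C_1$, so as soon as $\dist(x',y')>2C_1+\delta$ we get $C>B+2\delta$, forcing $B$ to be the minimum of the three. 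Consequently $A\ge C-2\delta$, which unpacks to
$$\dist(x,y)\ge\dist(x,x')+\dist(x',y')+\dist(y',y)-(4C_1+2\delta),$$
the desired lower bound for the endpoints of $\gamma$.

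To promote this to arbitrary subpaths, observe that any $p\in[x,x']$ satisfies $x'\in\pi_Y(p)$, and similarly $y'\in\pi_Y(q)$ for $q\in[y',y]$, so the same Gromov-product computation with $p,q$ in place of $x,y$ handles the case where $p$ and $q$ lie on the outer segments. For the ``mixed'' case $p\in[x,x']$, $q\in[x',y']$, I would pick $q^*\in Y$ with $\dist(q,q^*)\le K$ from $K$-quasiconvexity and apply the first-step bound $(p\mid q^*)_{x'}\le C_1$ to obtain $\dist(p,q)\ge\dist(p,x')+\dist(x',q)-(2C_1+2K)$; the case symmetric near $y'$ is identical, and the case $p,q$ on the same segment is trivial. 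Choosing $\Omega$ to exceed all of $2C_1+\delta$, $4C_1+2\delta$, and $2C_1+2K$ then gives the required constant. The main subtlety is the case analysis in the $4$-point inequality --- one must use the largeness of $\dist(x',y')$ to rule out the wrong pairing --- and this is exactly what the hypothesis $\dist(x',y')\ge\Omega$ is designed to guarantee.
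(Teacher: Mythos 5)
The paper does not prove this lemma itself; it cites it directly to \cite[Proposition 10.2.1]{CDP}, so there is no internal proof to compare against, and your proposal is a self-contained argument. Having checked it, the proposal is correct. The chain of reasoning is sound: the ``perpendicularity'' step ($x'\in\pi_Y(m)$ for every $m\in[x,x']$, hence the Gromov product $(x\mid z)_{x'}\le C_1$ for all $z\in Y$) is valid with $C_1 = K+2\delta+1$; summing the two Gromov-product bounds, feeding the result into the four-point inequality to identify the minimal pairing, and unpacking gives the endpoint estimate; and the reduction to arbitrary subpaths is handled by the correct observation that $x'\in\pi_Y(p)$ for $p\in[x,x']$ (so the endpoint argument applies verbatim to outer-segment pairs, with the same threshold on $\dist(x',y')$), together with a quasiconvexity correction for the mixed case.

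One small imprecision worth flagging: you invoke the four-point condition ``the two largest of $A,B,C$ differ by at most $2\delta$,'' which is the Gromov four-point definition of hyperbolicity. The paper's convention is $\delta$-slim triangles, under which the four-point condition holds only with a comparable but different constant (something like $8\delta$). This does not affect the structure of the argument --- it only changes the additive constants absorbed into $\Omega$ --- but the statement should be adjusted or a citation added for the equivalence of the two definitions with explicit constants. With that cosmetic fix, the proof is complete, and the approach (projections as coarse foot-points, Gromov products at $x'$ and $y'$, four-point inequality) is the standard one for this kind of result.
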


A collection $\mc Y$  of quasiconvex subspaces of $X$ is \emph{geometrically separated} if for every $\varepsilon>0$, there exists $R>0$ such that   $\diam( \mc N_\varepsilon(Y')\cap Y)\le R$ whenever $Y\neq Y'\in \mc Y$.

\begin{lem}\label{lem:geomsep_for_qc}
Let $\mc Y$ be a collection of $K$-quasiconvex subspaces of the $\delta$-hyperbolic graph $X$. Suppose there exists $M_0>0$ such that $\diam(\mc N_{2K+2\delta}(Y')\cap Y)\le M_0$ whenever $Y,Y'\in\mc Y$ are distinct. Then $\mc Y$ is geometrically separated. More precisely, for every $t\geq 0$, 
\begin{equation}
    \label{eq:M(e)}
\diam(\mc N_{t}(Y')\cap Y)\le M(t)\coloneq M_0+2K+2t+4\delta+2.
\end{equation}
\end{lem}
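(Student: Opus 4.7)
The plan is to prove the quantitative bound $\diam(\mc N_t(Y') \cap Y) \le M(t)$ directly, from which geometric separation follows by taking $R = M(\varepsilon)$ for each $\varepsilon > 0$. Fix distinct $Y, Y' \in \mc Y$ and $x, y \in \mc N_t(Y') \cap Y$, choose $x', y' \in Y'$ with $\dist(x, x') \le t$ and $\dist(y, y') \le t$, and fix a geodesic $\gamma \colon [0, L] \to X$ from $x$ to $y$ with $L = \dist(x, y)$. The strategy is to locate two points $u, v$ on $\gamma$ that are simultaneously close to $Y'$ and close to $x, y$ respectively; $K$-quasiconvexity of $Y$ will then promote $u, v$ to points $\tilde u, \tilde v \in Y \cap \mc N_{2K + 2\delta}(Y')$, so that the hypothesis bounds their distance by $M_0$.

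The key tool is that the geodesic quadrilateral on $x, x', y', y$ is $2\delta$-slim: each $\gamma(s)$ lies in the $2\delta$-neighborhood of one of the sides $[x, x']$, $[x', y']$, $[y', y]$. Let $A, B, C \subseteq [0, L]$ be the corresponding closed subsets; these cover $[0, L]$, with $0 \in A$ and $L \in C$. Set $s_u = \inf(B \cup C)$ and $s_v = \sup(A \cup B)$. If $B \cup C = \emptyset$, then $\gamma \subseteq \mc N_{2\delta}([x, x'])$, whose diameter is at most $2t + 4\delta$, and the bound follows immediately. Otherwise, every $s < s_u$ lies in $A$, so $\dist(\gamma(s), x) \le t + 2\delta$, which forces $s_u \le t + 2\delta + 1$; symmetrically $L - s_v \le t + 2\delta + 1$.

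It now suffices to carry out a short case analysis on where $u := \gamma(s_u)$ and $v := \gamma(s_v)$ land. If $u \in C$ then $\dist(u, y) \le t + 2\delta$, and the triangle inequality gives $L \le 2t + 4\delta + 1 \le M(t)$; symmetrically if $v \in A$. In the remaining generic case, $u, v \in B$, so both lie within $2\delta$ of $[x', y']$. Since $Y'$ is $K$-quasiconvex, $[x', y'] \subseteq \mc N_K(Y')$, hence $\dist(u, Y'), \dist(v, Y') \le K + 2\delta$. Since $Y$ is $K$-quasiconvex, $\gamma \subseteq \mc N_K(Y)$, so one may pick $\tilde u, \tilde v \in Y$ within $K$ of $u, v$. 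These satisfy $\dist(\tilde u, Y'), \dist(\tilde v, Y') \le 2K + 2\delta$, so the hypothesis yields $\dist(\tilde u, \tilde v) \le M_0$, and the triangle inequality delivers
\[
L \le (t + 2\delta + 1) + K + M_0 + K + (t + 2\delta + 1) = M(t),
\]
as required.

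The main obstacle is simply being careful with the case analysis on $u$ and $v$: one must verify that every configuration either yields a stronger direct bound from slimness of the quadrilateral, or falls into the generic case $u, v \in B$ in which the hypothesis on $M_0$ is invoked. Once the correct choice of $u, v$ is made, tracking the constants is routine.
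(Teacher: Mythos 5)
Your proof is correct and takes essentially the same approach as the paper: apply $2\delta$-slimness of the geodesic quadrilateral, locate points on $[x,y]$ that are simultaneously close to $x$ (resp. $y$) and to $[x',y']$, push them into $Y$ via $K$-quasiconvexity, and invoke the $M_0$-bound. The paper avoids your case analysis by first disposing of the case $\dist(x,y)\le 2t+4\delta+1$ and then choosing the point $a$ at distance $\approx t+2\delta$ from $x$ (so $a$ is automatically far from both $x$ and $y$, ruling out closeness to $[x,x']$ and $[y,y']$), but the content and the final constant $M(t)=M_0+2K+2t+4\delta+2$ are identical.

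One very minor remark: your case ``$B\cup C=\emptyset$'' is vacuous, since you already observed $L\in C$; you could drop it.
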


\noindent If the assumption of \Cref{lem:geomsep_for_qc} holds, we  say that the collection $\mathcal Y$ is \emph{$M_0$-geometrically separated}.

\begin{proof}[Proof of \Cref{lem:geomsep_for_qc}]
    Let $x,y\in \mc N_{t}(Y') \cap Y$, and let $x', y'\in Y'$ realize the distance between $Y'$ and $x,y$, respectively. If $\dist(x,y)\le 2t+4\delta+1$ there is nothing to prove. Otherwise, consider a geodesic quadrangle $[x,y]\cup[y,y']\cup[y',x']\cup[x',x]$, and let $a\in [x,y]$ be such that $t +2\delta\le\dist(x,a)\le t +2\delta+1$. Notice that $\dist(a,y)\ge \dist(x,y)-\dist(x,a)\ge t +2\delta$ as well. Since this geodesic quadrangle is $2\delta$-slim, there exists $a'\in [y,y']\cup[y',x']\cup[x',x]$ such that $\dist(a,a')\le 2\delta$. Notice that $a'$ cannot lie in the interior of $[x,x']$, as otherwise the reverse triangle inequality would give that $\dist(x,x')> \dist(a',x)\ge \dist(x,a)-\dist(a,a')\ge t$. For the same reason, $a'$ does not lie in the interior of $[y,y']$, so we must have that $a'\in [x,y]\subseteq \mc N_K(Y')$. Hence $a\in \mc N_{K+2\delta}(Y')$. In turn, as $a$ lies on a geodesic $[x,y]$ with endpoints in $Y$, there is some $p\in Y$ such that $\dist(a,p)\le K$. Thus we have found a point $p\in Y\cap \mc N_{2K+2\delta} (Y')$ such that $\dist(x,p)\le \dist(x,a)+\dist(a,p)\le t +2\delta+K+1$. An analogous argument produces a point $q\in Y\cap \mc N_{2K+2\delta} (Y')$ such that $\dist(y,q)\le t +2\delta+K+1$. Since $p,q\in \mc N_{2K+2\delta}(Y')\cap Y$, which has diameter at most $M_0$, we have
    \[\dist(x,y)\le \dist(x,p)+\dist(p,q)+\dist(q,y)\le M_0+2K+2t+4\delta+2.\qedhere\]
\end{proof}

From now on, we shall say that a function $f$, depending on some constants $c_1,\ldots, c_n$, and $M_0$, is \emph{bounded linearly} in $M_0$ if there are positive functions $a(c_1,\ldots, c_n)$ and $b(c_1,\ldots, c_n)$ such that $|f(M_0,c_1,\ldots, c_n)|\le a(c_1,\ldots, c_n)M_0+b(c_1,\ldots, c_n)$.

\begin{lem}\label{lem:boundedproj_for_qc} 
Let $X$ be a $\delta$-hyperbolic graph, and let $\mc Y$ be an $M_0$-geometrically separated collection of $K$-quasiconvex subspaces. There exists $B=B(\delta, K,M_0)$ which is bounded linearly in $M_0$ and  such that $\diam \pi_{Y'}(Y) \le B$ for every $Y\neq Y'\in \mc Y$.
\end{lem}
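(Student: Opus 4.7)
The plan is to bound the diameter of $\pi_{Y'}(Y)$ by passing any two points in the projection through the shortening path from \Cref{lem:NppGivesQgeo}, then using quasiconvexity of $Y$ together with Morse stability to trap the connecting geodesic inside a thin neighborhood of $Y$, and finally invoking geometric separation via \Cref{lem:geomsep_for_qc} to convert that neighborhood intersection into a linear bound in $M_0$.

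More concretely, fix $y_1, y_2 \in Y$ with projections $z_i \in \pi_{Y'}(y_i)$. Let $\Omega = \Omega(\delta,K)$ be the constant from \Cref{lem:NppGivesQgeo}. The goal is to bound $\dist(z_1,z_2)$. If $\dist(z_1,z_2) < \Omega$, we are done immediately, so assume $\dist(z_1,z_2) \geq \Omega$. Then by \Cref{lem:NppGivesQgeo} the nearest-point path $[y_1,z_1]\cup[z_1,z_2]\cup[z_2,y_2]$ is a $(1,\Omega)$-quasigeodesic in $X$ with endpoints $y_1,y_2 \in Y$. On the other hand, any geodesic $[y_1,y_2]$ lies in $\mc N_K(Y)$ by $K$-quasiconvexity of $Y$, so by the Morse property (\Cref{lem:stability}) the above quasigeodesic lies in $\mc N_{K+\Phi}(Y)$, where $\Phi = \Phi(1,\Omega,\delta)$.

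Now I want to convert this into a statement about $Y' \cap \mc N_t(Y)$ for some small $t$, so that geometric separation applies. The endpoints $z_1, z_2$ already lie in $Y'$ and, by the previous step, in $\mc N_{K+\Phi}(Y)$. Thus
\[
\dist(z_1,z_2) \leq \diam\!\bigl(Y' \cap \mc N_{K+\Phi}(Y)\bigr) \leq M(K+\Phi),
\]
where $M(\cdot)$ is the function from \Cref{lem:geomsep_for_qc}, which is linear in $M_0$. Combining this with the trivial bound $\Omega$ in the other case, we obtain $\diam\pi_{Y'}(Y) \leq \max\{\Omega, M(K+\Phi)\}$, which is a constant $B(\delta,K,M_0)$ depending linearly on $M_0$.

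I do not expect a real obstacle here; the only mildly delicate point is checking that both endpoints $z_1,z_2$ genuinely land in $Y' \cap \mc N_{K+\Phi}(Y)$ (as opposed to just an interior portion of $[z_1,z_2]$), but this is immediate since $z_1,z_2$ lie on the quasigeodesic itself. One could alternatively track an interior subsegment of $[z_1,z_2]$ and combine with $K$-quasiconvexity of $Y'$ to fish out nearby points of $Y'$; this gives the same bound up to a few additive $K$'s and $\delta$'s, and the final constant $B$ remains linear in $M_0$.
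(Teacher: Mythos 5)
Your proof is correct. The overall strategy matches the paper's: show that the two projection points $z_1, z_2$ land in a uniform neighborhood of $Y$, then invoke geometric separation via the function $M(\cdot)$ from \Cref{lem:geomsep_for_qc} to bound their distance. The difference is in how you certify that $z_1, z_2 \in \mc N_t(Y)$ for a $t$ independent of $M_0$. You route through \Cref{lem:NppGivesQgeo} to make the nearest-point path $[y_1,z_1]\cup[z_1,z_2]\cup[z_2,y_2]$ a $(1,\Omega)$-quasigeodesic, then apply Morse stability (\Cref{lem:stability}) to pin the entire path, including its corner points $z_1, z_2$, into $\mc N_{K+\Phi}(Y)$. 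The paper instead argues directly with $2\delta$-slimness of geodesic quadrangles: it places an auxiliary point $z'$ on $[z_1,z_2]$ at distance about $K+5\delta$ from $z_1$, observes that the slim-quadrangle partner $w$ of $z'$ must land on $[y_1,y_2]$ (a computation recycled from \Cref{lem:lipschitzproj}), and then bounds $\dist(z_1, Y)$ via $z_1 \to z' \to w \to Y$, yielding the sharper $t = 2K+7\delta+1$. Your version is more modular and a touch shorter since it reuses black-boxed lemmas, at the cost of a worse explicit constant (you get $K+\Phi$ with $\Phi = \Phi(1,\Omega,\delta)$, which dominates $2K+7\delta+1$); the paper's hands-on slimness computation yields a leaner constant. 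Since both $t$'s are independent of $M_0$ and $M(t)$ is linear in $M_0$, either route establishes the claimed linearity.
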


\begin{proof} We claim that it suffices to take $B= M(2K+7\delta+1)$, where $M$ is the geometric separation function from \eqref{eq:M(e)}, which  is bounded linearly in $M_0$. Let $x,y\in Y$, and let $x'\in \pi_{Y'}(x)$ and $y'\in \pi_{Y'}(y)$. If $\dist(x', y')\le 2K+10\delta+2$ we are done, as the latter is less than $M(2K+7\delta+1)$. Otherwise consider a geodesic quadrangle with vertices $\{x,x',y',y\}$. Let $z'\in [x',y']$ be such that $K+5\delta\le \dist(x',z')\le K+5\delta+1$. By slimness of quadrangles, one can find a point $w\in [x',x]\cup [x,y]\cup[y,y']$ within distance $2\delta$ from $z$. Arguing exactly as in \Cref{lem:lipschitzproj}, one sees that $w$ must belong to $[x,y]$, or it would violate the fact that $x'$ (resp. $y'$) realizes the distance between $Y'$ and $x$ (resp. $y$). Therefore
    \[\dist(x', Y)\le \dist(x', z)+\dist(z,w)+\dist(w,Y)\le 2K+7\delta+1,\]
    where we used that $Y$ is $K$-quasiconvex to bound $\dist(w,Y)$. The same argument works for $y'$, so the two points lie in $Y'\cap \mc N_{2K+7\delta+1}(Y)$. By \Cref{lem:geomsep_for_qc}, the diameter of the latter is bounded by $M(2K+7\delta+1)$, as required.
\end{proof}

\subsection{Cone-offs of graphs}
Let $\mc{Y}$ be a collection of subgraphs of a connected hyperbolic graph $X$. We denote by $\hat{X}$ the graph obtained from $X$ as follows: add a vertex $v_Y$ for each $Y \in \mc Y$, and add edges from $v_Y$ to each vertex in $Y$. Note that $X$ is naturally a subspace of $\hat X$.  We say that $\hat X$ is the \textit{cone-off}, or \emph{electrification}, of $X$ with respect to $\mc Y$, and we call the vertices $v_Y$ \textit{cone vertices}.

In what follows, the endpoints of a path $\gamma\colon[0,1]\to X$ are denoted by $\gamma_-=\gamma(0)$ and $\gamma_+=\gamma(1)$, and $\alpha * \beta$ denotes the concatenation of two paths $\alpha$ and $\beta$ such that $\alpha_+=\beta_-$.

\begin{defn}\label{defn:coneoff_for_graph}
    Let $\hat X$ be the cone-off of $X$ with respect to a family of subgraphs $\mc Y$.  Let $\gamma=u_1 * e_1 * \cdots * e_n * u_{n+1}$ be a concatenation of geodesic segments where each $e_i$ is a concatenation of two edges sharing a common cone vertex and the $u_i$ are (possibly trivial) segments contained in $X$. A \textit{de-electrification} $\widetilde  \gamma$ of $\gamma$ is the concatenation $u_1 * \eta_1 * \cdots * \eta_n * u_{n+1}$, where each $\eta_i$ is a geodesic in $X$ connecting the endpoints of $e_i$. If $e_i$ connects points of $Y\in \mc Y$, then $\eta_i$ is a \emph{$Y$-component of $\widetilde  \gamma$.}
\end{defn}
\noindent We are particularly interested in the case that $\mc Y$ is a collection of uniformly quasiconvex subgraphs.  
\begin{lem}[{\cite[Proposition~2.6]{KapovichRafi}}]\label{lem:hyp_constant_cone_off}
    Let $X$ be a $\delta$-hyperbolic graph, and let $\mc Y$ be a collection of $K$-quasiconvex subspaces. There exists $\hat\delta=\hat\delta(\delta, K)$ such that $\hat X$ is $\hat\delta$-hyperbolic.
\end{lem}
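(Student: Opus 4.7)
The plan is to prove the thin-triangle condition in $\hat X$ by lifting geodesic triangles to quasigeodesic triangles in the original hyperbolic space $X$, applying the Morse property, and then transferring the slimness back to $\hat X$. The key technical step is establishing that de-electrifications of $\hat X$-geodesics are uniform quasigeodesics in $X$.

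\textbf{Step 1 (Quasigeodesic lift).} First I would show that if $\gamma$ is a geodesic in $\hat X$, then any de-electrification $\widetilde\gamma$ (in the sense of \Cref{defn:coneoff_for_graph}) is a $(\lambda,c)$-quasigeodesic in $X$, with $\lambda,c$ depending only on $\delta$ and $K$. The endpoints of each $Y$-component $\eta_i$ lie in $Y$, so by $K$-quasiconvexity $\eta_i \subseteq \mc N_K^X(Y)$. The fact that $\gamma$ is $\hat X$-geodesic forbids $\widetilde\gamma$ from having two distinct components meeting the same $Y$ — otherwise one could route through $v_Y$ a single time and shorten $\gamma$ by at least $2$ in $\hat X$. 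Combining this with the Lipschitz projection bound in \Cref{lem:lipschitzproj} applied to each $Y$ encountered, and observing that each cone step accounts for a unit edge in $\hat X$ while its de-electrification $\eta_i$ has length $\ell_X(\eta_i) \le \mathrm{diam}_X(\eta_i)$, I obtain the required quasi-isometric inequality.

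\textbf{Step 2 (Apply Morse in $X$).} Given a geodesic triangle $\gamma_1\gamma_2\gamma_3$ in $\hat X$ with de-electrifications $\widetilde\gamma_1\widetilde\gamma_2\widetilde\gamma_3$, Step 1 gives a $(\lambda,c)$-quasigeodesic triangle in $X$. By \Cref{lem:stability}, each $\widetilde\gamma_i$ is within Hausdorff distance $\Phi=\Phi(\lambda,c,\delta)$ of an $X$-geodesic with the same endpoints; since $X$-geodesic triangles are $\delta$-slim, every point of $\widetilde\gamma_1$ is $X$-distance at most $\delta + 2\Phi$ from $\widetilde\gamma_2 \cup \widetilde\gamma_3$.

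\textbf{Step 3 (Transfer slimness back to $\hat X$).} Let $p \in \widetilde\gamma_1$ and $q \in \widetilde\gamma_2 \cup \widetilde\gamma_3$ with $\dist_X(p,q) \le \delta+2\Phi$. I produce points $p' \in \gamma_1$ and $q' \in \gamma_2 \cup \gamma_3$ as follows: if $p$ lies on a segment $u_j$, then $p$ is already on $\gamma_1$ and I set $p'=p$; if $p$ lies on a $Y$-component $\eta_i$, then $p \in \mc N_K^X(Y)$, so there is some $y \in Y$ with $\dist_X(p,y) \le K$, and I set $p' = v_Y \in \gamma_1$, giving $\dist_{\hat X}(p,p') \le K+1$. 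Making the analogous choice for $q$ yields $\dist_{\hat X}(p',q') \le \dist_X(p,q) + 2(K+1) \le \delta + 2\Phi + 2K + 2$. Setting $\hat\delta = \delta + 2\Phi + 2K + 2$ then gives $\gamma_1 \subseteq \mc N_{\hat\delta}^{\hat X}(\gamma_2 \cup \gamma_3)$, establishing hyperbolicity.

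The main obstacle is Step 1: controlling backtracking and near-revisits of individual $Y \in \mc Y$ by a de-electrification requires combining the local Lipschitz bound from \Cref{lem:lipschitzproj} with the global geodesic condition on $\gamma$, and care is needed to ensure the quasi-isometry constants depend only on $\delta$ and $K$ rather than on the whole family $\mc Y$.
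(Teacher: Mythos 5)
Your Step 1 — that the de-electrification of \emph{every} $\hat X$-geodesic is a $(\lambda,c)$-quasigeodesic in $X$ with $\lambda,c$ depending only on $\delta, K$ — is false, and this is a fatal gap rather than a technicality to be smoothed over. Here is a counterexample. Let $X$ be a trivalent tree, fix a vertex $o$, and let $a,b,c$ be three vertices at distance $n$ from $o$ in three distinct directions. Take $\mc Y = \{Y_1, Y_2\}$ where $Y_1 = [a,b]$ and $Y_2 = [b,c]$ are the (geodesic, hence $0$-quasiconvex) arcs through $o$. Then $\dist_{\hat X}(a,c) = 4$, and the path $\gamma = a,\, v_{Y_1},\, b,\, v_{Y_2},\, c$ is a genuine $\hat X$-geodesic. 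Its de-electrification $\widetilde\gamma = [a,b]^X \cup [b,c]^X$ traverses the length-$n$ segment $[o,b]$ twice in opposite directions, so it fails to be a $(\lambda,c)$-quasigeodesic in $X$ for any constants independent of $n$. Your observation that an $\hat X$-geodesic cannot visit the same cone point twice is correct but insufficient: the backtracking here occurs inside $Y_1 \cap Y_2$, where both cone points legitimately want to be visited, and the geodesic condition on $\gamma$ places no constraint on \emph{which} point of $Y_1 \cap Y_2$ the path passes through (all choices give the same $\hat X$-length). This is precisely why the paper later invokes \cite[Proposition~2.27]{Spriano_hyperbolic_I}, which only asserts that \emph{some} quasigeodesic in $\hat X$ between two given points has a quasigeodesic de-electrification, not that every $\hat X$-geodesic does.

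A repair requires a different route. One option: show instead that for $x,y \in X$, the image $q([x,y]^X)$ of the $X$-geodesic is within uniformly bounded Hausdorff distance in $\hat X$ of any $\hat X$-geodesic $[x,y]^{\hat X}$ (the containment $[x,y]^X \subseteq \mc N^{\hat X}_{D+K+1}([x,y]^{\hat X})$ follows from \Cref{lem:DboundSpriano} together with $K$-quasiconvexity exactly as in your Step 3; the reverse containment needs a separate argument but is true, and notably must be measured in $\hat X$ rather than $X$ — in the tree example above, $b$ lies far from $[a,c]^X$ in $X$ but at $\hat X$-distance $1$). Then $\hat X$-geodesic triangles with vertices in $X$ are uniformly close to images of $\delta$-slim $X$-triangles under the $1$-Lipschitz inclusion $q$, giving slimness; triangles with cone-vertex corners are handled by moving corners a distance $1$ into $X$. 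Alternatively, one can verify Bowditch's thin-triangles criterion directly using the family of paths $\{q([x,y]^X)\}_{x,y\in X}$. Your Steps 2 and 3 are sound in spirit (including the transfer of $X$-distance bounds to $\hat X$-distance bounds via quasiconvexity), but they rest on the faulty Step 1.
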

\noindent Moreover, de-electrifications of geodesics in $\hat X$ are uniformly close to geodesics in $X$.

\begin{lem}[{\cite[Corollary~2.23]{Spriano_hyperbolic_I}}]\label{lem:DboundSpriano}
    Let $X$ be a $\delta$-hyperbolic graph, let $\mc Y$ be a collection of $K$-quasiconvex subsets of $X$, and let $\hat X$ be the cone-off of $X$ with respect to $\mc Y$.
    Then there exists a constant $D = D(\delta, K)$ such that, for any pair of points $x,y \in X$, every geodesic $[x,y]^X$, and every geodesic $\gamma$ from $x$ to $y$ in $\hat X$, we have $[x,y]\subseteq \mc N^X_D(\widetilde \gamma)$, where $\widetilde {\gamma}$ is a de-electrification of $\gamma$.
\end{lem}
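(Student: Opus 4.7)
The plan is to reduce this to the Morse property of quasigeodesics (\Cref{lem:stability}) in the $\delta$-hyperbolic graph $X$. Specifically, the main step is to show that $\widetilde\gamma$ is itself a $(\lambda, c)$-quasigeodesic in $X$ with constants depending only on $\delta$ and $K$; then \Cref{lem:stability} applied to $\widetilde\gamma$ and the $X$-geodesic $[x,y]^X$ gives the required uniform constant $D = D(\delta, K)$ with $[x,y]^X\subseteq \mc N_D^X(\widetilde\gamma)$.

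To check this quasigeodesic property, first observe that $\widetilde\gamma$ is by construction a concatenation of $X$-geodesic segments: each $\eta_i$ is an $X$-geodesic by definition, with both endpoints in some $K$-quasiconvex cone $Y\in \mc Y$, so $\eta_i\subseteq \mc N_K^X(Y)$; and each $u_i$ is a maximal sub-path of $\gamma$ lying inside $X$, whose length in $\hat X$ (equal to its length in $X$) realizes the $\hat X$-distance between its endpoints, which must equal the $X$-distance between them, since any shorter $X$-path between those endpoints would yield a shorter $\hat X$-path and contradict that $\gamma$ is a geodesic in $\hat X$. Next, I would argue that $\widetilde\gamma$ does not ``backtrack'' on scales depending only on $\delta$ and $K$: any sufficiently large backtrack would produce two points of $\widetilde\gamma$ that are much closer in $X$ than their separation along the path, and by re-routing $\gamma$ through the corresponding cone vertices one would obtain a shorter path in $\hat X$, contradicting that $\gamma$ is an $\hat X$-geodesic. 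Quantifying the resulting projection bounds and applying \Cref{lem:NppGivesQgeo} iteratively at each concatenation point of $\widetilde\gamma$ would promote the concatenation to a uniform $X$-quasigeodesic.

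The main obstacle is translating the $\hat X$-geodesicity of $\gamma$ into quantitative projection bounds onto the cones $Y\in \mc Y$ in the space $X$. At each junction between consecutive segments of $\widetilde\gamma$, one must show that the closest-point projection onto the adjacent cone is uniformly bounded, so that the hypothesis of \Cref{lem:NppGivesQgeo} holds with constants depending only on $\delta$ and $K$. This requires combining the $\hat\delta$-hyperbolicity of $\hat X$ (\Cref{lem:hyp_constant_cone_off}), the $K$-quasiconvexity of the cones, and the rigidity forced by the fact that $\gamma$ is an $\hat X$-geodesic; from there, the conclusion follows by a standard hyperbolic-geometry computation and an application of \Cref{lem:stability}.
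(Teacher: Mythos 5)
The paper does not give a self-contained proof here: it cites \cite[Corollary~2.23]{Spriano_hyperbolic_I} directly and then adds a remark reconciling Spriano's conventions (connected subgraphs, a slightly different cone-off model) with the present ones. Your proposal, by contrast, tries to prove the statement from scratch by reducing it to the Morse lemma. That reduction does not work, and the obstruction is the very first step.

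Your key claim is that $\widetilde\gamma$ is a $(\lambda,c)$-quasigeodesic in $X$ with $\lambda,c$ depending only on $\delta$ and $K$. This is false, and there is a tell: the lemma asserts only the one-sided containment $[x,y]^X\subseteq \mc N_D^X(\widetilde\gamma)$, \emph{not} the reverse. If $\widetilde\gamma$ were a uniform quasigeodesic, then \Cref{lem:stability} would give Hausdorff closeness, hence both containments. Here is a concrete counterexample. Let $X$ be a tripod with center $o$ and three legs $L_a,L_b,L_c$ of length $n$ ending at $a,b,c$, so $\delta=0$. Take $\mc Y=\{Y_1,Y_2\}$ with $Y_1=L_a\cup L_b$ and $Y_2=L_a\cup L_c$; each is a geodesic, so $K=0$. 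In $\hat X$ one checks $\dist_{\hat X}(b,c)=4$, and $\gamma = b * v_{Y_1} * a * v_{Y_2} * c$ is an $\hat X$-geodesic. Its de-electrification is $\widetilde\gamma = [b,a]^X * [a,c]^X$, which travels from $b$ through $o$ out to $a$ and then retraces $[a,o]$ before reaching $c$: the points on either side of the detour just past $o$ are at $X$-distance $2$ but at arclength $\approx 2n$ along $\widetilde\gamma$. So $\widetilde\gamma$ is not a quasigeodesic with constants independent of $n$. Notice the lemma still holds here (with $D=0$, since $[b,c]^X\subseteq\widetilde\gamma$), but $\widetilde\gamma\not\subseteq\mc N_D([b,c]^X)$ for any $D$ independent of $n$, because $\widetilde\gamma$ wanders all the way to $a$.

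Your backtracking heuristic also fails at the quantitative level: re-routing $\gamma$ through cone vertices near the two close-but-far-along points does not shorten $\gamma$, because passing through a cone vertex always costs exactly $2$ in $\hat X$ regardless of how much $X$-distance is skipped. In the example above, the natural re-routing has the same $\hat X$-length $4$, so there is no contradiction with $\gamma$ being an $\hat X$-geodesic. A correct self-contained proof must argue the one-sided containment directly, without asserting that $\widetilde\gamma$ is a quasigeodesic; this is what Spriano's argument does, and it is why the statement is asymmetric in $[x,y]^X$ and $\widetilde\gamma$.
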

\begin{remark}
    \cite[Corollary~2.23]{Spriano_hyperbolic_I} is stated for \emph{connected} subgraphs; however, the latter hypothesis is only used to ensure that de-electrifications of geodesics exist, which is true under the requirement that $X$ is hyperbolic (in particular geodesic, hence path-connected). Moreover, the cone-off $X'$ of $X$ used in \cite{Spriano_hyperbolic_I} is slightly different: there, an edge is added between any two vertices lying in a common $Y\in \mc Y$.  There is a map from $X'$ to $\hat X$ sending an edge to a concatenation of two edges between the same pair of points, and it is immediate that de-electrifications $\widetilde  \gamma$ of a geodesic in $X'$ agree with de-electrifications of the image of $\gamma$ in $\hat X$.  Hence \Cref{lem:DboundSpriano} still holds for $\hat X$. 
\end{remark}

\begin{cor}\label{lem:CloseInXIsCloseInHatX}
    In the setting of \Cref{lem:DboundSpriano}, let $x,y,w\in X$, and let $\gamma$  be an $\hat X$--geodesic from $x$ to $y$.  For any $t\geq 0$, if $\dist_X(w,[x,y]^X)\le t$, then $\dist_{\hat X}(w,\gamma)\le t+D+K+1$.
\end{cor}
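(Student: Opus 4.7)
The plan is to chase distances through a de-electrification of $\gamma$, using \Cref{lem:DboundSpriano} to move from $[x,y]^X$ to $\widetilde\gamma$, and then to move from $\widetilde\gamma$ to $\gamma$ itself via the cone vertices.

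First, pick $p \in [x,y]^X$ with $\dist_X(w,p) \le t$, which exists by hypothesis. By \Cref{lem:DboundSpriano}, there exists $q \in \widetilde\gamma$ with $\dist_X(p,q) \le D$, and hence $\dist_X(w,q) \le t+D$. Since the inclusion $X \hookrightarrow \hat X$ is $1$-Lipschitz, $\dist_{\hat X}(w,q) \le t+D$. It now suffices to produce a point on $\gamma$ within $\hat X$-distance $K+1$ of $q$.

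Writing $\widetilde\gamma = u_1 * \eta_1 * \cdots * \eta_n * u_{n+1}$ as in \Cref{defn:coneoff_for_graph}, we split into cases based on where $q$ lies. If $q$ lies on one of the $u_i$, then $q$ is itself a point on $\gamma$, so $\dist_{\hat X}(w,\gamma) \le t+D \le t+D+K+1$. Otherwise $q$ lies on some $Y$-component $\eta_i$, which is an $X$-geodesic between two points of $Y$. By $K$-quasiconvexity of $Y$, there is a point $q' \in Y$ with $\dist_X(q,q') \le K$. The cone vertex $v_Y$ is connected to $q'$ by a single edge in $\hat X$, and by construction $v_Y$ lies on $\gamma$ (it is the midpoint of the two-edge path $e_i$ that de-electrifies to $\eta_i$). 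Therefore
\[
\dist_{\hat X}(q, v_Y) \le \dist_{\hat X}(q, q') + \dist_{\hat X}(q', v_Y) \le K + 1,
\]
and the triangle inequality yields $\dist_{\hat X}(w, v_Y) \le t + D + K + 1$, proving the claim.

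There is no real obstacle here; the argument is a direct application of \Cref{lem:DboundSpriano} together with the fact that a point on a $Y$-component of a de-electrification is within distance $K+1$ in $\hat X$ of the corresponding cone vertex on $\gamma$. The only subtlety is remembering that the shared segments $u_i$ already lie on $\gamma$, so no detour through a cone vertex is needed in that case.
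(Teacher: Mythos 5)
Your proof is correct and follows essentially the same route as the paper's: apply \Cref{lem:DboundSpriano} to land on the de-electrification $\widetilde\gamma$, then either the landing point already lies on $\gamma$, or $K$-quasiconvexity plus a cone edge moves it to the cone vertex $v_Y$ on $\gamma$. The only difference is notational.
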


\begin{proof}
    Let $\widetilde  \gamma$ be the de-electrification of $\gamma$.  Let $w'\in [x,y]^X$ satisfy $\dist_X(w,w')\le t$.  By \Cref{lem:DboundSpriano}, there is a point $z\in \widetilde  \gamma$ with $\dist_X(w',z)\le D$.  Either $z$ lies on $\gamma$, in which case we have proven the bound, or $z$ lies on a $Y$--component of $\widetilde \gamma$ for some $Y\in \mc Y$.  In the latter case, the $K$--quasiconvexity of $Y$ implies that there exists $z'\in Y$ such that $\dist_X(z,z')\le K$, and there is an edge from $z'$ to $v_Y$ in $\hat X$.  The cone vertex $v_Y$ lies on $\gamma$ by construction, and so $\dist_{\hat X}(w,\gamma)\le t + D + K + 1$, as desired.
\end{proof}

\begin{defn}\label{defn:extended_proj}
    For every $Y\in \mc Y$ there is a set-valued projection $\hat X-\{v_Y\}\to 2^Y$, which we still denote by $\pi_Y$, defined as follows. For every $x\in X$ the projection is $\pi_Y(x)$, and for every $U\in \mc Y$ other than $Y$ we set $\pi_Y(v_U)\coloneq \pi_Y(U)$, where $\pi_Y(U)$ is as defined in \eqref{eqn:pi_Y}, considering $U$ as a subspace of $X$.  For every $x,y\in \hat X-\{v_Y\}$, we set $\dist_Y^\pi(x,y)=\diam(\pi_Y(x)\cup \pi_Y(y)).$
\end{defn}

\noindent The following strengthening of the bounded geodesic image property describes how cone points relate to geodesics joining a pair of points. 

\begin{lem}[Strong bounded geodesic image]\label{lem:strongBGI}
    In the setting of \Cref{lem:DboundSpriano}, suppose further that the family $\mc Y$ is $M_0$-geometrically separated, in the sense of \Cref{lem:geomsep_for_qc}. There exists a constant $C=C(\delta, K, M_0)$  bounded linearly in $M_0$  such that for every $Y\in \mc Y$ and $x,y\in \hat X-\{v_Y\}$, if a $\hat{X}$--geodesic $\gamma$ does not pass through $v_Y$ then $\dist^\pi_Y(x,y) \le C$.
\end{lem}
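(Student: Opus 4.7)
The plan is to argue by contradiction: assume $\gamma$ avoids $v_Y$ but $\dist^\pi_Y(x,y) > C$ for a large $C = C(\delta,K,M_0)$ to be chosen. After reducing to the case $x, y \in X$ (replacing any endpoint of the form $v_Z$ with $Z \neq Y$ by an adjacent vertex of $\gamma$ in $Z$, at bounded projection cost via \Cref{lem:boundedproj_for_qc}), pick $x' \in \pi_Y(x)$, $y' \in \pi_Y(y)$ realizing $\dist^\pi_Y(x,y)$. Provided $C \geq \Omega(\delta,K)$, \Cref{lem:NppGivesQgeo} gives that the nearest-point path $\alpha := [x,x'] \cup [x',y'] \cup [y',y]$ is a $(1,\Omega)$-quasigeodesic in $X$, so by \Cref{lem:stability} it lies at Hausdorff distance at most $\Phi := \Phi(1,\Omega,\delta)$ from any $X$-geodesic $[x,y]^X$, which in turn is contained in $\mc N^X_D(\widetilde \gamma)$ for a de-electrification $\widetilde \gamma$ of $\gamma$ by \Cref{lem:DboundSpriano}.

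Next, I pick $p_0, p_1$ on $[x', y']$ at $X$-distance $\Omega$ from $x'$ and $y'$, respectively. Chaining through Morse and Spriano yields $q_0, q_1 \in \widetilde \gamma$ with $\dist_X(q_i, Y) \leq \Phi + D + K$, using $[x',y'] \subseteq \mc N_K(Y)$ by quasiconvexity. Each $q_i$ lies either on an $X$-segment of $\gamma$ (in which case set $u_i := q_i$), or on a $Z_i$-component of $\widetilde \gamma$ with $Z_i \neq Y$ (since $v_Y \notin \gamma$), in which case set $u_i$ to be the endpoint of that component adjacent to $v_{Z_i}$ in $\gamma$, so $u_i \in Z_i$. \Cref{lem:boundedproj_for_qc} bounds $\diam \pi_Y(Z_i) \leq B(\delta, K, M_0)$, and combined with \Cref{lem:lipschitzproj} this gives $\dist^\pi_Y(q_i, u_i) \leq B + JK$ where $J = 2K + 10\delta + 2$. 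Moreover, by going from $u_i$ through $v_{Z_i}$ (when applicable) to $q_i$ and then to a nearest point of $Y$, one obtains $\dist_{\hat X}(u_i, v_Y) \leq \Phi + D + 2K + 3$.

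Hence $\dist_{\hat X}(u_0, u_1) \leq 2(\Phi + D + 2K + 3)$, so the sub-segment of $\gamma$ from $u_0$ to $u_1$ has $\hat X$-length at most this bound. Along this sub-segment, each $X$-edge contributes at most $J$ to $\dist^\pi_Y$ by \Cref{lem:lipschitzproj}, and each cone edge through a $v_Z$ with $Z \neq Y$ contributes at most $B$ by \Cref{lem:boundedproj_for_qc}. This yields an upper bound on $\dist^\pi_Y(u_0, u_1)$ independent of $C$ and affine in $B$, hence linear in $M_0$. On the other hand, a triangle-inequality chain $\dist^\pi_Y(x,y) \leq \dist^\pi_Y(x,p_0) + \dist^\pi_Y(p_0,p_1) + \dist^\pi_Y(p_1,y)$, combined with $\dist^\pi_Y(x, p_0) \leq \Omega + 2K$ (from $x' \in \pi_Y(x)$ and $\pi_Y(p_0) \subseteq \mc N_K(p_0)$) and the analogous chain through $q_i, u_i$, gives $\dist^\pi_Y(u_0, u_1) \geq C - O(1)$, where the hidden constant also depends linearly on $B$. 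Choosing $C$ strictly larger than the upper bound plus this $O(1)$ term contradicts both bounds and yields the required linear-in-$M_0$ constant. The main obstacle is the case analysis for the location of $q_i$ on $\widetilde \gamma$: ensuring that replacing $q_i$ by a $\gamma$-vertex $u_i$ (possibly off $\widetilde \gamma$) loses only bounded amounts in both $\dist^\pi_Y$ and $\dist_{\hat X}(\cdot, v_Y)$, which is precisely where geometric separation (through $B$) enters the estimate.
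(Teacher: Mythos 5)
Your argument is correct, and the key engine is the same as the paper's: isolate a bounded-length segment of $\gamma$ near $v_Y$, then bound $\dist^\pi_Y$ edge-by-edge along it, charging $J$ to each $X$-edge (via \Cref{lem:lipschitzproj}) and $B$ to each cone-edge (via \Cref{lem:boundedproj_for_qc}, where geometric separation enters), while showing the segments of $\gamma$ far from $v_Y$ contribute a bounded amount. Where you diverge is in how you locate that bounded segment: you run a contradiction argument through the nearest-point-path quasigeodesic of \Cref{lem:NppGivesQgeo} and the Morse lemma (\Cref{lem:stability}) to transport points $p_0,p_1$ on $[x',y']^X$ over to points $u_0,u_1$ on $\gamma$ that are uniformly close to $v_Y$ in $\hat X$. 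The paper avoids Morse/nearest-point-path machinery entirely: it first shows directly (via the slimness argument used in \Cref{lem:lipschitzproj}, combined with \Cref{lem:CloseInXIsCloseInHatX}) that large projection forces $\gamma$ within a uniform $\hat X$-distance $\mathcal R$ of $v_Y$, then takes the first and last $X$-vertices $a,b$ of $\gamma$ in the $(\mathcal R+2)$-neighborhood of $v_Y$ and applies the same observation to the sub-geodesics $\gamma|_{[x,a]}$ and $\gamma|_{[b,y]}$ to see that $\dist^\pi_Y(x,a)$ and $\dist^\pi_Y(b,y)$ are at most $J$. The paper's version is a little leaner and yields a slightly cleaner constant, but both give a bound linear in $M_0$ and the structure of the estimate is the same.
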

\begin{proof} Assume first that $x,y\in X$. Set $\mc R=2\delta+2K+D+2$, where $D=D(\delta,K)$ is the constant from \Cref{lem:DboundSpriano}. Our first goal is to prove that if $\dist^\pi_Y(x,y) > J$, where $J=J(\delta, K)$ is the constant from \Cref{lem:lipschitzproj}, then $\gamma$ intersects the $\mc R$-neighborhood of $v_Y$ in $\hat X$. To that end, let $[x,y]$ be an $X$-geodesic between $x$ and $y$. Since $\dist^\pi_Y(x,y) > J$, we can argue as in \Cref{lem:lipschitzproj} to find some $z'\in Y$ such that $\dist_X(z',[x,y])\le K+2\delta$.  \Cref{lem:CloseInXIsCloseInHatX} then gives 
\[\dist_{\hat X}(v_Y,\gamma)\le 1+\dist_{\hat X}(z',\gamma)\le 1+\dist_X(z',[x,y])+D+K+1\le 2\delta+2K+D+2 = \mc R.\]
Next, let $a,b$ be the first and last vertices of $\gamma\cap X$ contained in the $(\mc R+2)$-neighborhood of $v_Y$. Notice that $\dist_{\hat X}(a,v_Y)\ge \mc R+1$, and similarly for $b$, since no two points on $\gamma - X$ are adjacent. Hence by the above argument both $\dist^\pi_Y(x,a)$ and $\dist^\pi_Y(b,y)$ are at most $J$. Moreover, if we let $a=a_0,a_1,\ldots,a_n=b$ be the subsegment of $\gamma$ between $a$ and $b$, then $n\le \dist_{\hat X}(a,b)\le 2\mc R+4$. Finally, for every $0\le i\le n-1$, we have $\dist^\pi_Y(a_{i},a_{i+1})\le \max\{J,B\}$, where $B=B(\delta,K,M)$ is the constant from \Cref{lem:boundedproj_for_qc}. Indeed, either $a_{i},a_{i+1}$ are $X$-adjacent, and therefore $\dist^\pi_Y(a_{i},a_{i+1})\le J$, or $a_i=v_U$ for some $U\neq Y$ and $a_{i+1}\in U$ (or vice versa), so $\diam(\pi_Y(a_{i+1})\cup \pi_Y(a_{i}))\le \diam(\pi_Y(U))\le B$. The triangle inequality thus yields
\[\dist^\pi_Y(x,y)\le \dist^\pi_Y(x,a)+\sum_{i=0}^{n-1} \dist_Y^\pi(a_i,a_{i+1})+\dist^\pi_Y(b,y)< C_0\coloneq 2J+(2\mc R+4)\max\{B,J\},\]
concluding the proof of \Cref{lem:strongBGI} for points in $X$. 

Finally, let $x,y\in \hat X$ be any two points, let $\gamma$ be an $\hat X$-geodesic between them which does not pass through $v_Y$, and let $x'$ and $y'$ be the first and last points of $\gamma\cap X$, respectively. In particular, either $x=x'$ or $x=v_U$ for some $U\in \mc Y$ containing $x'$, and similarly for $y$. Note that $\dist_Y^\pi(x',x),\, \dist_Y^\pi(y,y')\le B$, and $\dist_Y^\pi(x',y')\le C_0$ by the above argument, so the triangle inequality yields that $\dist_Y^\pi(x,y)\le C\coloneq C_0+2B$. The constant $C$ is bounded linearly in $M_0$, as so are $C_0$ and $B$.
\end{proof}

\subsection{Projection complexes}
\label{sec:projcplx}

In this section, we recall the machinery of projection complexes, first  introduced by Bestvina, Bromberg, and Fujiwara in \cite{BBF:quasitree}. Let $\mc Y$ be a collection of metric spaces. Suppose that for each $Y\in \mc Y$ there is a projection $\pi_Y$ from the elements of $\mc Y - \{Y\}$ to subsets of $Y$. Then, as in \Cref{defn:extended_proj}, we may define a ``distance function'' $\dist_Y^\pi\colon(\mc Y - \{Y\})^2 \to [0,\infty)$ by
\[
\dist_Y^\pi(U,V) = \diam\left(\pi_Y(U) \cup \pi_Y(V)\right).
\]
The map  $\dist_Y^\pi$ is usually not a true distance function since we may have $\dist_Y^\pi(U,U)>0$.

\begin{defn}[(Strong) projection axioms]\label{defn:Projection_axioms}
Given a collection $\mc Y$ and distance functions $ \{\dist_Y^\pi\}_{Y\in \mc Y}$ as above, as well as a constant $\theta\ge 0$, the \emph{projection axioms for $\mc Y$ with constant $\theta$} are
\begin{enumerate}[label=(\Roman*{})]
\item\label{projaxiom:symmetry} $\dist_Y^\pi(U,V) = \dist_Y^\pi(V,U)$;
\item\label{projaxiom:triangle} $\dist_Y^\pi(U,W) \le \dist_Y^\pi(U,V) + \dist_Y^\pi(V,W)$.
\item\label{projaxiom:boundedprojection} $\dist_Y^\pi(U,U)\le \theta$;
\item\label{projaxiom:bgi} $\min\{\dist_Y^\pi(U,V),\dist_V^\pi(U,Y)\}\le \theta$; and
\item\label{projaxiom:finitelymanybig} for all $U,V\in \mc Y$, the set $\{Y\in \mc Y \mid \dist^\pi_Y(U,V)\geq \theta\}$ is finite.
\end{enumerate}
The \emph{strong projection axioms for $\mc Y$ with constant $\theta$} were defined in \cite{BBFS}, and are obtained by replacing \Cref{projaxiom:bgi} by the following stronger statement:
\begin{enumerate}[label=(\Roman*{}'), start=4]
\item If $\dist_Y^\pi(U,V)>\theta$ then $\dist_V^\pi(U,W)=\dist_V^\pi(Y,W)$ for all $W\in \mc Y - \{V\}$.
\end{enumerate}
\end{defn}

\noindent Bestvina--Bromberg--Fujiwara--Sisto show that  functions $\dist_Y^\pi$ satisfying the projection axioms can be modified to satisfy the strong projection axioms:
\begin{thm}[{\cite[Theorem~4.1]{BBFS}}]\label{thm:modified_dist}
Assume that $\mc Y$ is a collection of metric spaces together with  functions $\{\dist_Y^\pi\}_{Y\in \mc Y}$ satisfying the projection axioms with constant $\theta$. Then there are functions $\dist_Y\colon (\mc Y - \{Y\})^2 \to [0,\infty)$ satisfying the strong projection axioms with constant $11\theta$, and such that for all $Y\in \mc Y$, $\dist_Y^\pi - 2\theta \le \dist_Y \le \dist_Y^\pi + 2\theta$.
\end{thm}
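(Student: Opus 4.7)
The plan is to modify the given set-valued projections by a controlled "absorption'' procedure that renders them insensitive to $\theta$-scale noise, and then define the new distances $\dist_Y$ as diameters of these enlarged projections. Concretely, for each $Y \in \mc Y$ and each $U \in \mc Y - \{Y\}$, I would define an enlarged projection
\[
\pi'_Y(U) \;=\; \bigcup_{Z} \pi_Y(Z),
\]
where $Z$ ranges over $\mc Y - \{Y\}$ (including $Z = U$) with $\dist_Y^\pi(Z, U) \leq 2\theta$, and then set $\dist_Y(U, V) = \diam\bigl(\pi'_Y(U) \cup \pi'_Y(V)\bigr)$. The intuition is that the $2\theta$-fattening absorbs precisely the errors permitted by axioms \ref{projaxiom:boundedprojection} and \ref{projaxiom:bgi}, so that whenever $U$ and $V$ appear far apart at $Y$, any witness ``near $U$ at $Y$'' gets canonically identified with the $U$-class, which is exactly what the strong axiom (IV$'$) demands.

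Having set this up, I would verify the five strong axioms in order. Symmetry \ref{projaxiom:symmetry} and the triangle inequality \ref{projaxiom:triangle} are immediate from the expression of $\dist_Y$ as a diameter of a union. The bounded self-projection \ref{projaxiom:boundedprojection} and the near-equality
\[
\dist_Y^\pi - 2\theta \;\le\; \dist_Y \;\le\; \dist_Y^\pi + 2\theta
\]
both reduce to the claim that any $\pi_Y(Z)$ contained in $\pi'_Y(U)$ lies within a bounded multiple of $\theta$ of $\pi_Y(U)$ at $Y$, which follows from \ref{projaxiom:boundedprojection} applied to $U$ together with the triangle inequality \ref{projaxiom:triangle}. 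The finiteness axiom \ref{projaxiom:finitelymanybig} carries over because if $\dist_Y(U, V) \ge 11\theta$ then $\dist_Y^\pi(U, V) \ge 9\theta > \theta$, and the latter inequality holds for only finitely many $Y$ by the original axiom \ref{projaxiom:finitelymanybig}.

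The main obstacle is the verification of the strong version of bounded geodesic image: if $\dist_Y(U, V) > 11\theta$, then $\pi'_V(U) = \pi'_V(Y)$, so that $\dist_V(U, W) = \dist_V(Y, W)$ for all $W \in \mc Y - \{V\}$. Given the hypothesis, the near-equality above yields $\dist_Y^\pi(U, V) > 9\theta > \theta$, and then the original axiom \ref{projaxiom:bgi} applied at the pair $(Y, V)$ forces $\dist_V^\pi(U, Y) \le \theta < 2\theta$. Consequently $\pi_V(Y) \subseteq \pi'_V(U)$ and, symmetrically, $\pi_V(U) \subseteq \pi'_V(Y)$. Upgrading these inclusions to equality of the enlarged sets requires showing that the indexing sets $\{Z : \dist_V^\pi(Z, U) \le 2\theta\}$ and $\{Z : \dist_V^\pi(Z, Y) \le 2\theta\}$ produce the same union $\bigcup_Z \pi_V(Z)$, which follows from the triangle inequality for $\dist_V^\pi$ combined with $\dist_V^\pi(U, Y) \le \theta$, provided the absorption constants are tuned correctly. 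The delicate point throughout the argument is bookkeeping: each additive $\theta$-scale error incurred while passing from $\pi$ to $\pi'$, and from $\dist_Y^\pi$ to $\dist_Y$, has to be tracked carefully so that the final constant works out to exactly $11\theta$. I expect this constant-chasing to be the most technical part of the proof.
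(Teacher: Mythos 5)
Your overall plan — fatten each projection by absorbing nearby ones, then take diameters of the fattened sets — is a natural first idea, but it cannot establish the strong axiom (IV$'$), which requires an \emph{exact} equality of distances, not a coarse one. The obstruction is precisely where you flag ``the delicate point'': after showing $\dist_V^\pi(U,Y)\le\theta$, you need $\pi'_V(U)=\pi'_V(Y)$ on the nose (otherwise one can choose $W$ with $\pi'_V(W)$ far away and asymmetric relative to the two sets to make $\dist_V(U,W)\neq\dist_V(Y,W)$). But the indexing sets $\{Z:\dist_V^\pi(Z,U)\le 2\theta\}$ and $\{Z:\dist_V^\pi(Z,Y)\le 2\theta\}$ genuinely differ: if $\dist_V^\pi(Z,U)=2\theta$ and $\dist_V^\pi(U,Y)=\theta$, the triangle inequality only gives $\dist_V^\pi(Z,Y)\le 3\theta$, so $Z$ can lie in the first set and not the second. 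This is not a matter of tuning the radius — for any threshold $r$, the same slippage of order $\theta$ occurs at the boundary, so no fixed cutoff makes the fattened projection a function only of the ``$\theta$-class'' of its argument. The actual proof in BBFS does not proceed by fattening at all; it constructs the modified distances through a more careful combinatorial device, exploiting the finiteness axiom to make consistent, order-based choices, rather than by an $\varepsilon$-neighborhood absorption.

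There is also a secondary constant issue in your near-equality claim: with the $2\theta$ threshold, a point of $\pi'_Y(U)$ is only guaranteed to be within $2\theta$ of $\pi_Y(U)$, so the upper bound you get is $\dist_Y\le\dist_Y^\pi+4\theta$, not $+2\theta$. This is fixable by shrinking the threshold, but it does not touch the more serious problem above. Note also that the paper you are reading does not give a proof of this theorem; it simply cites BBFS Theorem~4.1 as a black box.
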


\begin{defn}\label{def:projcomplex}
    Suppose that $(\mc Y, \{\dist_Y^\pi\}_{Y\in \mc Y})$ satisfy the projection axioms with constant $\theta$, and let $\{\dist_Y\}_{Y\in \mc Y}$ be the modified distance functions. For any $\Zhe\geq 0$, the \emph{projection complex} $\mc P_\Zhe(\mc Y)$ is defined as follows. The vertices of $\mc P_\Zhe(\mc Y)$ are $\mc Y$, and two vertices $U,V\in \mc Y$ are joined by an edge if $\dist_Y(U,V)\le\Zhe$ for all $Y\in \mc Y-\{U,V\}$.
    When the collection $\mc Y$ is unimportant or clear from context we use the notation $\mc P_\Zhe$.
\end{defn}
\noindent We recall some facts about projection complexes.

\begin{lem}[Bounded path image, {\cite[Corollary 3.4]{BBFS}}]\label{lem:boundedpathimage}
    If $\Zhe\geq 33\theta$ and a path $U_1,\ldots,U_k$ in $\mc P_\Zhe$ does not intersect the $2$-neighborhood of a vertex $Y$, then $\dist_Y(U_1,U_k)\le 11\theta$.
\end{lem}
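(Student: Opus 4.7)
I would argue by contradiction: assume $\dist_Y(U_1, U_k) > 11\theta$, and aim to derive a contradiction from the strong projection axiom (IV'), which by \Cref{thm:modified_dist} the modified distances $\dist_Y$ satisfy with constant $11\theta$. First, applying (IV') to the contradiction hypothesis and taking $W = Y$ yields $\dist_{U_k}(U_1, Y) \le 11\theta < \Zhe$, and symmetrically $\dist_{U_1}(U_k, Y) \le 11\theta$. In particular, neither endpoint of the path can serve as a witness for the other endpoint's failure to be $\mc P_\Zhe$-adjacent to $Y$.

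Since $U_1$ is not $\mc P_\Zhe$-adjacent to $Y$, there exists $V \in \mc Y \setminus \{U_1, Y\}$ with $\dist_V(U_1, Y) > \Zhe$; by the previous step, $V \neq U_k$. Two applications of (IV') now propagate this large projection to the other endpoint of the path. First, from $\dist_V(U_1, Y) > 11\theta$ one obtains $\dist_Y(V, W) = \dist_Y(U_1, W)$ for every $W \neq Y$, so in particular $\dist_Y(V, U_k) = \dist_Y(U_1, U_k) > 11\theta$. Second, applying (IV') to this inequality (using $\dist_Y(V, U_k) = \dist_Y(U_k, V)$ and the role swap) gives $\dist_V(U_k, U_1) = \dist_V(Y, U_1) > \Zhe$. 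Thus $V$ separates $U_1$ from $U_k$ by more than $\Zhe$ in its own coordinate.

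The contradiction must then come from bounding $\dist_V(U_1, U_k)$ from above using the path itself. For each edge $U_i U_{i+1}$ with $V \notin \{U_i, U_{i+1}\}$, the definition of $\mc P_\Zhe$ forces $\dist_V(U_i, U_{i+1}) \le \Zhe$, but summing these estimates along $k-1$ edges only gives the far too weak bound $(k-1)\Zhe$. The \emph{main obstacle} is to sharpen this telescoping: one must iterate (IV') to show that all but a bounded number of the increments $\dist_V(U_i, U_{i+1})$ are in fact controlled by $11\theta$, and handle the exceptional case where $V$ coincides with some intermediate $U_l$ by pivoting through $U_l$. Specifically, when $V = U_l$ the inequality $\dist_{U_l}(U_1, Y) > \Zhe$ combined with (IV') (with $W = U_l$) forces $\dist_Y(U_1, U_l) \le 11\theta$, so one can split the path at $U_l$ and recurse on the two sub-paths, combining via the triangle inequality. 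The threshold $\Zhe \ge 33\theta = 3\cdot 11\theta$ is precisely the budget required to absorb three chained Behrstock-type applications without the constants collapsing, which is what allows the sharp conclusion $\dist_Y(U_1,U_k)\le 11\theta$ rather than a bound that grows with the length of the path.
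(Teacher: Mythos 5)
Your proposal correctly identifies the strong projection axioms of \Cref{thm:modified_dist} as the relevant tool, and the initial manipulations are sound: from the contradiction hypothesis $\dist_Y(U_1,U_k)>11\theta$ you correctly deduce via (IV') and (III) that $\dist_{U_k}(U_1,Y)\le 11\theta$, then produce a witness $V\notin\{U_1,Y,U_k\}$ with $\dist_V(U_1,Y)>\Zhe$, and propagate via two further applications of (IV') to $\dist_V(U_1,U_k)>\Zhe$. But the argument does not close. The contradiction you want requires an \emph{upper} bound on $\dist_V(U_1,U_k)$, which you flag as ``the main obstacle'' without resolving. Controlling $\dist_V(U_1,U_k)$ along the path $U_1,\ldots,U_k$ is essentially the statement of the lemma with $V$ in place of $Y$, and you have no information on $\dist_{\mc P_\Zhe}(V,U_i)$, so there is no reason the telescoping terms $\dist_V(U_i,U_{i+1})$ should collapse to something of order $11\theta$. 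The recursion you sketch (splitting at $U_l$ whenever $V=U_l$) is likewise not shown to terminate or to combine to a bound independent of $k$.

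A telling symptom that something essential is missing is that the outline never uses the full hypothesis. The lemma assumes the \emph{entire} path avoids $\mc N_2(Y)$, but you only invoke that $U_1$ (and, by symmetry, $U_k$) is not adjacent to $Y$. With only that weaker assumption the conclusion cannot be expected to hold: if an intermediate vertex were permitted to lie at distance $2$ from $Y$ (say $U_1$--$W$--$U_k$ with $W$ adjacent to $Y$), the edge relations give only $\dist_Y(U_1,U_k)\le 2\Zhe$, far weaker than $11\theta$. Any correct proof must therefore exploit the $\mc N_2(Y)$ condition at the intermediate vertices, which yours does not. (The paper itself does not prove this lemma; it cites \cite[Corollary~3.4]{BBFS} directly, where the argument is organized around the order that the strong axioms induce on the family of domains with large projection distance, rather than around the single-witness propagation scheme you propose.)
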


\begin{cor}[Strong bounded geodesic image]\label{cor:bgi_for_proj}
    If $\Zhe\geq 33\theta$ and a geodesic $U_1,\ldots,U_k$ in $\mc P_\Zhe$ does not contain a vertex $Y$, then $\dist_Y(U_1,U_k)\le 22\theta +6\Zhe$. 
\end{cor}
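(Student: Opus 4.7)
The plan is to split the geodesic $U_1,\dots,U_k$ into a middle portion lying inside the $2$-neighborhood of $Y$ in $\mc P_\Zhe$, together with two ``tails'' that avoid this $2$-neighborhood. The tails can be handled directly by \Cref{lem:boundedpathimage}, while the middle portion must be short because the geodesic avoids $Y$.

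Concretely, let $a$ be the smallest and $b$ the largest index for which $U_a$, $U_b$ lie in the $2$-neighborhood of $Y$ in $\mc P_\Zhe$. If no such index exists, then the whole geodesic avoids the $2$-neighborhood, and \Cref{lem:boundedpathimage} gives $\dist_Y(U_1,U_k)\le 11\theta$, which is smaller than the claimed bound. Otherwise, each of the subpaths $U_1,\dots,U_{a-1}$ and $U_{b+1},\dots,U_k$ (when non-empty) avoids the $2$-neighborhood of $Y$, so \Cref{lem:boundedpathimage} yields
\[
\dist_Y(U_1,U_{a-1})\le 11\theta \qquad\text{and}\qquad \dist_Y(U_{b+1},U_k)\le 11\theta.
\]
Moreover, because $Y$ is not a vertex of the geodesic, $Y\neq U_i, U_{i+1}$ for every consecutive pair, and the definition of $\mc P_\Zhe$ gives $\dist_Y(U_i,U_{i+1})\le \Zhe$. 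In particular, this bounds both $\dist_Y(U_{a-1},U_a)$ and $\dist_Y(U_b,U_{b+1})$ by $\Zhe$.

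The key estimate is $b-a \le 4$. Since $U_a$ and $U_b$ both lie within $\mc P_\Zhe$-distance $2$ of $Y$, concatenating a length-$\leq 2$ path from $U_a$ to $Y$ with a length-$\leq 2$ path from $Y$ to $U_b$ produces a path of length at most $4$ from $U_a$ to $U_b$ in $\mc P_\Zhe$; since $U_1,\dots,U_k$ is a geodesic, the subsegment $U_a,\dots,U_b$ realizes the distance between its endpoints, forcing $b-a\le 4$. Using the triangle inequality for the modified distances (\Cref{projaxiom:triangle} for the modified $\dist_Y$ produced by \Cref{thm:modified_dist}), we then obtain
\[
\dist_Y(U_a,U_b)\;\le\;\sum_{i=a}^{b-1}\dist_Y(U_i,U_{i+1})\;\le\;(b-a)\Zhe\;\le\;4\Zhe.
\]

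Assembling the five contributions along the decomposition yields
\[
\dist_Y(U_1,U_k)\;\le\;11\theta+\Zhe+4\Zhe+\Zhe+11\theta\;=\;22\theta+6\Zhe,
\]
as claimed (the edge cases $a=1$ or $b=k$ only remove positive terms). The only genuinely new ingredient beyond \Cref{lem:boundedpathimage} is the observation $b-a\le 4$, which is the ``hard part'' of the argument in name only: it is forced purely by the geodesic's avoidance of $Y$ and the triangle inequality in $\mc P_\Zhe$.
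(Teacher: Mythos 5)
Your proof is correct and uses essentially the same decomposition as the paper: split the geodesic at the first and last vertices near $Y$, apply \Cref{lem:boundedpathimage} to the outer tails, and bound the short middle segment edge by edge. The bookkeeping differs slightly — the paper cuts at the first and last vertices within distance $3$ of $Y$ (noting implicitly that these must in fact be at distance exactly $3$, so that the outer segments avoid the $2$-neighborhood, and then $j-i\le 6$), whereas you cut at the $2$-neighborhood and carry the two boundary edges $\dist_Y(U_{a-1},U_a)$, $\dist_Y(U_b,U_{b+1})$ explicitly with $b-a\le 4$; both yield $22\theta + 6\Zhe$, and your version makes the applicability of \Cref{lem:boundedpathimage} to the tails more transparent.
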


\begin{proof}
This is proven exactly as \cite[Lemma 3.18]{BBF:quasitree}, using \Cref{lem:boundedpathimage} to obtain the precise bound.
\end{proof}

\subsubsection{A projection complex from a separated family of quasiconvex subspaces}
Quasiconvex subsets of hyperbolic spaces naturally give rise to projection complexes; see, for example, \cite{BBF:quasitree}. For the rest of the section, we will work under the following hypothesis.  

\begin{hyp}
    \label{hyp:metric}
    Let $X$ be a connected graph and $\mc Y$ a collection of subsets of $X$. Assume there are constants $\delta, K, M_0>0$ such that the following hold.
    \begin{enumerate}
        \item 
    $X$ is $\delta$-hyperbolic.
    \item
    Each $Y \in \mc Y$ is $K$--quasiconvex in $X$.
        \item
    $\mc Y$ is $M_0$--geometrically separated, in the sense of \Cref{lem:geomsep_for_qc}.
    \end{enumerate}
    When the above are satisfied, we say that $(X,\mc Y)$ satisfies \Cref{hyp:metric} with respect to  $(\delta, K, M_0)$. We  omit the constants when they are unimportant or clear from  context. 
\end{hyp}

\noindent The goal of this subsection is to prove the following:
\begin{prop}
    \label{P:hyp-projcplx}
Suppose $(X,\mc Y)$  satisfies \Cref{hyp:metric} with respect to $(\delta, K, M_0)$.  
There exists $\theta=\theta(\delta, K,M_0)$ such that $(\mc Y, \{\dist^\pi_Y\}_{Y\in \mc Y})$ satisfies the projection complex axioms (\Cref{defn:Projection_axioms}) with constant $\theta$. Moreover, $\theta$ is bounded linearly in $M_0$. 
\end{prop}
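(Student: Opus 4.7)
The plan is to verify the five projection axioms of \Cref{defn:Projection_axioms} in turn, keeping track of the dependence of the constants on $M_0$. Axioms \ref{projaxiom:symmetry} and \ref{projaxiom:triangle} are essentially formal: symmetry holds since $\dist_Y^\pi(U,V)$ is defined as the diameter of the symmetric union $\pi_Y(U)\cup\pi_Y(V)$, and the triangle inequality follows by inserting any point $c\in\pi_Y(V)$ (which is nonempty, since $V$ is a nonempty subset of $X$) between a point of $\pi_Y(U)$ and a point of $\pi_Y(W)$, and handling the ``same-side'' cases by observing that $\diam\pi_Y(U)\le\dist_Y^\pi(U,V)$ and $\diam\pi_Y(W)\le\dist_Y^\pi(V,W)$. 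Axiom \ref{projaxiom:boundedprojection} is immediate from \Cref{lem:boundedproj_for_qc}, which provides the bound $\dist_Y^\pi(U,U)=\diam\pi_Y(U)\le B$ with $B=B(\delta,K,M_0)$ linear in $M_0$.

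For axiom \ref{projaxiom:bgi} (Behrstock's inequality), I would exploit the cone-off $\hat X$ and the strong bounded geodesic image result (\Cref{lem:strongBGI}) via the extended projection of \Cref{defn:extended_proj}. Let $U,V,Y$ be pairwise distinct elements of $\mc Y$. Applying \Cref{lem:strongBGI} with $x=v_U$, $y=v_V$: if some $\hat X$-geodesic from $v_U$ to $v_V$ avoids $v_Y$, then $\dist_Y^\pi(U,V)=\dist_Y^\pi(v_U,v_V)\le C$, where $C=C(\delta,K,M_0)$ is linear in $M_0$; the analogous statement holds for $(v_U,v_Y)$ with respect to $v_V$. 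If neither conclusion holds, then every $\hat X$-geodesic from $v_U$ to $v_V$ passes through $v_Y$, giving the additive equality $d_{\hat X}(v_U,v_V)=d_{\hat X}(v_U,v_Y)+d_{\hat X}(v_Y,v_V)$, and similarly $d_{\hat X}(v_U,v_Y)=d_{\hat X}(v_U,v_V)+d_{\hat X}(v_V,v_Y)$. Substituting one into the other forces $d_{\hat X}(v_V,v_Y)=0$, contradicting $V\ne Y$. Hence $\min\{\dist_Y^\pi(U,V),\dist_V^\pi(U,Y)\}\le C$.

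The expected main obstacle is axiom \ref{projaxiom:finitelymanybig} (local finiteness), which I plan to establish by combining quasigeodesic stability in $X$ with geometric separation. Fix $U\ne V$ and pick $u\in U$, $v\in V$. For every $Y\in\mc Y\setminus\{U,V\}$ with $\dist_Y^\pi(U,V)\ge\theta$, the bounded projection bound $B$ yields $\dist_Y^\pi(u,v)\ge\theta-2B$. Choosing $\theta$ much larger than $\Omega(\delta,K)$, the nearest point path of \Cref{lem:NppGivesQgeo} through $\pi_Y(u)$ and $\pi_Y(v)$ is a $(1,\Omega)$-quasigeodesic from $u$ to $v$, so by Morse stability (\Cref{lem:stability}) any geodesic $[u,v]^X$ fellow-travels the segment $[\pi_Y(u),\pi_Y(v)]\subseteq\mc N_K^X(Y)$ at Hausdorff distance at most $\Phi=\Phi(1,\Omega,\delta)$. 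This yields a subsegment $\sigma_Y\subseteq[u,v]^X$ of length at least $\theta-2B-2\Phi$ contained in $\mc N_{\Phi+K}^X(Y)$. For $\theta$ large compared to $M(\Phi+K+2\delta)$ from \Cref{lem:geomsep_for_qc}, geometric separation ensures that for distinct $Y\ne Y'$ the subsegments $\sigma_Y,\sigma_{Y'}$ can overlap in a subsegment of length at most $M(\Phi+K+2\delta)$, hence are essentially disjoint along the finite-length geodesic $[u,v]^X$, forcing finiteness.

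Setting $\theta$ to be the maximum of $B$, $C$, and the finiteness threshold from the previous paragraph completes the proof. Each of these constants is bounded linearly in $M_0$ by \Cref{lem:boundedproj_for_qc}, \Cref{lem:strongBGI}, and \Cref{lem:geomsep_for_qc}, so $\theta$ itself can be chosen linearly in $M_0$. The main care needed is in the bookkeeping for axiom \ref{projaxiom:finitelymanybig}, where one must verify that the ``disjointness threshold'' indeed depends linearly on $M_0$, and in the clean application of the contrapositive of \Cref{lem:strongBGI} to cone vertices in the argument for \ref{projaxiom:bgi}.
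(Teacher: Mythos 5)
Your proof is correct. The verification of \Cref{projaxiom:symmetry}--\Cref{projaxiom:boundedprojection} matches the paper's, and your argument for \Cref{projaxiom:finitelymanybig} follows the same strategy as the paper's---isolating a long subsegment of $[u,v]^X$ in a uniform neighborhood of each $Y$ with large projection distance, then invoking geometric separation---differing only in implementation: the paper obtains the near-$Y$ subsegment $[a,b]$ by slimness of quadrangles and concludes by covering $\gamma$ with finitely many segments of fixed length $D_0$, each of which can lie deep inside at most one qualifying $Y$, rather than via the nearest-point path, Morse stability, and a separation-of-midpoints count. Your constants need minor adjustments (the overlap of $\sigma_Y,\sigma_{Y'}$ is bounded by something closer to $M(2(\Phi+K))+2(\Phi+K)$ than $M(\Phi+K+2\delta)$, and $\sigma_Y$ actually lies in a slightly larger neighborhood of $Y$ than $\mc N_{\Phi+K}(Y)$ once quasiconvexity of that neighborhood is invoked), but this does not affect linearity in $M_0$. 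The genuine divergence is in \Cref{projaxiom:bgi}: the paper's argument stays inside $X$, fixing $a\in U$ and $b\in\pi_V(a)$, locating a point $w\in[a,b]$ within $2\delta+K$ of $W$ from the assumption that $\dist_W^\pi(U,V)$ is large, and combining $b\in\pi_V(w)$ with the $J$-Lipschitz property of $\pi_V$ to get $\dist_V^\pi(U,W)\le 2B+J(2\delta+K+1)$; you instead pass to the cone-off $\hat X$, apply the contrapositive of \Cref{lem:strongBGI} via the extended projections of \Cref{defn:extended_proj}, and derive a contradiction from additivity of $\hat X$-distances if both Behrstock quantities exceed $C$. Your route is clean and, importantly, not circular---\Cref{lem:strongBGI} depends only on \Cref{lem:lipschitzproj}, \Cref{lem:boundedproj_for_qc}, and \Cref{lem:DboundSpriano}---but it routes through the cone-off a statement the paper keeps purely in $X$. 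Both yield a $\theta$ that is linear in $M_0$.
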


\begin{proof}
    We determine lower bounds on $\theta$ one item at a time.
    \Cref{projaxiom:symmetry} and \Cref{projaxiom:triangle} follow immediately from the fact that $\dist_Y^\pi$ is defined in terms of diameters of projections, and \Cref{projaxiom:boundedprojection} holds with $\theta\ge B$ by \Cref{lem:boundedproj_for_qc}.

    We now move to \Cref{projaxiom:bgi}. Let $\theta_1=2B+J(2\delta+K+1)$, where $J$ is the Lipschitz constant from \Cref{lem:lipschitzproj}. Let $U,V,W\in \mc Y$ be such that $\dist_W^\pi(U,V)> \theta_1$. 
    Fix $a\in U$, and let $b\in\pi_V(a)$, $c_a\in\pi_W(a)$, and $c_b\in\pi_W(b)$.   Then $\dist(c_a,c_b)\ge \dist_W^\pi(U,V)-\diam\pi_W(U)-\diam\pi_W(V)\ge J$. As in \Cref{lem:lipschitzproj}, there thus exists $w\in[a,b]$ within distance at most $2\delta+K$ from $W$. In turn $\dist(\pi_V(w),\pi_V(W))\le J(2\delta+K)$ as projections are $J$-Lipschitz. Notice that $b\in \pi_V(w)$, as $w$ lies on $[a,b]$ and $b\in \pi_V(a)$, so 
\[\dist(\pi_V(U),\pi_V(W))\le\dist(b,\pi_V(W))\le\diam(\pi_V(w))+\dist(\pi_V(w),\pi_V(W)) \le J(2\delta+K+1).\]
Hence
    \[\dist_V^\pi(U,W)\le \diam\pi_V(U)+\dist(\pi_V(U),\pi_V(W))+\diam\pi_V(W)\le 2 B+J(2\delta+K+1)=: \theta_1,\]
    so \Cref{projaxiom:bgi} holds if $\theta\ge \theta_1$.

    Now let $D_0=2K+4\delta+M(2K+4\delta)+1$, which again is bounded linearly in $M_0$, and let $\theta=3JD_0+2B+2J(3\delta+1)$, which is greater than $\theta_1$. We are left to prove \Cref{projaxiom:finitelymanybig}, i.e., that for every $U\neq V\in \mc Y$ the set $\{Y\in \mc Y\mid \dist_Y^\pi(U,V)\ge \theta\}$ is finite.

    Let $\gamma$ be a geodesic from $u$ to $v$, where $u\in U$ and $v\in V$. For every $Y$ as above, $\dist^\pi_Y(u, v)\ge \theta-2B\ge 2J(3\delta+1)$, so let $a,b\in \gamma$ be the last point such that $\dist^\pi_Y(u, a)\le J(3\delta+1)$ and the first point such that $\dist^\pi_Y(b, v)\le J(3\delta+1)$, respectively.  See \Cref{fig:projcomplex}. Let $[a,b]$ be the subsegment of $\gamma$ between $a$ and $b$, and let $u'\in \pi_Y(u)$ and $v'\in \pi_Y(v)$. By slimness of quadrangles, every $w\in [a,b]$ is $2\delta$-close to some point $z\in[u,u']\cup [u',v']\cup[v',v]$.  If $z\in [u,u']$ then $u'\in \pi_Y(z)$, so $\dist^\pi_Y(u,w)\le J+\dist^\pi_Y(z,w)\le J(2\delta+1)$, contradicting the fact that $w$ is between $a$ and $b$. For the same reason, $z$ cannot lie on $[v,v']$, so $[a,b]\subseteq \mc N_{2\delta}([u', v'])\subseteq \mc N_{K+2\delta}(Y)$. Now,
    $$ \dist^\pi_Y(a,b)\ge \left( \dist^\pi_Y(u,v)-2J(3\delta+1) \right)\ge \left( \theta-2B-2J(3\delta+1) \right)=3JD_0>J, $$
    so by \Cref{lem:lipschitzproj} we have that $\dist(a,b)\ge \frac{1}{J}\dist^\pi_Y(a,b)\ge 3D_0$. Thus, if we cover $\gamma$ by finitely many segments $\gamma_1, \dots, \gamma_\ell$, each of length $D_0$, we must have that $\gamma_i\subseteq[a,b]\subseteq \mc N_{K+2\delta}(Y)$ for some $i\le l$. 
    
    \begin{figure}[htp]
    \centering
    \includegraphics[width=4in]{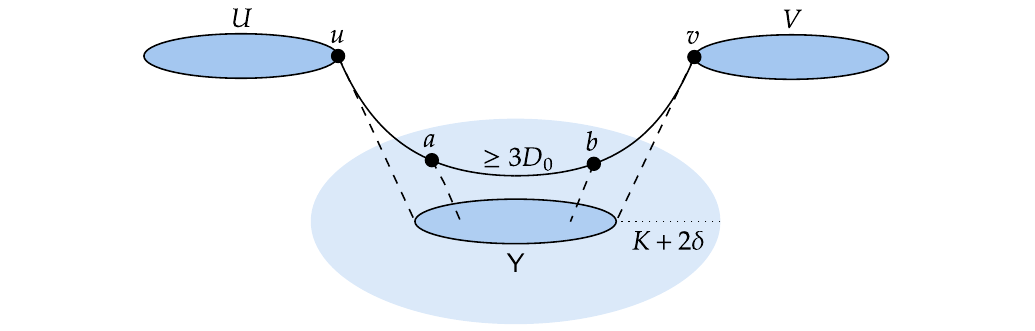} 
    \caption{The domains and geodesic involved in the proof of \Cref{projaxiom:finitelymanybig} in \Cref{P:hyp-projcplx}. The geodesic $[u,v]$ has a long subsegment in a uniform neighborhood of $Y$, and the endpoints $a$ and $b$ of this subsegment project to $Y$ close to the endpoints of the whole geodesic.}
    \label{fig:projcomplex}
    \end{figure}
   
    It is enough to show that if $\gamma_i\subseteq \mc N_{K+2\delta}(Y)\cap \mc N_{K+2\delta}(Y') $ for some $Y,Y'\in \mc Y$, then $Y=Y'$, because then the set $\{Y\in \mc Y\mid \dist_Y^\pi(U,V)\ge \theta\}$ will have cardinality at most $l$. To see this, notice that $\gamma_i$ has length $D_0$, so there exist $p,q\in Y\cap N_{K+2\delta}(\gamma_i)\subseteq Y\cap N_{2K+4\delta}(Y')$ such that $\dist(p,q)\ge D_0-2(K+2\delta)=M(2K+4\delta)+1$. Since $\mc Y$ is $M_0$-separated, this means that $Y=Y'$, as required.
\end{proof}

For later purposes, we can define a projection complex structure on $\hat X^{(0)}$ itself, and this will serve as a unified framework for studying projections between spaces in $\mathcal Y$ and from $X$ to spaces in $\mathcal Y$.

For every $x\in X$ let $\pi_x\colon X\to \{x\}$ be the constant map, and for every cone point $v_Y$, let $\pi_{v_Y}$ be the map  $\pi_Y\colon X\to 2^Y$. For every $Y\in \mc Y$ and $x\in \hat X$, set $\pi_{x}(v_{Y})=\pi_x(Y)$, as in \Cref{defn:extended_proj}. Equip the set $\hat X^{(0)}$ with distance functions $\dist^\pi_x(y,z)=\diam(\pi_x(y)\cup \pi_x(z))$.

\begin{cor}\label{cor:proj_complex_with_points}
    Suppose $(X,\mc Y)$ satisfies \Cref{hyp:metric} with respect to $(\delta, K,M_0)$. There exists $\Theta=\Theta(\delta, K,M_0)$ such that $(\hat X^{(0)}, \{\dist^\pi_x\}_{x\in \hat X})$ satisfies the projection complex axioms (\Cref{defn:Projection_axioms}) with constant $\Theta$. Moreover, $\Theta$ is bounded linearly in $M_0$. 
\end{cor}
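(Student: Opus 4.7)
The plan is to reduce the projection axioms for $\hat X^{(0)}$ to those for $\mc Y$ established in \Cref{P:hyp-projcplx}. The key observation is that the coarse covering hypothesis lets me approximate every $x\in X$ by a chosen nearest subspace $Y'\in \mc V_R(x)$; combined with the $J$-Lipschitzness of projections (\Cref{lem:lipschitzproj}) applied along an $X$-geodesic from $x$ to a point of $Y'$, this yields $\dist^\pi_W(x, Y')\le JR$ for every $W\in \mc Y$. I will set $\Theta$ to be the maximum of the bounds produced in each axiom below; since all quantities involved are either fixed in terms of $\delta, K, R$ or bounded linearly in $M_0$ (e.g.\ $B$ and $\theta$), so is $\Theta$.

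\Cref{projaxiom:symmetry} and \Cref{projaxiom:triangle} hold automatically for any family of diameter-defined distance functions. For \Cref{projaxiom:boundedprojection}, if $x\in X$ then $\pi_x$ is constant and $\dist^\pi_x(y,y)=0$; if $x=v_Y$, then $\diam(\pi_Y(y))$ is at most $J$ when $y\in X$ (by \Cref{lem:lipschitzproj} with coincident endpoints) and at most $B$ when $y=v_U$ (by \Cref{lem:boundedproj_for_qc}).

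For \Cref{projaxiom:bgi}, whenever either $x$ or $z$ lies in $X$ the corresponding $\dist^\pi$ vanishes, so it suffices to treat $x=v_Y$, $z=v_Z$. The case $y=v_W$ is immediate from \Cref{P:hyp-projcplx}, so suppose $y\in X$ and pick $Y'\in \mc V_R(y)$. The Lipschitz approximation combined with \Cref{projaxiom:boundedprojection} gives
\[
\dist^\pi_{v_Y}(y, v_Z)\le J + B + JR + \dist^\pi_Y(Y', Z)
\]
when $Y\ne Y'$, and analogously for $\dist^\pi_{v_Z}(y,v_Y)$ when $Z\ne Y'$. If both $Y, Z\ne Y'$, applying \Cref{projaxiom:bgi} for $\mc Y$ to the triple $\{Y', Y, Z\}$ bounds the minimum by $J+B+JR+\theta$. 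If instead $Y'\in \{Y, Z\}$, say $Y'=Y$, then $y$ lies within $R$ of a point $y'\in Y$, and Lipschitzness gives directly $\dist^\pi_{v_Z}(y, v_Y) \le J + JR + B$ from the bounds on $\diam(\pi_Z(y))$ and $\diam(\pi_Z(Y))$.

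\Cref{projaxiom:finitelymanybig} follows by the same reduction. Only cone vertices can contribute, since $\dist^\pi_x\equiv 0$ for $x\in X$. Replace each $y_i\in X$ by a chosen $Y'_i \in \mc V_R(y_i)$, keeping $y_i$ if it was already a cone point $v_{U_i}$. The same Lipschitz comparison shows that, apart from the at most two exceptional indices $W\in\{Y_1', Y_2'\}$, any $v_W$ with $\dist^\pi_{v_W}(y_1,y_2)>\Theta$ forces $\dist^\pi_W$ between the replacements to exceed $\Theta - 2J - 2B - 2JR$, so the finiteness axiom for $\mc Y$ yields the conclusion. The main bookkeeping obstacle is precisely the exceptional case where the nearest subspace $Y'$ coincides with one of the cone indices being compared: the Lipschitz estimate between $\pi_Y(y)$ and $\pi_Y(Y')$ becomes vacuous, and one must fall back on the direct estimate illustrated above in the \Cref{projaxiom:bgi} argument.
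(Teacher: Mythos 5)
Your proof is correct and follows essentially the same strategy as the paper's: reduce each axiom for $\hat X^{(0)}$ to the corresponding axiom for $\mc Y$ from \Cref{P:hyp-projcplx} by replacing each $y\in X$ with a nearest subspace in $\mc V_R(y)$ and invoking the $J$-Lipschitzness of projections, and set $\Theta$ to be $\theta$ plus a uniform additive constant. Two small remarks: the initial claim $\dist^\pi_W(x,Y')\le JR$ omits the additive $B$ from $\diam(\pi_W(Y'))$ (your subsequent estimates include it, so this is only a slip in the summary line); and for \Cref{projaxiom:finitelymanybig} the exceptional indices $W\in\{Y_1',Y_2'\}$ need no estimate at all since they contribute at most two elements to a set whose finiteness is the only thing at stake, so the "fall back on the direct estimate" remark is unnecessary there, though it is exactly the right move in your treatment of \Cref{projaxiom:bgi}.
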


\begin{proof}
    $\hat X^{(0)}$, which we can identify with $\mc Y\cup X^{(0)}$, is a $(M_0+4K+4\delta)$-geometrically separated collection of $K$-quasiconvex subsets, because points are $0$-quasiconvex and balls of radius $2K+2\delta$ have diameter $4K+4\delta$. Then the result follows from \Cref{P:hyp-projcplx}, applied to $(X, \hat X^{(0)})$.
\end{proof}


\section{Hyperbolic quotients from spinning families}

\label{sec:spinningQuotient}
Recall that an isometric action of a group $G$ on a space $X$ is \emph{$R$-cobounded} for some constant $R\ge 0$ if $X=\mc N_R(G\cdot x)$ for every $x\in X$. In this section, we work under the following standing assumption, which introduces a group action into the framework of \Cref{sec:background}.

\begin{hyp} \label{hyp:spinning}
Let $(X,\mc Y)$ satisfy \Cref{hyp:metric} with respect to $(\delta, K, M_0)$. Let $M$ be the geometric separation function from \Cref{eq:M(e)}. Let $J=J(\delta,K)$ be the Lipschitz constant of $\{\pi_Y\}_{Y\in \mc Y}$ from \Cref{lem:lipschitzproj}, let $B=B(\delta, K, M_0)$ be the bound on the diameter of projections from \Cref{lem:boundedproj_for_qc}, and let $C=C(\delta, K,M_0)$ be the bounded geodesic image constant from \Cref{lem:strongBGI}.  

\noindent Let $G$ be a group acting by isometries on $X$. Suppose that the following hold.
\begin{enumerate}[start=4]
    \item \label{I:cobounded} The $G$-action on $X$ is $R$-cobounded, for some $R\ge0$; and
    \item 
    \label{I:invariance}
    the collection $\mc Y$ is $G$--invariant, that is, for each $Y \in \mc Y$ every translate $gY$ is in $\mc Y$.
\end{enumerate}
Let $\Theta=\Theta(\delta,K,M_0)$ be the constant from \Cref{cor:proj_complex_with_points}. Set $\widetilde\Theta=\max\{\Theta, 2(JR+B+\Theta)/33\}$ and $\Zhe=33\widetilde\Theta$, and let $\mc P\coloneq \mc P_{\Zhe}(\hat X^{(0)})$ be the associated projection complex, as in \Cref{def:projcomplex}. Let $L_{hyp}=L_{hyp}(\mc P)$ be the threshold from \cite[Theorem~1.1]{ClayMangahas}, and set \begin{equation}
    \label{eq:bound_on_l_spinning}
\ol L=\max\{ L_{hyp}, 40C, 10(2(B+JR)+2J+1)\}.\end{equation}
Finally, suppose that the following hold.
\begin{enumerate}[start=6]
    \item
    \label{I:equivariance}
    For each $Y \in \mc Y$ there is a non-trivial subgroup $H_Y\le \Stab_G(Y)$ such that $gH_Yg^{-1} = H_{gY}$ for each $g\in G$; and
    \item
    \label{I:spinning_bound}
    There exists $L>\ol L$ such that, for any $Y\in \mc Y$, any $x \neq v_Y \in  \hat X$, and any nontrivial $h \in H_Y$, we have $\dist^\pi_Y(x, hx) > L$.
\end{enumerate}
If the above are satisfied, we say $(X,\mc Y, G,\{H_Y\}_{Y\in \mc Y})$ satisfies \Cref{hyp:spinning} with respect to $(\delta, K, M_0, R, L)$. We omit the constants when they are unimportant or clear from context.
\end{hyp}

\noindent We emphasize that the projection complex $\mc P$ in \Cref{hyp:spinning} is formed using \textit{all} of the points in $\hat X^{(0)}$, not just the set $\mc Y$.

\begin{remark}[Linear dependence of $L_{hyp}$ on $M_0$]\label{rem:l_hyp}
    By inspection of \cite{ClayMangahas}, one sees that the constant $L_{hyp}$ is bounded linearly in $M_0$. Indeed, Clay and Mangahas first introduce constants $C_e,C_p,C_g$ such that the following hold.
    \begin{itemize}
        \item Whenever $x,y$ are adjacent in $\mc P$, then $\dist^\pi_z(x,y)\le  C_e$ for every $x\neq z\neq y$. The definition of $\mc P$ and \Cref{thm:modified_dist}, which states that $\dist^\pi_z$ and $\dist_z$ differ by at most $2\Theta$,  ensures that we can choose $C_e=\Zhe+2\Theta$.
        \item Whenever $x,y$ are joined by a path in $\mc P$ which does not pass through the $2$-neighborhood of $z$, then $\dist^\pi_z(x,y)\le C_p$. By \Cref{lem:boundedpathimage} we can choose $C_p=11\widetilde\Theta+2\Theta$.
        \item Whenever $x,y$ are joined by a geodesic in $\mc P$ which does not pass through $z$, then $\dist^\pi_z(x,y)\le C_g$. By \Cref{cor:bgi_for_proj} we can use $C_g=22\widetilde\Theta+6\Zhe+2\Theta$.
    \end{itemize}
    From here, they set:
    \begin{itemize}
        \item $m=11C_e+6C_g+5C_p$ and $L_0=4(m+\Theta)+1$ \cite[Lemma 2.1]{ClayMangahas}; 
        \item $L_{short}= \max\{L_0, 5m, 14\Theta\}$ \cite[Proposition 3.2]{ClayMangahas};
        \item $L_{lift}(0)=\max\{L_{short},40 C_g\}$ \cite[Proposition 4.3]{ClayMangahas};
        \item $L_{hyp}=L_{lift}(0)$ \cite[Theorem 1.1]{ClayMangahas}.
    \end{itemize}
    In the end, $L_{hyp}$ is a piecewise linear function of $\widetilde\Theta$ and $\Theta$, both of which are bounded linearly in $M_0$. We also stress that, as pointed out in \cite[Lemma 2.1]{ClayMangahas}, $L_{hyp}\ge L_0$ is also greater than the constant $L(\mc P)$ from \cite[Theorem 1.6]{ClayMangahasMargalit}, so all results of that paper concerning only the action $G\curvearrowright\mc P$ still hold in our setting.
\end{remark}

\subsection{Properties of spinning families}
For each $x\in \hat X$, define $H_x=H_Y$ if $x=v_Y$ for some $Y\in \mc Y$, and define $H_x=\{1\}$ if $x\in X$. The collection of subgroups $\{H_x\}_{x\in \hat X}$ form a \textit{spinning family}, a notion introduced by Clay, Mangahas, and Margalit \cite[Section~1.7]{ClayMangahasMargalit}.

\begin{defn}[Spinning family]
    Let $\mc P$ be a projection complex, and let $G$ be a group that acts on $\mc P$ by isometries.  For each vertex $x\in \mc P$, let $H_x$ be a subgroup of the stabilizer of $x$ in $G$. Let $L>0$.  The family of subgroups $\{H_x\}$ is an \textit{equivariant $L$--spinning family} of subgroups of $G$ if it satisfies the following two conditions:
    \begin{itemize}
        \item \textit{Equivariance:}  If $g\in G$ and $x$ is a vertex of $\mc P$, then $gH_xg^{-1}=H_{gx}$.
        \item \textit{Spinning:}  For any $x\neq y\in \mc P$ and any non-trivial $h\in H_y$, we have $\dist_y^\pi(x,hx)>L$.
        \end{itemize}
\end{defn}
\noindent In our setting, $G$ acts on $\mc P=\mc P_{\Zhe}(\hat X^{(0)})$ by isometries, as the projections $\{\pi_Y\}_{Y\in \mc Y}$ and $\{\pi_x\}_{x\in X}$ are $G$-equivariant.  The following lemma is immediate.
\begin{lem}
    If $(X, \mc Y, G, \{H_Y\}_{Y\in\mc Y})$ satisfies \Cref{hyp:spinning} with respect to $(\delta, K, M_0, R, L)$,  then the family of subgroups $\{H_x\}_{x\in \hat X}$ forms an equivariant $L$--spinning family, with respect to the action on $\mc P$.
\end{lem}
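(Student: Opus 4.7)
The plan is to verify the two defining conditions of an equivariant $L$-spinning family directly from the clauses of \Cref{hyp:spinning}. Since the vertex set of $\mc P$ is $\hat X^{(0)} = X \sqcup \{v_Y : Y \in \mc Y\}$, and since the assignment $x \mapsto H_x$ is defined by cases on this decomposition, both verifications split naturally into the ``point'' case ($x \in X$, where $H_x = \{1\}$) and the ``cone vertex'' case ($x = v_Y$, where $H_x = H_Y$).

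For the \emph{equivariance} condition, I would first note that the $G$-action on $\hat X$ preserves the decomposition $\hat X^{(0)} = X \sqcup \{v_Y\}$: the action on $X$ is given by hypothesis, and on cone vertices it is forced by $G$-invariance of $\mc Y$ (clause \ref{I:invariance}) to satisfy $g v_Y = v_{gY}$. In the cone vertex case, $gH_x g^{-1} = g H_Y g^{-1} = H_{gY} = H_{gx}$ by clause \ref{I:equivariance}. In the point case, $gx \in X$ so $H_{gx} = \{1\} = g H_x g^{-1}$, which is trivial.

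For the \emph{spinning} condition, fix distinct vertices $x, y \in \hat X^{(0)}$ and a non-trivial element $h \in H_y$. If $y \in X$ then $H_y = \{1\}$ and there is no non-trivial $h$ to check, so the condition is vacuous. Otherwise $y = v_Y$ for some $Y \in \mc Y$ and $h$ is a non-trivial element of $H_Y$; since $x \ne v_Y$, clause \ref{I:spinning_bound} of \Cref{hyp:spinning} gives exactly $\dist_Y^\pi(x, hx) > L$, which is the required inequality $\dist_y^\pi(x, hx) > L$ in the projection complex $\mc P$.

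Since the proof is a direct unpacking of definitions via this two-case analysis, there is no substantive obstacle; the only thing to be careful about is that the ``spinning'' condition in the definition of an equivariant spinning family is formulated in terms of the raw diameter-of-projection functions $\dist_y^\pi$ rather than the Bestvina--Bromberg--Fujiwara--Sisto modifications $\dist_y$ (cf.\ \Cref{thm:modified_dist}), which matches precisely the quantity controlled by clause \ref{I:spinning_bound}. I would therefore write the proof as two short paragraphs, one per condition, each with the trivial ``point'' subcase dispatched in a single sentence and the ``cone vertex'' subcase reduced to citing the corresponding clause of \Cref{hyp:spinning}.
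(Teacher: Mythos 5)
Your proof is correct and is precisely the direct unpacking of definitions that the paper alludes to when it declares the lemma ``immediate'' without supplying a proof. The two-case decomposition ($x \in X$ versus $x = v_Y$), the citation of clause \ref{I:equivariance} for equivariance and clause \ref{I:spinning_bound} for spinning, and the remark that the spinning condition is stated in terms of $\dist^\pi_y$ rather than the modified $\dist_y$ all match what the paper implicitly requires, and the latter point in particular is a worthwhile observation.
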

\noindent We gather here some facts about spinning families from \cite{ClayMangahasMargalit, ClayMangahas}. First, the main theorem of \cite{ClayMangahasMargalit} states that the subgroup $N\coloneq \llangle H_x\rrangle_{x\in \hat X}=\llangle H_Y\rrangle_{Y\in \mc Y}$ normally generated by the spinning family is a (generally infinite) free product:
\begin{thm}\label{thm:N_is_freeprod}
    Let $(X, \mc Y, G,\{H_Y\}_{Y\in \mc Y})$ be as in \Cref{hyp:spinning}, and let $\mc O$ be any set of orbit representatives for the action of $N$ on $\mc Y$. Then $N\cong \bigast_{Z\in \mc O} H_Z$.
\end{thm}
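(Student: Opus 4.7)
The plan is to deduce this statement essentially directly from \cite[Theorem~1.6]{ClayMangahasMargalit}. First, I would observe that the setup of \Cref{hyp:spinning} gives an equivariant $L$-spinning family on the projection complex $\mc P = \mc P_\Zhe(\hat X^{(0)})$: parts \ref{I:invariance} and \ref{I:equivariance} of the hypothesis together with the convention $H_x = \{1\}$ for $x \in X$ yield equivariance, while \ref{I:spinning_bound} gives the spinning bound at every cone vertex (the bound at $x \in X$ is vacuous since $H_x$ is trivial).

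Second, I would invoke \Cref{rem:l_hyp}, which records that $L_{hyp}$ is at least the threshold $L(\mc P)$ from \cite[Theorem~1.6]{ClayMangahasMargalit}. Since our standing assumption \eqref{eq:bound_on_l_spinning} forces $L > L_{hyp}$, the hypothesis of that theorem is satisfied for the family $\{H_x\}_{x\in \hat X^{(0)}}$ acting on $\mc P$.

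Third, applying \cite[Theorem~1.6]{ClayMangahasMargalit} yields
\[
N = \llangle H_x \rrangle_{x \in \hat X^{(0)}} \;\cong\; \bigast_{x \in \mc O'} H_x,
\]
where $\mc O'$ is any set of orbit representatives for the $N$-action on the vertex set of $\mc P$, i.e.\ on $\hat X^{(0)} = X \sqcup \{v_Y : Y \in \mc Y\}$. Since $H_x = \{1\}$ for every $x \in X$, these trivial factors can be dropped from the free product, and the remaining factors are indexed by a set of orbit representatives for the $N$-action on $\mc Y$. Choosing $\mc O$ to be such a set completes the identification $N \cong \bigast_{Z \in \mc O} H_Z$.

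The only potential obstacle is bookkeeping around the two different projection-complex constants: the one used implicitly in \cite{ClayMangahasMargalit} and the one we have set up via \Cref{cor:proj_complex_with_points}. However, this is precisely what \Cref{rem:l_hyp} addresses, and no new technical input beyond that remark is required.
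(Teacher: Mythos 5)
Your proposal is correct and mirrors the paper's approach exactly: the paper states this result without proof as a direct translation of the main theorem of Clay--Mangahas--Margalit, having already noted in \Cref{rem:l_hyp} that $L_{hyp}$ (hence $L$) exceeds the threshold $L(\mc P)$ from that theorem. Your filling-in of the bookkeeping --- dropping the trivial factors $H_x=\{1\}$ for $x\in X$ and identifying orbit representatives of cone vertices with orbit representatives of $\mc Y$ --- is the correct and intended reading.
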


\noindent We can use \Cref{thm:N_is_freeprod} to characterize stabilizers in $N$:
\begin{cor}\label{cor:stabilisers_in_n}
    Let $(X, \mc Y, G,\{H_Y\}_{Y\in \mc Y})$ be as in \Cref{hyp:spinning}. For $Y\in \mc Y$ we have that $N\cap \Stab_{G}(Y)=H_Y$.
\end{cor}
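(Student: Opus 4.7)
The inclusion $H_Y \subseteq N \cap \Stab_G(Y)$ is immediate: $H_Y \subseteq N$ because $N$ is the normal closure of the family $\{H_Z : Z \in \mc Y\}$, and $H_Y \subseteq \Stab_G(Y)$ is part of item (6) of \Cref{hyp:spinning}. The content of the statement lies in the reverse inclusion.

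The plan for $N \cap \Stab_G(Y) \subseteq H_Y$ is to show that any such $g$ normalizes $H_Y$ inside $N$, and then to combine \Cref{thm:N_is_freeprod} with the classical fact that a nontrivial factor of a free product is self-normalizing. The first step is immediate from the equivariance condition in item (6) of \Cref{hyp:spinning}: since $gY = Y$, we obtain $gH_Yg^{-1} = H_{gY} = H_Y$.

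For the second step, first reduce to the case $Y \in \mc O$ by choosing $n \in N$ and $Z_0 \in \mc O$ with $Y = nZ_0$, so that equivariance gives $H_Y = nH_{Z_0}n^{-1}$; then it suffices to show that the normalizer of $H_{Z_0}$ in $N$ is $H_{Z_0}$ itself. Using \Cref{thm:N_is_freeprod}, let $T$ be the Bass-Serre tree associated to the splitting $N = \bigast_{Z \in \mc O} H_Z$; the edge stabilizers of $T$ are trivial, and $H_{Z_0}$ stabilizes a distinguished vertex $v_0$ whose full $N$-stabilizer is $H_{Z_0}$. Since $H_{Z_0}$ is nontrivial by item (6) and edge stabilizers are trivial, $H_{Z_0}$ cannot fix two distinct vertices of $T$ (else it would fix the geodesic between them, including its edges). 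Hence $\mathrm{Fix}_T(H_{Z_0}) = \{v_0\}$, and any element normalizing $H_{Z_0}$ must preserve this fixed set, fix $v_0$, and therefore lie in $H_{Z_0}$. Conjugating back by $n$ yields $g \in H_Y$.

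Given \Cref{thm:N_is_freeprod}, no substantive obstacle remains; the main care is in the bookkeeping of the conjugation by $n$ used to reduce to the case $Y \in \mc O$. As a purely algebraic alternative avoiding Bass-Serre machinery, one can instead analyze the free-product normal form of $g$: writing $g = h_1 \cdots h_k$ with $h_i \in H_{Z_{j_i}}$, $j_i \neq j_{i+1}$, and conjugating a generic element of $H_{Z_0}$ by $g$, one checks via uniqueness of the reduced form that the result lands in $H_{Z_0}$ only when $k \leq 1$ and $j_1 = Z_0$, giving the same conclusion.
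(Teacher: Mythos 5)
Your proof is correct and follows essentially the same route as the paper: use equivariance to show that any $g\in N\cap\Stab_G(Y)$ normalizes $H_Y$, then use the free-product decomposition of $N$ from \Cref{thm:N_is_freeprod} to conclude that the normalizer of a nontrivial free factor is itself. The paper streamlines this by choosing the orbit representatives $\mc O$ to contain $Y$ (avoiding your conjugation reduction) and by citing malnormality of free factors in place of your Bass--Serre argument, but these are the same underlying fact.
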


\begin{proof}
    Let $\mc O$ be a collection of $N$-orbit representatives including $Y$. Since $H_Y$ is a non-trivial free factor of $N\cong \bigast_{Z\in \mc O} H_Z$, it is malnormal in $N$, i.e., it intersects  any of its conjugates trivially. Furthermore, if $n\in N$ fixes $Y$, then it normalizes $H_Y$ by \Cref{hyp:spinning}.\eqref{I:equivariance}, and so  $n\in H_Y$ by malnormality, as required.
\end{proof}

\begin{remark}[Complexity]\label{rem:complexity}
    There is a partial order $\prec$ on elements of $N$, called \emph{complexity}, which is invariant under conjugation by elements of $N$. The only facts we will need about this partial order is that it has the identity as its unique minimal element, and that descending chains have finite length; this will allow for inductive arguments on the complexity of an element. We omit the full definition and refer the reader to \cite[Section~3]{ClayMangahas} for details.
\end{remark}

\begin{prop}[{\cite[Lemma 3.2]{ClayMangahas}}]\label{prop:new_shortening_pair}
    Assume that $(X, \mc Y,  G, \{H_Y\}_{Y\in \mc Y})$ satisfies \Cref{hyp:spinning}. 
    Let $x\in \hat X$ and $ h \in N$ be such that $hx\neq x$.
    Then there exist $Y \in \mc{Y}$ and $h_Y\in H_Y$ such that the following hold.
    \begin{enumerate}
        \item Either $v_Y\in\{x,hx\}$ or $\dist_Y^\pi(x, hx)>L/10$; and 
        \item $h_Yh \prec h$.
    \end{enumerate}
    We say that $(Y,h_Y)$ is a \emph{shortening pair} for $(x,h)$.
\end{prop}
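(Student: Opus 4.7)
The plan is to obtain the shortening pair as a direct consequence of the corresponding result of Clay--Mangahas, applied to the projection complex $\mc P = \mc P_{\Zhe}(\hat X^{(0)})$ we have constructed on the full vertex set of $\hat X$. The key observation is that $\mc P$ is built from vertices that are either points of $X$ (for which $H_x = \{1\}$) or cone vertices $v_Y$ (for which $H_{v_Y} = H_Y$ is nontrivial), and by \Cref{cor:proj_complex_with_points} together with \Cref{hyp:spinning}.\eqref{I:equivariance} and \eqref{I:spinning_bound}, the family $\{H_x\}_{x \in \hat X^{(0)}}$ is an equivariant $L$-spinning family on $\mc P$ in the sense of Clay--Mangahas. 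Since we chose $L > L_{hyp}$ in \eqref{eq:bound_on_l_spinning}, all the machinery of \cite{ClayMangahas} is available.

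First I would invoke \cite[Lemma 3.2]{ClayMangahas} directly with the vertex $x \in \hat X^{(0)}$ and element $h \in N$ (which is well-defined since, by \Cref{thm:N_is_freeprod}, our $N$ agrees with the normally generated subgroup from the spinning family). This produces a vertex $w \in \hat X^{(0)}$ and an element $h_w \in H_w$ satisfying the desired dichotomy with $w$ and $h_w$ in place of $v_Y$ and $h_Y$, together with the complexity reduction $h_w h \prec h$.

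The only additional step needed in our setting is to check that $w$ must be a cone vertex $v_Y$ for some $Y \in \mc Y$, rather than an ordinary point of $X$. This follows from the complexity condition: if $w$ were a point of $X$ then $H_w = \{1\}$ by construction, forcing $h_w = 1$ and hence $h_w h = h$, which contradicts $h_w h \prec h$ because $\prec$ is a strict partial order (\Cref{rem:complexity}). Thus $w = v_Y$ for some $Y$, and $(Y, h_Y) := (Y, h_w)$ is the required shortening pair.

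The main subtlety, rather than a serious obstacle, is verifying that the Clay--Mangahas framework truly applies to our enriched projection complex, in which many vertices carry trivial subgroups. The proof of \cite[Lemma 3.2]{ClayMangahas} proceeds by induction on complexity and makes no positivity assumption on the subgroups, so it transfers without change; the only place care is needed is to reconcile their constant thresholds (as recorded in \Cref{rem:l_hyp}) with the concrete projection data coming from $\hat X^{(0)}$, which is precisely what the choice \eqref{eq:bound_on_l_spinning} was set up to guarantee.
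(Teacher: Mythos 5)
Your proof takes the same approach as the paper, which also obtains this proposition by directly citing Clay--Mangahas' Lemma 3.2 applied to the projection complex $\mc P$ built on $\hat X^{(0)}$; the paper provides no further argument beyond the citation. Your explicit observation that the shortening vertex must be a cone vertex $v_Y$ (since $h_w h \prec h$ forces $h_w \neq 1$, which is impossible when $H_w$ is trivial) is a correct and useful clarification of a step the paper leaves implicit.
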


\begin{cor}[Large injectivity radius]\label{cor:tau}
Assume that $(X, \mc Y,  G, \{H_Y\}_{Y\in \mc Y})$ satisfies \Cref{hyp:spinning}, and let $\tau=(L/10-2(B+JR))/J\ge 2$. For every $x\in X$ and every $h\in N-\{1\}$,  we have $\dist_X(x,hx)>\tau$. In particular, $N$ acts freely on $X$. 
\end{cor}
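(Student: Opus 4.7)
The plan is to argue by contradiction: suppose $h \in N \setminus \{1\}$ and $x \in X$ satisfy $\dist_X(x, hx) \le \tau$, and split the analysis according to whether $hx = x$.

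Suppose first that $hx \neq x$. Since $x$ and $hx$ lie in $X \subseteq \hat X$ and so are not cone vertices, applying \Cref{prop:new_shortening_pair} to $(x, h)$ yields a shortening pair $(Y, h_Y)$ for which the alternative $v_Y \in \{x, hx\}$ cannot hold; hence $\dist_Y^\pi(x, hx) > L/10$. The Lipschitz estimate of \Cref{lem:lipschitzproj}, combined with $\dist_X(x, hx) \le \tau$ and $\tau \geq 2$, gives $\dist_Y^\pi(x, hx) \le J\tau = L/10 - 2(B + JR) < L/10$, a contradiction.

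Suppose now that $hx = x$; the plan is to rule this out, by strong induction on the complexity of $h$ from \Cref{rem:complexity}. By coboundedness, $\mc V_R(x)$ is nonempty; if $hY = Y$ for every $Y \in \mc V_R(x)$, then \Cref{cor:stabilisers_in_n} gives $h \in \bigcap_{Y \in \mc V_R(x)} H_Y$, and combining with \Cref{thm:N_is_freeprod} (whose free-product decomposition implies malnormality of the factors) forces either $h = 1$ (when $|\mc V_R(x)| \ge 2$) or $h \in H_{Y_0}$ for a unique nearest subspace $Y_0$; in the latter case the spinning hypothesis \Cref{hyp:spinning}.\eqref{I:spinning_bound} and \Cref{lem:lipschitzproj} force $\dist_X(x, hx) > L/J > \tau$, contradicting $hx = x$. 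Thus we may pick $Y \in \mc V_R(x)$ with $hY \neq Y$ and apply \Cref{prop:new_shortening_pair} to $(v_Y, h)$ to obtain $(Y', h_{Y'})$. In the generic case $\dist_{Y'}^\pi(v_Y, v_{hY}) > L/10$, the triangle inequality combined with the bounds $\dist_{Y'}^\pi(v_Y, x),\ \dist_{Y'}^\pi(hx, v_{hY}) \le JR + B$ (derived from \Cref{lem:lipschitzproj} and \Cref{lem:boundedproj_for_qc}) and $\dist_{Y'}^\pi(x, hx) \le J$ (since $hx = x$) produces $L/10 < 2(JR + B) + J$, violating the lower bound on $L$ in \Cref{eq:bound_on_l_spinning}.

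The main obstacle lies in the degenerate case $v_{Y'} \in \{v_Y, v_{hY}\}$, where \Cref{prop:new_shortening_pair} provides no direct lower bound on $\dist_{Y'}^\pi(v_Y, v_{hY})$. Here one exploits that the shortened element $h' := h_{Y'} h$ has strictly smaller complexity than $h$: if $h' = 1$ then $h = h_{Y'}^{-1} \in H_{Y'}$, contradicting $hY \neq Y$ (since $Y' \in \{Y, hY\}$ implies $h \in \Stab_G(Y)$); if $h' \neq 1$, the inductive hypothesis applied to $h'$ forces $h_{Y'} x = h' x \ne x$ and yields the structural constraints needed to iterate the shortening procedure, which terminates by the descending chain condition on complexity and eventually lands in the generic case, producing the desired contradiction.
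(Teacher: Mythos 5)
Your proof takes a genuinely different route from the paper's. Where the paper fixes a single $Y\in\mc V_R(x)$ and splits on whether $hY=Y$, applying \Cref{prop:new_shortening_pair} to the cone vertex $(v_Y,h)$ in the second case, you split on $hx\neq x$ versus $hx=x$ and apply the shortening proposition to $(x,h)$ directly when $hx\neq x$. That part is a genuine simplification: since $x$ and $hx$ lie in $X$ and are never cone vertices, the degenerate alternative $v_Y\in\{x,hx\}$ of \Cref{prop:new_shortening_pair} cannot arise, so $\dist^\pi_Y(x,hx)>L/10$ is immediate, and \Cref{lem:lipschitzproj} with $\tau\ge 2$ yields the contradiction cleanly. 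The paper's own proof, by contrast, applies the proposition to $(v_Y,h)$ and tacitly assumes the non-degenerate alternative without addressing the other one; your rearrangement sidesteps that concern entirely in the $hx\neq x$ case. The ``all nearest subspaces fixed'' sub-case of $hx=x$, using \Cref{cor:stabilisers_in_n} and malnormality of the free factors from \Cref{thm:N_is_freeprod}, is also correct.

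The gap is in the last sub-case: $hx=x$ with some $Y\in\mc V_R(x)$ moved, where the degenerate alternative of \Cref{prop:new_shortening_pair} re-enters. Your inductive argument does not close there. The inductive hypothesis applied to $h'=h_{Y'}h$ gives $\dist_X(z,h'z)>\tau$ for all $z\in X$, in particular $h'x\neq x$; but this is a statement about $h'$, not $h$, and is entirely consistent with $hx=x$ (it is in fact also a direct consequence of spinning applied to $h_{Y'}$, independently of the IH). The claim that this ``yields the structural constraints needed to iterate the shortening procedure, which \ldots eventually lands in the generic case'' is asserted, not demonstrated: once you know $h'x\neq x$ you are no longer in the configuration you are trying to rule out, and there is no mechanism by which information about $h'$ propagates back to a contradiction for $h$. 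To close this case one must actually track a concrete geodesic configuration through the shortening --- as the paper does in \Cref{lem:no_pivot_means_collapse} and \Cref{thm:hyperbolicity_olX_with_points}, where the degenerate cone vertex is used to bend a concatenation of $\hat X$-geodesics while preserving distances --- rather than invoking complexity reduction abstractly.
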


\begin{proof}Let $\mc V_R(x) = \{ Y \in \mc{Y} \mid \dist_{X}(x, Y) \le R \} $ be the collection of nearest subspaces to $x$. This set is non-empty as the action is $R$-cobounded, so let $Y\in\mc V(x)$. If $hY=Y$, then $h\in H_Y$ by \Cref{cor:stabilisers_in_n}. Hence $\dist_Y(x,hx)>L>J$ by \Cref{hyp:spinning}, and so $\dist_X(x,hx)>L/J$ by \Cref{lem:lipschitzproj}.

Suppose instead that $hY\neq Y$. Then by \Cref{prop:new_shortening_pair} there exists $U\in\mc Y$ such that $\dist_U(Y,hY)>L/10$. Since projections are $J$-Lipschitz and $x$ is $R$-close to $Y$, we have that $\dist_U(x,hx)>L/10-2(B+JR)>J$, since $L$ is greater than the constant $\ol L$ from \Cref{eq:bound_on_l_spinning}. Hence again \Cref{lem:lipschitzproj} yields that $\dist_X(x,hx)>(L/10-2(B+JR))/J$.
\end{proof}

\noindent Clay and Mangahas further investigated the properties of spinning families and showed that $\mc P/N$ is hyperbolic, provided that $L>L_{hyp}$ \cite[Theorem~1.1]{ClayMangahas}. In \Cref{S:quotient}, we use these properties to prove that $\hat X/N$, rather than the quotient of $\mc P$, is uniformly hyperbolic, regardless of the choice of $L$ (as long as $L$ is bigger than the constant $\ol L$ from \Cref{eq:bound_on_l_spinning}).

\subsection{Hyperbolicity of the quotient graph}\label{S:quotient}
We now turn our attention to the quotient $\ol X \coloneq  \hat X/N$.  Let $q\colon \hat X \to \ol X$ be the quotient map. We say that a subgraph $T\subseteq \hat X$ \emph{lifts} a subgraph $\ol T\subseteq \ol X$ if $q$ restricts to an isometry between $T$ and $\ol T$. To avoid confusion, we will denote points in $\ol X$ by $\ol x$ and points in $\hat X$ (and $X$) simply by $x$.  By an abuse of notation, we consider points in $\ol X$ to be equivalence classes of points in $\hat X$:  if $q(x)=\ol x$, we will write $x\in \ol x$ and say $x$ is a \textit{representative} of $\ol x$. We first define a certain class of representatives.

\begin{defn}\label{def:minrep}
Given  $\ol{x}, \ol y \in \ol{X}$, two points $x \in \ol{x}$ and $y\in \ol y$ are \emph{minimal distance representatives}, or simply \emph{minimal}, if 
$\dist_{\hat{X}}(x,y) = \inf_{x'\in\ol x,\,y'\in \ol y} \dist_{\hat{X}}(x',y')$.
\end{defn}

\begin{lem}\label{lem:bendinggeodesics}
    Let $\ol x, \ol y\in \ol X$, and let $x\in \ol x$, $y\in \ol y$ be minimal representatives. Suppose $\alpha=\alpha_1*\alpha_2$ is a geodesic in $\hat X$ from $x$ to $y$ with endpoints $(\alpha_1)_+=v_Y=(\alpha_2)_-$ for some $Y\in \mc Y$.  For any $h_Y \in H_Y$, the path $\alpha'=\alpha_1*h_Y\alpha_2$ is also a geodesic in $\hat X$. 
\end{lem}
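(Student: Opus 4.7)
The plan is to exploit minimality of $(x,y)$ together with the fact that $h_Y\in H_Y \subseteq N$, so that $h_Y y$ is another representative of $\ol y$ whose distance from $x$ is at least $\dist_{\hat X}(x,y)$. The length of $\alpha'$ is easy to compute because $h_Y$ acts by isometries, and this length matches $\dist_{\hat X}(x,y)$. Combining these observations sandwiches $\ell_{\hat X}(\alpha')$ between $\dist_{\hat X}(x,h_Y y)$ from below (since $\alpha'$ is a path between those points) and from above (via minimality), forcing $\alpha'$ to realize the distance.

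More concretely, I would first note that because $h_Y$ acts by isometries on $\hat X$, the subpath $h_Y\alpha_2$ has the same length as $\alpha_2$, and hence
\[
\ell_{\hat X}(\alpha') = \ell_{\hat X}(\alpha_1) + \ell_{\hat X}(\alpha_2) = \ell_{\hat X}(\alpha) = \dist_{\hat X}(x,y),
\]
where the last equality uses that $\alpha$ is a geodesic. Also, $\alpha'$ is a genuine path in $\hat X$ since $(\alpha_1)_+ = v_Y = h_Y v_Y = (h_Y \alpha_2)_-$, because $h_Y\in H_Y\subseteq\Stab_G(Y)$ fixes $v_Y$.

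Next, since $h_Y\in H_Y\subseteq N$, the endpoint $h_Y y$ of $\alpha'$ lies in the orbit $\ol y$. The minimality of the pair $(x,y)$ then yields
\[
\dist_{\hat X}(x, h_Y y) \ge \inf_{x'\in \ol x,\, y'\in \ol y}\dist_{\hat X}(x', y') = \dist_{\hat X}(x,y) = \ell_{\hat X}(\alpha').
\]
On the other hand, $\alpha'$ is a path from $x$ to $h_Y y$ in $\hat X$, so $\ell_{\hat X}(\alpha') \ge \dist_{\hat X}(x, h_Y y)$. Both inequalities together give $\ell_{\hat X}(\alpha') = \dist_{\hat X}(x, h_Y y)$, which is precisely the statement that $\alpha'$ is a geodesic from $x$ to $h_Y y$.

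There is no real obstacle here; the lemma is essentially a reformulation of minimality. The only small point to be careful about is checking that $\alpha'$ is indeed a legitimate continuous path (which follows from $h_Y$ fixing the cone vertex $v_Y$) and that applying $h_Y$ does not change lengths (which is automatic since $G$, and hence $N$, acts on $\hat X$ by isometries).
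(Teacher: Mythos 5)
Your proof is correct and follows essentially the same approach as the paper's: using that $h_Y$ fixes $v_Y$ to conclude that $\alpha'$ is a path with $\ell(\alpha') = \ell(\alpha) = \dist_{\hat X}(x,y)$, then invoking minimality and $h_Y y \in \ol y$ to get $\dist_{\hat X}(x, h_Y y) \ge \dist_{\hat X}(x,y)$, which together force $\alpha'$ to realize the distance.
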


\begin{proof}
    Since $h_Y\in \Stab(Y)$, we have $h_Yv_Y=v_Y$.  Therefore $\alpha'$ has the same length as $\alpha$, and so $\dist_{\hat X}(x,h_Yy)\le \dist_{\hat X}(x,y)$.  On the other hand, since $x$ and $y$ are minimal and $h_Yy$ is also a representative of $\ol y$, the distance from $h_Yy$ to $x$ is at least the distance from $y$ to $x$.  Together, this implies that $\dist_{\hat X}(x,y)=\dist_{\hat X}(x,h_Yy)$, and so $\alpha'$ is a geodesic in $\hat X$.
\end{proof}

\noindent We say that we \textit{bend the path $\alpha$ along $v_Y$} to obtain the path $\alpha'=\alpha_1*h_Y\alpha_2$, and we call $\alpha'$ a \textit{bent path}. Notice  that, since the endpoints of $\alpha$ are minimal, the image in the quotient $q(\alpha)$ is a geodesic between $\ol x$ and $\ol y$, and both $\alpha$ and $\alpha'$ are lifts of $q(\alpha)$.

\par\medskip
The leitmotiv of many arguments throughout this paper is that, if one combines \Cref{prop:new_shortening_pair} with the strong bounded geodesic image \Cref{lem:strongBGI}, then many combinatorial configurations lift from $\ol X$ to $\hat X$. We showcase this in the next Proposition, where we prove that geodesic quadrangles admit lifts. The ``moreover'' part will be relevant in \Cref{sec:preserving_loxo} to ensure that the image of certain WPD elements of $G$ remain WPD elements of $G/N$.

\begin{prop}\label{prop:liftingQuadrilaterals}
    Let $\ol Q\subseteq \ol X$ be a geodesic quadrangle with vertices $\ol v_1,\ol v_2, \ol v_3, \ol v_4$. Then there exists a geodesic quadrangle $Q\subseteq \hat X$ which lifts $\ol Q$. 

    Moreover, if the geodesics $[\ol v_1, \ol v_2]$ and $[\ol v_3, \ol v_4]$ of $\ol Q$ have lifts $[v_1', v_2']$ and $[v_3', v_4']$, respectively, such that $\sup_{Y\in \mc Y}\dist_Y^\pi(v_1',v_2')\le L/40$ and $\sup_{Y\in \mc Y}\dist_Y^\pi(v_3',v_4')\le L/40$, then the lifts of $[\ol v_1, \ol v_2]$ and $[\ol v_3, \ol v_4]$ contained in $Q$ can be chosen to be $N$--translates of $[v_1', v_2']$ and $[v_3', v_4']$, respectively.
\end{prop}

\begin{proof}
    Lift each geodesic side of $\ol Q$ to a geodesic in $\hat X$. Up to replacing each lift by some $N$-translate, this produces a concatenation of four geodesics $\gamma_1=[v_1,v_2]$, $\gamma_2=[v_2,v_3]$, $\gamma_3=[v_3, v_4]$, and $\gamma_4=[v_4,v_5]$, where $v_5=hv_1$ for some $h\in N$.  Under the hypothesis of the ``moreover'' part, we can choose $\gamma_1$ and $\gamma_3$ to be $N$--translates of $[v_1', v_2']$ and $[v_3', v_4']$, respectively. We call this configuration an \emph{open lift} of $\ol T$.

    Recall that, by \Cref{rem:complexity}, descending chains in $\prec$ have finite length, so we  proceed by induction on the complexity of $h$. If $h=1$, or more generally if $hv_1=v_1$, then the union $\gamma_1\cup\ldots\cup\gamma_4$ is already a geodesic quadrangle $Q$ lifting $\ol Q$, as required. Otherwise, we show that we can find another open lift of $\ol Q$ where the new endpoints differ by some $h'\prec h$, reducing the complexity. Indeed, since $hv_1\neq v_1$, \Cref{prop:new_shortening_pair} provides a shortening pair $(Y,h_Y)$ for $(v_1,h)$. There are two cases to consider.
    \begin{itemize}
        \item Suppose first that $v_Y=v_j$ for some $1\le j\le 5$. For every $i\ge j$, replace $\gamma_i$ by $h_Y\gamma_i$.  This results in another open lift of $\ol T$, made of $N$-translates of the original $\gamma_i$s, but now $v_1$ and $h_Y hv_1$ differ by $h_Yh$, which has lower complexity than $h$ by definition of the shortening pair. 
        \item Now suppose that $v_Y\neq v_j$ for all $j$, so that all projections from $v_j$ to $Y$ are defined. By \Cref{prop:new_shortening_pair} we have that $\dist_Y^\pi(v_1, hv_1)> L/10$, and the triangle inequality yields that at least one of $\dist_Y^\pi(v_1, v_2)$, $\dist_Y^\pi(v_2, v_3)$, $\dist_Y^\pi(v_3, v_4)$, and $\dist_Y^\pi(v_4, hv_1)$ is larger than $L/40$. We assume that $\dist_Y^\pi(v_4, hv_1)> L/40$; an analogous argument holds in the other cases. Since we chose $L$ greater than the constant $\ol L$ from \Cref{eq:bound_on_l_spinning}, the quantity $L/40$ is greater than the constant $C$ from the bounded geodesic image \Cref{lem:strongBGI}. It follows that $v_Y$ lies on the geodesic $\gamma_4$. Bend $\gamma_4$ at $v_Y$ by $h_Y$; in other words, apply $h_Y$ to every vertex of the open lift between $v_Y$ and $hv_1$. See \Cref{fig:openlift}. Since the bent path is still a geodesic lift of $[\ol v_4,\ol v_1]$, this operation produces a new open lift of $\ol Q$. Moreover, as before, $v_1$ and $h_Yhv_1$ now differ by $h_Yh\prec h$. Notice that, in the setting of the ``moreover'' part, both $\dist_Y^\pi(v_1, v_2)$ and $\dist_Y^\pi(v_3, v_4)$ are at most $L/40$, hence the bending procedure replaces each of $\gamma_1$ and $\gamma_3$ by an $N$--translate.
    \end{itemize}
    In both cases, we conclude by induction: after finitely many steps, we have obtained a geodesic quadrangle $Q$ that lifts $\ol Q$, and, in the ``moreover'' setting, the lifts of $[\ol v_1, \ol v_2]$ and $[\ol v_3, \ol v_4]$ contained in $Q$ are $N$-translates of $[v_1', v_2']$ and $[v_3', v_4']$, respectively.
    \end{proof}

    \begin{figure}[htp]
        \centering
        \includegraphics[width=4in]{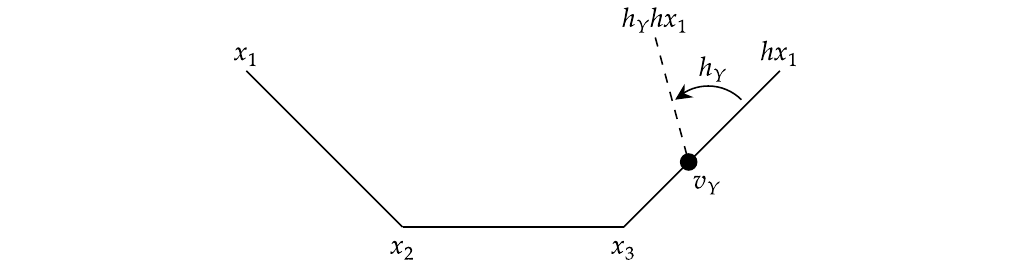}
        \caption{The open lift from the proof of \Cref{prop:liftingQuadrilaterals}, and its bending at the shortening vertex (here, the dashed line).}
        \label{fig:openlift}
    \end{figure}

\begin{thm}\label{thm:hyperbolicity_olX_with_points}
    If $(X, \mc Y, G,\{H_Y\}_{Y\in \mc Y})$ satisfies \Cref{hyp:spinning}, then $\ol X$ is $\hat\delta$-hyperbolic, where $\hat\delta=\hat\delta(\delta, K)$ is the hyperbolicity constant of $\hat X$ from \Cref{lem:hyp_constant_cone_off}.
\end{thm}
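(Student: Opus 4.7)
The plan is to establish the $\hat\delta$-hyperbolicity of $\ol X$ by showing that every geodesic triangle $\ol T$ in $\ol X$ admits a closed lift $T\subseteq\hat X$ consisting of three geodesic segments. Granted such a lift, the $\hat\delta$-slimness of $T$ coming from \Cref{lem:hyp_constant_cone_off} descends to $\ol T$, because the quotient map $q\colon\hat X\to\ol X$ is 1-Lipschitz and sends each side of $T$ onto the corresponding side of $\ol T$.

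To build $T$, label the sides of $\ol T$ as $\ol\gamma_i$ from $\ol x_i$ to $\ol x_{i+1}$, pick any $x_1\in\ol x_1$, and path-lift each $\ol\gamma_i$ consecutively, producing a concatenation $\gamma_1*\gamma_2*\gamma_3$ in $\hat X$ starting at $x_1$. A direct length comparison using that $q$ is 1-Lipschitz shows that each $\gamma_i$ is an $\hat X$-geodesic and that its endpoints are minimal distance representatives in the sense of \Cref{def:minrep}, so \Cref{lem:bendinggeodesics} applies to every segment. The only obstruction to closedness is that the terminal vertex equals $nx_1$ for some $n\in N$, and the goal is to modify the lift until $nx_1=x_1$.

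The reduction proceeds by induction on the complexity of $n$ from \Cref{rem:complexity}, which is well-founded. Assuming $nx_1\ne x_1$, apply \Cref{prop:new_shortening_pair} to $(x_1,n)$ to obtain $Y\in\mc Y$ and $h_Y\in H_Y$ with $h_Yn\prec n$, and with either $v_Y\in\{x_1,nx_1\}$ or $\dist_Y^\pi(x_1,nx_1)>L/10$. In the second case, the triangle inequality
\[
\dist_Y^\pi(x_1,nx_1)\le \dist_Y^\pi(x_1,x_2)+\dist_Y^\pi(x_2,x_3)+\dist_Y^\pi(x_3,nx_1)
\]
forces at least one summand to exceed $L/30$, which is larger than the bounded geodesic image constant $C$ by \Cref{eq:bound_on_l_spinning}. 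Strong bounded geodesic image (\Cref{lem:strongBGI}) then places $v_Y$ on the corresponding $\gamma_i$, and one bends $\gamma_i$ at $v_Y$ via \Cref{lem:bendinggeodesics} while acting by $h_Y$ on all subsequent segments; the new concatenation still lifts $\ol T$, now ends at $h_Ynx_1$, and retains minimal intermediate endpoints by the same length argument. In the boundary cases $v_Y\in\{x_1,nx_1\}$, one acts by $h_Y$ on the entire path (when $v_Y=x_1=h_Yx_1$) or leaves the path unchanged (when $v_Y=nx_1$, which is also fixed by $h_Y$). In all cases the tracked group element becomes $h_Yn\prec n$, so well-foundedness of $\prec$ forces the procedure to terminate with $nx_1=x_1$, yielding the desired closed $T$.

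The main subtlety is coordinating the algebraic reduction supplied by the shortening pair with the geometric bending of the lift: \Cref{prop:new_shortening_pair} selects $v_Y$ from purely projection-theoretic data, and one relies on strong bounded geodesic image together with the precise calibration $L>30C$ built into \Cref{eq:bound_on_l_spinning} to guarantee that $v_Y$ actually lies on the three-segment lift to which \Cref{lem:bendinggeodesics} is applied. The boundary cases $v_Y\in\{x_1,nx_1\}$ are the most conceptually delicate since the lift may remain geometrically unchanged while the tracked element changes; however, because each such step still reduces complexity and because reaching $n=1$ forces $nx_1=x_1$, the well-founded induction nevertheless terminates at a genuinely closed lift.
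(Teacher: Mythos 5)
Your approach matches the paper's: build an open lift, induct on complexity via the shortening pair, bend at a cone point located by strong BGI, conclude by well-foundedness. However, your case split has a gap. \Cref{prop:new_shortening_pair} yields the disjunction ``$v_Y\in\{x_1,nx_1\}$ or $\dist_Y^\pi(x_1,nx_1)>L/10$,'' and you treat the two disjuncts as a dichotomy of subcases. But in the second disjunct it is entirely possible that $v_Y=x_2$ or $v_Y=x_3$. In that event the triangle inequality
$$\dist_Y^\pi(x_1,nx_1)\le \dist_Y^\pi(x_1,x_2)+\dist_Y^\pi(x_2,x_3)+\dist_Y^\pi(x_3,nx_1)$$
is not available: $\pi_Y$ is defined only on $\hat X-\{v_Y\}$ (\Cref{defn:extended_proj}), so $\pi_Y(x_2)$ (resp.\ $\pi_Y(x_3)$) is undefined, and so is $\dist_Y^\pi(x_1,x_2)$. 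Without the triangle inequality you cannot invoke strong BGI to place $v_Y$ on a particular side, and the bending step is not justified.

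The paper avoids this by splitting instead on whether $v_Y$ coincides with \emph{any} vertex $x_j$ of the open lift, $1\le j\le 4$, not just the two endpoints. When $v_Y=x_j$ for some $j$, it applies $h_Y$ to every segment after that vertex; this is immediate (no BGI needed) and reduces complexity. Only when $v_Y$ is none of the four vertices does it invoke the second disjunct of \Cref{prop:new_shortening_pair} and run the triangle inequality, at which point all projections are genuinely defined. To repair your argument, enlarge your boundary case to $v_Y\in\{x_1,x_2,x_3,nx_1\}$ (handled by applying $h_Y$ to the portion of the lift after $v_Y$, exactly as in your $v_Y=x_1$ subcase) and use the triangle-inequality argument only when $v_Y$ avoids all four vertices.
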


\begin{proof}
    Let $\ol T\subseteq \ol X$ be a geodesic triangle, which we see as a degenerate quadrangle, and let $T\subseteq \hat X$ be a lift of $\ol T$. Given any point $\ol p\in \ol T$ let $p\in T$ be its lift. By slimness of triangles in $\hat X$, there exists $q\in T$ on one of the other sides such that $\dist_{\hat X}(p,q)\le \hat\delta$. But then, since the projection map $q\colon \hat X\to \ol X$ is $1$-Lipschitz, we have that $\dist_{\ol X}(\ol p, \ol q)\le\dist_{\hat X}(p,q)\le \hat\delta $, where $\ol q$ is the projection of $q$. This proves that $\ol T$ is $\hat\delta$-slim, as required. 
\end{proof}

\subsection{Preserving acylindrical hyperbolicity}\label{sec:preserving_loxo}
In this subsection, we show that quotients of acylindrically hyperbolic groups by spinning families are acylindrically hyperbolic.  We go about this by showing that if $G\curvearrowright X$ admits independent loxodromic WPD elements whose axes are ``transverse'' to the collection $\mc Y$, then the quotient $G/N$ does, as well.  

Recall that, given an isometric action of a group $G$ on a hyperbolic space $X$, an element $g\in G$ is \emph{loxodromic} if for every $x\in X$ the map $\Z\to X$ mapping $n$ to $g^n(x)$ is a quasi-isometric embedding. In this case, the \emph{limit set} of $\langle g \rangle$ consists of the two endpoints of the quasigeodesic $\{g^n(x)\}_{n\in \Z}$ inside $\partial X$, which do not depend on $x$. Two loxodromic isometries of a hyperbolic space are \textit{independent} if their limit sets are disjoint.
 Furthermore, following \cite{bestvina_fujiwara},  a loxodromic element $g\in G$ is \emph{weakly properly discontinuous}, or \emph{WPD}, if for every $\varepsilon>0$ and every $x\in X$ there exists $M\in\N$ such that
$$\left | \left\{h\in G \mid \dist_{X}\left(x, hx\right)\le\varepsilon,\, \dist_{X}\left(g^Mx, hg^Mx\right)\le \varepsilon\right\}\right |<\infty.$$
The core of the arguments in this subsection is the following lemma.
\begin{lem}\label{lem:isoproj}
    Let $x,y\in X$ be such that $\sup_{Y\in \mc Y}\dist^\pi_Y(x,y)\le L/20$. Then
    \begin{equation}
        \label{eq:isoproj}
    \frac{1}{L/20+2C}\dist_X(x,y)\le \dist_{\hat X}(x,y)=\dist_{\ol X}(\ol x, \ol y).
    \end{equation}
\end{lem}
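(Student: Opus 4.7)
The inequality $\dist_{\ol X}(\ol x, \ol y) \le \dist_{\hat X}(x,y)$ is immediate since the quotient map $q\colon \hat X \to \ol X$ is $1$-Lipschitz. So the real work is establishing the reverse inequality $\dist_{\hat X}(x,y) \le \dist_{\ol X}(\ol x, \ol y)$ and the bound $\dist_X(x,y) \le (L/20 + 2C)\dist_{\hat X}(x,y)$.

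\textbf{Bounding $\dist_X$ by $\dist_{\hat X}$.} Let $\gamma = (x = z_0, z_1, \ldots, z_n = y)$ be an $\hat X$-geodesic of length $n = \dist_{\hat X}(x,y)$ with $k$ cone vertices. For each cone vertex $z_i = v_Y$, its neighbors $p = z_{i-1}, q = z_{i+1}$ lie in $Y$. The subpaths $[x, p]_\gamma$ and $[q, y]_\gamma$ are $\hat X$-geodesics avoiding $v_Y$, so strong bounded geodesic image (\Cref{lem:strongBGI}) yields $\dist^\pi_Y(x,p) \le C$ and $\dist^\pi_Y(q, y) \le C$. Since $p, q \in Y$, $\pi_Y(p) = \{p\}$ and $\pi_Y(q) = \{q\}$, so $\dist_X(p,q) = \dist^\pi_Y(p,q)$. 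The triangle inequality combined with the hypothesis $\dist^\pi_Y(x,y) \le L/20$ gives
\[
\dist_X(p,q) \le \dist^\pi_Y(p, x) + \dist^\pi_Y(x,y) + \dist^\pi_Y(y,q) \le L/20 + 2C.
\]
De-electrifying $\gamma$ replaces each cone vertex (and its two adjacent edges) by an $X$-geodesic of length at most $L/20 + 2C$, producing an $X$-path of length at most $(n-2k) + k(L/20+2C) \le (L/20+2C)\cdot n$, which dominates $\dist_X(x,y)$.

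\textbf{Equality $\dist_{\hat X}(x,y) = \dist_{\ol X}(\ol x, \ol y)$.} Lift an $\ol X$-geodesic from $\ol x$ to $\ol y$ of length $\ell$ to a path $\gamma$ in $\hat X$ starting at $x$ and ending at some $ny$ with $n \in N$. I proceed by induction on the complexity of $n$ (\Cref{rem:complexity}). If $n = 1$, we are done. Otherwise $nx\ne x$, since otherwise $n\in \Stab_G(x)\cap N$, which is trivial by the free action established in \Cref{cor:tau}. Apply \Cref{prop:new_shortening_pair} to $(x, n)$ to obtain a shortening pair $(Y, h_Y)$ with $h_Yn\prec n$. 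Because $x, nx \in X$ and $v_Y$ is a cone vertex, $v_Y \notin \{x, nx\}$, so $\dist^\pi_Y(x, nx) > L/10$. Using $N$-equivariance of projections, $\dist^\pi_Y(nx, ny) = \dist^\pi_{n^{-1}Y}(x,y) \le L/20$, so by the reverse triangle inequality
\[
\dist^\pi_Y(x, ny) \ge \dist^\pi_Y(x, nx) - \dist^\pi_Y(nx, ny) > L/10 - L/20 = L/20 > C,
\]
where the last inequality uses \Cref{eq:bound_on_l_spinning}. By \Cref{lem:strongBGI}, $v_Y$ lies on $\gamma$. Bending $\gamma$ at $v_Y$ by $h_Y$ produces another lift of the same $\ol X$-geodesic, now from $x$ to $h_Y ny$, and $h_Y n \prec n$. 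The induction terminates at a lift from $x$ to $y$ of length $\ell$, yielding $\dist_{\hat X}(x,y) \le \ell$.

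\textbf{Main obstacle.} The subtle point is that the shortening pair controls the projection $\dist^\pi_Y(x, nx)$, but the geodesic to be bent ends at $ny$; the hypothesis on $\dist^\pi_Y(x,y)$ combined with the $N$-equivariance of $\pi_Y$ is exactly what transfers control from $nx$ to $ny$, placing $v_Y$ on the lift and permitting the bending procedure.
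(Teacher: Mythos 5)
Your proof is correct and takes essentially the same approach as the paper: for the first inequality you bound the de-electrification exactly as the paper does, and for the equality you use a bend-and-reduce-complexity argument driven by \Cref{prop:new_shortening_pair} and \Cref{lem:strongBGI}. The one cosmetic difference is in the second part: the paper takes a shortening pair for $(y,h)$ (where $hy=y'$ is a minimal representative) and invokes the hypothesis directly via $\dist^\pi_Y(y,x)\le L/20$, whereas you take a shortening pair for $(x,n)$ and transfer the hypothesis across $n$ via $G$-equivariance of the projections; both give $\dist^\pi_Y(\cdot,\cdot)>L/20>C$ and place $v_Y$ on the relevant lift, so the two routes are interchangeable. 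One small point worth making explicit: your bending step relies (correctly but implicitly) on the fact that any lift of an $\ol X$-geodesic is automatically an $\hat X$-geodesic, since for $i<j$ one has $j-i\ge\dist_{\hat X}(z_i,z_j)\ge\dist_{\ol X}(\ol z_i,\ol z_j)=j-i$; this is what licenses the application of \Cref{lem:strongBGI} to $\gamma$.
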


\begin{proof}
    Let $\gamma$ be an $\hat X$-geodesic connecting $x$ to $y$, and let $\widetilde\gamma$ be its de-electrification in $X$. Clearly the length of $\widetilde\gamma$ is at least $\dist_{X}(x,y)$; hence the inequality on the left of \eqref{eq:isoproj} follows if we bound the length of $\widetilde\gamma$ by $(L/20+2C)$-times the length of $\gamma$. In turn, by the definition of the de-electrification, it suffices to show that whenever $\gamma$ has a two-edge subsegment of the form $\{c,v_Y,d\}$, where $Y\in \mc Y$ and $c,d\in Y$, then $\dist_X(c,d)\le L/20+2C$. To see this, assume that $c$ is closer to $x$ than $d$. Since $\gamma|_{[x,c]}$ does not pass through $v_Y$, $\dist^\pi_Y(c,x)\le C$ by \Cref{lem:strongBGI}, and symmetrically $\dist^\pi_Y(d,y)\le C$. Therefore, since $c,d\in Y$, we have that
    \[\dist_X(c,d)=\dist^\pi_Y(c,d)\le \dist^\pi_Y(c,x)+\dist^\pi_Y(x,y)+\dist^\pi_Y(y,d)\le L/20+2C.\]
    For the equality on the right of \eqref{eq:isoproj}, suppose $x$ and some $y'\in \ol y$ are minimal and let $\eta$ be an $\hat X$-geodesic connecting $x$ to $y'$. We claim that $\gamma$ and $\eta$ have the same length. To see this, let $h\in N$ be such that $hy=y'$. We proceed by induction on the complexity of $h$. If $h$ fixes $y$ we are done. Otherwise, let $(Y,h_Y)$ be a shortening pair for $(y,h)$, as in \Cref{prop:new_shortening_pair}. We have that
    $$L/10<\dist^\pi_Y(y,y')\le \dist^\pi_Y(y,x)+\dist^\pi_Y(x,y')\le L/20+ \dist^\pi_Y(x,y').$$
    Since $L/10-L/20=L/20>C$, \Cref{lem:strongBGI} implies that $v_Y$ lies on $\eta$. Bending $\eta$ at $v_Y$,  we conclude by induction.
\end{proof}

\begin{prop}\label{prop:independent_in_quotient}
    Let $f,g\in G$ be independent loxodromic isometries for the action on $X$, and let $x\in X$ be such that $\sup_{Y\in \mc Y}\sup_{m,n\in\mathbb{Z}}\dist^\pi_Y(f^m x, g^n x)\le L/40.$ Then $\ol f, \ol g\in G/N$ are independent loxodromics for the action on $\ol X$. Moreover, if $f $ is a WPD element, then so is $\ol f$.  
\end{prop}

\begin{proof}
    The hypotheses, together with \Cref{lem:isoproj}, imply that $\dist_{\ol X}(\ol x, \ol f^n \ol x)$ and $\dist_{\ol X}(\ol x, \ol g^n \ol x)$ both grow linearly in $n$, so $\ol f$ and $\ol g$ are loxodromic. Moreover, the same \Cref{lem:isoproj} yields that
    \[\dist_{Haus}(\langle\ol f\rangle\ol x, \langle \ol g\rangle \ol x)\ge\frac{1}{L/20+2C}\dist_{Haus}(\langle f\rangle x, \langle g\rangle x)=\infty,\]
    so $\ol f$ and $\ol g$ are indeed independent.

    Now suppose that $f$ is a  WPD element with respect to the action of $G$ on $X$.  By hypothesis, a quasiaxis for $f$ can fellow travel along a subspace $Y\in \mc Y$ only for a uniformly bounded amount, so \cite[Theorem~2.4]{MaherMasaiSchleimer} implies that $f$ is a WPD element with respect to the action of $G$ on $\hat X$. Now fix $\varepsilon >0$ and $\ol x\in \ol X$, let $x\in \hat X$ be a lift of $\ol x$, and let $M>0$ be such that the set
    \[
    \Delta = \{h \in G \mid \dist_{\hat X}(x, hx) \leq \varepsilon,\, \dist_{\hat X}(f^Mx,fg^Mx)\leq \varepsilon\} \]
    is finite.  Let $D$ denote the size of $\Delta$.

    Suppose there exist distinct elements $\ol g_1,\dots, \ol g_p\in G/N$ such that 
    \[ 
    \dist_{\ol X}(\ol x, \ol g_i\ol x)\leq \varepsilon \quad \textrm{and} \quad \dist_{\ol X}(\ol f^M\ol x, \ol g_i\ol f^M \ol x)\leq \varepsilon.
    \]
    Consider the geodesic quadrilateral $\ol Q$ in $\ol X$ with vertices $\ol x, \ol f^M \ol x, \ol g_i\ol f^M\ol x, $ and $\ol f^M \ol x$.  By the ``moreover'' statement of \Cref{prop:liftingQuadrilaterals}, there exist $g_i\in \ol g_i$ such that the quadrilateral $\ol Q$ lifts to a geodesic quadrilateral $Q$ of $\hat X$ with vertices $x, f^M x, g_if^Mx$, and $g_ix$. In particular, we have 
    \[ 
    \dist_{\hat X}(x,  g_i x)=\dist_{\ol X}(\ol x, \ol g_i\ol x)\leq \varepsilon \quad \textrm{and, similarly,} \quad \dist_{\hat X}( f^M x,  g_i f^M  x)\leq \varepsilon.
    \]
    It follows that $g_i\in \Delta$ for each $1\leq i \leq p$, and hence $p<D$.  This shows that the element $\ol f$ is WPD with respect to the action of $G/N$ on $\ol X$, as required.
\end{proof}

\begin{cor}\label{cor:preserveAH}
    Let $(X, \mc Y, G,\{H_Y\}_{Y\in \mc Y})$ satisfy \Cref{hyp:spinning}. Suppose there exist independent loxodromics $f,g\in G$ as in \Cref{prop:independent_in_quotient}, and that $f$ is a WPD element. Then $G/N$ is acylindrically hyperbolic.
\end{cor}

\begin{proof}
    By \Cref{prop:independent_in_quotient}, $G/N$ acts with independent loxodromic isometries on $\ol X$ and so is not virtually cyclic.  Furthermore, $\ol f$ is a loxodromic WPD element for this action. Therefore $G/N$ is acylindrically hyperbolic by \cite[Theorem 1.2]{Osin_acyl_hyp}.
\end{proof}

\section{Hierarchical hyperbolicity of spinning quotients}
\label{sec:hierarchical}
Up until this point, we have worked with an arbitrary hyperbolic graph $X$ with a group action, along with an equivariant collection of uniformly quasiconvex subspaces. In this section, we specialize to the case when $X$ arises as the top level hyperbolic space in a (relative) hierarchically hyperbolic group structure.

\subsection{Background on hierarchical hyperbolicity}
This subsection gathers definitions and properties of (relative) hierarchically hyperbolic spaces and groups introduced in \cite{BHS_HHSII}; see \cite[Definition~2.8]{Russell_Rel_Hyp} for this formulation of some of the axioms.

\begin{defn}[Relative hierarchically hyperbolic space]\label{defn:HHS}
    Let $E>0$, and let $\mc{X}$ be an $(E,E)$--quasigeodesic space.  A \emph{relatively hierarchically hyperbolic space (relative HHS) structure with constant $E$} for $\mc{X}$ is an index set $\mf S$ and a set $\{ \mc{C}W \mid W\in\mf S\}$ of geodesic spaces $(\mc{C}W,\dist_W)$ such that the following axioms are satisfied. 
    \begin{enumerate}[label=(\arabic*{})]
        \item\label{axiom:projections} \textbf{(Projections.)} For each $W \in \mf{S}$, there exists a \emph{projection} $\pi_W \colon \mc{X} \rightarrow 2^{\mc{C}W}$  that is a $(E,E)$--coarsely Lipschitz, $E$--coarsely onto, $E$--coarse map.
	
        \item\label{axiom:nesting} \textbf{(Nesting.)} If $\mf S \neq \emptyset$, then $\mf{S}$ is equipped with a  partial order $\nest$ and contains a unique $\nest$--maximal element. When $V\nest W$, we say $V$ is \emph{nested} in $W$. For each $W\in\mf S$, we denote by $\mf S_W$ the set of all $V\in\mf S$ with $V\nest W$.  Moreover, for all $V,W\in\mf S$ with $V\propnest W$ there is a specified non-empty subset $\rho^V_W\subseteq \mc{C}W$ with $\diam(\rho^V_W)\le E$.
            
        \item\label{axiom:finite_complexity}  \textbf{(Finite complexity.)} Any set of pairwise $\nest$--comparable elements has cardinality at most $E$.
            
        \item\label{axiom:orthogonal} \textbf{(Orthogonality.)} The set $\mf S$ has a symmetric relation called \emph{orthogonality}. If $V$ and $W$ are orthogonal, we write $V\perp W$ and require that $V$ and $W$ are not $\nest$--comparable. Further, whenever $V\nest W$ and $W\perp U$, we require that $V\perp U$. We denote by $\mf{S}_W^\perp$ the set of all $V\in \mf{S}$ with $V\perp W$.
            
        \item\label{axiom:containers}\textbf{(Containers.)} For each $W \in \mf{S}$ and $U \in \mf{S}_W$ with $ \mf{S}_W\cap \mf{S}_U^\perp \neq \emptyset$, there exists $Q \in\mf{S}_W-\{W\}$ such that $V \nest Q$ whenever $V \in\mf{S}_W \cap \mf{S}_U^\perp$. We call $Q$ the \emph{container for $U$ inside $W$}.
		
        \item\label{axiom:transversality} \textbf{(Transversality.)} If $V,W\in\mf S$ are neither orthogonal nor $\nest$-comparable, we say $V$ and $W$ are \emph{transverse}, denoted $V\trans W$.  Moreover, for all $V,W \in \mf{S}$ with $V\trans W$, there are non-empty sets $\rho^V_W\subseteq \mc{C}W$ and $\rho^W_V\subseteq \mc{C} V$, each of diameter at most $E$.
		
        \item\label{axiom:consistency} \textbf{(Consistency.)} For all $x \in\mc X$ and $V,W,U \in\mf{S}$:
	\begin{itemize}
            \item if $V\trans W$, then $\min\left\{\dist_{W}(\pi_W(x),\rho^V_W),\dist_{V}(\pi_V(x),\rho^W_V)\right\}\le E$, and
            \item if $U\nest V$ and either $V\propnest W$ or $V\trans W$ and $W\not\perp U$, then $\dist_W(\rho^U_W,\rho^V_W)\le E$.
	\end{itemize}
		
        \item\label{axiom:hyperbolicity} \textbf{(Hyperbolicity)} For each $W\in \mf S$, either $W$ is $\nest$--minimal or $\mc CW$ is $E$--hyperbolic.
		
        \item\label{axiom:bounded_geodesic_image} \textbf{(Bounded geodesic image.)} For all $V,W\in\mf S$ and for all $x,y \in \mc{X}$, if  $V\propnest W$ and $\dist_V(\pi_V(x),\pi_V(y)) \geq E$, then every $\mc{C}W$--geodesic from $\pi_W(x)$ to $\pi_W(y)$ must intersect $\mc{N}_E(\rho_W^V)$.
		
        \item\label{axiom:partial_realisation} \textbf{(Partial realization.)} If $\{V_i\}$ is a finite collection of pairwise orthogonal elements of $\mf S$ and $p_i\in \mc{C}V_i$ for each $i$, then there exists $x\in \mc X$ so that:
	\begin{itemize}
		\item $\dist_{V_i}(\pi_{V_{i}}(x),p_i)\le E$ for all $i$;
            \item for each $i$ and each $W\in\mf S$, if $V_i\propnest W$ or $W\trans V_i$, we have $\dist_{W}(\pi_W(x),\rho^{V_i}_W)\le E$.
	\end{itemize}

        \item\label{axiom:uniqueness} \textbf{(Uniqueness.)} There is a function $\theta \colon [0,\infty) \to [0,\infty)$ so that for all $r \geq 0$, if $x,y\in\mc X$ and $\dist_\mc{X}(x,y)\geq\theta(r)$, then there exists $W\in\mf S$ such that $\dist_W(\pi_W(x),\pi_W(y))\geq r$. 

        \item\label{axiom:large_link_lemma} \textbf{(Large links.)} For all $W\in\mf S$ and  $x,y\in\mc X$, there exists $\{V_1,\dots,V_m\}\subseteq\mf S_W -\{W\}$ such that $m$ is at most $E \dist_{W}(\pi_W(x),\pi_W(y))+E$, and for all $U\in\mf S_W - \{W\}$, either $U\in\mf S_{V_i}$ for some $i$, or $\dist_{U}(\pi_U(x),\pi_U(y)) \le E$.  
    \end{enumerate}
    We use $\mf{S}$ to denote the relative HHS structure, including the index set $\mf{S}$, spaces $\{\mc{C}W : W \in \mf{S}\}$, projections $\{\pi_W : W \in \mf{S}\}$, and relations $\nest$, $\perp$, $\trans$.  We call an element $U\in \mf{S}$ a \emph{domain}, the associated space $\mc CU$ its \emph{coordinate space}, and call the maps $\rho_W^V$ the \emph{relative projections} from $V$ to $W$. The number $E$ is called the \emph{hierarchy constant} for $\mf{S}$; notice that every $E'\ge E$ is again a hierarchy constant for $\mf S$, so we are often free to enlarge $E$ by a bounded amount. 

    A relative HHS is called a \emph{hierarchically hyperbolic space} (HHS) if for every $W\in \mf S$ the space $\mc CW$ is $E$--hyperbolic, in this case we say $\mf S$ is a \textit{HHS structure} on $\mc X$.

    A quasigeodesic space $\mc{X}$ is a \emph{(relative) HHS with constant $E$} if there exists a (relative) HHS structure on $\mc{X}$ with constant $E$. The pair $(\mc{X},\mf{S})$ denotes a (relatively) HHS equipped with the specific HHS structure $\mf{S}$.
\end{defn}	
\noindent The large links axiom (\Cref{defn:HHS}.\ref{axiom:large_link_lemma}) can be replaced with the following, which is traditionally called \emph{passing up}. 
\begin{enumerate}[label=(\arabic*{}'), start=12]
    \item\label{axiom:passing_up} \textbf{(Passing up).} For every $t>0$, there exists an integer $P=P(t)>0$ such that if $V\in \mf S$ and $x,y\in \mc X$ satisfy $\dist_{U_i}(x,y)>E$ for a collection of domains $\{U_i\}_{i=1}^P$ with $U_i\in \mf S_V$, then there exists $W\in \mf S_V$ with $U_i\propnest W$ for some $i$ such that $\dist_W(x,y)>t$. 
\end{enumerate}

\noindent It was shown in \cite[Lemma~2.5]{BHS_HHSII} that every HHS satisfies the passing up axiom.  The following lemma, which is the converse implication, is stated explicitly in \cite[Section~4.8]{Durham_Cubulating_Infinity}, but the strategy behind the proof appears in \cite[Lemma~5.3]{PetytSpriano_Eyries}.  The proof is written for a HHS, but it does not use the hyperbolicity of the spaces $\mc CW$, and so it also applies to a relative HHS.

\begin{lem}\label{lem:passingupisenough}
   If $(\mc X,\mf S)$ satisfies axioms \ref{axiom:projections}--\ref{axiom:uniqueness} from \Cref{defn:HHS}, as well as the passing up axiom \ref{axiom:passing_up}, then $(\mc X,\mf S)$ is a relative HHS. If moreover the spaces $\mc CW$ are $E$--hyperbolic for all $W\in \mf S$, then $(\mc X,\mf S)$ is a HHS.
\end{lem}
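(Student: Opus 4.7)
The plan is to derive the large links axiom \ref{axiom:large_link_lemma} from passing up \ref{axiom:passing_up} together with \ref{axiom:projections}--\ref{axiom:uniqueness}. Once this is done, $(\mc X, \mf S)$ satisfies every clause of \Cref{defn:HHS} and so is a relative HHS; the HHS conclusion is then immediate from the additional hyperbolicity hypothesis on the coordinate spaces.

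Given $W \in \mf S$ and $x, y \in \mc X$, set $N = \dist_W(\pi_W(x), \pi_W(y))$. I would choose as candidates $V_1, \ldots, V_m$ the $\nest$-maximal elements of
\[
\mathcal U = \{\, U \in \mf S_W - \{W\} \ :\ \dist_U(\pi_U(x), \pi_U(y)) > E \,\}.
\]
Finite complexity \ref{axiom:finite_complexity} ensures that every $U \in \mathcal U$ lies $\nest$-below some $V_i$, so these automatically give the covering property demanded by large links. What remains is to bound $m$ linearly in $N$.

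The bound on $m$ is by contradiction using \ref{axiom:passing_up}. Fix a threshold $t > \max\{E, N\}$ and suppose $m \ge P(t)$, where $P$ is the passing-up function. Applied to $\{V_1, \ldots, V_m\} \subseteq \mf S_W$ (each with $\dist_{V_i}(x,y) > E$), passing up yields $W' \in \mf S_W$ with $V_i \propnest W'$ for some $i$ and $\dist_{W'}(x,y) > t$. Since $t > N = \dist_W(x,y)$ forces $W' \neq W$ and $t > E$ forces $W' \in \mathcal U$, this contradicts the $\nest$-maximality of $V_i$. So $m < P(t)$. To upgrade this to the linear form $EN + E$ demanded by the axiom, I would use \ref{axiom:bounded_geodesic_image} to localize each $V_i$ to a nearest vertex of a fixed $\mc CW$-geodesic from $\pi_W(x)$ to $\pi_W(y)$ (of length $N$), partitioning the $V_i$ into at most $N+1$ ``stalks''. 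Consistency \ref{axiom:consistency} then constrains any container $W'$ produced by applying passing up within a single stalk to project near the same geodesic vertex, letting one rule out $W' = W$ even with an $N$-independent threshold $t_0$. The bound inside each stalk becomes a constant depending only on $E$ and $P(t_0)$, yielding $m$ linear in $N$ after enlarging the hierarchy constant.

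The main obstacle is this calibration: choosing the threshold in the localized passing-up step to be a universal constant while still forcing the container $W'$ to differ from $W$. This is precisely the refinement carried out in \cite[Section~4.8]{Durham_Cubulating_Infinity} and \cite[Lemma~5.3]{PetytSpriano_Eyries}. Because it uses no hyperbolicity of $\mc CW$ beyond what is already encoded in \ref{axiom:bounded_geodesic_image}, the argument applies verbatim in the relative HHS setting, which is the key point for the first assertion of the lemma.
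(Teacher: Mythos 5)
Your approach matches the paper's, which derives the large links axiom from passing up by citing the same references, \cite[Section~4.8]{Durham_Cubulating_Infinity} and \cite[Lemma~5.3]{PetytSpriano_Eyries}, and observing that the argument there never invokes hyperbolicity of the coordinate spaces $\mc CW$, so it applies in the relative setting. Your added sketch of that derivation --- take the $\nest$-maximal elements of $\mathcal{U}=\{U\in\mf S_W-\{W\}\mid\dist_U(\pi_U(x),\pi_U(y))>E\}$, get the covering clause from finite complexity, and bound $m$ via passing up localized along a $\mc CW$-geodesic using bounded geodesic image --- is in the right spirit.

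One imprecision worth flagging: consistency (axiom~\ref{axiom:consistency}) does not, by itself, let you rule out that the domain $W'$ returned by passing up within a stalk equals $W$. Consistency constrains $\rho^{W'}_W$ relative to $\rho^{V_i}_W$ only in the case $W'\propnest W$; when $W'=W$ there is no relative projection $\rho^W_W$ to constrain, and you only learn $\dist_W(x,y)>t_0$, which is no contradiction once $N$ is large. The way the cited references handle this is to apply passing up to auxiliary realization points $x',y'$ whose $\pi_W$-images are localized near the stalk vertex, so that $\dist_W(x',y')$ is bounded by a universal constant and any $t_0$ exceeding that constant forces $W'\neq W$, while simultaneously arranging $\dist_{V_i}(x',y')>E$. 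That is a genuine realization step, not a consequence of consistency alone. Since you explicitly defer the calibration to the references, the proposal as a whole stands, but the mechanism is worth naming correctly, both for accuracy and because it is precisely the part one must check does not secretly use hyperbolicity.
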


\noindent The following is a combination of \cite[Lemma~1.8]{BHS_HHS_AsDim} and the consistency axiom \ref{axiom:consistency}.
\begin{lem}\label{lem:close_proj_of_orthogonals}
    Let $(\mc X,\mf S)$ be a relative HHS. If $U,V\in \mf S_W$ are not transverse, then $\dist_{S}(\rho^U_W,\rho^V_W)$ is at most $2E$.
\end{lem}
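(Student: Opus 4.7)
The plan is to split into cases based on how $U$ and $V$ are related. Since $U, V \in \mf S_W$ are not transverse, and they cannot be equal to $W$ for $\rho^U_W$ and $\rho^V_W$ to be defined, we may assume $U, V \propnest W$ and that $U$ and $V$ are either nested or orthogonal. (Reading $\dist_S$ in the statement as the intended $\dist_W$.)

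In the nested case, say $U \nest V$ without loss of generality, the result is immediate from the second bullet of the consistency axiom \ref{axiom:consistency}: since $V \propnest W$, we get $\dist_W(\rho^U_W, \rho^V_W) \le E \le 2E$. The symmetric situation $V \nest U$ is identical.

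In the orthogonal case $U \perp V$, the idea is to produce a common witness in $\mc X$ whose projection to $\mc CW$ is simultaneously close to both $\rho^U_W$ and $\rho^V_W$. Partial realization \ref{axiom:partial_realisation} is exactly the right tool: applied to the pairwise orthogonal collection $\{U, V\}$ (with arbitrary choices $p_U, p_V$ in the coordinate spaces), it yields some $x \in \mc X$ with the property that for any $Z \in \mf S$ and any $i$ for which $V_i \propnest Z$ or $Z \trans V_i$, one has $\dist_Z(\pi_Z(x), \rho^{V_i}_Z) \le E$. Taking $Z = W$ and using $U, V \propnest W$, we obtain both $\dist_W(\pi_W(x), \rho^U_W) \le E$ and $\dist_W(\pi_W(x), \rho^V_W) \le E$, and the triangle inequality gives the desired bound $\dist_W(\rho^U_W, \rho^V_W) \le 2E$.

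There is essentially no hard step here: the lemma is a direct unpacking of the consistency and partial realization axioms, which is why the authors describe it as a combination of a lemma of \cite{BHS_HHS_AsDim} with consistency. The only minor care needed is to confirm that $U, V$ are strictly below $W$ so that the relative projections $\rho^U_W, \rho^V_W$ are defined, and to check that the clauses ``$V_i \propnest W$'' in partial realization apply in both the orthogonal and nested subcases of the proof.
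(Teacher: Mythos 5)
Your proof is correct and follows the same route the paper indicates: the nested case is exactly the second bullet of the consistency axiom, and your partial-realization argument for the orthogonal case is precisely the content of \cite[Lemma~1.8]{BHS_HHS_AsDim}, which the paper cites without reproving. Your reading of $\dist_S$ as $\dist_W$ also matches every downstream use of the lemma in the paper (all of which take $W=S$), and your observation that $U,V$ must be proper in $W$ for the $\rho$-maps to exist is the right implicit hypothesis.
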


\noindent A hallmark of hierarchically hyperbolic spaces is that every pair of points can be joined by a  special family of quasigeodesics called \emph{hierarchy paths}, each of which projects to a quasigeodesic in each of the spaces $\mc{C}W$.

\begin{defn}
	A \emph{$\lambda$--hierarchy path} $\gamma$ in a HHS $(\mc{X},\mf{S})$
	is a $\lambda$--quasigeodesic with the property that $\pi_W\circ
	\gamma$ is an unparametrized $\lambda$--quasigeodesic
	for each $W \in \mf{S}$.
\end{defn}

\begin{thm}[{\cite[Theorem 6.11]{BHS_HHSII}}]\label{lem:hierarchypath}
	Let $(\mc{X},\mf{S})$ be a relative HHS with constant $E$. There exist $\lambda\geq 1$
	depending only on $E$ so that every pair of
	points in $\mc{X}$ is joined by a $\lambda$--hierarchy
	path.
\end{thm}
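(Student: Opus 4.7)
The plan is to build hierarchy paths by simultaneously traversing geodesics in each coordinate space and using a realization-type theorem to produce actual points of $\mc X$ along the way. This is the standard "hierarchy machine" adapted from Masur--Minsky to the HHS setting, and the proof has three main stages: a realization lemma, a construction of a discrete path, and a verification via the distance formula.

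First, I would establish (or invoke) the realization theorem: given a tuple $\vec b=(b_W)_{W\in\mf S}$ with $b_W\in\mc CW$ satisfying the consistency inequalities of axiom \ref{axiom:consistency} and the partial realization conclusions of \ref{axiom:partial_realisation} with some uniform constant $\kappa$, there exists $x\in\mc X$ with $\dist_W(\pi_W(x),b_W)\le\kappa'$ for all $W$, where $\kappa'$ depends only on $\kappa$ and $E$. This is proved by induction on complexity using the container and partial realization axioms, and is essentially \cite[Theorem 3.1]{BHS_HHSII}. Since for relative HHSs some $\mc CW$ are not hyperbolic, the realization should be proved with projections possibly being coarse sets, but this does not change the structure of the argument.

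Next, given $x,y\in\mc X$, I would fix, for each $W\in\mf S$, a geodesic $\gamma_W$ from a point in $\pi_W(x)$ to a point in $\pi_W(y)$ (for $\mc CW$ hyperbolic this is a geodesic; for $\nest$-minimal non-hyperbolic $\mc CW$, take any $(E,E)$-quasigeodesic provided by the fact that $\mc CW$ is geodesic). Discretize each $\gamma_W$ into unit-spaced samples and organize the samples across all $W$ by a common parameter. At each discretized time $t$, assemble the tuple $\vec b(t)=(\gamma_W(t))_W$ and verify that it is coarsely consistent: transverse pairs $U\trans W$ satisfy the consistency inequality because, by the bounded geodesic image axiom \ref{axiom:bounded_geodesic_image}, once the projection of the geodesic $\gamma_W$ passes the relative projection $\rho^U_W$ the projection to $U$ is forced to lie near $\rho^W_U$ (and similarly in the nested case); containers and partial realization absorb the orthogonal directions. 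Apply the realization theorem to extract a point $x_t\in\mc X$ realizing $\vec b(t)$ up to the constant $\kappa'$, set $x_0=x$ and $x_1=y$, and connect consecutive samples by geodesics in $\mc X$.

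Finally, I would verify that the concatenation $\gamma$ is a $\lambda$-hierarchy path. The projection $\pi_W\circ\gamma$ is, by construction, uniformly Hausdorff-close to the geodesic $\gamma_W$ and monotonically parameterized up to bounded backtracking, which makes it an unparametrized $\lambda$-quasigeodesic in each $\mc CW$. That $\gamma$ itself is a $\lambda$-quasigeodesic in $\mc X$ follows from the distance formula for (relative) HHSs, which in turn is a consequence of the uniqueness axiom \ref{axiom:uniqueness} and the large links axiom \ref{axiom:large_link_lemma} (equivalently, passing up, via \Cref{lem:passingupisenough}). The main obstacle I anticipate is coordinating the discretization across infinitely many $W$ and verifying consistency of the intermediate tuples in a uniform way: one must control how long the geodesic $\gamma_W$ can lie close to $\rho^U_W$ without $\pi_U$ having to move, and this is exactly where bounded geodesic image and consistency together do the work. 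Treating the non-hyperbolic $\nest$-minimal coordinate spaces requires slightly more care, since one cannot appeal to slimness there, but these appear only at the bottom of the $\nest$-order and their projections along $\gamma$ are already controlled by the projections in the $\nest$-immediately-larger hyperbolic domains via the large links argument.
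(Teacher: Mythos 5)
This statement is cited from \cite[Theorem 6.11]{BHS_HHSII} and not reproved in the present paper, so there is no internal proof to compare against; what follows evaluates your sketch on its own terms.

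Your outline has the right ingredients (realization of consistent tuples, coordinate geodesics, distance formula), but there is a genuine gap at the step ``organize the samples across all $W$ by a common parameter'' and ``assemble the tuple $\vec b(t)=(\gamma_W(t))_W$ and verify that it is coarsely consistent.'' If each $\gamma_W$ is parameterized independently (say at unit speed on $[0,1]$), the tuple $\vec b(t)$ is \emph{not} consistent in general, and the realization theorem then simply does not apply. Concretely, suppose $U\trans W$ with $\dist_U(x,y)$ and $\dist_W(x,y)$ both large. Consistency of the endpoints forces (up to symmetry) $\pi_W(x)$ to lie near $\rho^U_W$ and $\pi_U(y)$ to lie near $\rho^W_U$, so $\gamma_W(t)$ is near $\rho^U_W$ only for $t$ close to $0$ and $\gamma_U(t)$ is near $\rho^W_U$ only for $t$ close to $1$. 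Nothing guarantees these two windows cover $[0,1]$; if both geodesics leave the relevant $E$-neighborhoods quickly, then for an open interval of intermediate $t$ the pair $(\gamma_U(t),\gamma_W(t))$ violates \Cref{defn:HHS}.\ref{axiom:consistency}. Your appeal to the bounded geodesic image axiom \ref{axiom:bounded_geodesic_image} does not repair this: that axiom constrains projections of \emph{points of $\mc X$}, whereas here $\vec b(t)$ is an abstract tuple we are hoping to realize, so there is nothing yet ``forcing'' $b_U(t)$ to sit near $\rho^W_U$; moreover \ref{axiom:bounded_geodesic_image} concerns nested pairs $V\propnest W$, not transverse ones.

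The missing ingredient is the \emph{partial (time) order} on the relevant domains. One shows, using consistency and bounded geodesic image, that the set of domains with $\dist_W(x,y)$ large admits a partial order (roughly, $U\prec W$ when $\pi_W(x)$ is close to $\rho^U_W$ as above), which can be extended to a total order; the coordinate geodesics must then be traversed \emph{serially} in that order, holding $\gamma_W$ at $\pi_W(x)$ until every $\gamma_U$ with $U\prec W$ has been completed, rather than simultaneously at constant speed. With that coordination the intermediate tuples are consistent and the rest of your plan (realization, then the distance formula via \Cref{lem:passingupisenough} to verify quasigeodesicity) goes through. This ordering is exactly the device used both in \cite{BHS_HHSII} and in Masur--Minsky's original hierarchy construction, and is the nontrivial combinatorial content of the theorem; as written, your sketch omits it.
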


\noindent Given $A,B,C,D\in \R$,  write $A\preceq_{C,D}B$ to mean that $A\le BC+D$, and $A\asymp_{C,D} B$ if $B\preceq_{C,D}A\preceq_{C,D}B$. Let $\ignore{A}{B}$ be the quantity which is $A$ if $A\ge B$, and is $0$ otherwise. 
\begin{thm}[{Distance formula, \cite[Theorem~6.10]{BHS_HHSII}}]\label{thm:distance_formula}
Let $(\mc X,\mf S)$ be a relative HHS. There exists $s_0$ such that for all $s\geq s_0$, there exist $k_1,k_2$ so
that for all $x,y\in\mc X$,
$$\dist_{\mc X}(x,y)\asymp_{k_1,k_2}\sum_{U\in\mf S}\ignore{\dist_U(x,y)}{s}.$$
\end{thm}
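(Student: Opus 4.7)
The plan is to prove the two inequalities $\sum_U \ignore{\dist_U(x,y)}{s}\preceq_{k_1,k_2} \dist_{\mc X}(x,y)$ and $\dist_{\mc X}(x,y)\preceq_{k_1,k_2}\sum_U\ignore{\dist_U(x,y)}{s}$ separately, and use the axioms heavily in each direction. For the easier upper bound on the sum, \Cref{axiom:projections} immediately gives $\dist_U(x,y)\le E\dist_{\mc X}(x,y)+E$ for every single $U$, so the real content is bounding the \emph{number} of domains $U$ with $\dist_U(x,y)>s$. I would fix $s$ larger than the constant $P(E)$ coming from the passing up reformulation \Cref{axiom:passing_up}: if a set of domains all nested in some $V$ has cardinality exceeding $P(s)$ and each has projection $>E$, then some intermediate $W$ has $\dist_W(x,y)>s$. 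Starting from the top-level $S$ and recursing into each such $W$, with the recursion terminating after at most $E$ steps by finite complexity (\Cref{axiom:finite_complexity}), one bounds $|\{U:\dist_U(x,y)>s\}|$ linearly in $\dist_{\mc X}(x,y)$, and combining with the per-domain Lipschitz bound gives the desired inequality.

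For the lower bound, I would connect $x$ and $y$ by a $\lambda$--hierarchy path $\gamma$ using \Cref{lem:hierarchypath}. Since each $\pi_U\circ\gamma$ is an unparametrized quasigeodesic in $\mc CU$, every domain with $\dist_U(x,y)$ large forces $\gamma$ to traverse a correspondingly long sub-interval whose projection to $\mc CU$ is progressing. The approach is to decompose $\gamma$ into consecutive sub-paths and charge the length of each to the contribution of a unique ``active'' domain: by \Cref{axiom:bounded_geodesic_image}, whenever $V\propnest W$ with $\dist_V(x,y)\ge E$, then the $\mc CW$--geodesic passes near $\rho^V_W$, so long projections at level $V$ occur precisely while the top-level projection is near $\rho^V_S$. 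Inducting on the complexity of $\mf S$ (which is bounded by \Cref{axiom:finite_complexity}), the total length of $\gamma$ is bounded by a multiple of $\sum_U\ignore{\dist_U(x,y)}{s}$, with the base case handled by the one-level case of a single hyperbolic space.

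The main obstacle is controlling the interaction between contributions from different domains, especially to avoid double-counting. Orthogonal, transverse, and nested pairs must all be handled separately: \Cref{axiom:consistency} ensures that for $V\trans W$, only one of the two projections can deviate far from $\rho^V_W$ or $\rho^W_V$, so transverse pairs do not both contribute simultaneously along $\gamma$; \Cref{axiom:partial_realisation} (realization) allows replacement of a consistent tuple of projection coordinates by an actual point in $\mc X$, making the induction on complexity possible by reducing to a proper sub-HHS after passing to a nested domain. The technical heart is therefore proving a realization statement: given a sufficiently consistent tuple of target points in the $\mc CU$, construct a point of $\mc X$ whose distance from $x$ is controlled by $\sum_U\ignore{\dist_U(x,y)}{s}$. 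Once realization is in hand, the distance formula follows by combining it with the hierarchy path decomposition above.
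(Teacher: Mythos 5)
Note first that the paper does not prove this statement: it is quoted verbatim from \cite[Theorem~6.10]{BHS_HHSII} and used as a black box, so there is no in-paper argument to compare your sketch against. What you have written is therefore an attempt to reconstruct the original Behrstock--Hagen--Sisto proof, and the comparison should be against that.

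Your sketch captures the correct high-level architecture of the BHS argument: the upper bound on $\sum_U\ignore{\dist_U(x,y)}{s}$ by combining the coarse Lipschitz property of $\pi_U$ with passing-up/finite-complexity counting of ``relevant'' domains, and a lower bound driven by a realization statement. However, there is a circularity concern in the way you present the lower bound. You invoke \Cref{lem:hierarchypath} (existence of $\lambda$--hierarchy paths) and propose to decompose such a path and charge its length to active domains. But \Cref{lem:hierarchypath} is, in this paper, itself cited from \cite[Theorem~6.11]{BHS_HHSII}, which appears in the same source \emph{after} the distance formula and whose proof (via the hull construction) leans on machinery developed alongside or after the distance formula. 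You cannot import it as a free tool here without first verifying that hierarchy paths can be constructed from the axioms and realization alone, independently of the distance formula. The BHS proof of the distance formula in fact proceeds directly from the realization theorem and a ``$\sigma$-distance is a quasimetric'' argument, without passing through hierarchy paths; if you want your sketch to avoid circularity you should follow that route, or explicitly prove hierarchy-path existence first from realization and the hull axioms. Beyond this structural issue, your sketch is far too compressed relative to the actual difficulty: the claims in your last paragraph (``transverse pairs do not both contribute simultaneously,'' ``inducting on complexity after passing to a sub-HHS'') each require substantial quantitative bookkeeping in \cite{BHS_HHSII}, and the realization theorem you flag as the technical heart is itself a multi-page argument that you only gesture at. As a reconstruction of the idea your account is serviceable; as a proof it is not close to complete, and the dependency on hierarchy paths needs to be resolved.
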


\begin{defn}[Product region]\label{defn:prodreg}
    Given $U\in\mf S$, the \emph{product region} associated to $U$ is the subspace
$$\mathbf{P}_U=\{x\in \mc {X} \mid \dist_W(x,\rho^U_W)\le E\mbox{ for all } U\trans W \mbox{ or }U\propnest W\}.$$
\end{defn}

\noindent We next introduce the notion of a hierarchically hyperbolic group, first introduced in \cite{BHS_HHSII} and then reformulated as follows in \cite[Section~2.1]{DHS_corrigendum}.
\begin{defn}[Hierarchically hyperbolic group]\label{defn:rel_HHG}
Let $G$ be a finitely generated group and $\mc X$ be the Cayley graph of $G$ with respect to some finite generating set. We say $G$ is a (relatively) \textit{hierarchically hyperbolic group} (HHG) if the following hold.
    \begin{enumerate}[label=(\roman*{})]
        \item\label{HHG_structure} The space $\mc X$ admits a (relative) HHS structure $\mf S$ with hierarchy constant $E$.
        \item\label{HHG_action} There is a $\nest$--, $\perp$--, and $\trans$--preserving action of $G$ on $\mf S$ by bijections such that $\mf S$ contains finitely many $G$--orbits.
        \item\label{HHG_isometries} For each $W\in \mf S$ and $g\in G$, there exists an isometry $g_W\colon \mc CW \to \mc C(gW)$ satisfying the following for all $V,W\in\mf S$, $h\in G$, and $x\in \mc X$.
            \begin{itemize}
                \item The maps $(gh)_W\colon \mc CW \to \mc C(ghW)$ and $g_{hW}\circ h_W\colon \mc CW\to \mc C(ghW)$ coincide.
                \item $g_W(\pi_W(x))=\pi_{gW}(g\cdot x)$.
                \item If $V\trans W$ or $V\propnest W$, then $g_W(\rho^V_W)=\rho^{gV}_{gW}$.
            \end{itemize}
    \end{enumerate}
The structure $\mf S$ satisfying \ref{HHG_structure}---\ref{HHG_isometries} is called a (relative) hierarchically hyperbolic group structure on $G$. We use $(G,\mf S)$ to denote $G$ equipped with a specific (relative) HHG structure on $G$. In \Cref{HHG_isometries} we often drop the subscript on the isometry $g_W$ and simply write $g$ when the domain $W$ is clear from context. 
\end{defn}

\begin{remark}[Convention on $\mc C S$]\label{rem:X_is_CayG} In a HHS structure, one can assume that $\mc C W$ is a graph for every $W\in \mf S$, as it can always be $\Stab_{G}({W})$-equivariantly replaced by a graph (see, e.g., \cite[Lemma 3.B.6]{cornulier2014metric}). Furthermore, if $G$ is a HHG, the projection  $\pi_S\colon G \to \mc C S$ can be assumed to be a bijection. This is because, since $\pi_S$ is $E$-coarsely onto and $G$-equivariant, $G$ acts coboundedly on $\mc C S$, so $\mc C S$ is $G$-equivariantly quasi-isometric to a Cayley graph of $G$ with respect to some possibly infinite generating set. Hence we can (and will) identify points of $\mc C S$ with elements of $G$. Finally, if $\mc C S$ is bounded, we will always assume that the HHG constant $E$ is larger than $\diam (\mc C S)$.
\end{remark}

\noindent For the next lemma, recall that the action of a group $G$ on a metric space $X$ is \textit{acylindrical} if for all $\varepsilon>0$ there exist constants $R=R(\varepsilon)\geq 0$ and $N=N(\varepsilon)\geq 0$ such that for every $x,y\in X$ with $\dist(x,y)\geq R$, we have 
\[
\#\{g\in G \mid \dist(x,gx)\le \varepsilon \textrm{ and } \dist(y,gy)\le \varepsilon\} \le N.
\]
A group is \textit{acylindrically hyperbolic} if it admits a non-elementary acylindrical action on a hyperbolic space, that is, an acylindrical action that contains two independent loxodromic isometries.

\begin{lem} \label{lem:acyl_action_on_CS} If $(G,\mf S)$ is a (relative) HHG whose top-level coordinate space $X\coloneq \mc CS$ is hyperbolic, then $G$ acts acylindrically on $X$. As a consequence, if $G$ is not virtually cyclic and $X$ is unbounded, then $G$ is acylindrically hyperbolic, and we say $(G,\mf S)$ is an \emph{acylindrically hyperbolic (relative) HHG}.
\end{lem}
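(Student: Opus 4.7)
The plan is to handle the two claims in succession. For acylindricity of $G \curvearrowright X = \mc CS$, this is the standard top-level acylindricity for HHGs, essentially \cite[Theorem~14.3]{BHS_HHSII}; the argument transfers verbatim to the relative HHG setting, since the only use of hyperbolicity of coordinate spaces is for $\mc CS$ itself, which is our standing hypothesis. The strategy is as follows. Fix $\varepsilon > 0$, choose $R$ large depending on $\varepsilon$ and the HHG constants, and consider $x, y \in X$ with $\dist_S(x, y) \ge R$. For any $g \in G$ satisfying $\dist_S(x, gx), \dist_S(y, gy) \le \varepsilon$, the geodesic $g \cdot [x, y] = [gx, gy]$ uniformly fellow-travels $[x, y]$ by hyperbolicity of $\mc CS$. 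Combined with the bounded geodesic image axiom, this forces, for each $V \propnest S$ with $\dist_V(x, y)$ large, that $\rho^V_S$ and $\rho^{gV}_S$ lie uniformly close in $\mc CS$. Using the large links axiom together with the distance formula (\Cref{thm:distance_formula}) then bounds the word length of $g$ in $G$ uniformly, so only finitely many such $g$ exist, giving acylindricity.

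For the second claim, first observe that $G$ acts coboundedly on $X$: by \Cref{rem:X_is_CayG}, $\pi_S$ is $E$-coarsely onto and $G$-equivariant, so $X \subseteq \mc N_E(G \cdot x)$ for any $x \in X$. Hence unboundedness of $X$ implies that every $G$-orbit in $X$ is unbounded. By Osin's classification of acylindrical actions on hyperbolic spaces, such an action is either elliptic, lineal, or non-elementary. Unbounded orbits exclude the elliptic case. If the action were lineal, then an index-at-most-two subgroup $G_0$ of $G$ would fix two points of $\partial X$ and act coboundedly on a quasi-axis joining them; acylindricity would then force $G_0$, and hence $G$, to be virtually cyclic, contradicting the hypothesis. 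Therefore the action is non-elementary, yielding two independent loxodromic isometries and proving that $G$ is acylindrically hyperbolic. The main obstacle is the acylindricity step, but as indicated it reduces to a known HHG result whose proof only invokes hyperbolicity of $\mc CS$.
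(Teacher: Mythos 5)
Your proposal is correct and takes essentially the same route as the paper: cite the top-level acylindricity result for HHGs from Behrstock--Hagen--Sisto and observe that its proof uses only the hyperbolicity of $\mc CS$, so it transfers to relative HHGs; then deduce non-elementarity from unboundedness and the non-virtually-cyclic hypothesis. The paper cites \cite[Corollary~14.4]{BHS_HHSI} directly (which already bundles both conclusions), whereas you reconstruct the second step via Osin's classification of acylindrical actions, which is a standard and correct way to obtain it. One small slip: the acylindricity theorem (Theorem~14.3) and its Corollary~14.4 live in \emph{Hierarchically hyperbolic spaces I} (\cite{BHS_HHSI}), not in the second paper of the series; you should correct the citation.
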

\begin{proof}
    This is  \cite[Corollary 14.4]{BHS_HHSI}, which is stated for HHGs but whose proof does not use the hyperbolicity of non-maximal elements of $\mf S$.
\end{proof}

\noindent The following lemma follows immediately from the distance formula; see also \cite{ABD} in the case of a HHG.
\begin{lem}\label{lem:Dbddprojs}
    Let $(G,\mf S)$ be a relative HHG and suppose $H\le G$ is $(\lambda,c)$--quasi-isometrically embedded by the orbit map $G \to \mc CS$. There is a constant $\aleph=\aleph(\mf S, \lambda, c)$  such that the diameter of $\pi_U(H)$ is at most $\aleph$ for all $U\in \mf S-\{S\}$.  
\end{lem}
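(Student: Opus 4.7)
The plan is to combine the bounded geodesic image axiom \Cref{defn:HHS}.\ref{axiom:bounded_geodesic_image} with the fact that the orbit $\pi_S(H)$ is quasi-convex in the hyperbolic space $\mc C S$ (a standard consequence of the QI embedding hypothesis into a hyperbolic space), and with the Lipschitzness of $\pi_U$ via $d_U(h, h') \le E\, d_G(h, h') + E$ from \Cref{defn:HHS}.\ref{axiom:projections}. Let $R_0 = R_0(\lambda, c, E)$ denote the quasi-convexity constant of $\pi_S(H) \subseteq \mc C S$.

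Fix $h_1, h_2 \in H$ and assume $d_U(h_1, h_2) > E$ (otherwise we are already done). By BGI, any $\mc C S$-geodesic $\gamma$ from $\pi_S(h_1)$ to $\pi_S(h_2)$ meets $\mc N_E(\rho^U_S)$, and $\rho^U_S$ itself has diameter at most $E$. I would fix a constant $R$, chosen large enough in terms of $R_0$, $E$, and the Morse constant $\Phi$ from \Cref{lem:stability}. Using quasi-convexity of $\pi_S(H)$, I would then produce elements $h_1^*, h_2^* \in H$ whose $\pi_S$-images lie within $R_0$ of points of $\gamma$ at distance roughly $R$ from $\rho^U_S$, situated on opposite sides of the neighborhood $\mc N_E(\rho^U_S)$; if one of the $\pi_S(h_i)$ is already within $R$ of $\rho^U_S$, simply take $h_i^* = h_i$. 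Provided $R$ is large enough, the $\mc C S$-geodesics $[\pi_S(h_1), \pi_S(h_1^*)]$ and $[\pi_S(h_2^*), \pi_S(h_2)]$ would avoid $\mc N_E(\rho^U_S)$ entirely, and the contrapositive of BGI then gives $d_U(h_1, h_1^*) \le E$ and $d_U(h_2^*, h_2) \le E$. On the other hand, since both $\pi_S(h_1^*)$ and $\pi_S(h_2^*)$ lie within $R+R_0$ of $\rho^U_S$, we have $d_S(h_1^*, h_2^*) \le 2(R+R_0) + E$; combining the QI embedding with the Lipschitzness of $\pi_U$ produces a constant $\mc B = \mc B(\mf S, \lambda, c)$ with $d_U(h_1^*, h_2^*) \le \mc B$. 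The triangle inequality then yields $d_U(h_1, h_2) \le 2E + \mc B =: \aleph$.

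The main obstacle is justifying that $R$ can be chosen depending only on $\mf S, \lambda, c$ so that the two sub-geodesics really do avoid $\mc N_E(\rho^U_S)$. This uses quasi-geodesic stability (\Cref{lem:stability}) together with the coarse monotonicity of distance to a quasi-convex subset along a geodesic in a hyperbolic space: if $\pi_S(h_i^*)$ lies close to a point of $\gamma$ at distance $R$ from $\rho^U_S$ on the $\pi_S(h_i)$-side of the neighborhood $\mc N_E(\rho^U_S)$, then the entire sub-geodesic $[\pi_S(h_i), \pi_S(h_i^*)]$ remains at distance at least $R - O(\Phi + R_0)$ from $\rho^U_S$, which exceeds $E$ once $R$ is chosen sufficiently large. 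Enlarging the HHG constant $E$ as needed to dominate the relevant hyperbolicity and Morse constants allows the bound to be packaged uniformly, producing the desired $\aleph = \aleph(\mf S, \lambda, c)$.
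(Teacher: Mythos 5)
Your route through bounded geodesic image is a genuinely different argument from what the paper intends: the paper dispatches this lemma with a one-line appeal to the distance formula (\Cref{thm:distance_formula}), citing \cite{ABD} for the HHG case. The overall shape of your plan is sound: reduce to a pair $h_1^*,h_2^*\in H$ whose $\pi_S$-images flank $\rho^U_S$ at bounded $\mc CS$-distance, apply the contrapositive of BGI on either side, and use the QI embedding together with coarse Lipschitzness of $\pi_U$ in the middle. The bookkeeping of $R$ against the Morse constant and the coarse monotonicity of $\dist_{\mc CS}(\cdot,\rho^U_S)$ along $\gamma$ is also fine.

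The real gap is the very first step: the assertion that $\pi_S(H)$ is $R_0(\lambda,c,E)$-quasiconvex in $\mc CS$ as ``a standard consequence of the QI embedding hypothesis.'' The Morse lemma yields quasiconvexity of the image of a QI embedding only when the \emph{source} is a quasigeodesic metric space. For $H\le G$ with the induced metric $\dist_G|_H$, that requires $H$ to be coarsely connected in $G$ (equivalently, finitely generated), and even then the quasiconvexity constant depends on the coarse-connectedness parameter $r_0$ of $H$ --- not just on $\lambda$ and $c$. As a result, your argument produces $\aleph=\aleph(\mf S,\lambda,c,r_0)$ rather than $\aleph(\mf S,\lambda,c)$ as the lemma states. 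This is harmless in the paper's sole application (\Cref{rem:YtoU_is_bounded}, where each $H_Y$ acts geometrically on $Y$, so by Milnor--\v{S}varc it is finitely generated with $r_0$ uniformly controlled over the finitely many $G$-orbits), but to match the lemma as written you would need either to add a coarse-connectedness hypothesis and track its constant explicitly, or to locate the intermediate points $h_1^*,h_2^*$ --- or bypass them entirely, as the distance-formula route does by comparing $\dist_G$, $\dist_S$, and $\sum_V\dist_V$ directly for the given pair $h_1,h_2$ --- without appealing to quasiconvexity of $\pi_S(H)$.
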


\subsection{Statement of the main result}
Given a metric space $X$ and a group $G$, we say that an isometric action $G\curvearrowright X$ is \emph{geometric} if it is cobounded and \emph{metrically proper}, i.e., for every $x\in X$ and every $R>0$ the set $\{g\in G\mid \dist(x,gx)\le R\}$ is finite. For the rest of the section, we shall work under the following strengthening of \Cref{hyp:spinning}:
\begin{hyp}\label{hyp:complete} Let $G$ be a (relative) HHG, with top-level coordinate space $X$ and HHG constant $E>0$. Assume that $(X,\mc Y,G,\{H_Y\}_{Y\in \mc Y})$ satisfy \Cref{hyp:spinning} with respect to $(E,K,M_0,R,L)$. Notice that we can always choose $R=0$ by \Cref{rem:X_is_CayG}. Furthermore, suppose that:
\begin{enumerate}[start=7]
    \item \label{I:finitelymany_GY} $G$ acts cofinitely on $\mc Y$;
    \item \label{I:stabilizers} For each $Y\in \mc Y$, the subgroup $H_Y$ acts geometrically on $Y$; and
    \item\label{I:def_of_widetildeL} $L>\widetilde{L}$, where $\widetilde{L}=\widetilde{L}(E,K,M_0)$ is defined in \Cref{eq:tildeL} below and is bounded linearly in $M_0$. 
\end{enumerate}
 We say that the collection $(X,\mc Y,G,\{H_Y\}_{Y\in \mc Y})$ satisfies \Cref{hyp:complete} with respect to $(E,K,M_0,L)$. We drop the constants when unimportant or clear from context. Let $N\coloneq \llangle H_Y\rrangle_{Y\in \mc Y}$ be the normal subgroup generated by the spinning family. 
\end{hyp}

\noindent Our main result is a technical formulation of \Cref{thm:spinning_quotient} that quotients of (relative) HHGs by subgroups forming a sufficiently spinning family are (relative) HHGs. 

\begin{thm}\label{thm:quotientishhg} If $G$ is a (relative) HHG such that $(X,\mc Y,G,\{H_Y\}_{Y\in \mc Y})$ satisfies \Cref{hyp:complete}, then $G/N$ is a (relative) HHG.
\end{thm}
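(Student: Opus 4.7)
My plan is to adopt the candidate HHG structure on $G/N$ described in the introduction's \Cref{constr:HHGStructureQuotient}: take the index set to be $\mf S/N$, set the coordinate space of $\ol U \neq \ol S$ to be $\mc CU$ for any representative $U$ (well-defined up to isometry by the HHG action), take the top-level coordinate space to be $\ol X = \hat X/N$, and declare $\ol V \nest \ol W$ (resp.\ $\ol V \perp \ol W$) precisely when some pair of representatives is nested (resp.\ orthogonal) in $\mf S$, with remaining pairs transverse. Since the $G$--action on $\mf S$ descends to an action of $G/N$ on $\mf S/N$, the HHG axioms on the group action will follow as soon as the underlying (relative) HHS axioms are verified.

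The central technical device is the notion of \emph{minimal representatives} from \Cref{def:minrep}, which picks lifts realizing distances in $\hat X$. I would use these to define projections $\pi_{\ol U}(\ol x)$ (for $\ol U \neq \ol S$) by taking a minimal lift $x$ of $\ol x$ and setting $\pi_{\ol U}(\ol x)$ to be the image of $\pi_U(x)$ under the identification $\mc CU \cong \mc C\ol U$, and similarly define $\rho^{\ol V}_{\ol W}$ via minimal representatives of $(V,W)$ chosen so that $\rho^V_W$ is well placed. For $\pi_{\ol S}$, I would use the quotient of $\pi_S$ together with the cone-off construction. The general strategy for each HHS axiom is then uniform: given the hypothesis of the axiom in $(G/N, \mf S/N)$, choose minimal representatives to lift the setup to $(G, \mf S)$, apply the corresponding HHS axiom there, and then push the conclusion back down, using the $1$--Lipschitz quotient map and the fact that minimal representatives control distances.

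For the top-level domain, hyperbolicity of $\ol X = \hat X/N$ is already supplied by \Cref{thm:hyperbolicity_olX_with_points}, and the bounded geodesic image \Cref{lem:strongBGI} for the cone-off $\hat X$ combined with the bending procedure of \Cref{lem:bendinggeodesics} transfers to $\ol X$ by the triangle-lifting argument used to prove hyperbolicity. The quasi-isometry $\hat X/N \to X/N$ follows because the cone-off differs from $X$ only by paths through cone vertices, and \Cref{cor:tau} ensures the $N$--action on $X$ is free and indeed has injectivity radius bounded below linearly in $L$. For non-top-level domains, the coordinate spaces are literally copies of $\mc CU$, so hyperbolicity, bounded geodesic image, uniqueness, and large links within those spaces are inherited once I show that minimal representatives realize the relevant projections up to bounded error.

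The main obstacles I expect are consistency \ref{axiom:consistency} and passing-up \ref{axiom:passing_up} in situations involving the top-level $\ol S$. For consistency, when $\ol V \trans \ol W$, I must show that $\rho^{\ol V}_{\ol W}$ and $\rho^{\ol W}_{\ol V}$ can be defined simultaneously in a way that respects the $E$--consistency inequality; the danger is that minimal representatives for the pair $(\ol V, \ol W)$ may not simultaneously minimize with an external point $\ol x$, producing a discrepancy. The remedy is to show the discrepancy is controlled by $L$ via the shortening pair of \Cref{prop:new_shortening_pair}: any ``bad'' translate $nV$ yielding a different projection would force a shortening vertex $v_Y$ on a short $\hat X$--geodesic, and \Cref{lem:strongBGI} together with the spinning bound would then contradict the assumed largeness of $L$. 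For passing-up, the analogous lifting argument needs to produce a coherent collection of lifts of $\ol U_1, \dots, \ol U_P$ whose original projections in $(G, \mf S)$ are uniformly large; this again reduces to controlling how the quotient can merge domains with large projections, and the bound \Cref{eq:bound_on_l_spinning} on $L$ should be tightened to a constant $\widetilde L$ that is linear in $M_0$ precisely so that these arguments go through. Verifying \ref{axiom:passing_up} rather than \ref{axiom:large_link_lemma} directly and invoking \Cref{lem:passingupisenough} will likely be the cleanest route.
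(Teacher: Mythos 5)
Your plan follows the paper's actual strategy in almost every respect: the candidate structure, the use of minimal representatives, the lift--apply--push-down scheme, the preference for verifying passing-up and invoking \Cref{lem:passingupisenough}, and the identification of consistency involving $\ol S$ as a subtle point are all exactly how the paper proceeds. However, there is a genuine gap: the uniqueness axiom \ref{axiom:uniqueness} does \emph{not} succumb to the ``lift the setup, apply the axiom in $(G,\mf S)$, push back down'' template, and you do not flag this. The issue is directional. If $\ol x,\ol y$ have uniformly bounded projections to every $\ol U\in\mf S/N$, then after taking minimal representatives $x,y$ you can control $\dist_{\hat X}(x,y)$ (from the $\ol S$--projection) and, with work, $\dist_U(x,y)$ for $U\propnest S$. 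But this does not bound $\dist_X(x,y)$: the cone-off collapses the subspaces $Y\in\mc Y$, so $x$ and $y$ can be enormously far apart in $X=\mc C S$ while being close in $\hat X$. Consequently the uniqueness axiom in $(G,\mf S)$ gives you nothing, and you cannot conclude $\dist_{G/N}(\ol x,\ol y)$ is small merely by choosing a good lift of the pair.

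What is actually needed, and what the paper does, is to work directly with the word metric on $G/N$: take a $\xi$--quasigeodesic $\gamma$ in $\hat X$ whose de-electrification is a $\xi$--quasigeodesic in $X$, decompose the de-electrification into subpaths of $\gamma$ in $X$ interleaved with components across subspaces $Y\in\mc Y$, approximate the long $Y$--components by $H_Y$--orbit jumps (using the coboundedness of $H_Y\curvearrowright Y$, hence the hypothesis that $H_Y$ acts geometrically in \Cref{hyp:complete}), and then write $x^{-1}y$ as a product of short coset representatives interleaved with elements of $N$. Each short factor is bounded in the $\mc T$--word metric via the distance formula \Cref{thm:distance_formula}, and since $\dist_{\hat X}(x,y)$ bounds the number of such factors, the bound on $\dist_{G/N}(\ol x,\ol y)$ follows. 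This is the one axiom where the quotient group's extra ``shortcuts'' through cosets of $N$ must be used positively rather than merely controlled, and your general strategy as stated does not supply this step. (A minor related point: you write that ``uniqueness \dots\ within those spaces are inherited'' for non-top-level domains, but uniqueness is not a per-domain property; it is a global statement tying all projections to the ambient metric, and it is precisely the interaction with the top-level space that is delicate.)
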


\begin{remark}
    Notice that \Cref{thm:quotientishhg} holds trivially when $G$ is \emph{not} an acylindrically hyperbolic (relative) HHG. Indeed, by \Cref{lem:acyl_action_on_CS}, either such a $G$ is virtually cyclic, or its top-level coordinate space $X$ is bounded. In the first case, any quotient of $G$ is still virtually cyclic, hence hierarchically hyperbolic. In the second case, since $\widetilde{L}\ge E$ by \Cref{eq:tildeL}, and since we are assuming that $E\ge \diam (X)$ by \Cref{rem:X_is_CayG}, then the only $L$-spinning family of subgroups of $G$ is the trivial family, so that $N=\{1\}$ and $G/N=G$. In light of this, the bulk of work is to deal with the case that $X$ is non-elementary hyperbolic, which we shall assume for the remainder of the section.

    Assuming that $G$ is acylindrically hyperbolic does not guarantee that the quotient is again acylindrically hyperbolic. For example, if $G$ is a surface group, $X$ is a Cayley graph of $G$, $Y=X$, and $H$ is a normal subgroup of sufficiently large finite index, then $(X,Y,G,H)$ satisfies \Cref{hyp:complete}, but the quotient is finite. However, under the additional assumption that there exist two independent loxodromics whose axes have uniformly bounded projections to all $Y\in \mc Y$, \Cref{prop:independent_in_quotient} will ensure that $G/N$ is again an acylindrically hyperbolic HHG. We note that this condition is equivalent to $G$ having two independent loxodromic elements for the action on $G\curvearrowright \hat X$, where $\hat X$, defined in the next subsection, is a slight modification of the cone-off from \Cref{defn:coneoff_for_graph}.
    \end{remark}

\subsection{A modified cone-off}\label{sec:basicassumption} 
In describing the quotient hierarchy structure, we will use both projections coming from the relative HHS structure and the projection complex structure described in \Cref{cor:proj_complex_with_points}.  
It will be convenient to first modify the space $X$ by a quasi-isometry that introduces new cone points for each domain $U \in \mf S - \{S\}$ that will serve as an anchor for canonically defining projections and the quotient action.
We first set
\begin{equation}\label{eqn:A}
    A\coloneq  \max\{K,3E\}+4E+D
\end{equation}
where $D=D(E,\max\{K,3E\})$ is defined as in \Cref{lem:DboundSpriano}. Let $X'$ be formed from $X$ by coning off the collection of subspaces $\{\mc N_A(\rho^U_S)\mid U\in \mf S-\{S\}\}$ such that the cone point over $\mc N_A(\rho^U_S)$ is $v_U$. Then $G$ acts acylindrically, $1$-coboundedly, and by isometries on $X'$. 

Furthermore, the natural inclusion map $X\to X'$ is a uniform quality quasi-isometry, since the sets we are coning off have diameter which is bounded in terms of $E$ and $K$; in particular $X'$ is $E'(E,K)$--hyperbolic. For the same reason, each $Y\in \mc Y$, identified with its image under the inclusion, is $K'(E,K)$--quasiconvex in $X'$, so there is a projection $\pi'_Y\colon X'\to Y$ which differs from $\pi_Y\colon X\to Y$ by a uniformly bounded amount, depending on $E$ and $K$. As a consequence, using that $\pi'_Y$ is uniformly Lipschitz with respect to $E$ and $K$, for every $U\in \mf S$, the set $\pi'_Y(v_U)$ is within uniformly bounded Hausdorff distance from $\pi_Y(\rho^U_S)$. We thus expand the domain of the projection $\pi_Y$ to the whole $X'$ by setting $$\pi_Y(v_U)\coloneq \pi_Y(\rho^U_S).$$
Notice also that $\mc Y$ is $M_0'$--geometrically separated in $ X'$, for some $M_0'$ which differs from $M_0$ by a uniform amount; in particular, $M_0'$ is bounded linearly in $M_0$. Hence $(X',\mc Y)$ again satisfies \Cref{hyp:metric} with respect to the constants $(E',K',M_0')$. With a little abuse of notation, we once and for all replace $E$ with the maximum of $E'$ and the original $E$, and similarly for $K$ and $M_0$, so that $(X',\mc Y)$ satisfies \Cref{hyp:metric} with respect to $(E,K,M_0)$. This allows us to define the constants $B,J$, etc. from \Cref{sec:HypBackground}, all of which are bounded linearly in $M_0'$ and therefore in $M_0$.

Let $\hat X$ be formed by coning off $\mc Y$ in $X'$ (not in $X$), and let $v_{Y}$ denote the cone points over $Y\in \mc Y$. Then $\hat X$ satisfies the strong bounded geodesic image property: applying \Cref{lem:strongBGI} in $X'$, there is a constant $C' = C'(E,K,M_0)$ which is bounded linearly in $M_0$, and such that if $x,y\in X'$ satisfy $\dist^{\pi'}_Y(x,y)>C'$, where the projection to $Y$ is measured in $X'$, then any $\hat X$ geodesic $[x,y]$ passes through the cone point $v_Y$. However, as the projections in $X'$ and $X$ differ by a uniform amount (depending on $E$ and $K$), we immediately obtain the following version of strong bounded geodesic image property, no longer mentioning $X'$ and $\pi_Y'$:

\begin{lem}\label{lem:SBGIinHHS}
There is a constant $C=C(E, K,M_0)$ which is bounded linearly in $M_0$, such that for every $Y\in \mc Y$ and $x,y\in \hat X-\{v_Y\}$, if $\dist^\pi_Y(x,y)\geq C$ (where the projection distance is measured in $X$), then any $\hat X$--geodesic $[x,y]$ passes through the cone point $v_Y$.
\end{lem}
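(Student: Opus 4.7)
The plan is to apply \Cref{lem:strongBGI} inside the intermediate space $X'$, rather than inside $\hat X$ directly, and then translate the resulting projection-distance hypothesis from the $X'$-projection $\pi'_Y$ to the $X$-projection $\pi_Y$ used in the statement. By construction $\hat X$ is precisely the cone-off of $X'$ along the family $\mc Y$, and the paragraph immediately preceding the lemma already observes that $(X', \mc Y)$ satisfies \Cref{hyp:metric} with respect to $(E, K, M_0)$ after the constant enlargement described there. Thus \Cref{lem:strongBGI} applied to $(X', \mc Y)$ immediately yields a constant $C' = C'(E, K, M_0)$, bounded linearly in $M_0$, such that whenever $x, y \in \hat X - \{v_Y\}$ satisfy $\dist^{\pi'}_Y(x,y) \ge C'$, every $\hat X$-geodesic from $x$ to $y$ passes through $v_Y$.

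The remaining work is to compare $\dist^\pi_Y$ and $\dist^{\pi'}_Y$ up to a uniform additive error. The key observation is that the inclusion $X \hookrightarrow X'$ is a quasi-isometry whose constants depend only on $E$ and $K$, since each subset coned off in passing from $X$ to $X'$ has the form $\mc N_A(\rho^U_S)$ with $\diam(\rho^U_S) \le E$ by axiom \ref{axiom:nesting}, and $A$ is itself an explicit function of $E$ and $K$ (see \eqref{eqn:A}). Consequently, coarse closest point projections of points of $X$ to the $K$-quasiconvex subspace $Y$ — computed in $X$ or in $X'$ — differ by a uniform constant $\Delta = \Delta(E, K)$. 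For the cone vertices $v_U$ with $U \in \mf S - \{S\}$, the comparison still holds by the extension $\pi_Y(v_U) := \pi_Y(\rho^U_S)$, since the paragraph preceding the lemma records that $\pi'_Y(v_U)$ is within bounded Hausdorff distance of $\pi_Y(\rho^U_S)$. For the cone vertices $v_{Y'}$ with $Y' \in \mc Y - \{Y\}$, both projections are defined via \Cref{defn:extended_proj} as the projections of $Y'$ in the respective spaces, so the same $\Delta$-bound applies.

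Combining these cases, one obtains $|\dist^\pi_Y(x, y) - \dist^{\pi'}_Y(x, y)| \le 2\Delta$ for every $x, y \in \hat X - \{v_Y\}$. Setting $C := C' + 2\Delta$, any pair with $\dist^\pi_Y(x, y) \ge C$ has $\dist^{\pi'}_Y(x, y) \ge C'$, so the $\hat X$-geodesic traverses $v_Y$ by the application of \Cref{lem:strongBGI} in $X'$. Since $C'$ is bounded linearly in $M_0$ and $\Delta$ depends only on $E$ and $K$, the resulting constant $C$ is bounded linearly in $M_0$.

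There is no serious obstacle in this argument; it is essentially a bookkeeping translation from $X'$ to $X$. The only point requiring care is checking that the projection comparison $|\pi_Y - \pi'_Y| \le \Delta$ extends to both types of cone vertices appearing in $\hat X - \{v_Y\}$ — namely $v_U$ for $U \in \mf S - \{S\}$ and $v_{Y'}$ for $Y' \in \mc Y - \{Y\}$ — which is why the comparison was recorded in the paragraph preceding the lemma.
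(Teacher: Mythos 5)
Your proof is correct and takes essentially the same approach as the paper: the paper's own justification, given in the paragraph immediately preceding the lemma, is precisely to apply \Cref{lem:strongBGI} in the intermediate space $X'$ and then observe that $\pi_Y$ and $\pi'_Y$ differ by a uniformly bounded amount depending on $E$ and $K$. You have simply spelled out the projection comparison in somewhat more detail, including the two types of cone vertices, which the paper leaves implicit.
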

With a slight abuse of notation we still call the above constant $C$, as the one from \Cref{lem:strongBGI}, since we can just take $C$ to be the maximum of the two constants and ensure that both lemmas hold.

Notice that the collection $\{H_Y\}_{Y\in \mc Y}$ is still an $L'$--spinning family on $X'$ (with respect to the original projections $\pi_Y$), for some constant $L'$ which differs from $L$ by a bounded amount $\Sha=\Sha(E,K)$. In particular, there exists some constant $L_{1}(E,K,M_0)$, which is bounded linearly in $M_0$, such that if $L>L_1$ then $L'$ satisfies \Cref{hyp:spinning}, so that all the consequences from \Cref{sec:spinningQuotient} still hold for $X'$ and $\hat X$ (with respect to the original projections $\pi_Y$).  Set
\begin{equation}\label{eq:tildeL}
    \widetilde{L}=\max\{L_1,100C+\Sha, 20(C+EJ)+\Sha\},
\end{equation} which is still bounded linearly in $M_0$. From now on we assume that $L>\widetilde{L}$, and in particular
\begin{equation}\label{eq:bound_on_L'} 
    L'\ge L-\Sha>\max\{100C, 20(C+EJ)\}.
\end{equation}

\noindent We conclude this subsection with some remarks about the construction.
\begin{remark}[Projection to domains]
    Recall that, by \Cref{rem:X_is_CayG}, we are assuming that the vertex set of $X$ coincides with $G$. In particular, for every $x\in X$, the projection $\pi_U(x)$ is well-defined for every $U\in \mf S$. 
\end{remark}

\begin{remark}[$H_Y$--orbits have bounded projection]\label{rem:YtoU_is_bounded}
    Since each $H_Y$ acts geometrically on the corresponding $Y\subseteq X$, and since $G$ acts on $\mc Y$ with finitely many orbits, \Cref{lem:Dbddprojs} implies the existence of a constant $\aleph$ such that  $\diam(\pi_U(H_Y\cdot y))\le \aleph$ for every $Y\in \mc Y$, every $y\in Y$, and every $U\in \mf S-\{S\}$.
\end{remark}

\begin{remark}\label{rem:newsprianobound}
    For every $U\in \mf S-\{S\}$, the set $\mc N_A(\rho^U_S)$ is $3E$--quasiconvex by \Cref{lem:n_A_unif_qc}, regardless of the value of $A$. Hence $\hat X$ can be seen as the cone-off of $X$ with respect to the family $\mc H=\{\mc N_A(\rho^U_S)\}_{U\in \mf S-\{S\}}\cup \mc Y$, whose elements are $\max\{K,3E\}$-quasiconvex. Hence \Cref{lem:DboundSpriano} implies that, for every $x,y\in X$, every $X$-geodesic $[x,y]$, and every $\hat X$-geodesic $\gamma$ with the same endpoints, we have $[x,y]\subseteq \mc N_D^X(\widetilde \gamma)$, where $\widetilde \gamma$ is the de-electrification of $\gamma$ with respect to $\mc H$ and $D=D(E,\max\{K,3E\})$ is as in the definition of $A$.
\end{remark}

\subsection{Minimal representatives for points in the quotient}\label{sec:minimal}
 The main goal of this subsection is to define canonical lifts of pairs of points in the quotient and to verify that they are well-behaved with respect to projections; see  \Cref{prop:LiftsProjCloseInU} and \Cref{prop:projections_are_welldef}. 
 
 Let $\ol X\coloneq \hat X/N$. The composition $X  \xhookrightarrow{i} \hat X \xrightarrow{q} \ol X$, where $i$ is the inclusion map and $q$ is the quotient map, is 1--Lipschitz as both $i$ and $q$ are. For each point $x\in \hat X$, including the case $x=v_Y,v_U\in \hat X$, let $\ol x\coloneq  q(x)$. For each $U\in \mf S$ we denote its image in $\mf S/N$ by $\ol U$; as we will see in \Cref{sec:HHGStructure}, $\mf S/N$ will be the index set for the (relative) HHG structure on $G/N$. The following is the analogue of \Cref{def:minrep}:

\begin{defn}[Minimal representatives]
    Let $x,y\in X$. We say $\{x,y\}$ are \textit{minimal distance representatives}, or simply \emph{minimal}, if $\dist_{\hat X}(x,y)=\min_{x'\in \ol x,\,y'\in \ol y}\dist_{\hat X}(x',y')$.
    
    If in the above definition we replace $x$ by $v_U$, for some $U\in \mf S-\{S\}$, we say that $\{U,y\}$ are minimal. If we also replace $y$ by $v_V$ for some $V\in \mf S-\{S\}$, we say that $\{U,V\}$ are minimal.
\end{defn}

\noindent Our first goal is to show that if $\{x,U\}$ and $\{x',U\}$ are minimal, then there is a uniform bound on $\dist_U(x,x')$.  We begin with some preparatory lemmas which investigate the uniqueness of certain lifts in $\hat X$.

\begin{lem}\label{lem:orbit_of_edge_is_unique}
    Let $x, y\in \hat X$ be adjacent vertices. For every $x'\in \ol x$ and $y'\in \ol y$ which are adjacent in $\hat X$, there exists $n\in N$ mapping $x$ to $x'$ and $y$ to $y'$. In other words, an edge of $\ol X$ admits a unique $N$--orbit of lifts.
\end{lem}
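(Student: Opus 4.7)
The plan is to use $N$ to realign the two edges so that they share the endpoint $x$, and then to carry out a case analysis on the nature of $x$.

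First, I would pick any $n_0\in N$ with $n_0 x=x'$, which exists since $q(x)=q(x')$, and set $y''\coloneq n_0^{-1}y'$. Then $(x,y'')=n_0^{-1}(x',y')$ is an edge of $\hat X$ with $q(y'')=\ol y$, and it suffices to produce $\tilde n\in \Stab_N(x)$ with $\tilde n y=y''$, since then $n\coloneq n_0\tilde n$ is as required. Because every edge of $\hat X$ has at most one cone-point endpoint, after possibly swapping $x$ and $y$ I may also assume $y\in X$, so that $y''\in X$ as well.

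The argument now branches on $x$. If $x\in X$, then $N$ acts freely on $X$ by \Cref{cor:tau}, so $\Stab_N(x)=\{1\}$ and I must show $y=y''$; this follows from $\tau>2$, since $y$ and $y''$ are $N$-translates of $y$ at $X$-distance at most $2$ from $x$. If $x=v_U$ for some $U\in\mf S-\{S\}$, then $y,y''\in\mc N_A(\rho^U_S)$, a set of $X$-diameter at most $2A+E$; taking $L$ large enough (linearly in $M_0$) so that $\tau>2A+E$, \Cref{cor:tau} again forces $y=y''$ and I take $\tilde n=1$.

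The substantive case is $x=v_Y$ for some $Y\in\mc Y$, where both $y$ and $y''$ lie in $Y$. Writing $y''=my$ for the unique $m\in N$ (unique since $\Stab_N(y)=\{1\}$), the claim reduces to showing $m\in H_Y$; then $\tilde n\coloneq m$ works. I will prove this by induction on the complexity of $m$ (\Cref{rem:complexity}). The base case $m=1$ is immediate. For the inductive step with $m\neq 1$ (so $my\neq y$), \Cref{prop:new_shortening_pair} supplies a shortening pair $(Y',h_{Y'})$ for $(y,m)$. Since $y,my\in X$, the vertex $v_{Y'}$ is distinct from both, so $\dist^\pi_{Y'}(y,my)>L/10$. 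If $Y'\neq Y$, then \Cref{lem:boundedproj_for_qc} bounds $\diam\pi_{Y'}(Y)\le B<L/10$, a contradiction; hence $Y'=Y$. Now $h_Y m\prec m$ and $(h_Y m)y=h_Y(my)\in Y$ because $H_Y$ stabilises $Y$, so the inductive hypothesis applied to $h_Y m$ gives $h_Y m\in H_Y$, and therefore $m\in H_Y$.

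The main obstacle is this last case: unlike the first two, $Y$ may have infinite $X$-diameter, so no injectivity-radius argument applies directly. Instead, I exploit the spinning hypothesis through iterated shortening, using the uniform bound on projections between distinct quasiconvex subsets to confine the shortening vertex to $Y$ itself at every step.
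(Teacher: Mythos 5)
Your proof is essentially correct, and the substantive case (when $x$ is a $\mc Y$-cone point) takes a genuinely different route from the paper. The paper keeps $y=y'$ fixed and runs a single induction on the 2-edge path $x,y,x'$, at each step using the strong bounded geodesic image (\Cref{lem:SBGIinHHS}) to force the shortening vertex $v_Y$ into $\{x,y,x'\}$, then bending. You instead fix $x=x'$, reduce to showing that the element $m\in N$ carrying $y$ to $y''$ lies in $H_Y$, and prove the clean intermediate statement: \emph{if $m\in N$ and $y,my\in Y$, then $m\in H_Y$}. Your inductive step pins down $Y'=Y$ via \Cref{lem:boundedproj_for_qc} (bounded projections between distinct subspaces), which is a slicker mechanism than the paper's BGI argument and yields a statement of independent interest, strengthening \Cref{cor:stabilisers_in_n}. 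Both arguments rely on the same underlying complexity induction and on the spinning constant dominating the relevant projection bound.

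There is, however, a real gap in your case $x=v_U$ for $U\in \mf S-\{S\}$. You need $\tau>2A+E$ to conclude $y=y''$, but \Cref{cor:tau} only guarantees $\tau\ge 2$, and the lower bound on $L$ coming from \Cref{hyp:complete} (via $\widetilde{L}$ in \Cref{eq:tildeL} and $L'$ in \Cref{eq:bound_on_L'}) does not obviously dominate $J(2A+E)$. You acknowledge this by proposing to enlarge $L$; while that enlargement would be harmless (since $2A+E$ depends only on $E,K$ and not $M_0$, it preserves the linear-in-$M_0$ requirement), it silently modifies the hypothesis and requires re-checking every other use of $\widetilde{L}$. The cleaner fix is to treat this case the same way you treat the $x=v_Y$ case: if $m\ne 1$, a shortening pair $(Y',h_{Y'})$ gives $\dist^\pi_{Y'}(y,my)>L'/10>C$, but the two-edge path $y,v_U,my$ is an $\hat X$-geodesic (or the single edge $[y,my]$ if they are adjacent) that avoids every $\mc Y$-cone point, contradicting \Cref{lem:SBGIinHHS}; this works directly with the stated constants, since $L'>100C$ by \Cref{eq:bound_on_L'}. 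The same strong-BGI argument also handles your $x\in X$ case, so in fact no appeal to \Cref{cor:tau} is needed at all.
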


\begin{proof}
    Up to the action of $N$, we assume that $y=y'$. Let $h\in N$ be such that $x'=hx$, and let $\gamma$ be the path $x,y,x'$. Suppose that $hx\neq x$ and let $(Y,h_Y)$ be a shortening pair, as in \Cref{prop:new_shortening_pair}, so that $L'/10<\dist^\pi_Y(x,x')$. If $v_Y\not\in\{x,y,x'\}$ then using the bounded geodesic image \Cref{lem:SBGIinHHS}, we have 
    $$L'/10<\dist^\pi_Y(x,x')\le \dist^\pi_Y(x,y)+\dist^\pi_Y(y,x')\le 2C.$$
     However this contradicts the lower bound on $L'$ from \Cref{eq:bound_on_L'}. Hence it must be that $v_Y\in\{x,y,x'\}$. If we apply $h_Y$ to all vertices of $\gamma$ between $v_Y$ and $x'$, we obtain a new configuration of $N$-translates of $\{x,y\}$ and $\{y,x'\}$,  where the new $x$ and $x'$ differ by $h_Yh\prec h$.  We conclude by induction on the complexity of $h$. 
\end{proof}

\begin{lem}\label{lem:bgi_with_U}
    Let $x,x'\in X$, let $\gamma$ be an $\hat X$--geodesic between them, and let $V\in \mf S-\{S\}$ be such that $\dist_V(x,x')>E$. Then $\dist_{\hat X}(v_V,\gamma)\le 2$.
\end{lem}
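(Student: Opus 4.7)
The plan is to combine the HHS bounded geodesic image axiom (applied at the top level $S$) with the de-electrification estimate from Remark~\ref{rem:newsprianobound} to transfer information from an $X$-geodesic $[x,x']^X$ to the given $\hat X$-geodesic $\gamma$, and then to use that $\hat X$ has cone edges to $v_V$ from every point of $\mc N_A(\rho^V_S)$, where $A$ was chosen precisely to absorb the relevant constants (see \eqref{eqn:A}).

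First, since $V\propnest S$ and $\dist_V(x,x')>E$, the bounded geodesic image axiom \ref{axiom:bounded_geodesic_image} (applied in $\mc CS = X$, using that $\pi_S$ is the identity by Remark~\ref{rem:X_is_CayG}) gives a point $p\in[x,x']^X$ with $\dist_X(p,\rho^V_S)\le E$. Next, by Remark~\ref{rem:newsprianobound}, the $X$-geodesic $[x,x']^X$ lies in the $D$-neighborhood of the de-electrification $\widetilde\gamma$ with respect to the family $\mc H=\{\mc N_A(\rho^U_S)\}_{U\in\mf S-\{S\}}\cup\mc Y$. Choose $q\in\widetilde\gamma$ with $\dist_X(p,q)\le D$, so that $\dist_X(q,\rho^V_S)\le E+D$.

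Now I would do a case analysis on where $q$ lies in $\widetilde\gamma$. If $q$ lies on one of the $X$-portions of $\gamma$ itself, then $q$ is already on $\gamma$, and since $E+D\le A$ by the definition of $A$, we have $q\in\mc N_A(\rho^V_S)$, so $v_V$ is adjacent to $q$ in $\hat X$ by construction, yielding $\dist_{\hat X}(v_V,\gamma)\le 1$. Otherwise, $q$ lies in the interior of some component of $\widetilde\gamma$ which is an $X$-geodesic $\eta$ between two points of some $H\in\mc H$ that arose from a two-edge subsegment of $\gamma$ through a cone vertex $v_H\in\gamma$. Since each $H\in\mc H$ is $\max\{K,3E\}$-quasiconvex, there exists $c\in H$ with $\dist_X(q,c)\le\max\{K,3E\}$. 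Then
\[
\dist_X(c,\rho^V_S)\le E+D+\max\{K,3E\}\le A,
\]
so $c\in\mc N_A(\rho^V_S)$, giving an edge from $c$ to $v_V$ in $\hat X$; since also $c\in H$, there is an edge from $c$ to $v_H$. Chaining these two edges and using $v_H\in\gamma$ yields $\dist_{\hat X}(v_V,\gamma)\le 2$.

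The only subtlety, and what I expect to be the main thing to be careful about, is the bookkeeping between the three different quasiconvexity regimes at play: the $K$-quasiconvexity of elements of $\mc Y$, the $3E$-quasiconvexity of each $\mc N_A(\rho^U_S)$ coming from Lemma~\ref{lem:n_A_unif_qc}, and the fact that Remark~\ref{rem:newsprianobound} applies to the unified family $\mc H$ with constant $D=D(E,\max\{K,3E\})$. The deliberate choice $A=\max\{K,3E\}+4E+D$ in \eqref{eqn:A} is tailored exactly so that both bounds $E+D\le A$ (the first case) and $E+D+\max\{K,3E\}\le A$ (the second case) hold, which is what makes the proof go through cleanly.
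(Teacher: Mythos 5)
Your proof is correct and follows essentially the same approach as the paper: apply the top-level bounded geodesic image axiom to an $X$-geodesic, transfer to the de-electrification via Remark~\ref{rem:newsprianobound}, and then do the two-case analysis depending on whether the nearby point of $\widetilde\gamma$ lies on $\gamma$ itself or on a de-electrified segment, using the quasiconvexity of $\mc H$ and the choice of $A$ in \eqref{eqn:A} to conclude. The constant bookkeeping you carry out explicitly at the end matches what the paper leaves implicit.
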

\begin{proof}
    Since $\dist_V(x,x')> E$, the bounded geodesic image in $(G, \mf S)$ \Cref{defn:HHS}.\ref{axiom:bounded_geodesic_image} implies that, for every $X$--geodesic $\alpha$ from $x$ to $x'$, there exists $w\in \alpha$ with $\dist_X(w, \rho^V_S)\le E$. In turn, by \Cref{rem:newsprianobound}, $\alpha$ is contained in the $D$--neighborhood of $\widetilde \gamma$ the de-electrification of $\gamma$, so we can find $z'\in \widetilde \gamma$ such that $\dist_X(z', \rho^V_S)\le E+D$. If $z'$ belongs to $\gamma\cap \widetilde {\gamma}$, then there is an edge $[z',v_V]$ in $\hat X$ connecting $z'$ to $v_V$, since we chose $A\ge D+E$ in \Cref{eqn:A}. If instead $z' \not\in \gamma$, then $z'$ lies along a de-electrified segment for some subspace $Z \in \mc H$ with $v_Z\in \gamma$; here $v_Z=v_{W}$ if $Z=\mc N_A(\rho^W_S)$ for some $W\in \mf S-\{S\}$.  Since $Z$ is $\max\{K,3E\}$--quasiconvex, there exists $z'' \in Z$ with $\dist_X(z',z'')\le \max\{K,3E\}$. The situation is depicted in \Cref{fig:ShorterPath}, and again by our choice of $A$ in \Cref{eqn:A} there are edges $[v_Z,z'']$ and $[z'',v_V]$ in $\hat X$.
\end{proof}
\begin{figure}[htp]
        \centering
        \begin{overpic}[width=2in, trim={2in 4.5in 4.25in 4in}, clip]{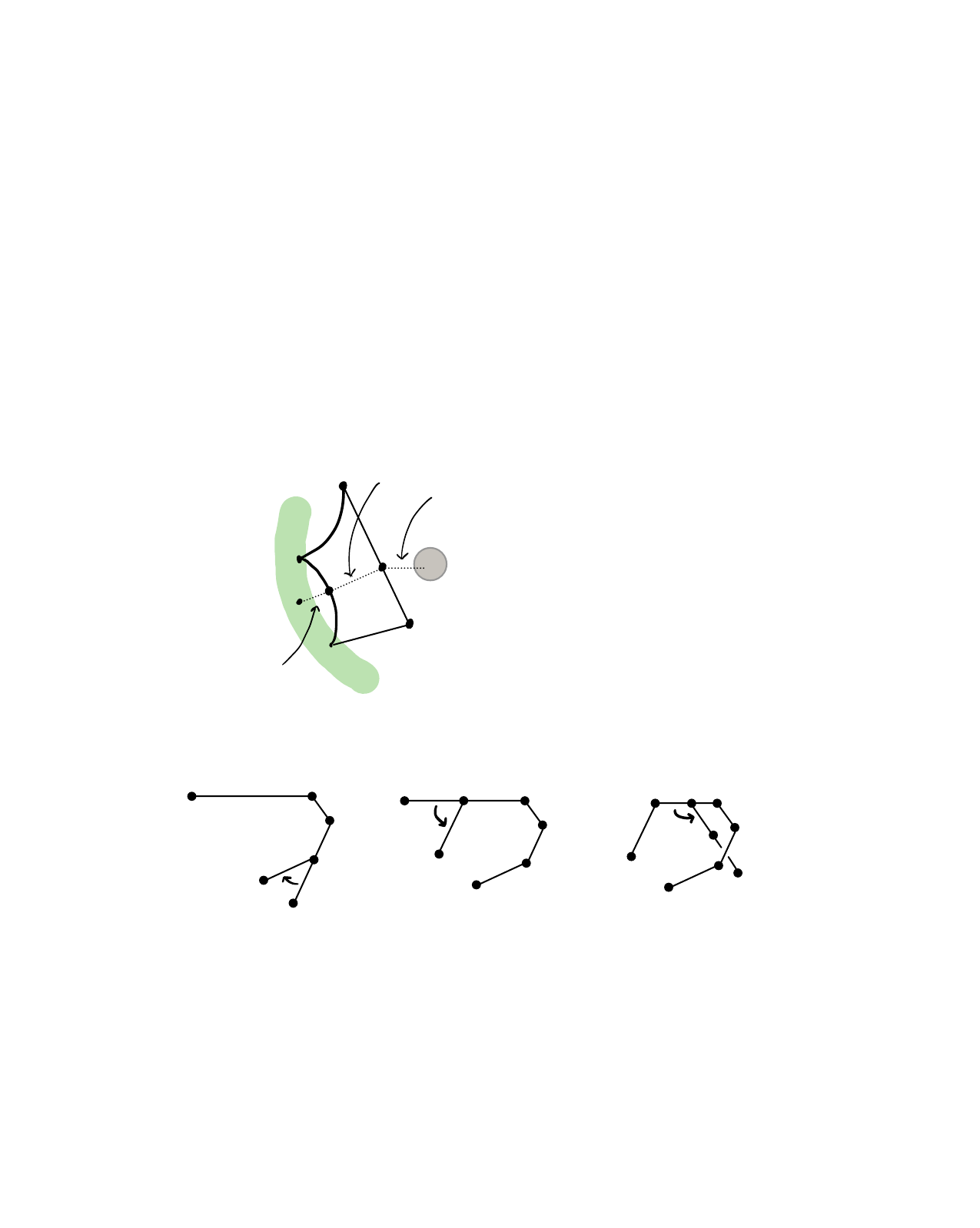}
            \put(42,90){$x$}
            \put(10,70){{\color{ForestGreen}$Z$}}
            \put(19,39){$z''$}
            \put(41,42){$z'$}
            \put(54,48){$w$}
            \put(81,47){$\rho^V_S$}
            \put(71,31){$x'$}
            \put(34,70){$\widetilde\gamma$}
            \put(-2,10){$\le \max\{K, 3E\}$}
            \put(59,89){$\le D$}
            \put(79,80){$\le E$}
            \put(56,63){$\alpha$}
        \end{overpic}
        \caption{Proof of \Cref{lem:bgi_with_U}, in the case $z'$ does not lie on $\gamma$.}
        \label{fig:ShorterPath}
    \end{figure}

\begin{lem}\label{lem:Unotnearleg_NEW}
    Let $x\in G$ and $U\in \mf S-\{S\}$ be such that $\{x,U\}$ are minimal, and let $V\in \mf S-\{S\}$ be such that $\dist_X(\rho^U_S,\rho^V_S)\le 2E$; we include the case  $U=V$. Let $Y\in \mc Y$ be such that $v_Y$ lies on a geodesic $\gamma$ from $x$ to $v_U$ in $\hat X$, and let $x'\in \gamma$ be the vertex of $\gamma$ before $v_Y$. Then $\dist_V(x,x')\le E$. 
\end{lem}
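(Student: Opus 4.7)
The plan is to argue by contradiction: assuming $\dist_V(x,x')>E$, I will construct a path in $\hat X$ from $x$ to $v_U$ that is strictly shorter than $\gamma$, contradicting that $\gamma$ is a geodesic. The guiding idea is that the hypothesis $\dist_X(\rho^U_S,\rho^V_S)\le 2E$ allows a detour forced by the projection to $V$ to enter the star of $v_U$ in $\hat X$ directly, thereby bypassing $v_Y$.

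First I would mimic the start of the proof of \Cref{lem:bgi_with_U}. Since $\gamma|_{[x,x']}$ is an $\hat X$-geodesic with endpoints $x,x'\in X$ and $\dist_V(x,x')>E$, applying the HHG bounded geodesic image axiom \Cref{defn:HHS}.\ref{axiom:bounded_geodesic_image} to any $X$-geodesic $\alpha$ between $x$ and $x'$ produces some $w\in\alpha$ with $\dist_X(w,\rho^V_S)\le E$. Combining this with \Cref{rem:newsprianobound}, which gives $\alpha\subseteq \mc N_D^X(\widetilde\gamma|_{[x,x']})$, I would locate a point $z'$ on the de-electrification $\widetilde\gamma|_{[x,x']}$ with $\dist_X(z',\rho^V_S)\le E+D$. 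The hypothesis together with $\diam(\rho^V_S)\le E$ yields $\rho^V_S\subseteq \mc N_{3E}(\rho^U_S)$, so $\dist_X(z',\rho^U_S)\le 4E+D\le A$ by the choice of $A$ in \Cref{eqn:A}.

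Next I would split according to where $z'$ sits in $\widetilde\gamma|_{[x,x']}$. If $z'$ lies on $\gamma$ itself, then it is a vertex of $\gamma|_{[x,x']}$ in $X$ with $z'\in\mc N_A(\rho^U_S)$, so there is an edge from $z'$ to $v_U$ in $\hat X$, giving a path from $x$ to $v_U$ of length at most $\dist_{\hat X}(x,x')+1$. Otherwise $z'$ lies on one of the arcs introduced by the de-electrification, replacing some cone vertex $v_Z\in\gamma|_{[x,x']}$ whose associated subspace $Z\in\mc H$ is $\max\{K,3E\}$-quasiconvex; picking $z''\in Z$ within $\max\{K,3E\}$ of $z'$ gives $\dist_X(z'',\rho^U_S)\le\max\{K,3E\}+4E+D=A$, and so the edges from $v_Z$ to $z''$ and from $z''$ to $v_U$ yield a path from $x$ to $v_U$ of length at most $\dist_{\hat X}(x,x')+2$.

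To extract the contradiction, observe that every cone vertex of $\hat X$ is adjacent only to vertices of $X$, so two distinct cone vertices are never adjacent; in particular $\dist_{\hat X}(v_Y,v_U)\ge 2$. Since $\gamma$ passes through $x'$, then $v_Y$, and finally $v_U$, we obtain $\dist_{\hat X}(x,v_U)=\dist_{\hat X}(x,x')+1+\dist_{\hat X}(v_Y,v_U)\ge\dist_{\hat X}(x,x')+3$, which is strictly larger than the length of the constructed path in either case. The main subtlety is the second case of the split: the constant $A$ was engineered in \Cref{eqn:A} precisely so that both the quasiconvexity slack $\max\{K,3E\}$ and the de-electrification slack $D$ leave $z''$ inside the set coned off by $v_U$, which is what makes the case analysis close.
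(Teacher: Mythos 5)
Your proof is correct and takes essentially the same approach as the paper's: the paper also applies the machinery of \Cref{lem:bgi_with_U} (via \Cref{defn:HHS}.\ref{axiom:bounded_geodesic_image} and \Cref{rem:newsprianobound}) to the subsegment of $\gamma$ between $x$ and $x'$, uses the hypothesis $\dist_X(\rho^U_S,\rho^V_S)\le 2E$ together with the choice of $A$ in \Cref{eqn:A} to place $v_U$ within distance $2$ of that subsegment, and then derives a contradiction with the geodesicity of $\gamma$ from the fact that $v_Y$ and $v_U$ are distinct cone points at distance at least $2$.
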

\begin{proof} Apply \Cref{lem:bgi_with_U} to $x, x'$, and the subsegment of $\gamma$ between them. As in the proof, this produces some $z$, belonging to either $\gamma \cap \widetilde  \gamma$ or some $Z\in\mc H$ where $v_Z\in  \gamma$, such that $\dist_X(z, \rho^V_S)\le D+E+\max\{K,3E\}$. By assumption, we have $\dist_X(\rho^U_S, \rho^V_S)\le 2E$, so $\dist_X(z, \rho^U_S)\le D+4E+\max\{K,3E\}\le A$. As above, it follows that $v_U$ is in the $2$-neighborhood of the subsegment of $\gamma$ between $x$ and $x'$. However, the subpath of $\gamma$ from $x'$ to $v_U$ has length at least $3$, since it must contain $v_Y$ and $v_U$, which are distance at least $2$ apart. Hence we contradict the fact that $\gamma$ is a geodesic between $x$ and $v_U$. 
\end{proof}

\begin{prop}\label{prop:LiftsProjCloseInU}
    Let $\ol x\in G/N$, $x,x' \in \ol{x}$ be distinct representatives, and $U\in \mf S-\{S\}$ be such that $\{x,U\}$ and $\{x',U\}$ are minimal, and let $V\in \mf S-\{S\}$ be such that $\dist_X(\rho^U_S,\rho^V_S)\le 2E$, where we allow  $U=V$. Then $\dist_V(x, x') \le 2\aleph+9E$, where $\aleph$ is the constant from \Cref{rem:YtoU_is_bounded}.
\end{prop}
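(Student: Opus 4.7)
I will induct on the complexity (in the sense of \Cref{rem:complexity}) of the unique element $h\in N$ with $x'=hx$, whose uniqueness follows from the free action of $N$ on $X$ established in \Cref{cor:tau}. The case $h=1$ forces $x=x'$ and the bound is trivial.

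For the inductive step, let $\gamma\colon x\to v_U$ and $\gamma'\colon x'\to v_U$ be minimal $\hat X$-geodesics witnessing the two minimality hypotheses. Applying \Cref{prop:new_shortening_pair} to $(x,h)$ produces a shortening pair $(Y,h_Y)$ with $h_Yh\prec h$, and since $x,x'\in G$ are not cone vertices, $\dist_Y^\pi(x,x')>L'/10$. The diameter triangle inequality $\dist_Y^\pi(x,x')\le \dist_Y^\pi(x,v_U)+\dist_Y^\pi(v_U,x')$ together with $L'/20>C$ (coming from \Cref{eq:bound_on_L'}) forces one of the two right-hand terms to exceed $C$, so by \Cref{lem:SBGIinHHS} the cone vertex $v_Y$ lies on $\gamma$ or on $\gamma'$.

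Up to symmetry, suppose $v_Y\in\gamma$, and let $z\in Y$ be the vertex of $\gamma$ immediately preceding $v_Y$. By \Cref{lem:Unotnearleg_NEW} we have $\dist_V(x,z)\le E$. In the favorable subcase that $v_Y$ also lies on $\gamma'$, a second application of \Cref{lem:Unotnearleg_NEW} on the primed side supplies $z'\in Y$ with $\dist_V(x',z')\le E$; since $H_Y$ acts geometrically on $Y$, \Cref{rem:YtoU_is_bounded} bounds $\diam \pi_V(Y)$ by $\aleph$ plus a uniform constant comfortably absorbed into $9E$. The triangle inequality $\dist_V(x,x')\le \dist_V(x,z)+\diam\pi_V(Y)+\dist_V(z',x')$ then gives the desired estimate.

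In the harder subcase where $v_Y\in\gamma$ but $v_Y\notin\gamma'$, I bend $\gamma$ at $v_Y$ by $h_Y^{-1}$ using \Cref{lem:bendinggeodesics}, producing a minimal $\hat X$-geodesic from $\widetilde x\coloneqq h_Y^{-1}x$ to $v_U$, so that $\{\widetilde x,U\}$ is itself a minimal pair. Writing $x'=(hh_Y)\widetilde x$ and running an analogous shortening analysis on $(x',h^{-1})$ yields a coordinated reduction to a new pair whose connecting element has strictly smaller $\prec$-complexity and whose endpoints both remain minimal relative to $v_U$; the inductive hypothesis then closes the argument. The extra $\pi_V$-cost accrued by each bending step is controlled by the diameter of a single $H_Y$-orbit projection, again at most $\aleph$ via \Cref{rem:YtoU_is_bounded}, and the two sides together fit inside the budget $2\aleph+9E$. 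The principal technical obstacle is verifying that the bending on each side can be coordinated so as to preserve minimality with respect to the common cone vertex $v_U$ while strictly decreasing the complexity of the connecting element; this is precisely where \Cref{lem:bendinggeodesics} and the doubling of the shortening-pair argument on $(x,h)$ and $(x',h^{-1})$ enter in an essential way.
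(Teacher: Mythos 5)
Your plan — shortening pairs, bending, and induction on complexity — is the right framework, and the split into cases according to where $v_Y$ lands is a natural first move. However, the cost accounting has two genuine gaps that prevent the argument from landing on the bound $2\aleph+9E$.

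First, your ``harder subcase'' assigns a cost of (at most) $\aleph$ per bending step. Two problems. (i) The per-step cost you need to control is $\dist_V(x,h_Y^{-1}x)$, but \Cref{rem:YtoU_is_bounded} only bounds projections of $H_Y$-orbits of points \emph{inside} $Y$; the point $x$ being bent typically lies nowhere near $Y$, so the remark does not apply to it. (ii) Even granting a uniform per-step cost, the number of bending steps is governed by the complexity of $h$, which is unbounded, so the accumulated cost has no reason to stay inside a fixed budget of $2\aleph+9E$. Your phrase ``the two sides together fit inside the budget'' assumes a single bend per side, which the induction does not provide.

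Second, in your ``favorable subcase'' you pass from $\dist_V(z,z')$ to $\diam\pi_V(Y)$. This quantity is $\aleph$ plus a term involving the coboundedness constant $\Psi$ of the $H_Y$-action and the Lipschitz constant $J$ of the projections; neither is controlled by $E$, so ``comfortably absorbed into $9E$'' does not hold. Moreover, $z$ and $z'$ are the last vertices on two \emph{different} geodesics $\gamma,\gamma'$ before $v_Y$, and there is no reason they lie in the same $H_Y$-orbit, so \Cref{rem:YtoU_is_bounded} alone does not bound $\dist_V(z,z')$.

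The paper avoids both traps by \emph{not} doing a per-step accounting. It bends repeatedly (one side at a time, the side containing $v_Y$), using the fact that each bend replaces the connecting element by an $N$-conjugate of $h_Y h\prec h$, until the two endpoints coincide at a single point $x''$. Only then does it compare the original $\gamma$ with the final $\gamma_k$: since every bend fixes the segment from $v_U$ to the bending point, $\gamma$ and $\gamma_k$ share a common cone vertex $v_Y$ (the one nearest $v_U$ among all bending points, or the whole of $\gamma$ if it was never bent), and by \Cref{lem:orbit_of_edge_is_unique} combined with \Cref{cor:stabilisers_in_n}, the vertices of $\gamma$ and $\gamma_k$ immediately past $v_Y$ differ by a \emph{single} element of $H_Y$ — regardless of how many bends were performed. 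This is what makes \Cref{rem:YtoU_is_bounded} applicable (to the orbit of a point of $Y$) and gives exactly one $\aleph$ per side, not one per bend. The two applications of \Cref{lem:Unotnearleg_NEW} then supply the $E$-terms, and the symmetric comparison of $\gamma'$ with $\gamma_k'$ gives the second $\aleph$.
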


\begin{proof}
    Let $\gamma, \gamma'$ be $\hat X$--geodesics from $x$ and $x'$ to $v_U$, respectively.  Since $x,x'\in \ol x$, there is some $h\in N$ such that $hx=x'\neq x$. Let $x_0=x$ and $x_0'=x'$, and similarly let $\gamma_0=\gamma$ and $\gamma_0'=\gamma'$. By \Cref{prop:new_shortening_pair} there exists a shortening pair $(Y,h_Y)$ such that $h_Yh\prec h$ and $\dist^\pi_Y(x_0,x_0')>L'/10$. By the triangle inequality, one of $\dist^\pi_Y(x_0,v_U)$ and $\dist^\pi_Y(v_U, x_0')$ is at least $L'/20$; we focus on the case $\dist^\pi_Y(x_0,v_U)>L'/20$, as the other is dealt with analogously. Since $L'/20$ is greater than the constant $C$ from \Cref{lem:SBGIinHHS}, the cone point $v_Y$ lies on $\gamma_0$. Now bend $\gamma_0$ at $v_Y$ by $h_Y^{-1}$, and call this $\gamma_1$ with endpoint $x_1=h_Y^{-1}x_0$. Then set $\gamma_1'=\gamma_0'$ and $x_1'=x_0'$. Notice that  $x_1$ and $x_1'$ differ by $hh_Y$, which is an $N$-conjugate of $h_Yh$ and so still has  complexity strictly less than that of $h$. Moreover, both geodesics still have $v_U$ as an endpoint.
    
    Repeat the argument with $x_1, x_1'$ and the associated geodesics. We proceed inductively until $x_k=x_k'$ for some $k\in \mathbb N$; we call this point $x''$. In other words, we eventually produce two geodesics $\gamma_k$ and $\gamma_k'$, both with endpoints $v_U$ and $x''\in \ol x$. Notice that $x''$ and $v_U$ are minimal, as their distance is still the length of $\gamma$.  Since $\gamma_k$ is obtained by successively bending $\gamma$ while fixing the endpoint $v_U$, there exists $v_Y\in \gamma\cap\gamma_k$. Moreover, if $z\in \gamma$ is the vertex before $v_Y$, then there exists $n\in N$ such that the vertex of $\gamma_k$ before $v_Y$ is $nz$. By \Cref{lem:orbit_of_edge_is_unique}, we can actually choose $n\in \Stab_G{v_Y}$, and the latter is $H_Y$ by \Cref{cor:stabilisers_in_n}; hence $\dist_V(z, nz)\le \aleph$ by \Cref{rem:YtoU_is_bounded}. The situation is therefore as in \Cref{fig:minimals_project_close}.
    
    \begin{figure}[htp]
        \centering
        \includegraphics[width=4in]{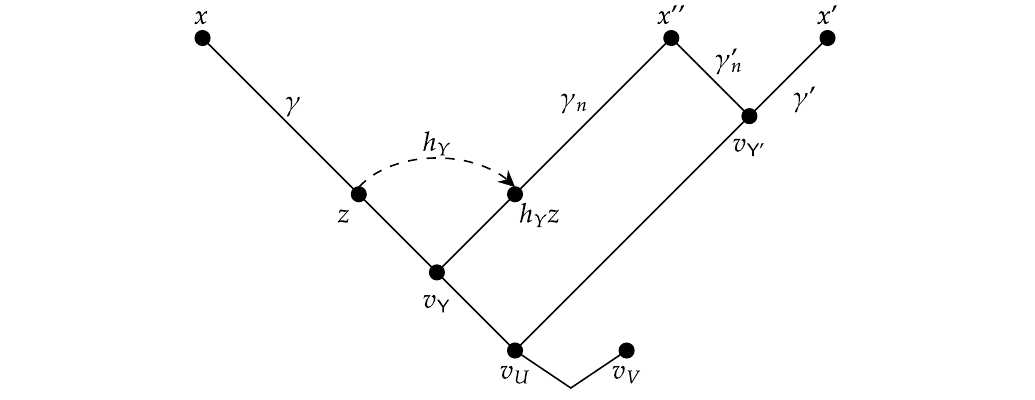}
        \caption{The geodesics from the proof of \Cref{prop:LiftsProjCloseInU}. After bending $\gamma$ and $\gamma'$ a finite number of times, their endpoints coincide. Notice that $\gamma$ and $\gamma_k$ must overlap on the segment of $\gamma$ connecting $v_U$ to the first vertex of the form $v_Y$ for some $Y\in \mc Y$, as bending can occur only at cone points.}
        \label{fig:minimals_project_close}
    \end{figure}
    
    By \Cref{lem:Unotnearleg_NEW} applied to the minimal pairs $\{x,U\}$ and $\{x'',U\}$, $V$, and $Y$, we obtain
    \[
    \dist_V(x,x'') \le \dist_V(x,z) + \diam\pi_V(z)+ \dist_V(z,nz) + \diam\pi_V(nz)+\dist_V(nz, x'') \le \aleph+4E.
    \]
    Symmetrically, we obtain $\dist_V(x'',x')\le \aleph+4E$, completing the proof as $\diam\pi_V(x'')\le E$.
\end{proof}

\noindent A similar statement holds for pairs of minimal domains:

\begin{prop}\label{prop:projections_are_welldef}
    Let $V\in \mf S-\{S\}$ and $U,U'\in\ol U\in \mf S/N-\{\ol S\}$ be such that $\{U,V\}$ and $\{U',V\}$ are minimal.
    \begin{itemize}
        \item If $U$ and $V$ are not transverse then $U=U'$.
        \item Otherwise $\dist_V(\rho^U_V, \rho^{U'}_V)\le 2\aleph+25E.$
    \end{itemize}
\end{prop}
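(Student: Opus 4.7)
The plan is to mirror the strategy of \Cref{prop:LiftsProjCloseInU}, with both variable endpoints being cone vertices rather than points of $X$. Assume $U\neq U'$ and pick $h\in N$ with $hv_U=v_{U'}$. Let $\gamma,\gamma'$ be $\hat X$--geodesics from $v_U,v_{U'}$ to $v_V$, which share the same length by minimality. I would iteratively apply \Cref{prop:new_shortening_pair} to $(v_U,h)$: since the cone vertices $v_Y$ (for $Y\in\mc Y$) and $v_U,v_{U'}$ (for domains in $\mf S-\{S\}$) are distinct vertex types in $\hat X$, the alternative $v_Y\in\{v_U,v_{U'}\}$ never arises. Hence $\dist^\pi_Y(v_U,v_{U'})>L/10$, and by the triangle inequality together with \Cref{lem:SBGIinHHS}, $v_Y$ lies on $\gamma$ or on $\gamma'$. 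Bend the corresponding geodesic at $v_Y$, replacing one endpoint by its $h_Y^{\pm 1}$--image and reducing the complexity of the ``difference'' element. After finitely many steps, both geodesics share a common endpoint $v_{U''}\in\ol U$.

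For the \emph{not transverse} case I would argue by contradiction. \Cref{lem:close_proj_of_orthogonals} applied to $W=S$ yields $\dist_S(\rho^U_S,\rho^V_S)\le 2E$, and hence via the path $v_U\to u\to v\to v_V$ (for $u\in\rho^U_S$ and $v\in\rho^V_S$ at $X$--distance at most $2E$) one has $\dist_{\hat X}(v_U,v_V)\le 2+2E$. Minimality gives the same bound for $\dist_{\hat X}(v_{U'},v_V)$, so $\dist_{\hat X}(v_U,v_{U'})\le 4+4E$. A uniform Lipschitz bound for $\pi_Y$ on adjacent vertices of $\hat X$ (depending only on $E,J,B,A$ via \Cref{lem:lipschitzproj} and \Cref{lem:boundedproj_for_qc}) then bounds $\dist^\pi_Y(v_U,v_{U'})$ by a constant linear in $E$. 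For $L$ above the threshold $\widetilde L$ this contradicts $\dist^\pi_Y(v_U,v_{U'})>L/10$, so $U=U'$.

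For the \emph{transverse} case I would follow the template of \Cref{prop:LiftsProjCloseInU}. The iterative bending yields $v_{U''}\in\ol U$ and modified geodesics $\gamma_k,\gamma_k'$ from $v_{U''}$ to $v_V$. As in the previous proof, there exists $v_Y\in\gamma\cap\gamma_k$ such that, if $z\in\gamma$ is the vertex adjacent to $v_Y$ on the $v_U$ side, the corresponding vertex of $\gamma_k$ is $nz$ for some $n\in H_Y$ (by \Cref{lem:orbit_of_edge_is_unique}). One then splits
\[
\dist_V(\rho^U_V,\rho^{U''}_V)\le \dist_V(\rho^U_V,\pi_V(z))+\diam\pi_V(z)+\dist_V(\pi_V(z),\pi_V(nz))+\diam\pi_V(nz)+\dist_V(\pi_V(nz),\rho^{U''}_V),
\]
bounding $\dist_V(\pi_V(z),\pi_V(nz))\le\aleph$ by \Cref{rem:YtoU_is_bounded}, the diameters by $E$, and the outer two terms by an analog of \Cref{lem:Unotnearleg_NEW} stated below. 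A symmetric argument for $\gamma'$ and $\gamma_k'$ bounds $\dist_V(\rho^{U'}_V,\rho^{U''}_V)$, and the triangle inequality yields $\dist_V(\rho^U_V,\rho^{U'}_V)\le 2\aleph+25E$.

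The hard part will be establishing an analog of \Cref{lem:Unotnearleg_NEW} in which the variable endpoint is a cone vertex $v_U$ rather than a point in $G$: for any $v_Y$ on a minimal $\hat X$--geodesic $\gamma$ from $v_U$ to $v_V$ with $U\trans V$, and any vertex $z$ of $\gamma$ adjacent to $v_Y$ on the $v_U$ side, $\dist_V(\rho^U_V,\pi_V(z))$ is bounded by a constant depending only on $E$. The approach would mirror the proof of \Cref{lem:Unotnearleg_NEW}: assume the bound fails and invoke \Cref{lem:bgi_with_U} for $z$ paired with a point $y\in\mc N_A(\rho^U_S)\cap X$ chosen so that $\pi_V(y)$ is close to $\rho^U_V$ (such a $y$ exists by applying partial realization to $U$ and using the Lipschitz property of $\pi_V$). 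This would force $v_V$ to lie within $\hat X$--distance $2$ of the $\hat X$--geodesic from $z$ to $y$, which, combined with the fact that $y$ is adjacent to $v_U$, yields a shortcut from $v_U$ to $v_V$ strictly shorter than $\gamma$, contradicting the minimality of $\{U,V\}$.
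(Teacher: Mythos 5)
Your strategy --- iterative bending via shortening pairs, then projection estimates --- parallels the paper's, but both halves have gaps. In the non-transverse case you stop short of the key bound: since $\dist_X(\rho^U_S,\rho^V_S)\le 2E\le 2A$, any point of $\rho^U_S$ already lies in $\mc N_A(\rho^V_S)$, so $\dist_{\hat X}(v_U,v_V)=2$ exactly, not merely $\le 2+2E$. This matters because the concatenation $[v_U,v_V]\cup[v_V,v_{U'}]$ is then $v_U,z,v_V,z',v_{U'}$ with $z,z'\in X$ (as $v_V$ is $\hat X$--adjacent only to vertices of $X$), hence contains no cone point $v_Y$ with $Y\in\mc Y$, and \Cref{lem:no_pivot_means_collapse} gives $U=U'$ immediately. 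Your Lipschitz argument is only valid along a path that misses the shortening vertex $v_Y$; you exhibit such a path from $v_U$ to $v_V$, but on the $v_{U'}$ side you only invoke minimality of the $\hat X$--distance, which says nothing about the combinatorial form of a geodesic --- indeed $U'$ and $V$ need not be non-transverse, so your construction does not apply there. Thus the asserted contradiction with $\dist^\pi_Y(v_U,v_{U'})>L/10$ is not established without the sharper observation.

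In the transverse case, your decomposition requires an analogue of \Cref{lem:Unotnearleg_NEW} whose moving endpoint is a cone vertex $v_U$, and the proof you propose for it does not close. \Cref{lem:bgi_with_U} puts $v_V$ within $\hat X$--distance $2$ of a geodesic $[y,z]$, but $[y,z]$ is \emph{not} a subsegment of the minimal geodesic $\gamma=[v_U,v_V]$, so geodesicity of $\gamma$ is not contradicted. Quantitatively: since $z$ is adjacent to $v_Y$ and $v_Y,v_V$ are cone points at distance $\ge 2$, we have $\dist_{\hat X}(v_U,v_V)\ge\dist_{\hat X}(v_U,z)+3$; whereas the detour $v_U\to y\to[y,z]\to v_V$ only yields $\dist_{\hat X}(v_U,v_V)\le 4+\dist_{\hat X}(v_U,z)$ --- consistent, not contradictory. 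The paper avoids needing such an analogue: it works with $p$, the first vertex of $\gamma$ after $v_U$ (which lies in $X$ and is a minimal representative with $V$), applies \Cref{lem:Unotnearleg_NEW} in the case $U=V$ to bound $\dist_V(p,z)\le E$, and then separately relates $\pi_V(p)$ to $\rho^U_V$ via partial realization together with the estimate $\diam\pi_V(\mc N_A(\rho^U_S))\le 5E$. Substituting this chain for your proposed analogue would repair your decomposition, though the resulting constant would come out larger than $2\aleph+25E$.
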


\begin{proof}
    Suppose first that $\dist_{\hat X}(v_U,v_V)=2$. This occurs, in particular, if $U$ and $V$ are not transverse, since $\dist_{X}(\rho^U_S,\rho^V_S)\le 2E\le 2A$ by \Cref{lem:close_proj_of_orthogonals}. Then the union of any two geodesics $[v_U,v_V]\cup [v_V,v_{U'}]$ does not contain any cone point $v_Y$ for $Y\in \mc Y$, as $v_V$ is only $\hat X$-adjacent to points in $X$. Towards a contradiction, suppose that $U'\neq U$. Let $h\in N$ be such that $U'=hU$, and let $(Y,h_Y)$ be a shortening pair, as in \Cref{prop:new_shortening_pair}. Since the geodesics $[v_U,v_V]$ and $[v_V,v_{U'}]$ do not pass through $v_Y$, the diameter of the $Y$-projection of each geodesic is at most $C$ by \Cref{lem:SBGIinHHS}.  Thus $L'/10\le 2C$, contradicting \Cref{eq:bound_on_L'}. This proves the first bullet point.

For the second bullet, the above argument lets us assume that $\dist_{\hat X}(v_U,v_V)\ge 3$, which in particular means that $\dist_X(\rho^U_S, \rho^V_S)>2A$. First, we prove that $\diam \pi_V(\mc N_A(\rho^U_S))\le 5E$. To see this, let $p\in \mc N_A(\rho^U_S)$, and fix $q\in \rho^U_S$, so that $\dist(p,q)\le A+E$. If a geodesic $[p,q]$ in $X$ passed $E$-close to $\rho^V_S$, then $\dist_X(\rho^U_S, \rho^V_S)\le A+2E$, and this is at most $2A$ by \Cref{eqn:A}, contradicting our assumption. Thus the bounded geodesic image axiom for $\mf S$ yields that $\dist_V(p,q)\le E$. Hence $\pi_V(p)\subseteq \mc N_{2E}(\pi_V(q))$, from which the claim follows.

 Now let $\gamma$ and $\lambda$ be geodesics connecting $v_U$ (resp. $v_{U'}$) to $v_V$, and let $p$ (resp. $q$) be the first point of $\gamma$ after $v_U$ (resp. the first point of $\lambda$ after $v_{U'}$). 
 Since $U, U' \in \ol{U}$, we may shorten the concatenation as in \Cref{prop:LiftsProjCloseInU} to produce two geodesics, $\gamma''$ and $\lambda''$, each joining $v_V$ to a single point $v_{U''}\in \ol{v_U}$, and such that $\gamma''$ is obtained by bending $\gamma$ while $\lambda''$ is obtained by bending $\lambda$. If $p''$ is the first point of $\gamma''$ after $v_{U''}$, then $p''\in \ol p$, and, as in the proof of \Cref{prop:LiftsProjCloseInU}, we obtain $\dist_V(p,p'')\le \aleph+4E$. 
 
 Now let $r\in X$ be such that $\dist_X(r,\rho^U_S)\le E$ and $\dist_V(r,\rho^U_V)\le E$, which exists by the partial realization axiom \Cref{defn:HHS}.\ref{axiom:partial_realisation}. Since $r,p\in \mc N_A(\rho^U_S)$, we have that
 \[\dist_V(\rho^U_V,p'')\le E+\dist_V (r,p'')\le 
 E+\diam\pi_V(N_A(\rho^U_S))+\dist_V(p,p'')\le \aleph+10E.\] 
 Symmetrically, there exists $q''\in \mc N_A(\rho^{U''}_S)$ such that $\dist_V(\rho^{U'}_V,q'')\le \aleph+10E$, thus concluding the proof as $\diam\pi_V(\mc N_A(\rho^{U''}_S))\le 5E$.
\end{proof}

\subsection{The quotient structure and projections}\label{sec:HHGStructure}
We are now ready to define the relative HHG structure on $G/N$.  Recall that, by \Cref{rem:X_is_CayG}, we are assuming that $X$ is a Cayley graph for $G$. In particular, if we fix a word metric $\dist_G$ on $G$ with respect to a finite generating set, the projection map $\pi_S\colon G\to X$ is bijective at the level of vertices.

\begin{construction} \label{constr:HHGStructureQuotient} Fix a finite generating set $\mc T$ for $G$ which induces a word metric $\dist_G$, let $\ol{\mc T}$ be its image in $G/N$, and let $\dist_{G/N}$ be the word metric on $G/N$ induced by $\ol {\mc T}$. The relative HHG structure on $G/N$ has the following components.
\begin{itemize}
    \item \textit{Index set:} $\qs$.  Recall that we denote the image of $U\in \mf S$ by $\ol U\in \mf S/N$.
    
    \item \textit{Hyperbolic spaces:} The top-level space $\mc C \ol S$ is $\ol X$ as defined in \Cref{sec:minimal}. For each $\ol U\in \mf S/N -\{\ol S\}$, we set 
    $$\mc C \ol U=\left(\bigcup_{U\in \ol U}\mc C U\right) /N.$$ 
    Notice that, for every $U\in \ol U$, the projection map $\mc C U\to \mc C \ol U$ is an isometry. Indeed, by \Cref{cor:tau} $N$ acts freely on $X'$, and in particular no non-trivial $n\in N$ can fix $v_U$ (hence $U$).  

     \item \textit{Relations:} Every domain in $\qs$ nests into $\ol  S$.  The relation between $\ol U,\ol V\in \mf S/N - \{\ol S\}$ is the same as the relation between any minimal pair $\{U,V\}$ with $U\in \ol U$ and $V\in \ol V$.
     In particular, $\ol U$ is $\nest$-minimal in $\mf S/N$ if and only if every representative $U\in \ol U$ is $\nest$-minimal in $\mf S$.
    
     \item \textit{Projection maps:} We identify $G$ with $X$ via the projection $\pi_S$, and we consider it as a subspace of $\hat X$ via the inclusion $i\colon X\hookrightarrow \hat X$. This way, $G/N$ can be seen as a subgraph of $\ol X$. For every $\ol g\in G/N$ define $\pi_{\ol S}\colon G/N \to \ol X$ by $\pi_{\ol S} (\ol g)\coloneq \ol{g}$; moreover, given $\ol  U\in \qs-\{S\}$ set 
     \[
      \pi_{\ol  U}(\ol  g)\coloneq  \left( \bigsqcup_{\substack{g\in \ol {g},\, U\in \ol U \\\{U,g\} \textrm{ minimal}}} \pi_U(g)\right)/N.
     \]

    \item \textit{Relative projections:}  Set $\rho^{\ol  V}_{\ol  S}=\ol{v_V}$ for some (equivalently, any) $V\in\ol V$. Furthermore, given $\ol  U, \ol  V\in \mf S/N-\{\ol S\}$ with $\ol  U\trans \ol  V$ or $\ol  V\propnest \ol  U$, define  
        \[
            \rho^{\ol  V}_{\ol  U} \coloneq  \left( \bigsqcup_{\substack{U\in \ol U, V\in\ol V \\\{U,V\} \textrm{ minimal}}} \rho^V_U\right)/N.
        \]
\end{itemize}
\end{construction}

\begin{remark}
\label{rem:closeProj} Let $U\propnest S$ and $g\in G$ be minimal. An immediate consequence of how the projections are defined is that $\pi_U(g) \subseteq \pi_{\ol{U}}(\ol{g})$. Similarly, if $U,V\propnest S$ are minimal and non-orthogonal, then $\rho^V_U \subseteq \rho^{\ol{V}}_{\ol{U}}$. These convenient facts are useful for comparing the hierarchical structure for $(G, \mf S)$ and that of $(G/N, \mf S/N)$.
\end{remark}

\begin{remark}\label{rem:UniqueReps}
    Let $\ol U, \ol V\in \mf S/N-\{\ol S\}$, and fix a representative $U\in \ol U$. If there exists $V\in\ol V$ such that $\dist_{X}(\rho^U_S, \rho^V_S)\le 2A$ (for example if $U$ and $V$ are not transverse, by \Cref{lem:close_proj_of_orthogonals}), then there can be at most one such domain $V\in \ol V$, as we argued in the proof of \Cref{prop:projections_are_welldef}. As a consequence, for every other $V'\in \ol V$ we have that $\dist_{X}(\rho^U_S, \rho^{V'}_S)>2A>E$, so $V'\trans U$. This proves that the relations $\nest$ and $\orth$ in $\mf S/N$ are well-defined, as either there is a unique minimal pair including $U$, or every pair $\{U,V\}$ is transverse. This also shows that $\ol U$ and $\ol V$ are orthogonal (resp. nested) if and only if they admit orthogonal (resp. nested) representatives $U$ and $V$, as such representatives are minimal.
\end{remark}

\subsubsection{Lifting minimal configurations}
\noindent Before we prove that the above construction yields a hierarchy structure on $G/N$, we record a few technical lemmas about projections and minimal collections. We first argue that certain configurations admit pairwise minimal representatives.
\begin{lem}[Lifting triples]\label{lem:LiftingTriples}
    Given $\ol x,\ol y,\ol z\in G/N$, there exist representatives $x\in \ol x, y\in \ol y$, and $z\in \ol z$ such that $\{x,y,z\}$ are pairwise minimal. Furthermore, the same holds when any of the elements of $G/N$ are replaced with domains in $\mf S/N-\{\ol S\}$.
\end{lem}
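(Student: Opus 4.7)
The plan is to adapt the triangle-lifting strategy from the proof of \Cref{thm:hyperbolicity_olX_with_points}. First I would unify the two cases: every domain $\ol U \in \mf S/N - \{\ol S\}$ is represented in $\hat X$ by the cone vertex $v_U$, and the notion of minimality for pairs of domains (or for mixed pairs of a point and a domain) is defined via the $\hat X$-distance between the associated cone vertices. Consequently it suffices to prove that for any three vertices $\ol a, \ol b, \ol c \in \ol X$ there exist representatives $a, b, c \in \hat X$ with the three pairs $\{a,b\}, \{b,c\}, \{a,c\}$ pairwise minimal in the sense of \Cref{def:minrep}.

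To produce such representatives, I would start with an \emph{open lift} of the geodesic triangle $[\ol a, \ol b] \cup [\ol b, \ol c] \cup [\ol c, \ol a]$ in $\ol X$: lift each side to an $\hat X$-geodesic, and translate by elements of $N$ to share endpoints, obtaining a concatenation $\gamma_1 = [a, b']$, $\gamma_2 = [b', c']$, $\gamma_3 = [c', ha]$ for some $h \in N$. Then I would close up the triangle by induction on the complexity of $h$, following the argument in \Cref{thm:hyperbolicity_olX_with_points}: at each step \Cref{prop:new_shortening_pair} produces a shortening pair $(Y, h_Y)$ for $(a, h)$; the triangle inequality together with \Cref{eq:bound_on_L'} and \Cref{lem:SBGIinHHS} force $v_Y$ to be either a vertex of the open triangle (in which case I would apply $h_Y$ to all sides following that vertex) or to lie on one of the three sides (in which case I would bend that side at $v_Y$ using \Cref{lem:bendinggeodesics}). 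Either move yields a new open lift of the same triangle in which $h$ is replaced by $h_Y h \prec h$, so after finitely many iterations the procedure terminates with a closed geodesic triangle $T$ in $\hat X$ that is a lift of the original geodesic triangle in $\ol X$.

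Pairwise minimality then follows immediately from the definition of ``lift'': the quotient map $q$ restricts to an isometry between $T$ and its image in $\ol X$, so each side of $T$ is an $\hat X$-geodesic whose length equals $\dist_{\ol X}$ between the corresponding endpoints. This forces the three pairs of endpoints of $T$ to be minimal. I expect the only obstacle to be bookkeeping, since the closing-up procedure is already spelled out in detail in \Cref{thm:hyperbolicity_olX_with_points}; the only genuinely new ingredient is the observation that domain representatives $\ol U$ correspond to cone vertices $v_U$ in $\hat X$, which allows the argument to apply uniformly to both points and domains.
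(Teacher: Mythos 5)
Your proposal is correct and takes essentially the same approach as the paper, which simply invokes the open-lift-and-bend procedure from the proof of \Cref{thm:hyperbolicity_olX_with_points} and observes that the resulting closed lift has pairwise minimal vertices by construction. Your explicit handling of domains via their cone vertices $v_U$ is exactly how the paper's ``furthermore'' clause is meant to be read.
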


\begin{proof}
   Fix $x\in \ol x$.  Considering $\ol x, \ol y, \ol z$ as vertices of $\ol X$, pick geodesics in $\ol X$ to form a geodesic triangle. By \Cref{prop:liftingQuadrilaterals} we can lift this to a geodesic triangle in $\hat X$ based at $x\in \ol x$.  The vertices of this triangle are pairwise minimal by construction. The same argument holds if we replace any vertex of the triangle in $\ol X$ with the image of the cone point over a domain in $\mf S-\{ S\}$, thus proving the ``furthermore'' part of the statement.
\end{proof}

\begin{lem}\label{lem:lifting_trans_and_points}
    Let $\ol x, \ol y\in G/N$ and $\ol U_1,\ldots, \ol U_k\in \mf S/N-\{\ol S \}$. There exist representatives $\{x,y,U_1,\ldots, U_k\}$ such that $\{x,y,U_i\}$ are pairwise minimal for every $i\le k$.
\end{lem}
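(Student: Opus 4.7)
The plan is to extend the lifting strategy of \Cref{lem:LiftingTriples} to a more elaborate configuration and then close it up via iterated applications of \Cref{prop:new_shortening_pair}. Fix $x\in\ol x$ and, in $\ol X$, choose a geodesic $\ol{\gamma}_0\colon\ol x\to\ol y$ together with, for each $i$, geodesics $\ol{\gamma}_i^+\colon\ol y\to\ol{v_{U_i}}$ and $\ol{\gamma}_i^-\colon\ol{v_{U_i}}\to\ol x$. This yields a ``caterpillar'' in $\ol X$: a spine $\ol x\ol y$ with $k$ triangular wings sharing it. Lift $\ol\gamma_0$ starting at $x$ to a minimal geodesic $\gamma_0\colon x\to y$ in $\hat X$, so that $\{x,y\}$ is automatically minimal. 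For each $i$, lift the two wings starting at $y$ to obtain an open lift of the $i$-th triangle closing up at some $h_ix$ with $h_i\in N$; let $v_{U_i^*}$ denote the intermediate cone point. If $h_i=1$ for all $i$ simultaneously, then the resulting configuration is closed and $\{x,y,U_i^*\}$ is pairwise minimal for every $i$.

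To arrange $h_i=1$ for all $i$, iterate the shortening procedure. For any $i_0$ with $h_{i_0}\neq 1$, \Cref{prop:new_shortening_pair} gives a shortening pair $(Y,h_Y)$ with $\dist^\pi_Y(x,h_{i_0}x)>L/10$. By the triangle inequality along $x\to y\to v_{U_{i_0}^*}\to h_{i_0}x$ combined with \Cref{lem:SBGIinHHS} (since $L/30>C$ by \Cref{eq:bound_on_L'}), the cone point $v_Y$ must lie on one of $\gamma_0$, $\gamma_{i_0}^+$, or $\gamma_{i_0}^-$. If $v_Y$ lies on $\gamma_{i_0}^\pm$, bending only this wing at $v_Y$ yields a new lift of the $i_0$-th triangle in which $h_{i_0}$ is replaced by $h_Yh_{i_0}\prec h_{i_0}$, while the spine and all other wings are unaffected. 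If $v_Y$ lies on $\gamma_0$, the bending propagates globally: applying $h_Y$ to everything ``after'' $v_Y$ replaces $y$ with $h_Yy$, each $U_j^*$ with $h_YU_j^*$, and each $h_j$ with $h_Yh_j$; in particular $c(h_Yh_{i_0})\prec c(h_{i_0})$ by the shortening-pair property.

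The main obstacle will be termination, since spine-bendings in the second case may temporarily increase $c(h_j)$ for $j\neq i_0$. I plan to address this via a composite well-founded measure, for instance a Dershowitz--Manna-style multiset order on $\{c(h_j)\}_j$ combined with a secondary bound coming from the finiteness of cone points on a minimal geodesic of $\hat X$, which limits how often any single spine-bending can be repeated. An alternative route, which would bypass case~2 altogether, is to show---leveraging the minimality of $\{x,y\}$ together with \Cref{prop:LiftsProjCloseInU} and the Lipschitz/bounded-projection estimates of \Cref{sec:basicassumption}---that the shortening pair $(Y,h_Y)$ can always be refined so that $v_Y$ lies off $\gamma_0$. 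Either strategy drives all $h_j$ to the identity (using freeness of the $N$-action on $X$ from \Cref{cor:tau}) and produces the desired representatives $\{x,y,U_1^*,\dots,U_k^*\}$.
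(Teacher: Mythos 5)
Your caterpillar construction is genuinely different from the paper's proof, and the difference is exactly where the gap lies. You lift all $k$ triangles at once, leaving $k$ separate closing elements $h_1,\dots,h_k\in N$ that must be driven to the identity simultaneously. As you yourself note, a spine-bending at $v_Y$ (needed when the shortening pair forces $v_Y$ onto $\gamma_0$) replaces every $h_j$ by $h_Y h_j$; the shortening-pair property from \Cref{prop:new_shortening_pair} only guarantees $h_Y h_{i_0}\prec h_{i_0}$ for the one index $i_0$ you chose, while for $j\neq i_0$ the complexity $c(h_Y h_j)$ can increase, and there is no control on how much. Neither of your proposed fixes closes this. A Dershowitz--Manna multiset order requires that any elements whose rank increases be strictly dominated by the one that decreased, but the complexity relation $\prec$ of \Cref{rem:complexity} is only a partial order on $N$ with no such compatibility, and the $h_j$ ($j\neq i_0$) can jump arbitrarily in it. Your alternative route---refining the shortening pair so that $v_Y$ avoids $\gamma_0$---also cannot work in general: the triangle inequality along $x\to y\to v_{U_{i_0}^*}\to h_{i_0}x$ only forces $v_Y$ onto \emph{one} of the three sides, and if the big projection distance is $\dist^\pi_Y(x,y)>L/30$ then by \Cref{lem:SBGIinHHS} the cone point lies on $\gamma_0$, full stop; minimality of $\{x,y\}$ does not rule this out.

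The paper avoids the simultaneous-termination problem structurally, by a two-level induction rather than a one-shot construction. The outer induction is on $k$: one first obtains pairwise-minimal representatives $\{x,y,U_1,\dots,U_{k-1}\}$ by the inductive hypothesis, and separately a pairwise-minimal triple $\{x,y',U_k\}$ from \Cref{lem:LiftingTriples}, normalized so both share the lift $x$ of $\ol x$. Only \emph{one} element $h\in N$ with $hy=y'$ remains to be killed, and the inner induction runs on $c(h)$ alone. The triangle inequality for $\dist^\pi_Y(y,y')>L'/10$ shows $v_Y$ is a cut vertex of either all the triangles on $y$ or the one triangle on $y'$, so the bending propagates coherently and strictly reduces $c(h)$ at every step, never touching the complexities of elements that have already been closed up. In other words, the paper's configuration deliberately has exactly one open discrepancy at any time, which is the idea your approach is missing.
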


\begin{proof}
    We proceed by induction on $k$. The base case $k=1$ is \Cref{lem:LiftingTriples}.  Suppose that $\{x,y,U_1,\ldots, U_{k-1}\}$ are as in the statement. There are $\{x',y',U_k\}$ pairwise minimal representatives by \Cref{rem:UniqueReps}, where $x'\in \ol x$ and $y'\in \ol y$. Up to the action of $N$, we can assume that $x=x'$. Let $h\in N$ map $y$ to $y'$. For every $i\le k-1$, consider a geodesic triangle with vertices $\{x,y, v_{U_i}\}$, and also consider a geodesic triangle with vertices $\{x,y', v_{U_k}\}$, as in \Cref{fig:lifting_trans_and_points}.
    
    \begin{figure}[htp]
        \centering
        \includegraphics[width=4in]{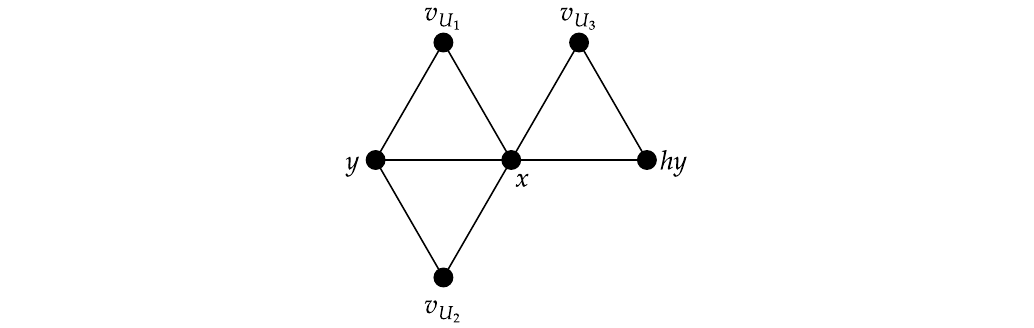}
        \caption{The various lifts from the proof of \Cref{lem:lifting_trans_and_points} (for $k=3$).}
        \label{fig:lifting_trans_and_points}
    \end{figure}
    
    If $hy=y$, we are done. Otherwise, by \Cref{prop:new_shortening_pair} there exists a shortening pair $(Y,h_Y)$, so that $\dist^\pi_Y(y,y')>L'/10$.  By triangle inequality, one of $\dist^\pi_Y(y,x)$ and $\dist^\pi_Y(x,y')$ is at least $L'/20$.

    If $\dist^\pi_Y(y,x)>L'/20$, then for every $i\le k-1$, one of $\dist^\pi_Y(y,v_{U_i})$ and $\dist^\pi_Y(v_{U_i},x)$ is at least $L'/40>C$.  \Cref{lem:SBGIinHHS} then implies that $v_Y$ lies both on every geodesic $[x, v_{U_i}]$ and every geodesic $[x, y]$. In particular, $v_Y$ is a cut vertex of the triangle with vertices $\{x,y, v_{U_i}\}$. For the same reason, if $\dist^\pi_Y(x,y')>L'/20$, then $v_Y$ is a cut vertex of the triangle with vertices $\{x,y', v_{U_k}\}$. In either case, applying $h_Y$ to the every vertex in the configuration from \Cref{fig:lifting_trans_and_points} between $v_Y$ and $y'$ reduces the complexity of $h$, and we conclude by induction.
\end{proof}

\noindent If the given domains are all non-transverse to some domain, the lifts from \Cref{lem:lifting_trans_and_points} can be chosen to be pairwise minimal:
\begin{lem}\label{non_trans_dom_and_two_points_lift}
    Let $\{\ol U_0,\dots, \ol U_k\}$ be a collection of domains in $\mf S/N-\{\ol S\}$ such that $\ol U_i$ and $\ol U_0$ are not transverse for any $i\neq 0$, and let $\ol x, \ol y\in G/N$. Then there exist pairwise minimal representatives $\{x,y,U_0,\ldots, U_k\}$.
\end{lem}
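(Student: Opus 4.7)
The plan is to proceed by induction on $k$, paralleling the strategy of \Cref{lem:lifting_trans_and_points}. The base case $k=0$ is immediate from \Cref{lem:LiftingTriples} applied to $\ol x, \ol y, \ol v_{U_0}$.

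For the inductive step, suppose pairwise minimal $\{x, y, U_0, \ldots, U_{k-1}\}$ have been chosen. The first observation is that for each $i \in \{1, \ldots, k-1\}$, minimality of $\{U_0, U_i\}$ combined with the hypothesis that $\ol U_0, \ol U_i$ are non-transverse forces $U_0$ and $U_i$ to themselves be non-transverse: indeed, the non-transverse representative pair guaranteed by \Cref{lem:close_proj_of_orthogonals} achieves $\hat X$-distance at most $2$, and the uniqueness clause of \Cref{rem:UniqueReps} identifies it with the minimal pair. Next, applying \Cref{rem:UniqueReps} to $U_0$ and $\ol U_k$, I take $U_k \in \ol U_k$ to be the unique representative non-transverse to $U_0$. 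Then \Cref{lem:close_proj_of_orthogonals} together with the choice of $A$ in \eqref{eqn:A} yields $\dist_X(\rho^{U_0}_S, \rho^{U_k}_S) \le 2E \le 2A$, so $\dist_{\hat X}(v_{U_0}, v_{U_k}) \le 2$ and $\{U_0, U_k\}$ is minimal. Combining with the non-transversality of each $(U_0, U_i)$ produces $\dist_X(\rho^{U_i}_S, \rho^{U_k}_S) \le 4E \le 2A$ for every $i \ge 1$, and a further application of \Cref{rem:UniqueReps} gives $\{U_i, U_k\}$ minimal.

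It remains to verify that $\{x, U_k\}$ and $\{y, U_k\}$ are minimal with this canonical choice. Here my plan is to adapt the bending procedure from the proof of \Cref{lem:lifting_trans_and_points}: apply \Cref{lem:LiftingTriples} to $\ol x, \ol y, \ol v_{U_k}$ to produce pairwise minimal $\{x^*, y^*, v_{U_k^*}\}$, use the $N$-action to arrange $x^* = x$, and then reduce the complexity (in the sense of \Cref{rem:complexity}) of the element $h \in N$ encoding the discrepancy with $(y, v_{U_k})$. When $h$ is non-trivial, \Cref{prop:new_shortening_pair} supplies a shortening pair $(Y, h_Y)$; by the strong bounded geodesic image \Cref{lem:SBGIinHHS} together with the thresholds in \eqref{eq:bound_on_L'}, the cone vertex $v_Y$ is a cut vertex of the relevant $\hat X$-triangles, and bending at $v_Y$ strictly decreases complexity. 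Throughout, the pairwise minimalities involving the fixed $x, y, v_{U_0}, \ldots, v_{U_{k-1}}$ are preserved, since the bending modifies only the auxiliary vertices $y^*, v_{U_k^*}$.

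The main obstacle I anticipate is ensuring that when $h$ becomes trivial, the resulting $v_{U_k^*}$ coincides with the canonical $v_{U_k}$ rather than with some other $N$-translate. The plan is to use the non-transversality one last time: after the bending terminates, the triangle $\{x, v_{U_0}, v_{U_k^*}\}$ is pairwise minimal, which together with the bound $\dist_{\ol X}(v_{\ol U_0}, v_{\ol U_k}) \le 2$ forces $\dist_{\hat X}(v_{U_0}, v_{U_k^*}) \le 2$; by \Cref{rem:UniqueReps} this uniquely identifies $v_{U_k^*}$ with the canonical representative $v_{U_k}$, closing the induction.
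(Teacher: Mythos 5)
Your first observation and the construction of the canonical $U_k$ are correct: for each $i<k$, minimality of $\{U_0,U_i\}$ together with the non-transversality hypothesis forces $U_0, U_i$ non-transverse, and the unique $U_k$ non-transverse to $U_0$ is then pairwise minimal with each $U_i$ by the $5E\le 2A$ computation and \Cref{rem:UniqueReps}. The gap is in the bending procedure.

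After arranging $x^*=x$, there are generically \emph{two} independent discrepancies: one element $h_1\in N$ with $y^*=h_1 y$ and another $h_2\in N$ with $U_k^*=h_2 U_k$. There is no single ``element $h\in N$ encoding the discrepancy with $(y,v_{U_k})$'', so the induction on complexity you propose is not well-defined as stated. Even if you repair this by tracking only $h_1$ (so the bending terminates with some pairwise minimal $\{x,y,U_k^{**}\}$), the representative $U_k^{**}$ is simply whatever the bending produces; nothing constrains it to be the canonical $U_k$, and minimal representatives are not unique (cf.\ \Cref{lem:no_pivot_means_collapse}, which describes \emph{how} they can differ rather than ruling it out). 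Your ``main obstacle'' resolution asserts that after bending $\{x,v_{U_0},v_{U_k^{**}}\}$ is pairwise minimal, but the configuration you are bending — the triangle $\{x,y^*,v_{U_k^*}\}$, possibly augmented with the triangles $\{x,y,v_{U_i}\}$ for $i<k$ — contains no geodesic $[v_{U_0},v_{U_k^*}]$, so nothing certifies that $\{v_{U_0},v_{U_k^{**}}\}$ is minimal. That is exactly what the non-transversality hypothesis is supposed to buy you, but your configuration doesn't carry it through the bending. (You also never address whether the terminal $y^{**}$ equals the fixed $y$.)

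The paper's proof sidesteps this by splitting into three stages rather than one induction. First it produces pairwise minimal domain representatives $\{U_0,\dots,U_k\}$ directly: once $U_0$ is fixed, each $U_i$ is the unique non-transverse representative, and since all $\rho^{U_i}_S$ lie within $5E\le 2A$ of each other the whole collection is automatically pairwise minimal. Second, by a separate induction on $k$, it extends to pairwise minimal $\{x,U_0,\dots,U_k\}$: the bending configuration there explicitly includes the length-$2$ geodesics $[v_{U_i},v_{U_j}]$, which lie in $X'$ and can never contain a shortening cone point $v_Y$, and this is what forces $v_Y$ to be a cut vertex of the whole figure so that the bending carries all the $v_{U_i}$ rigidly alongside $x'$. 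Third, a single further bending adds $y$. At each stage there is one unambiguous element of $N$ whose complexity decreases. You should adopt this decomposition: trying to incorporate $U_k$, $x$, and $y$ simultaneously in one inductive step makes it unclear which representative you land on when the bending terminates.
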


\begin{proof}
    We first prove the existence of pairwise minimal representatives of the domains alone. Fix $U_0\in \ol U_0$. Since $\ol U_i$ is not transverse to $\ol U_0$ for any $i$, by \Cref{rem:UniqueReps} there is a unique representative $U_i\in \ol U_i$ such that $\{U_1,U_0\}$ are minimal, and $\rho^{U_i}_S$ at distance at most $2E$ from $\rho^{U_0}_S$ by \Cref{lem:close_proj_of_orthogonals}. Since $\dist_X(\rho^{U_i}_S,\rho^{U_j}_S)\le 5E\le 2A$, it follows from \Cref{rem:UniqueReps} that $\{U_i,U_j\}$ are minimal for all $i\neq j$. 

    Next, we prove by induction on $k$ that there exist minimal representatives $\{x,U_0,\ldots, U_k\}$. If $k=0$, there is nothing to prove. Assume now that there exist pairwise minimal representatives $\{x,U_0',\ldots, U_{k-1}'\}$, and let $\{U_0, \ldots, U_k\}$ be pairwise minimal representatives, whose existence is guaranteed by the above argument. Up to the action of $N$ we can assume that $U_0=U_0'$, and by \Cref{prop:projections_are_welldef} this implies that $U_i=U_i'$ for every $i\le k-1$ (but possibly $x$ and $U_k$ are not yet minimal). Let $x'\in \ol x$ be minimal with respect to $U_k$. Consider geodesics from $x$ to every $v_{U_i}$ for $i\le k-1$, from $v_{U_i}$ to $v_{U_j}$ for every $i,j$, and from  $v_{U_k}$ to $x'$, as in \Cref{fig:x_and_nontrans}. Notice that each such geodesic lifts a geodesic of $\ol X$, as its endpoints are minimal.
    
    \begin{figure}[htp]
        \centering
        \includegraphics[width=4in]{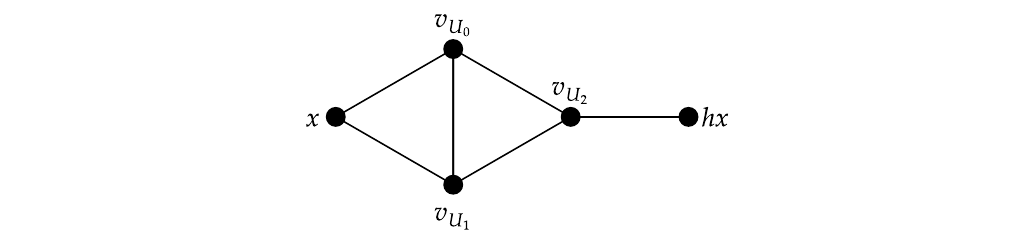}
        \caption{The various lifts from the first part of the proof of \Cref{non_trans_dom_and_two_points_lift} (with $k=2$). Each edge in the picture is a geodesic with whose endpoints are minimal.}
        \label{fig:x_and_nontrans}
    \end{figure}
    
    Let $h\in N$ be such that $hx=x'$. We proceed by induction on the complexity of $h$. If $hx=x$ we are done, as then $x$ and $U_k$ are already minimal. Otherwise, there is $(Y,h_Y)$ a shortening pair by \Cref{prop:new_shortening_pair}. If $\dist^\pi_Y(v_{U_k}, x')>C$ then $v_Y$ lies on the geodesic between $v_{U_k}$ and $x'$. In this case, we bend this geodesic at $v_Y$, and conclude by induction since $h_Yx'$ differs from $x$ by $h_Yh\prec h$. If instead $\dist^\pi_Y(v_{U_k}, x')\le C$, then for every $i\le k-1$ we have that
    \[\dist^\pi_Y(x, v_{U_i})\ge \dist^\pi_Y(x, x')-\dist^\pi_Y(v_{U_i}, v_{U_k})-\dist^\pi_Y(v_{U_k}, x') \ge L'/10-2C>C.\]
    Here, we used that, since $U_i$ and $U_k$ are not transverse, they lie at distance $2$ in $\hat X$, so any $\hat X$--geodesic connecting them belongs to $X'$ and cannot pass through $v_Y$. Hence $v_Y$ lies on the geodesic between $x$ and each $v_{U_i}$, so we bend each of these geodesics at $v_Y$ by applying $h_Y$ to the geodesic connecting $v_Y$ to $x'$. Again $h_Yx'$ differs from $x$ by $h_Yh\prec h$, and we conclude by induction.

    Finally, let $\{x,U_1,\ldots, U_k\}$ and $\{y,U_1',\ldots, U_k'\}$ be pairwise minimal collections, which exist by the above argument. As before, up to $N$-translation we can actually assume that $U_i=U_i'$ for every $i$. Consider geodesics from $x$ to $v_{U_i}$ for every $i$, from $v_{U_i}$ to $v_{U_j}$ for every $i\neq j$, from $v_{U_i}$ to $y$ for every $i$, and from $y$ to some $x'\in \ol X$ such that $\{y,x'\}$ are minimal. The situation is as in \Cref{fig:x_y_and_nontrans}.
    
    \begin{figure}[htp]
        \centering
        \includegraphics[width=4in]{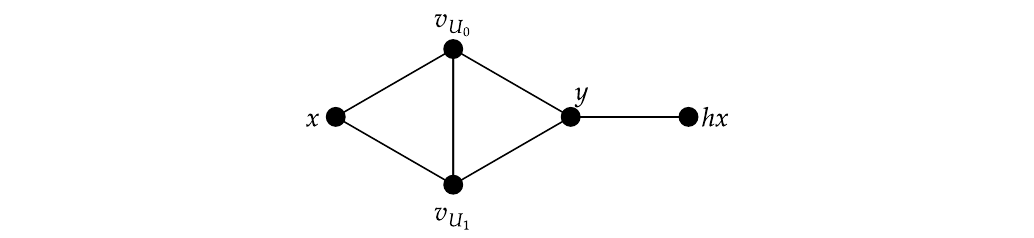}
         \caption{The various lifts from the final part of the proof of \Cref{non_trans_dom_and_two_points_lift} (with $k=1$).}
        \label{fig:x_y_and_nontrans}
    \end{figure}

    The argument to find pairwise minimal representatives is now very similar to the one with $x$ alone. Let $h\in N$ map $x$ to $x'$. If $hx=x$ we are done; otherwise let $(Y,h_Y)$ be a shortening pair. If $\dist^\pi_Y(y, x')>C$ then $v_Y$ lies on the geodesic between $y$ and $x'$; in this case we bend this geodesic at $v_Y$ and conclude by induction. If instead $\dist^\pi_Y(y, x')\le C$ then $\dist^\pi_Y(x,y)\ge L'/10-C>3C$, where we used \Cref{eq:bound_on_L'}. In turn, for every $i,j\le k$ one of $\dist^\pi_Y(x,v_{U_i})$ and $\dist^\pi_Y(v_{U_j},y)$ is greater than $C$, as otherwise we would have that 
    $$\dist^\pi_Y(x,y)\le\dist^\pi_Y(x,v_{U_i})+\dist^\pi_Y(v_{U_i},v_{U_j})+ \dist^\pi_Y(v_{U_j},y)\le 3C.$$
    In particular, this means that either $\dist^\pi_Y(x,v_{U_i})>C$ for every $i$, or the same holds with $y$ replacing $x$. In other words, by \Cref{lem:SBGIinHHS}, $v_Y$ lies on either every geodesic coming out of $x$, or every geodesic ending at $y$. In either case $v_Y$ is a cut point of the configuration from \Cref{fig:x_y_and_nontrans}, so we can apply $h_Y$ to every point between $v_Y$ and $x'$ and again conclude by induction.
\end{proof}

\subsubsection{Bounded projections}
\noindent Our next goal is to provide some bounds on the projections from \Cref{constr:HHGStructureQuotient}. We first show that the new maps $\pi_{*}$ and $\rho^{*}_{*}$ send points to uniformly bounded sets.

\begin{prop}\label{prop:ProjsBdd} Let $\beth=2\aleph+27E$, where $\aleph$ is the constant from \Cref{rem:YtoU_is_bounded}. For any $\ol x\in G/N$ and $\ol U\in \mf S/N$, the projection $\pi_{\ol U}(\ol x)$ has diameter at most $\beth$ in $\mc C\ol U$.  Analogously, if $\ol U,\ol V\in \mf S/N$ satisfy $\ol U\trans \ol V$ or $\ol V\nest \ol U$, then $\rho^{\ol V}_{\ol U}$ has diameter bounded by $\beth$ in $\mc C\ol U$.
\end{prop}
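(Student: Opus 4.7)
The overall strategy is to reduce the claim to the single-domain versions of these facts, namely Propositions~\ref{prop:LiftsProjCloseInU} and \ref{prop:projections_are_welldef}. The key point is that for any $U \in \ol U$, the quotient map $\mc C U \to \mc C \ol U$ is an isometry by \Cref{constr:HHGStructureQuotient}, so bounding the diameter of $\pi_{\ol U}(\ol x)$ in $\mc C \ol U$ amounts to bounding the distance in some fixed $\mc CU$ between suitably chosen $N$--translates of the various lifts that contribute to $\pi_{\ol U}(\ol x)$. Since the $G$-action on $(G,\mf S)$ is (genuinely) equivariant with respect to projections and relative projections (as noted after \Cref{defn:rel_HHG}), and since the $N$-action preserves minimality of pairs (as it preserves distances in $\hat X$), this alignment step is a formality.

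For the projection statement, consider two points $p_1, p_2 \in \pi_{\ol U}(\ol x)$ realized as $p_i = \overline{p_i'}$, where $p_i' \in \pi_{U_i}(g_i) \subseteq \mc C U_i$ for some minimal pair $\{U_i, g_i\}$ with $U_i \in \ol U$ and $g_i \in \ol x$. Choose $n\in N$ with $nU_1 = U_2$. Equivariance gives $n \pi_{U_1}(g_1) = \pi_{U_2}(ng_1)$, and the pair $\{U_2, ng_1\}$ is still minimal. Hence $np_1' \in \pi_{U_2}(ng_1)$, and the distance in $\mc C \ol U$ between $p_1$ and $p_2$ equals $\dist_{\mc C U_2}(np_1', p_2')$. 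Applying \Cref{prop:LiftsProjCloseInU} with $V = U = U_2$ (which is allowed since $\dist_X(\rho^{U_2}_S, \rho^{U_2}_S) = 0$) to the two minimal pairs $\{U_2, ng_1\}$ and $\{U_2, g_2\}$ yields $\dist_{U_2}(ng_1, g_2) \le 2\aleph + 9E$, which dominates $\dist_{\mc C U_2}(np_1', p_2')$ and is well within $\beth$.

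For the relative projection statement, the argument is the same, but with \Cref{prop:projections_are_welldef} in place of \Cref{prop:LiftsProjCloseInU}. Two points $q_1, q_2 \in \rho^{\ol V}_{\ol U}$ lift to $q_i' \in \rho^{V_i}_{U_i}$ for minimal pairs $\{U_i, V_i\}$ with $U_i \in \ol U$, $V_i \in \ol V$. Pick $n \in N$ with $nU_1 = U_2$; then $nV_1 \in \ol V$, the pair $\{U_2, nV_1\}$ is minimal, and equivariance of $\rho$-maps gives $n\rho^{V_1}_{U_1} = \rho^{nV_1}_{U_2}$, so $nq_1' \in \rho^{nV_1}_{U_2}$. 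Since $\ol U \neq \ol S$, we are either in the nested/transverse case (so $\ol U$ and $\ol V$ are transverse by \Cref{rem:UniqueReps}, and the same is true of the representatives) and \Cref{prop:projections_are_welldef} applies to yield $\dist_{U_2}(\rho^{nV_1}_{U_2}, \rho^{V_2}_{U_2}) \le 2\aleph + 25E$. Combining with the fact that each $\rho$-set has diameter at most $E$, we conclude $\dist_{\mc C U_2}(nq_1', q_2') \le 2\aleph + 27E = \beth$.

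There is no substantive obstacle; the argument is essentially bookkeeping. The only subtleties to watch are: (a) confirming that the $N$--alignment preserves minimality, which is immediate from the isometric action of $N$ on $\hat X$; (b) using genuine (not merely coarse) equivariance of the projections $\pi_U$ and relative projections $\rho^V_U$, which is guaranteed by the convention fixed after \Cref{defn:rel_HHG}; and (c) ensuring we have applied \Cref{prop:projections_are_welldef} in the correct case, noting that if $\ol U$ and $\ol V$ are not transverse, \Cref{rem:UniqueReps} forces the minimal representative to be unique, so only one summand contributes to $\rho^{\ol V}_{\ol U}$ and the diameter bound is just $E \le \beth$.
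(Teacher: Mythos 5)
Your proof is correct and follows essentially the same route as the paper: fix a representative $U\in\ol U$ via $N$--equivariance, then apply \Cref{prop:LiftsProjCloseInU} for point projections and \Cref{prop:projections_are_welldef} for relative projections. Two small remarks: you omit the trivial case $\ol U = \ol S$ (where $\pi_{\ol S}(\ol x)$ and $\rho^{\ol V}_{\ol S}$ are single points), and in the point-projection case the bound $\dist_{U_2}(ng_1,g_2)\le 2\aleph+9E$ does not directly dominate $\dist_{\mc CU_2}(np_1',p_2')$ --- one must still add the diameters $\diam\pi_{U_2}(ng_1)+\diam\pi_{U_2}(g_2)\le 2E$ as the paper does, giving $2\aleph+11E$, which remains safely within $\beth$.
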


\begin{proof}
    If $\ol U=\ol S$, then $\pi_{\ol S}(\ol x)=\ol x$, seen as a point in $\ol X$, and we have nothing to prove. Thus suppose $\ol U\neq \ol S$. Since being minimal is an $N$--equivariant relation, it suffices to consider a fixed $U\in \ol U$, so that we can identify $\mc C \ol U$ with $\mc C U$.  By \Cref{prop:LiftsProjCloseInU}, if $x,x'\in \ol x$ are both minimal with $U$,  then $\dist_U(x,x')\le 2\aleph+9E$, so $\diam(\pi_U(x)\cup\pi_U(x'))\le 2\aleph+11E\le \beth$.

    For the second statement, if $\ol U=\ol S$ then $\rho^{\ol V}_{\ol S}=\ol {v_V}$ is a point. Otherwise fix $U\in \ol U$, and suppose $\{V',U\}$ and $\{V,U\}$ are both minimal, with $V,V'\in \ol V$. The result follows by applying \Cref{prop:projections_are_welldef} and using that $\rho^V_U$, $\rho^{V'}_U$ have diameter at most $E$.
\end{proof}

\begin{lem}\label{lem:DistInQuotient}
    Let $x,y\in X$, and suppose $U \in \mf S - \{S\}$. If $\{U,x,y\}$ are pairwise minimal, then \[\dist_U(x,y)-2\beth \le \dist_{\ol U}(\ol x, \ol y)\le  \dist_U(x,y) \quad\mbox{ and }\quad
    \dist_{\ol X}(\ol x, \ol y)\le \dist_X(x,y).
    \]
     Moreover, if $\ol V \in \mf S / N-\{\ol S\}$ satisfies $\ol U\trans \ol V$ or $\ol V\propnest \ol U$ and  $\{x,U,V\}$ are pairwise minimal representatives, then
    \[
    \dist_U(x,\rho^V_U) -2\beth\le \dist_{\ol U}(\ol x, \rho^{\ol V}_{\ol U})\le \dist_U(x,\rho^V_U)
    \quad\mbox{ and }\quad
    \dist_{\ol X}(\ol x,\ol{v_V})\le \dist_X(x,\rho^V_S) + 1.
    \]
\end{lem}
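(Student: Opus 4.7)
The plan is to prove each inequality separately, invoking the one-Lipschitzness of the chain $X \hookrightarrow \hat X \xrightarrow{q} \ol X$ for the $\ol X$-bounds, and carefully comparing projection sets via \Cref{rem:closeProj} and \Cref{prop:ProjsBdd} for the bounds in $\mc C \ol U$.

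The two inequalities involving $\dist_{\ol X}$ follow almost immediately. First, since $X \hookrightarrow \hat X \xrightarrow{q} \ol X$ is one-Lipschitz, we get $\dist_{\ol X}(\ol x, \ol y) \le \dist_{\hat X}(x,y) \le \dist_X(x,y)$. For the analogous bound with $\ol{v_V}$, the construction of $\hat X$ from \Cref{sec:basicassumption} provides an $\hat X$-edge from $v_V$ to every vertex of $\mc N_A(\rho^V_S)$, and in particular to every $p \in \rho^V_S$. Hence $\dist_{\hat X}(x, v_V) \le \dist_{\hat X}(x,p) + 1 \le \dist_X(x,p) + 1$; infimizing over $p \in \rho^V_S$ and applying one-Lipschitzness of $q$ yields $\dist_{\ol X}(\ol x, \ol{v_V}) \le \dist_X(x, \rho^V_S) + 1$.

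For the upper bound in $\mc C \ol U$, the map $\mc C U \to \mc C \ol U$ from \Cref{constr:HHGStructureQuotient} is an isometry, so we identify $\mc C \ol U$ with $\mc C U$. Under this identification, \Cref{rem:closeProj} yields $\pi_U(x) \subseteq \pi_{\ol U}(\ol x)$, $\pi_U(y) \subseteq \pi_{\ol U}(\ol y)$, and (for the second part) $\rho^V_U \subseteq \rho^{\ol V}_{\ol U}$, using the pairwise minimality hypothesis. Since $\dist_U$ and $\dist_{\ol U}$ measure distances between the respective projection sets, enlarging these sets can only decrease the inter-set distance, producing $\dist_{\ol U}(\ol x, \ol y) \le \dist_U(x, y)$ and $\dist_{\ol U}(\ol x, \rho^{\ol V}_{\ol U}) \le \dist_U(x, \rho^V_U)$.

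For the matching lower bounds, invoke \Cref{prop:ProjsBdd}: each of $\pi_{\ol U}(\ol x)$, $\pi_{\ol U}(\ol y)$, and $\rho^{\ol V}_{\ol U}$ has diameter at most $\beth$ in $\mc C \ol U \cong \mc C U$. Since each of these enlarged sets contains the corresponding one of $\pi_U(x), \pi_U(y), \rho^V_U$, the triangle inequality shows that any $p \in \pi_{\ol U}(\ol x)$ lies within $\beth$ of $\pi_U(x)$ and any $q \in \pi_{\ol U}(\ol y)$ lies within $\beth$ of $\pi_U(y)$, so $\dist_{\mc C U}(p,q) \ge \dist_U(x,y) - 2\beth$, and analogously for the $\rho$-case; infimizing delivers the claimed lower bounds. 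Beyond this, the only point requiring care is consistent bookkeeping between $\mc C U$ and $\mc C \ol U$ via the canonical isometry, since the substantive technical work has already been packaged into \Cref{prop:LiftsProjCloseInU} and \Cref{prop:ProjsBdd}.
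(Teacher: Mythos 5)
Your proposal is correct and follows essentially the same approach as the paper: the $\ol X$-bounds come from the $1$-Lipschitzness of $X \hookrightarrow \hat X \to \ol X$, the upper bounds in $\mc C\ol U$ come from the containment $\pi_U(x) \subseteq \pi_{\ol U}(\ol x)$ (and $\rho^V_U \subseteq \rho^{\ol V}_{\ol U}$) of \Cref{rem:closeProj}, and the lower bounds come from the $\beth$-diameter bound of \Cref{prop:ProjsBdd}. The only difference is that you spell out the intermediate steps (e.g., the edge from $v_V$ to $\mc N_A(\rho^V_S)$) a bit more explicitly than the paper does.
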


\begin{proof}
    For any pair $\{x,y\}$, we have that $\dist_{\ol X}(\ol x, \ol y) \le \dist_{\hat X}(x,y)\le \dist_X(x,y)$.  When $y\in \ol y$ is replaced with $V\in \ol V\neq \ol S$, we have $\dist_{\ol X}(\ol x,\ol{v_V})\le \dist_{\hat X}(x,v_V) \le \dist_X(x,\rho^V_S) + 1$.

    Furthermore, by definition of the projections $\pi_{\ol U}$ and the fact that $\mc C\ol U$ is identified with $\mc CU$, we have $\pi_{\ol U}(\ol x)\supseteq \pi_U(x)$ by \Cref{rem:closeProj}, so
    $\dist_U(x,y)\ge \dist_{\ol U}(\ol x,\ol y)$. On the other hand, by \Cref{prop:ProjsBdd}, $\pi_{\ol U}(\ol x)$ and $\pi_{\ol U}(\ol y)$ have diameter bounded by $\beth$, so $\dist_{\ol U}(\ol x, \ol y)\ge \dist_{U}(x, y)-2\beth$. Analogously, $\rho^{\ol V}_{\ol U}$ contains $\rho^V_U$ and has diameter at most $\beth$, so the second statement follows analogously.
\end{proof}

Finally, we prove that if $x\in G$ and  $U\in \mf S$ are ``almost'' minimal, then the projection of $x$ to $U$ is almost the projection of a minimal element:

\begin{lem}\label{lem:almost_minimal_proj}
    Let $x\in X$, $y\in \hat X$ and $U\in \mf S-\{S\}$. Suppose that $\{x,y\}$ are minimal and $\dist_{\hat X}(v_U,y)\le 2$. Then there exists $x^*\in \ol x$ such that $\{x^*,U\}$ are minimal and $$\dist_U(x,x^*)\le 9\aleph+28E.$$
\end{lem}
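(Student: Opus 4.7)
The plan is to pick $x^* \in \ol x$ realizing $\dist_{\hat X}(\ol x, \ol{v_U})$ (such an $x^*$ exists since $\hat X$-distances are integer-valued), write $x^* = hx$ for some $h \in N$, and then bound $\dist_U(x,x^*)$ by a bending argument modeled on the proof of \Cref{prop:LiftsProjCloseInU}, using the hypothesis on $y$ as the crucial extra ingredient. I will induct on the complexity of $h$ (see \Cref{rem:complexity}). The base case $hx = x$ is immediate, since then $x = x^*$ and $\dist_U(x,x) \le E$. Otherwise, \Cref{prop:new_shortening_pair} produces a shortening pair $(Y,h_Y)$ for $(x,h)$; as $x, x^* \in X$ forces $v_Y \notin \{x,x^*\}$, we get $\dist_Y^\pi(x,x^*) > L'/10$.

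The configuration I will bend is the concatenation in $\hat X$ of an $\hat X$-geodesic $\alpha$ from $x$ to $y$, an $\hat X$-path $\beta$ of length at most two from $y$ to $v_U$, and an $\hat X$-geodesic $\gamma$ from $v_U$ to $x^*$. A key observation is that cone points $v_Y$ (for $Y \in \mc Y$) are never adjacent in $\hat X$ to cone points $v_U$ (for $U \in \mf S-\{S\}$), since the two families of cone edges emanate from disjoint vertex sets in $X'$. Consequently, provided $y \ne v_Y$, no geodesic of length $\le 2$ from $y$ to $v_U$ can pass through $v_Y$, and \Cref{lem:SBGIinHHS} then yields $\dist_Y^\pi(y, v_U) \le C$. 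The triangle inequality applied to $\dist_Y^\pi(x, x^*) > L'/10$ therefore forces one of $\dist_Y^\pi(x,y)$ or $\dist_Y^\pi(v_U, x^*)$ to exceed $C$. In the favorable subcase $\dist_Y^\pi(v_U,x^*) > C$, the vertex $v_Y$ lies on $\gamma$ by \Cref{lem:SBGIinHHS}, and I bend $\gamma$ at $v_Y$ using \Cref{lem:bendinggeodesics}, replacing $x^*$ by $h_Y x^*$; this new representative is still minimal with $U$ and related to $x$ by $h_Y h$, which has strictly smaller complexity. Applying \Cref{lem:Unotnearleg_NEW} with $V = U$ both to $\gamma$ and to the bent geodesic, together with the $H_Y$-orbit bound from \Cref{rem:YtoU_is_bounded}, will give $\dist_U(x^*, h_Y x^*) \le \aleph + 4E$, so the inductive hypothesis closes this subcase.

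The main obstacle will be the unfavorable subcase $\dist_Y^\pi(x,y) > C$ with $\dist_Y^\pi(v_U, x^*) \le C$: here $v_Y$ lies on $\alpha$, but bending $\alpha$ necessarily either displaces the fixed point $x$ or drags the anchor $v_U$ to the different $N$-translate $v_{h_Y U}$, breaking the inductive setup. My plan to resolve this is to permit the anchor to drift to $v_{h_Y U}$ at each such bending step, tracking the accumulated discrepancy between $\pi_U$ and $\pi_{h_Y U}$ using \Cref{rem:YtoU_is_bounded}, and finally invoking \Cref{prop:projections_are_welldef} to compare projections from the drifted anchor back to the original $U$. The degenerate subcase $y = v_Y$ (where $\pi_Y(y)$ is undefined) will require separate handling, which I will address by noting that $\{x, Y\}$ is then itself a minimal pair and applying the bound from \Cref{prop:LiftsProjCloseInU} directly. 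Careful bookkeeping of the worst-case accumulation of $\aleph$-terms (from the $H_Y$-orbit bounds) and $E$-terms (from the HHG projection constants and the diameters of $\pi_U$-images) across the induction will yield the stated bound $9\aleph + 28E$.
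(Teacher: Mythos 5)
Your proposal correctly identifies the core mechanism (bending at cone points produced by shortening pairs, inducting on complexity, and using the proximity of $y$ to $v_U$ to split the triangle inequality), but it is missing two key ideas that the paper needs, and as written the argument does not close.

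First, your treatment of the favorable subcase ``the inductive hypothesis closes this subcase'' does not yield the fixed bound. Each bending step contributes roughly $\aleph+4E$ to $\dist_U$, and the number of bending steps is only bounded by the complexity of $h$, which is not uniformly bounded. If you literally invoke the inductive hypothesis on the lower-complexity element $h_Y h$ and add the per-step cost $\aleph+4E$, the bound grows without limit. The paper avoids accumulation altogether in the analogous ``good ending'' case: every bending of the geodesic $\gamma_i=[y,x_i]$ fixes the initial segment out of $y$, so the initial geodesic $\gamma_0$ and the terminal geodesic $\gamma_n$ share a common first cone point $v_Y$, and the bound $\aleph+4E$ is obtained by comparing $\gamma_0$ with $\gamma_n$ directly across that one shared vertex --- not by summing per-step contributions.

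Second, and more importantly, your unfavorable subcase (where $v_Y$ lies on $\alpha=[x,y]$) is where the real difficulty sits, and your ``anchor drift'' sketch does not resolve it. The projections $\pi_U$ and $\pi_{h_YU}$ take values in different coordinate spaces, so comparing them requires unwinding the equivariance isometry, which gives $\dist_{h_YU}(x,h_Yx^*)=\dist_U(h_Y^{-1}x,x^*)$; bridging back to $\dist_U(x,x^*)$ requires a bound on $\dist_U(x,h_Y^{-1}x)$, and $x$ need not lie on $Y$, so \Cref{rem:YtoU_is_bounded} does not apply. Moreover \Cref{prop:projections_are_welldef} compares relative projections $\rho^U_V$ and $\rho^{U'}_V$, not point-projections $\pi_U$ versus $\pi_{U'}$, so it cannot be invoked here as stated. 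The paper confronts this subcase head-on via the ``bad ending'' restart: when bending the $y$-side geodesic brings it within distance $2$ of $v_U$, the whole procedure restarts with a new pair $(x^1,y^1)$ having strictly smaller $\hat X$-distance, and the observation $\dist_{\ol X}(\ol{v_U},\ol x)\ge\dist_{\hat X}(x,y)-2$ (from minimality of $\{x,y\}$ and $\dist_{\hat X}(y,v_U)\le 2$) caps the number of restarts at four; it is exactly this $4\times$ factor, together with the per-restart cost $2\aleph+6E$ and the final good-ending cost $\aleph+4E$, that produces $9\aleph+28E$. Without a mechanism of this kind --- or a genuinely different route --- your induction has no way to terminate with a bounded error. (A minor additional issue: your fallback for the degenerate case $y=v_Y$ invokes \Cref{prop:LiftsProjCloseInU}, but that proposition is stated for domains in $\mf S-\{S\}$, not for subspaces $Y\in\mc Y$, so it does not directly apply.)
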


\begin{proof} If $\{x,U\}$ are already minimal, there is nothing to prove, so we henceforth assume the contrary. Let $\gamma=[y,x]$ be an $\hat X$--geodesic, and up to replacing $y$ by some point on $\gamma$, we can assume that $\gamma\cap \mc N_2(v_U)=\{y\}$ (notice that $\dist_{\hat X}(x,v_U)\ge 2$, or they would be minimal). If $\eta=[v_U,y]$ is an $\hat X$--geodesic, then $y$ is the only point of $\eta$ which can be of the form $v_Y$ for some $Y\in \mc Y$, because the link of $v_U$ in $\hat X$ belongs to $X$. 

We now describe an algorithm to find the required $x^*\in \ol x$. To initialize the procedure, set $\gamma_0=\gamma$ and $x_0=x$; furthermore, let $\widetilde  x_0\in \ol x$ be minimal with $U$, let $\sigma_0$ be an $\hat X$-geodesic connecting $v_U$ to $\widetilde  x_0$, and let $h_0\in N$ map $x_0$ to $\widetilde  x_0$. 

Suppose we are given geodesics $\gamma_i=[y, x_i]$ and $\sigma_i=[v_U,\widetilde  x_i]$, where $x_i, \widetilde  x_i\in \ol x$ and the endpoints of both geodesics are minimal. Suppose  $h_i\in N$ maps $x_i$ to $\widetilde  x_i$. If $\widetilde  x_i=x_i$, then  set $x^*=x_i$ and  stop the algorithm. Otherwise, let $(Y,h_Y)$ be a shortening pair, as in \Cref{prop:new_shortening_pair}. Then $v_Y$ must lie on one of $\sigma_i$ or $\gamma_i$ since $L'/10>3C$; notice that it cannot lie in the interior of $\eta$, which is a single point in $X$. 
\begin{enumerate}
    \item If $v_Y\in \sigma_i$, let $\widetilde  x_{i+1}=h_Y\widetilde  x_i$, and let $\sigma_{i+1}=[v_U,\widetilde  x_{i+1}]$ be obtained by bending $\sigma_i$ at $v_Y$ by $h_Y$. Set $\gamma_{i+1}=\gamma_i$, $x_{i+1}=x_i$, and $h_{i+1}=h_Yh_i\prec h_i$. We now repeat the procedure with the data indexed by $i+1$.
    \item Suppose instead that $v_Y\in \gamma_i$. Let $x_{i+1}=h_Y^{-1}x_i$, and let $\gamma_{i+1}=[y, x_{i+1}]$ be obtained by bending $\gamma_i$ at $v_Y$ by $h_Y^{-1}$. There are two sub-cases to consider.
    \begin{subequations}
    \begin{enumerate}
        \item If $\gamma_{i+1}\cap \mc N_2(v_U)=\{y\}$, we set $\eta_{i+1}=\eta_i$, $\widetilde  x_{i+1}=\widetilde  x_i$, and $h_{i+1}=h_ih_Y$, which is conjugate to $h_Yh_i\prec h_i$ and therefore has the same complexity. We now repeat the procedure with the data indexed by $i+1$.
        \item\label{item:bad_ending} If instead  $\gamma_{i+1}\cap \mc N_2(v_U)$ contains some other point $z$, we  stop the algorithm.
    \end{enumerate}
    \end{subequations}
\end{enumerate}

    \begin{figure}[htp]
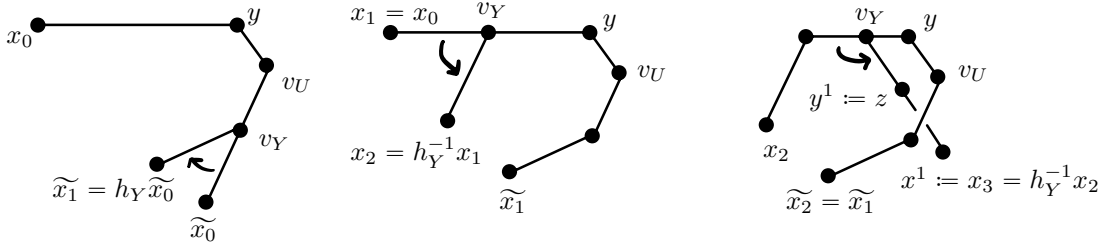

        \centering
        \begin{overpic}[width=6in, trim={1.5in 2.75in 1in 6.75in}, clip]{img/Lemma433.pdf}
            \put(0,15){$x_0$}
            \put(21, 17){$y$}
            \put(52, 16.5){$y$}
            \put(80, 16.5){$y$}
            \put(24, 11){$v_U$}
            \put(55, 12){$v_U$}
            \put(83, 12){$v_U$}
            \put(16,-2){$\widetilde{x_0}$}
            \put(4, 1.5){$\widetilde{x_1}=h_Y\widetilde{x_0}$}
            \put(22, 6){$v_Y$}
            \put(30, 17){$x_1=x_0$}
            \put(41, 17.5){$v_Y$}
            \put(30, 4.75){$x_2=h_Y^{-1}x_1$}
            \put(43, 0.5){$\widetilde{x_1}$}
            \put(74, 17){$v_Y$}
            \put(68, 0.5){$\widetilde{x_2}=\widetilde{x_1}$}
            \put(78, 2.3){$x^1\coloneq x_3=h_Y^{-1}x_2$}
            \put(70, 9.5){$y^1\coloneq z$}
            \put(66, 5){$x_2$}
        \end{overpic}
        \caption{An example of the algorithm in the proof of \Cref{lem:almost_minimal_proj}. From left to right, we applied step 1, then step 2(a), and finally step 2(b), at which point we reached a bad ending and stopped the algorithm.  We would next apply the algorithm  to the points $x^1$ and $y^1$.}
        \label{fig:enter-label}
    \end{figure}

\textbf{Good ending.} When running the above algorithm, if we never encounter the termination condition from \Cref{item:bad_ending}, then each step reduces the complexity of $h_i$. Since the complexity is a good ordering, we must eventually find some $n\in\mathbb{N}$ such that $x^*=x_n=\widetilde  x_n$, which is minimal with $U$. Since $\gamma_n$ is obtained by successively bending $\gamma_0$, there exists $Y\in \mc Y$ such that $v_Y\in \gamma_0\cap \gamma_n$. Let $t\in \gamma_0$ be the vertex after $v_Y$, so that the first vertex of $\gamma_n$ after $v_Y$ is of the form $h_Y t$ for some $h_Y\in H_Y$ (to see this, we can argue exactly as in the proof of \Cref{prop:LiftsProjCloseInU}). Then
    $$\dist_U(x,x^*)\le \dist_U(x,t)+\diam\pi_U(t)+\dist_U(t,h_Yt)+\diam\pi_U(h_Yt)+\dist_U(h_Yt,x^*).$$
    The third term is at most $\aleph$ by \Cref{rem:YtoU_is_bounded}. Moreover the segment of $\gamma_0$ between $t$ and $x$ does not pass $2$-close to $v_U$, so \Cref{lem:bgi_with_U} gives that the first term is bounded by $E$. Similarly, since we never encountered the termination condition from \Cref{item:bad_ending}, the last term is also bounded by $E$. Therefore 
    \begin{equation}\label{eqn:dUxx*}
    \dist_U(x,x^*)\le \aleph+4E,
    \end{equation}
    which satisfies the requirement of the statement.

\par\medskip
\textbf{Bad ending.} Suppose instead that the algorithm terminated because some $z\in \gamma_{i+1}-\{y\}$ is within distance $2$ from $v_U$. We can assume that $\dist_{\hat X}(z,v_U)=2$. Notice that the above argument applies to $x$ and $x_i$, giving that $\dist_U(x, x_{i})\le \aleph+4E$. Moreover, since $x_{i+1}$ differs from $x_{i}$ by an element in some $H_Y$, we have that 
\begin{equation}\label{eqn:dUxixi+1}
    \dist_U(x, x_{i+1})\le \dist_U(x, x_{i})+\diam\pi_U(x_i)+\dist_U(x_i, x_{i+1})\le 2\aleph+5E.
\end{equation}
Now set $x^1=x_{i+1}$ and $y^1=z$, and notice that $\dist_{\hat X}(y^1, x^1)< \dist_{\hat X}(y,x)$. If we repeat the whole procedure with $\{x^1,y^1,U\}$, we either obtain a good ending or find some $\{x^2,y^2\}$ with $\dist_{\hat X}(y^2, x^2)< \dist_{\hat X}(y^1,x^1)$, and so on. We cannot keep falling into the termination condition from \Cref{item:bad_ending} more than four times in total. Indeed, if so, then we would have 
$$\dist_{\hat X}(y^5, x^5)< \dist_{\hat X}(y^4,x^4)<\ldots<\dist_{\hat X}(x,y),$$ and
so $\dist_{\hat X}(y^5, x^5)\le \dist_{\hat X}(x,y)-5$. In turn, $$\dist_{\ol X}(\ol{v_U},\ol x)\le \dist_{\hat X}(v_U, x^5)\le 2+\dist_{\hat X}(y^5, x^5)\le \dist_{\hat X}(x,y)-3.$$ However this would be a contradiction, since
$$\dist_{\ol X}(\ol{v_U}, \ol x)\ge \dist_{\ol X}(\ol x, \ol y)-\dist_{\ol X}(\ol{v_U}, \ol y)\ge \dist_{\hat X}(x,y)-2,$$
where we used that $\{x,y\}$ are minimal and that $\dist_{\ol X}(\ol{v_U}, \ol y)\le \dist_{\hat X}(v_U,y)\le 2$. 

Since the process must have a good ending after at most four bad endings, there exists $j\le 4$ and some $x^*\in \ol x$ which is minimal with $U$, such that $\dist_U(x^j,x^*)\le \aleph+4E$. We then use \eqref{eqn:dUxx*} and \eqref{eqn:dUxixi+1} to conclude that
$$\dist_U(x,x^*)\le \dist_U(x,x^1)+\diam\pi_U(x^1)+\ldots+\dist_U(x^{j-1}, x^j)+\diam\pi_U(x^j)+\dist_U(x^j,x^*)\le$$
$$\le4(2\aleph+5E+E)+\aleph+4E=9\aleph+28E,$$
as required.
\end{proof}

\subsection{The quotient satisfies the HHG axioms}\label{sec:axiomproofs}
We are now ready to show that $G/N$ is a relative HHG. More precisely, we shall check that each axiom from \Cref{defn:HHS} is satisfied by the structure described in \Cref{constr:HHGStructureQuotient} for some choice of the hierarchy constant. We shall then set $\ol E$ to be the maximum of these constants, thus ensuring that all axioms hold. We advise the reader to keep the list of all constants from \Cref{tableofconstants} at hand throughout the proof.

\begin{proof}[Proof of \Cref{thm:quotientishhg}] 
We check each axiom in turn.
\par\medskip\textbf{\ref{axiom:projections} Projections:} The maps $\pi_{\ol U}$ for $\ol U\in \mf S/N$ send points to uniformly bounded diameter sets by \Cref{prop:ProjsBdd}. We now check that $\pi_{\ol U}$ is coarsely Lipschitz with respect to the fixed word metric $\dist_{G/N}$ induced by $\dist_G$. Given $\ol g, \ol h\in G/N$ such that $\dist_{G/N}(\ol g, \ol h)=1$, we will uniformly bound $\dist_{\ol U}(\ol g, \ol h)$. Let $g, h\in G$ be such that $\dist_{G}(g,h)=\dist_{G/N}(\ol g,\ol h)=1$, so that for every $V\in \mf S$ we have that $\dist_V(g,h)\le 2E$, as projections in $(G,\mf S)$ are $E$-coarsely-Lipschitz. If $\ol U=\ol S$ then
$$\dist_{\ol X}(\pi_{\ol S}(\ol g),\pi_{\ol S}(\ol g))=\dist_{\ol X}(\ol g, \ol h)\le \dist_{X}(g,h)\le 2E.$$

Now suppose $\ol U\neq \ol S$, and let $\ol T\subseteq \ol X$ be a geodesic triangle with vertices $\{\ol g, \ol h, \ol {v_U}\}$. Lift $\ol T$ to a geodesic triangle $T$ in $\hat X$ with vertices $\{g,h',v_U\}$, where $h'\in \ol h$, and consider an $X$-geodesic $\lambda$ from $g$ to $h$, whose length (in $X$, hence also in $\hat X$) is at  most $2E$. The situation is as in \Cref{fig:coarselylip_proj}.

\begin{figure}[htp]
    \centering
    \includegraphics[width=\linewidth]{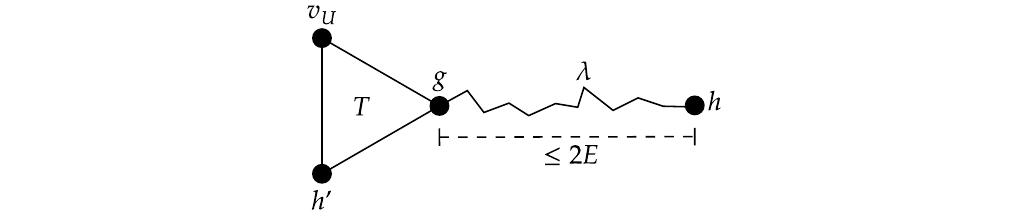}
    \caption{The configuration in the proof of the Projection axiom. In the picture, straight lines are $\hat X$-geodesics, while the zig-zag path $\lambda$ is an $X$--geodesic of length at most $2E$.}
    \label{fig:coarselylip_proj}
\end{figure}

Notice that ${g, h', v_U}$ are minimal, so $\dist_{\ol U}(\ol g, \ol h)\le \dist_U(\pi_U(g), \pi_U(h'))$. If $h=h'$, then $\dist_U(\pi_U(g), \pi_U(h'))=\dist_U(\pi_U(g), \pi_U(h))\le 2E$, and we are done. Otherwise we proceed by induction on the complexity of the element $n\in N$ mapping $h$ to $h'$. By \Cref{prop:new_shortening_pair}, there exists a shortening pair $(Y,h_Y)$. If $\dist^\pi_Y(h',g)\le 2C$ then $\dist^\pi_Y(h',h)\le 2C+2EJ$, as every two consecutive points of $\lambda$ are $X$--adjacent and $\pi_Y$ is $J$ Lipschitz. But then \Cref{eq:bound_on_L'}  gives that $\dist^\pi_Y(h',h)\le L/10$, contradicting the definition of a shortening pair. Thus we must have that $\dist^\pi_Y(h',g)> 2C$, so one of $\dist^\pi_Y(h',v_U)$ and $\dist^\pi_Y(v_U, g)$ is at least $C$ by the triangle inequality. In other words, $v_Y$ is a cut vertex for $T$ by \Cref{lem:SBGIinHHS}.  Apply $h_Y$ to the connected component of $T-\{v_Y\}$ containing $h'$. After this procedure, $g$, the image of $v_U$, and $h_Yh'$ are again minimal, and now $h_Yh'$ differs from $h$ by $h_Yn\prec n$. Proceeding inductively, we eventually find some $U'\in \ol U$ such that $\{g, h, v_{U'}\}$ are minimal and $\dist_{U'}(g,h)=\dist_U(g,h')\le 2E$, so $\dist_{\ol U}(\ol g, \ol h)\le\dist_{U'}(g,h) \le 2E $.

\par\medskip\textbf{\ref{axiom:nesting} Nesting:}  The nesting relation $\nest$ and the subsets $\rho^{\ol V}_{\ol W}$ for $\ol V \propnest \ol W$ are defined in \Cref{constr:HHGStructureQuotient}.  These sets have uniformly bounded diameter by \Cref{prop:ProjsBdd}.

\par\medskip\textbf{\ref{axiom:finite_complexity} Finite complexity:}
If $\ol U_1\nest \ol U_2\nest \ldots$ then by \Cref{non_trans_dom_and_two_points_lift} we may choose pairwise minimal representatives $U_1\nest U_2\nest \ldots$ of the $\ol U_i$.  Finite complexity in $(G,\mc S)$ then implies that both sequences are eventually constant. 

\par\medskip\textbf{\ref{axiom:orthogonal} Orthogonality:} The relation $\perp$ is defined in \Cref{constr:HHGStructureQuotient}, and by construction has the same properties as $\perp$ in $(G,\mf S)$.  Thus the axiom holds by the orthogonality axiom in $(G,\mf S)$.

\par\medskip\textbf{\ref{axiom:containers} Containers:} To show containers exist, let $\ol T\in \mf S/N$, and consider $\ol U\in (\mf S/N)_{\ol T}$  such that $\ol{\mc V} \vcentcolon = \{\ol V\in (\mf S/N)_{\ol T} \mid \ol V \perp \ol U\}\neq \emptyset$.

 If $\ol T=\ol S$, let $U\in \ol U$, and let $W$ be the container for $U$ in $S$. Now every $\ol V$ which is orthogonal to $\ol U$ has a representative $V$ which is orthogonal to $U$ and therefore nested in $W$. Then $\ol V\nest \ol W$ by \Cref{rem:UniqueReps}, proving that $\ol W$ is the container for $\ol U$ in $\ol S$.

Now assume that $\ol T\neq \ol S$, and choose $\{U,T\}$ minimal representatives of $\ol U$ and $\ol T$.  Since $U\nest T$, we have $\dist_X(\rho^U_S,\rho^T_S)\le E$ by consistency in $\mf S$. By \Cref{rem:UniqueReps}, each $\ol V\in \ol{\mc V}$ has a unique representative $V$ which is orthogonal to $U$. Let $\mc V$ be the collection of such representatives. Notice that $\dist_X(\rho^V_S,\rho^U_S)\le 2E$ by \Cref{lem:close_proj_of_orthogonals}.  Combining this with the previous inequality yields that $\dist_X(\rho^V_S,\rho^T_S)\le 4E\le 2A$, so $V$ and $T$ are minimal. Since $\ol V\nest \ol T$, the minimal representatives $V$ and $T$ must be nested. Now let $W$ be the container for $U$ in $T$, which contains every $V\in \mc V$. It then follows from \Cref{rem:UniqueReps} that $\ol W\propnest \ol T$ and $\ol W$ contains every $\ol V\in \ol{\mc V}$, as required.

\par\medskip\textbf{\ref{axiom:transversality} Transversality:}  The relation $\trans$ and the sets $\rho^{\ol U}_{\ol W}\subseteq \mc C\ol W$ and $\rho^{\ol W}_{\ol U}\subseteq \mc C\ol U$ are defined in \Cref{constr:HHGStructureQuotient}.  These sets  have uniformly bounded diameter by \Cref{prop:ProjsBdd}. 

\par\medskip\textbf{\ref{axiom:consistency} Consistency:} Let $\ol x\in G/N$, and let $\ol U,\ol V\in \mf S/N$ satisfy $\ol U\trans \ol V$.  By \Cref{lem:LiftingTriples}, we may choose $x\in \ol x$, $U\in \ol U$, and $V\in \ol V$ such that  $\{x,U,V\}$ are pairwise minimal.  By the definition of the relation $\trans$ in $\mf S/N$, we have $U\trans V$.  It follows from the consistency axiom in $(G,\mf S)$ that
\[
\min\{\dist_U(x,\rho^V_U), \dist_V(x,\rho^U_V)\}\le E.
\]
The first consistency inequality now follows immediately from \Cref{lem:DistInQuotient}.

For the second statement, suppose that $\ol U\nest \ol V$ and either $\ol V \propnest \ol W$, or $\ol V \trans \ol W$ and $\ol W \not\perp \ol U$. Suppose first that $\ol W\neq \ol S$. We may choose  $U\in \ol U$, $V\in \ol V$, and $W\in \ol W$ with $\{U,V,W\}$ pairwise minimal by \Cref{lem:LiftingTriples}, and the same relations hold between $U, V,$ and $W$ as between $\ol U, \ol V, \ol W$. It follows from the consistency axiom in $(G,\mf S)$ that $\rho^U_W$ is coarsely equal to $\rho^V_W$ in $\mc CW=\mc C\ol W$, and the result again follows from \Cref{lem:DistInQuotient}.  

If instead $\ol W = \ol S$ let $U\in \ol U$ and $V\in \ol V$ be nested representatives. Then $\dist_{X}(\rho^U_S,\rho^V_S)\le E$ by consistency in $\mf S$. As $E\le 2A$, we have $\dist_{\ol X}(\ol{v_U}, \ol{v_V})\le \dist_{\hat X}({v_U}, {v_V})\le 2$.

\par\medskip\textbf{\ref{axiom:hyperbolicity} Hyperbolicity:} If $(G,\mf S)$ is a HHG, then $\mc C\ol U$ is hyperbolic for all $U\in \mf S$ by definition if $U\neq S$ and by \Cref{thm:hyperbolicity_olX_with_points} if $U=S$. If $(G,\mf S)$ is only a relative HHG, then the spaces associated to all domains that are not $\nest$--minimal in $\mf S/N$ are $E$--hyperbolic for the same reason.

\par\medskip\textbf{\ref{axiom:bounded_geodesic_image} Bounded geodesic image:} Let $\ol W\in \mf S/N$,  let $\ol V \propnest \ol W$, and let $\ol x, \ol y\in G/N$ satisfy $\dist_{\ol V}(\ol x, \ol y)> E$. Let $\ol \gamma$ be a geodesic in $\mc C\ol W$ from $\pi_{\ol W}(\ol x)$ to $\pi_{\ol W}(\ol y)$.  We will show that a uniform neighborhood of $\rho^{\ol V}_{\ol W}$ intersects $\ol\gamma$. 

Suppose first that $\ol W \neq \ol S$, so that $\mc C\ol W=\mc CW$. Since $\ol V\propnest \ol W$,  \Cref{non_trans_dom_and_two_points_lift} implies that there exist pairwise minimal representatives $\{x,y,V,W\}$. Since $\{x, V\}$ are minimal and $\mc C\ol V = \mc CV$, we have $\pi_V(x)\subseteq\pi_{\ol V}(\ol x)$, and similarly for $y$. 
 Thus  $\dist_V(x,y)\ge \dist_{\ol V}(\ol x, \ol y)> E$.  By \Cref{rem:closeProj} we have $\rho^V_W\subseteq \rho^{\ol V}_{\ol W}$, so the bounded geodesic image axiom in $(G,\mf S)$ provides a point  $w\in \mc N_E(\rho^V_W)\subseteq \mc N_E(\rho^{\ol V}_{\ol W})$ which lies on any geodesic $\lambda$ connecting $\pi_W(x)$ to $\pi_w(y)$.  Connect the endpoints of $\lambda$ to the endpoints of $\ol\gamma$ with two geodesics, each of whose length is at most $\diam(\pi_{\ol W}(\ol x))\le\beth$ because $\pi_W(x)\subseteq \pi_{\ol W}(\ol x)$ and similarly for $y$. Since geodesic quadrangles in $\mc C W$ are $2E$-slim, we obtain  $\dist_W(w, \ol\gamma)\le 2E+\beth$, so $\ol\gamma\cap N_{3E+\beth}(\rho^{\ol V}_{\ol W})\neq \emptyset$, as desired.

Now suppose $\ol W=\ol S$. In this case, $\ol \gamma$ is a geodesic in $\ol X$ from $\ol x$ to $\ol y$. Complete $\ol \gamma$ to a geodesic triangle $\ol T$ with vertices $\{\ol x, \ol y, \ol{v_V}\}$, and let $T$ be a lift of $\ol T$ to $\hat X$ with vertices $\{x,y,v_V\}$. Let $\gamma$ be the lift of $\ol\gamma$ inside $T$. Since $\{x,y, V\}$ are pairwise minimal, $\dist_V(x,y)\ge \dist_{\ol V}(\ol x, \ol y)>E$, so \Cref{lem:bgi_with_U} yields that $\dist_{\hat X}(v_V,\gamma)\le 2$, concluding the proof of the bounded geodesic image axiom.

\par\medskip\textbf{\ref{axiom:partial_realisation} Partial realization:} Let $\{\ol V_j\}$ be a collection of pairwise orthogonal domains of $\mf S/N$, and fix $p_j\in \mc C\ol V_j$ for each $j$. If $\{\ol V_j\}=\{\ol S\}$, then the unique given point is $\ol p\in \hat X/N$.  Choose any representative $p\in \ol p$, and let $x\in X\subseteq \hat X$ be at distance at most one from $p$. Since vertices of $X$ are all elements of $G$, we can consider $\ol x$ as an element of $G/N$. Note that $\pi_{\ol S}(x)=\ol x$, which is at distance $1$ from $\ol p$ in $\hat X/N$. Hence the first bullet point of the partial realization axiom holds, and the other two vacuously hold.

Thus suppose that $\ol V_j\neq \ol S$ for all $j$. Since the $\ol V_j$ are pairwise orthogonal, \Cref{non_trans_dom_and_two_points_lift} provides a collection $\{V_j\}$ of pairwise orthogonal, pairwise minimal representatives $V_j\in \ol V_j$.  As $\mc C\ol V_j=\mc C V_j$ in this case, we have $p_j\in \mc C V_j$.  Let $x\in G$ be the point provided by the partial realization axiom in $(G,\mf S)$. Since $S\sqsupsetneq V_j$ for each $j$, we have that $\dist_X(x,\rho^S_{V_j})\le E$.  Thus $\{x,V_j\}$ is a minimal pair for each $j$, since $x\in \mc N_A(\rho^S_{V_j})$ and therefore $x$ and $v_{V_j}$ are joined by an edge.  We will show that $\ol x\in G/N$ satisfies the conclusions of the partial realization axiom, \Cref{axiom:partial_realisation}.

The first bullet point of \Cref{axiom:partial_realisation} holds because $\mc C\ol V_j=\mc CV_j$ and $\{x,V_j\}$ is minimal for each $j$, which implies that $\dist_{\ol V_j}(\ol x, p_j) \le \dist_{V_j}(x, p_j) \le E$ by \Cref{lem:DistInQuotient}.

For the second bullet point of the axiom, fix $\ol W\in \mf S/N$ such that $\ol V_j\propnest \ol W$ or $\ol V_j\trans\ol W$ for some $j$, and for simplicity let $\ol V=\ol V_j$. We will uniformly bound $\dist_{\ol W}(\ol x, \rho^{\ol V}_{\ol W})$. If $\ol W=\ol S$ then $\dist_{\ol X}(\ol x,  \rho^{\ol V}_{\ol S})\le \dist_{X}(x,  \rho^V_S)+1\le E+1$ by \Cref{lem:DistInQuotient}. Otherwise, by \Cref{lem:LiftingTriples}, there exist $W\in \ol W$ and $V'\in \ol V$ such that $\{x,V',W\}$ are pairwise minimal. By \Cref{lem:orbit_of_edge_is_unique}, we can also assume that $V'=V$, as $v_{V}$ and $x$ are joined by an edge. Hence $\{x,V,W\}$ are pairwise minimal, which in particular means that the relation between $V$ and $W$ is the same as that between $\ol V$ and $\ol V$. Hence, by minimality and the realization axiom in $\mf S$ we have that $\dist_{\ol W}(\ol x, \rho^{\ol V}_{\ol W})\le \dist_{W}(x, \rho^{V}_{W})\le E$, concluding the proof.

\par\medskip\textbf{\ref{axiom:uniqueness} Uniqueness:}
Let $\ol x, \ol y\in G/N$ and $r\ge 0$, and suppose that $\dist_{\ol U}(\ol x, \ol y)\le r$ for every $\ol U\in \mf S/N$. We will show that $\dist_{G/N}(\ol x, \ol y)$ is uniformly bounded by a constant depending only on $r$.  Let $\{x,y\}$ be minimal representatives with $x\in \ol x$ and $y\in \ol y$.  

Recall that $\hat X$ is the cone-off of $X$ with respect to the family $\mc H=\mc Y\cup \{\mc N_A(\rho^U_S)\}_{U\propnest S}$ of $\max\{K,3E\}$-quasiconvex subsets. By \cite[Proposition~2.27]{Spriano_hyperbolic_I}, there exists $\xi=\xi(E,K)\geq 1$ and a $\xi$--quasigeodesic $\gamma$ from $x$ to $y$ in $\hat X$ such that any de-electrification $\widetilde \gamma$ is itself a $\xi$--quasigeodesic from $x$ to $y$ in $X$. By construction, we have 
\[
\widetilde \gamma=\sigma_1*\eta_1*\cdots * \sigma_k * \eta_k * \sigma_{k+1},
\]
where each $\sigma_i$ is a $\xi$-quasigeodesic in $X$ contained in $\gamma$, while each $\eta_i$ is a geodesic in $X$ connecting points in some subspace $Y_i\in \mc Y$ or in the $A$-neighborhood of some $\rho^{U_i}_S$. 
Let $\alpha$ be (the image in $X$ of) a $\lambda$--hierarchy path in $G$ from $x$ to $y$, whose existence is guaranteed by \Cref{lem:hierarchypath}. By \Cref{lem:stability}, the unparameterized $\lambda$-quasigeodesic $\alpha$ and the $\xi$-quasigeodesic $\widetilde \gamma$ are at Hausdorff distance at most $\Phi$, where $\Phi$ depends only on $\lambda$, $\xi$, and $E$, and so ultimately on $E$ and $K$.

Let $\Psi$ be such that each subgroup $H_Y$ has a $\Psi$--cobounded action on $Y$, which exists by \Cref{hyp:complete}, and fix 
\begin{equation}\label{eq:daleth}
    \daleth>2\Phi + 2\Psi + 3E+2A.
\end{equation} We will consider only the collection of paths $\eta_{i_1},\dots, \eta_{i_m}$ whose endpoints are at distance greater than $\daleth$ in $X$, that is, such that $\dist_X((\eta_{i_j})_-,(\eta_{i_j})_+)>\daleth$. In particular, since $\daleth>2A+E=\diam(\mc N_A(\rho^U_S))$, the endpoints $c_j'=(\eta_{i_j})_-$ and $d_j'=(\eta_{i_j})_+$ must belong to some $Y\in \mc Y$.  Let $c_j,d_j\in Y$ be points in the same $H_Y$-orbit at distance at most $\Psi$ from $c_j'$ and $d_j'$, respectively. Let $a_j,b_j\in \alpha$ be points at distance at most $\Phi$ from $c_j',d_j'$ respectively, so that $\dist_X(a_j,c_j)\le \Phi + \Psi$ and $\dist_X(b_j,d_j)\le \Phi + \Psi$.  See \Cref{fig:DivertedPath1}.

\begin{figure}[htp]
    \centering
    \begin{overpic}[page=3, width=4in, trim={1.25in 6in 2in 1in}, clip]{img/HHGFigs}
    \put(5,26){$x$}
    \put(91,30){$y$}
    \put(15,26){$a_1$}
    \put(35,27){$b_1$}
    \put(63,24){$a_2$}
    \put(78,29){$b_2$}
    \put(19,52){$c_1$}
    \put(14,45){$c_1'$}
    \put(34,60){$d_1$}
    \put(39,60){$d_1'$}
    \put(68,50){$c_2$}
    \put(63,50){$c_2'$}
    \put(77,47){$d_2$}
    \put(81,42){$d_2'$}
    \put(1.5,11){{\color{BrickRed}$\le \Phi + \Psi$}}
    \put(63,15){{\color{BrickRed}$\le \Phi + \Psi$}}
    \put(34,1){\color{gray}$Y_2$}
    \put(55,1){\color{gray}$Y_3$}
    \put(25,62){\color{gray}$Y_1$}
    \put(75,56){\color{gray}$Y_4$}
    \put(10,38){$\sigma_1$}
    \put(26,47){$\eta_1$}
    \put(25,20){$\sigma_2$}
    \put(35,10){$\eta_2$}
    \put(46,16){$\sigma_3$}
    \put(55,8){$\eta_3$}
    \put(54,20){$\sigma_4$}
    \put(70,38){$\eta_4$}
    \put(86,38){$\sigma_5$}
    \end{overpic}
    \caption{The setup for the proof of the Uniqueness Axiom, in the space $X$.  The undulated line passing through $x$, $a_1$, $b_1$, and so on is the $\lambda$-hierarchy path $\alpha$, while the de-electrification $\widetilde{\gamma}$ is the union of the segments $\sigma_1$, $\eta_1$, $\sigma_2$ and so on. In this example, $\dist_{Y_i}(x,y)\geq \daleth$ for $i=1,4$.}
    \label{fig:DivertedPath1}
\end{figure}

Let $\omega_1=\alpha|_{[x,a_1]}\cup [a_1,c_1]$;  for $1<i\le m$, let $\omega_i=[d_{i-1},b_{i-1}]\cup \alpha|_{[b_{i-1},a_i]} \cup [a_i,c_i]$; and  let $\omega_{m+1} = [d_m,b_m]\cup \alpha|_{[b_m,y]}$. See \Cref{fig:DivertedPath2}.

\begin{figure}[htp]
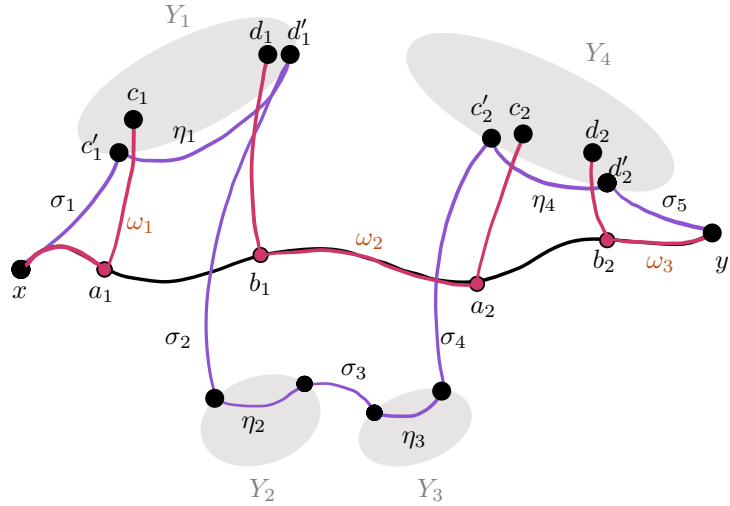

    \centering
    \begin{overpic}[page=2, width=4in, trim={1.5in 3in 2in 4.5in}, clip]{img/HHGFigs}
    \put(5,26){$x$}
    \put(97,30){$y$}
    \put(15,26){$a_1$}
    \put(36,27){$b_1$}
    \put(65,24){$a_2$}
    \put(81,30){$b_2$}
    \put(20,52){$c_1$}
    \put(14,45){$c_1'$}
    \put(36,60){$d_1$}
    \put(41,60){$d_1'$}
    \put(70,50){$c_2$}
    \put(65,50){$c_2'$}
    \put(80,47){$d_2$}
    \put(83,42){$d_2'$}
    \put(20,35){{\color{Bittersweet}$\omega_1$}}
    \put(50,33){{\color{Bittersweet}$\omega_2$}}
    \put(88,30){{\color{Bittersweet}$\omega_3$}}
    \put(36,0){\color{gray}$Y_2$}
    \put(58,0){\color{gray}$Y_3$}
    \put(25,62){\color{gray}$Y_1$}
    \put(80,57){\color{gray}$Y_4$}
    \put(10,38){$\sigma_1$}
    \put(26,47){$\eta_1$}
    \put(25,20){$\sigma_2$}
    \put(35,9){$\eta_2$}
    \put(48,16){$\sigma_3$}
    \put(56,7){$\eta_3$}
    \put(61,20){$\sigma_4$}
    \put(73,38){$\eta_4$}
    \put(90,38){$\sigma_5$}
    \end{overpic}
    \caption{The subpaths $\omega_i$ in the setup for the proof of the Uniqueness Axiom.}
    \label{fig:DivertedPath2}
\end{figure}

Recalling that each vertex of $X$ represents an element of $G$, the path 
\[
\omega_1\cup [c_1,d_1] \cup \omega_2 \cup [c_2,d_2]\cup \cdots \cup [c_m,d_m] \cup \omega_{m+1}
\]
from $x$ to $y$ shows that we can write the element $x^{-1}y\in G$ as 
\[
x^{-1}y = (x^{-1}a_1) \cdot (a_1^{-1}c_1) \cdot (c_1^{-1}d_1) \cdot (d_1^{-1}b_1) \cdot \cdots (b_m^{-1}y).
\]
Since $c_j,d_j$ are in the same coset of some $H_{Y}$, the element $c_j^{-1}d_j$ belongs to $N$.  In particular, we have 
\begin{equation}\label{eqn:productofcosets}
x^{-1}y\in w_1 N \cdot w_2 N \cdot \dots \cdot w_{m+1} N,
\end{equation}
where $w_1=x^{-1}c_1$, and similarly $w_j = d_{j-1}^{-1}c_j$ for $1<j\le m$ and $w_{m+1}= d_{m}^{-1}y$.

Recall that the word metric $\dist_G$ (resp. $\dist_{G/N}$) is induced by the finite generating set $\mc{T}$ (resp. $\ol{\mc{T}}$). In general, if $w$ is a word in the product of cosets $(d_1N)(d_2N)\dots (d_mN)$, then $|\ol w|_{\ol{\mc{T}}}\le \sum_{i=1}^m |d_i|_{\mc{T}}$.  Applying this fact to the representation of $x^{-1}y$ in \Cref{eqn:productofcosets}, we see that it suffices to bound the $\mc{T}$--lengths of the coset representatives $w_j$. Each is constructed as a product of elements of $G$, so we  bound the $\mc{T}$--length of each factor individually.  

We proceed via a sequence of claims. The first two show that the $\mc{T}$--lengths of the coset representatives is approximated by the lengths of the appropriate paths in $X$. We first consider factors in coset representatives that lie on $\alpha$.

\begin{claim}
\label{claim:x_y_smallproj}
    There exist constants $k_1,k_2$ such that if $a,b\in \alpha$, then $\dist_G(a,b) \asymp_{k_1,k_2} \dist_X(a,b)$.
\end{claim}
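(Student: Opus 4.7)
The lower bound $\dist_X(a,b)\le E\,\dist_G(a,b)+E$ is immediate: the projection $\pi_S\colon G\to X$ is $(E,E)$-coarsely Lipschitz by axiom \ref{axiom:projections}, and $G$ is identified with the vertex set of $X$ by \Cref{rem:X_is_CayG}. So I focus on the reverse inequality, where the constants $k_1,k_2$ are allowed to depend on $r$ and the hierarchy data.

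My plan is to apply the distance formula (\Cref{thm:distance_formula}) in $(G,\mf S)$: for all sufficiently large thresholds $s$,
\[
\dist_G(a,b) \asymp \dist_X(a,b) + \sum_{U\in\mf S-\{S\}} \ignore{\dist_U(a,b)}{s}.
\]
Because $\alpha$ is a $\lambda$-hierarchy path from $x$ to $y$, the projection $\pi_U(\alpha)$ is an unparameterized $\lambda$-quasigeodesic in $\mc CU$ from $\pi_U(x)$ to $\pi_U(y)$, so for any $a,b\in\alpha$ one gets $\dist_U(a,b)\preceq_\lambda \dist_U(x,y)$. After raising the threshold to absorb these $\lambda$-constants, the task reduces to uniformly bounding $\sum_{U\in\mf S-\{S\}}\ignore{\dist_U(x,y)}{s'}$ by a function of $r$ alone.

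I would handle each term as follows. If $\dist_U(x,y)>E$, then by \Cref{lem:bgi_with_U} any $\hat X$-geodesic between the minimal pair $\{x,y\}$ passes within distance $2$ of $v_U$, so \Cref{lem:almost_minimal_proj} produces $x_U\in\ol x$ that is minimal with $U$ and whose $U$-projection is within $9\aleph+28E$ of $\pi_U(x)$; a symmetric argument yields $y_U\in\ol y$. Consequently $\{x_U,y_U,U\}$ is a pairwise minimal triple, and \Cref{lem:DistInQuotient} together with the uniqueness hypothesis $\dist_{\ol U}(\ol x,\ol y)\le r$ forces $\dist_U(x_U,y_U)\le r+2\beth$. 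Thus each individual $U$ contributes at most $r+O(\aleph+\beth+E)$ to the sum.

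The main obstacle, which I would address last, is to control how many $U\in \mf S-\{S\}$ can exceed the threshold $s'$. Grouping by $\ol U\in\mf S/N$, the uniform bound above applies to every contributing $U$ in a given orbit, so it remains to bound the number of $N$-orbits and the multiplicity within each orbit. For the multiplicity, I would use \Cref{prop:LiftsProjCloseInU} and the $N$-equivariance of the HHG structure to argue that only boundedly many $U\in \ol U$ can simultaneously be close (in $\hat X$) to the fixed $\hat X$-geodesic $[x,y]$, which is necessary for $\dist_U(x,y)$ to be large. For the number of orbits, I would apply the finitely-many-large-domains consequence of the distance formula in $(G,\mf S)$ to the quasi-minimal pairs $\{x_U,y_U\}$ produced above, together with how the action of $N$ collapses these to orbits in $\mf S/N$. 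Combining these bounds yields the desired uniform control on the sum and completes the proof.
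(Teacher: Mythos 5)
Your proof starts down the same path as the paper — the distance formula, the hierarchy-path property of $\alpha$, and the lifting machinery (\Cref{lem:bgi_with_U}, \Cref{lem:almost_minimal_proj}, \Cref{lem:DistInQuotient}) to show that each $\dist_U(x,y)$ for $U\propnest S$ is bounded by a constant depending on $r$ and the hierarchy data — but then it veers off into a counting argument that is both unnecessary and, as proposed, unlikely to close.

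The gap is that you misdiagnose what needs to be bounded. After establishing the uniform bound $\dist_U(x,y)\le C_0(r,\aleph,\beth,E)$ for every $U\propnest S$, you do not need to control $\sum_{U}\ignore{\dist_U(x,y)}{s'}$ at all. Since $\alpha$ is a $\lambda$-hierarchy path, $\pi_U(\alpha)$ is an unparametrized $\lambda$-quasigeodesic, so the bound on $\dist_U(x,y)$ propagates to a uniform bound $\diam \pi_U(\alpha)\le s'(\lambda, C_0)$ on \emph{every} non-maximal domain. The correct move is then to invoke the distance formula with a threshold $s>s'$, at which point every term $\ignore{\dist_U(a,b)}{s}$ with $U\neq S$ vanishes and the sum collapses to $\ignore{\dist_X(a,b)}{s}$ alone. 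This gives $\dist_G(a,b)\asymp\dist_X(a,b)$ directly — no counting of domains, orbits, or multiplicities.

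Moreover, the counting you sketch looks circular: the number of $U$ with $\dist_U(x,y)>s'$ is controlled in $(G,\mf S)$ only through $\dist_G(x,y)$, which is precisely the quantity the Uniqueness axiom is trying to bound. The $\hat X$-geodesic $[x,y]$ has length at most $r$, but a ball of bounded radius in $\hat X$ can meet infinitely many cone points $v_U$, so you cannot bound the count geometrically either. The paper's threshold trick sidesteps all of this.
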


\begin{claimproof}[Proof of \Cref{claim:x_y_smallproj}] Recall that $\alpha$ is a hierarchy path between $x$ and $y$, which are minimal. Let $[x,y]$ be an $\hat X$-geodesic. Since $x$ and $y$ are minimal, any two points on $[x,y]$ are minimal, as otherwise we could find closer lifts of $\ol x$ and $\ol y$. We first prove that  $\dist_U(x,y)$ is uniformly bounded for all $U\propnest S$. Indeed, suppose there exists $U \propnest S$ with $\dist_U(x,y)>E$. By \Cref{lem:bgi_with_U}, there exists $z\in[x,y]$ such that $\dist_{\hat X}(v_U,z)\le 2$. In turn, since $\{z,x\}$ and $\{z,y\}$ are minimal, \Cref{lem:almost_minimal_proj} provides the existence of $x^*\in \ol x$ and $y^*\in \ol y$ such that $\{x^*,U\}$ and $\{y^*,U\}$ are minimal and $\max\{\dist_U(x,x^*),\dist_U(y,y^*)\}\le 9\aleph+28E$. Hence
$$\dist_U(x,y)\le \dist_U(x,x^*)+\diam\pi_{\ol U}(\ol x)+\dist_{\ol U}(\ol x, \ol y)+\diam\pi_{\ol U}(\ol y)+\dist_U(y,y^*) \le 18\aleph+2\beth+56E+r.$$
Since $\alpha$ is a $\lambda$--hierarchy path in $G$, if $\dist_U(x,y)$ is uniformly bounded, then the diameter of $\pi_U(\alpha)$ is at most some uniform constant $s'$. Therefore $\diam\pi_U(\{a,b\})\le s'$ whenever $a,b\in \alpha$.  Applying the distance formula (\Cref{thm:distance_formula}) with $s=\max\{s'+E+\aleph+1,s_0\}$ yields constants $k_1,k_2$ so that $\dist_G(a,b) \asymp_{k_1,k_2} \dist_X(a,b)$, as required. We note that, for later purposes, we deliberately chose $s$ to be bigger than $\max\{s',s_0\}$, though the latter threshold would have been sufficient to apply the distance formula and conclude the proof of \Cref{claim:x_y_smallproj}.
\end{claimproof}

\noindent We next consider factors in coset representatives labeling paths from $\alpha$ to some  $Y_j$.
\begin{claim}\label{claim:shortsegs}
    For $i=1,\dots, m$, we have
    \[
    d_G(a_i,c_i)\asymp_{k_1,k_2} \dist_X(a_i,c_i) \le \Phi + \Psi,
    \] and similarly
    \[  
    d_G(b_i,d_i)\asymp_{k_1,k_2} \dist_X(b_i,d_i) \le \Phi + \Psi.
    \]
\end{claim}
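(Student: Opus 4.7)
The plan is a direct reduction to the distance formula, essentially extending the argument of \Cref{claim:x_y_smallproj} to points that lie slightly off of $\alpha$.

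The upper bound $\dist_X(a_i,c_i)\le\Phi+\Psi$ is immediate from the construction: $a_i\in\alpha$ was chosen within distance $\Phi$ of $c_i'\in\widetilde\gamma$ (by quasigeodesic stability, \Cref{lem:stability}), while $c_i\in Y$ was chosen in the same $H_Y$-orbit as $c_i'$ and within distance $\Psi$ of it (by $\Psi$-coboundedness of the $H_Y$-action on $Y$). The triangle inequality finishes this part, and the argument for $\dist_X(b_i,d_i)$ is symmetric.

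For $\dist_G(a_i,c_i)\asymp_{k_1,k_2}\dist_X(a_i,c_i)$, the natural approach is to apply the distance formula (\Cref{thm:distance_formula}) to the pair $(a_i,c_i)\in G\times G$ with the same threshold $s$ fixed in the proof of \Cref{claim:x_y_smallproj}. Since projections in a HHS are $(E,E)$-coarsely Lipschitz, for every $U\in\mf S-\{S\}$ one has
\[
\dist_U(a_i,c_i)\le E\cdot\dist_X(a_i,c_i)+E\le E(\Phi+\Psi)+E,
\]
a uniform constant depending only on $E$ and $K$. The deliberate enlargement of $s$ past $\max\{s',s_0\}$ in the proof of \Cref{claim:x_y_smallproj} is precisely the room needed here: we simply pick $s$ at the outset to also dominate $E(\Phi+\Psi)+E$. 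Then every non-maximal term $\ignore{\dist_U(a_i,c_i)}{s}$ in the distance-formula sum vanishes, leaving only the top-level contribution $\ignore{\dist_X(a_i,c_i)}{s}$. The distance formula therefore collapses to
\[
\dist_G(a_i,c_i)\asymp_{k_1,k_2}\ignore{\dist_X(a_i,c_i)}{s}\asymp\dist_X(a_i,c_i),
\]
with the same constants $k_1,k_2$ coming from the distance formula applied with threshold $s$. The $(b_i,d_i)$ case is verbatim the same, using $b_i\in\alpha$ and $d_i\in Y$ in place of $a_i$ and $c_i$.

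No real obstacle is anticipated: the whole content of the claim is that the pair $(a_i,c_i)$ has all non-top-level projections uniformly bounded, so the sum in the distance formula contributes only at the level of the top-level space $\mc CS=X$. The single bookkeeping point is that the threshold $s$ in \Cref{claim:x_y_smallproj} must be large enough to absorb the constant $E(\Phi+\Psi)+E$; since enlarging $s$ does not weaken any previous estimate, this is harmless and explains the extra slack built into the definition of $s$ in the preceding claim.
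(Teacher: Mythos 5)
Your proposal has a genuine gap in the key step. You write that since projections are $(E,E)$-coarsely Lipschitz, one has
\[
\dist_U(a_i,c_i)\le E\cdot\dist_X(a_i,c_i)+E,
\]
but the HHS Projections axiom (\Cref{defn:HHS}.\ref{axiom:projections}) gives that $\pi_U$ is coarsely Lipschitz \emph{from $\mc X$}, i.e.\ with respect to the word metric $\dist_G$ on $G$, not with respect to the top-level metric $\dist_X=\dist_{\mc CS}$. By \Cref{rem:X_is_CayG}, $\dist_X$ is the metric of a Cayley graph of $G$ over a typically \emph{infinite} generating set, and in general $\dist_X \ll \dist_G$. (Think of the mapping class group and the curve graph: two group elements a bounded curve-graph distance apart can have arbitrarily large subsurface projection distance.) So the Lipschitz bound you want would read $\dist_U(a_i,c_i)\le E\,\dist_G(a_i,c_i)+E$, which is circular here since $\dist_G(a_i,c_i)$ is precisely the quantity the claim is trying to control. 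Having $\dist_X(a_i,c_i)\le\Phi+\Psi$ gives you no a priori bound on any non-maximal projection.

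The paper instead bounds $\dist_U(a_i,c_i)$ with a geometric argument that bypasses this: it considers the geodesic rectangle in $X$ on $\{a_i,c_i,d_i,b_i\}$, uses that the side $[c_i,d_i]$ is long (length $>2\Phi+2\Psi+3E$, built into the choice of $\daleth$ in \Cref{eq:daleth}) to force $\mc N_E(\rho^U_S)$ to miss the opposite side $[b_i,d_i]$, then applies the bounded geodesic image axiom to get $\dist_U(b_i,d_i)\le E$, and combines this with the bounds $\diam\pi_U(\{c_i,d_i\})\le\aleph$ (\Cref{rem:YtoU_is_bounded}, since $c_i,d_i$ lie in the same $H_Y$-orbit) and $\diam\pi_U(\{a_i,b_i\})\le s'$ (since $a_i,b_i$ lie on the hierarchy path $\alpha$, from the proof of \Cref{claim:x_y_smallproj}). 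This gives $\dist_U(a_i,c_i)\le s'+E+\aleph$, which is the reason the threshold $s$ in \Cref{claim:x_y_smallproj} was deliberately taken larger than $s'+E+\aleph$ (not larger than $E(\Phi+\Psi)+E$, as you guessed). Your upper-bound paragraph and your plan to then apply the distance formula are both fine; the missing ingredient is this rectangle/BGI estimate.
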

\begin{claimproof}[Proof of \Cref{claim:shortsegs}]
    We prove the first statement, as the second follows symmetrically. Let $U\in \mf S-\{S\}$ be such that $\dist_U(a_i,c_i)>E$, so that $[a_i,c_i]\cup \mc N_{E}(\rho^U_S)\neq \emptyset$ by the bounded geodesic image axiom. Let $[a_i,c_i] \cup [c_i,d_i] \cup [d_i,b_i] \cup [b_i,a_i]$ be a geodesic rectangle in $X$.  It follows from the definition of $\daleth$ in \Cref{eq:daleth} that the side $[c_i,d_i]$ has length greater than $2\Phi+2\Psi + 3E$. Thus $\mc N_E(\rho^U_S)$ cannot intersect $[b_i,d_i]$ non-trivially, as it already intersects $[a_i,c_i]$. Therefore $\dist_U(b_i,d_i)\le E$ by the bounded geodesic image axiom.  Moreover, $\diam\pi_U(\{c_i,d_i\})\le \aleph$ by \Cref{rem:YtoU_is_bounded}, and by the proof of \Cref{claim:x_y_smallproj}, we see that $\diam\pi_U(\{a_i,b_i\}) \le s'$, since $a_i,b_i$ lie on $\alpha$. Thus
    \[
    \dist_U(a_i,c_i)\le \diam\pi_U(\{a,b\})+\dist_U(b_i,d_i)+ \diam\pi_U(\{c_i,d_i\})\le s'+E+\aleph.\]
     The claim follows by applying the distance formula (\Cref{thm:distance_formula}) with the same threshold $s>s'+E+\aleph$ 
     as in \Cref{claim:x_y_smallproj}, and using that $[a_i,c_i]$ and $[b_i,d_i]$ each have $X$--length at most $\Phi +\Psi$ by construction.
\end{claimproof}

\noindent The final claim relates the sum of the lengths of the subpaths of $\alpha$ to $\dist_{\hat X}(x,y)$.
\begin{claim}\label{claim:subpathsofgeod}
    There is a constant $\Xi\geq 1$ such that     
    \[
    \dist_X(x,a_1) + \sum_{i=1}^{m-1} \dist_X(b_i,a_{i+1}) + \dist_X(b_m,y) \le \Xi.
    \]
\end{claim}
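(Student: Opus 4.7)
The plan is to exploit the fact that the ``non-long'' portion of $\widetilde{\gamma}$---that is, everything except the pieces $\eta_{i_1},\ldots,\eta_{i_m}$---has uniformly bounded $X$-length, and then transfer this bound to the corresponding subpaths of $\alpha$ via Morse stability.

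First I would reduce the problem to a length bound on $\gamma$ in $\hat X$. Applying the standing hypothesis $\dist_{\ol U}(\ol x,\ol y)\le r$ with $\ol U=\ol S$, together with minimality of $\{x,y\}$, yields
\[
\dist_{\hat X}(x,y)=\dist_{\ol X}(\ol x,\ol y)\le r.
\]
Since $\gamma$ is a $\xi$-quasigeodesic in $\hat X$, its $\hat X$-length is at most $\xi r+\xi$. Each two-edge subword $e_i$ contributes $2$ to the $\hat X$-length, while each $u_i$-segment contributes its full $X$-length, so
\[
k\le\tfrac{\xi r+\xi}{2}\quad\text{and}\quad \sum_{i=1}^{k+1}\ell_X(\sigma_i)\le\xi r+\xi.
\]
In particular $m\le k$ is bounded in terms of $r$.

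Next I would handle the short $\eta$'s: by construction, every $\eta_j$ with $j\notin\{i_1,\ldots,i_m\}$ satisfies $\ell_X(\eta_j)\le\daleth$, and there are at most $k\le(\xi r+\xi)/2$ such pieces, so their total $X$-length is at most $\daleth(\xi r+\xi)/2$.

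Then, using $\dist_X(a_j,c_j'),\dist_X(b_j,d_j')\le \Phi$ from the Morse property, I would bound each summand in the target inequality by a subpath of $\widetilde\gamma$ plus a uniformly small error:
\[
\dist_X(x,a_1)\le \ell_X(\widetilde\gamma|_{[x,c_1']})+\Phi,
\]
\[
\dist_X(b_i,a_{i+1})\le \ell_X(\widetilde\gamma|_{[d_i',c_{i+1}']})+2\Phi,
\]
\[
\dist_X(b_m,y)\le \ell_X(\widetilde\gamma|_{[d_m',y]})+\Phi.
\]
Each of these subpaths of $\widetilde\gamma$ contains only $\sigma$-pieces and short $\eta$-pieces, since the long $\eta_{i_j}$ have been carefully excised. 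Summing the three inequalities, the $\widetilde\gamma$-lengths telescope to at most the total $X$-length of all $\sigma$'s plus all short $\eta$'s, which is at most $\xi r+\xi+\daleth(\xi r+\xi)/2$. The total error from the $\Phi$-terms is at most $2m\Phi\le(\xi r+\xi)\Phi$. Setting
\[
\Xi\coloneq (\xi r+\xi)\bigl(1+\tfrac{\daleth}{2}+\Phi\bigr)+1
\]
then yields a constant depending only on $r,\xi,\Phi,\daleth$, as desired.

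I do not anticipate a serious obstacle here, as the argument is essentially combinatorial bookkeeping given that (i) $\gamma$ is a $\xi$-quasigeodesic of bounded length in $\hat X$, (ii) the short $\eta$'s are short by definition of the threshold $\daleth$, and (iii) $\alpha$ tracks $\widetilde\gamma$ up to the Morse constant $\Phi$. The only care needed is to make sure the subpaths of $\widetilde\gamma$ whose $X$-lengths are summed are exactly the non-long ones, so that the potentially very long pieces $\eta_{i_j}$ contribute nothing to the total.
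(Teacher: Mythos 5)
Your proposal is correct and follows essentially the same route as the paper's proof: you use minimality to bound $\dist_{\hat X}(x,y)\le r$, the Morse property to replace each $\dist_X(b_i,a_{i+1})$ by the $X$-length of a subpath of $\widetilde\gamma$ plus a $2\Phi$-error, and the observations that the excised subpaths of $\widetilde\gamma$ contain only $\sigma$-pieces and short $\eta$-pieces (each $\le \daleth$) and that the total number of pieces is bounded by $\xi(r+1)$. The only cosmetic differences are that you track the factor of $2$ in $k\le(\xi r+\xi)/2$ rather than using the cruder bound $m\le\xi(r+1)$, which yields a slightly tighter but equivalent constant $\Xi$.
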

\begin{claimproof}[Proof of \Cref{claim:subpathsofgeod}]
    For convenience, let $b_0=x$ and $a_{m+1}=y$, so that the sum we are interested in bounding is $\sum_{i=0}^m\dist_X(b_i,a_{i+1})$. By the triangle inequality, for each $1\le i \le m-1$ we have that $\dist_X(b_i,a_{i+1}) \le \dist_X(b_i,d_i')+\dist_X(d_i',c'_{i+1})+\dist_X(c'_{i+1},a_{i+1})$. Hence
       \begin{equation}\label{eqn:sum1}
    \dist_X(b_i,a_{i+1}) \le
    2\Phi  + \ell_X(\widetilde \gamma|_{[d_i',c_{i+1}']}) \le 2\Phi + \sum_j \ell_X(\sigma_{i_j}) + \sum_j \ell_X(\eta_{i_j}),
    \end{equation}
    where the sums are taken over indices $i_j$  so that each $\eta_{i_j}$ and $\sigma_{i_j}$ are contained in the subpath of $\widetilde \gamma$ from $d_i'$ to $c_{i+1}'$.  See \Cref{fig:biToai+1}.

\begin{figure}[htp]
    \centering
    \begin{overpic}[page=2, width=2in, trim={2in 7in 3.5in 1in}, clip]{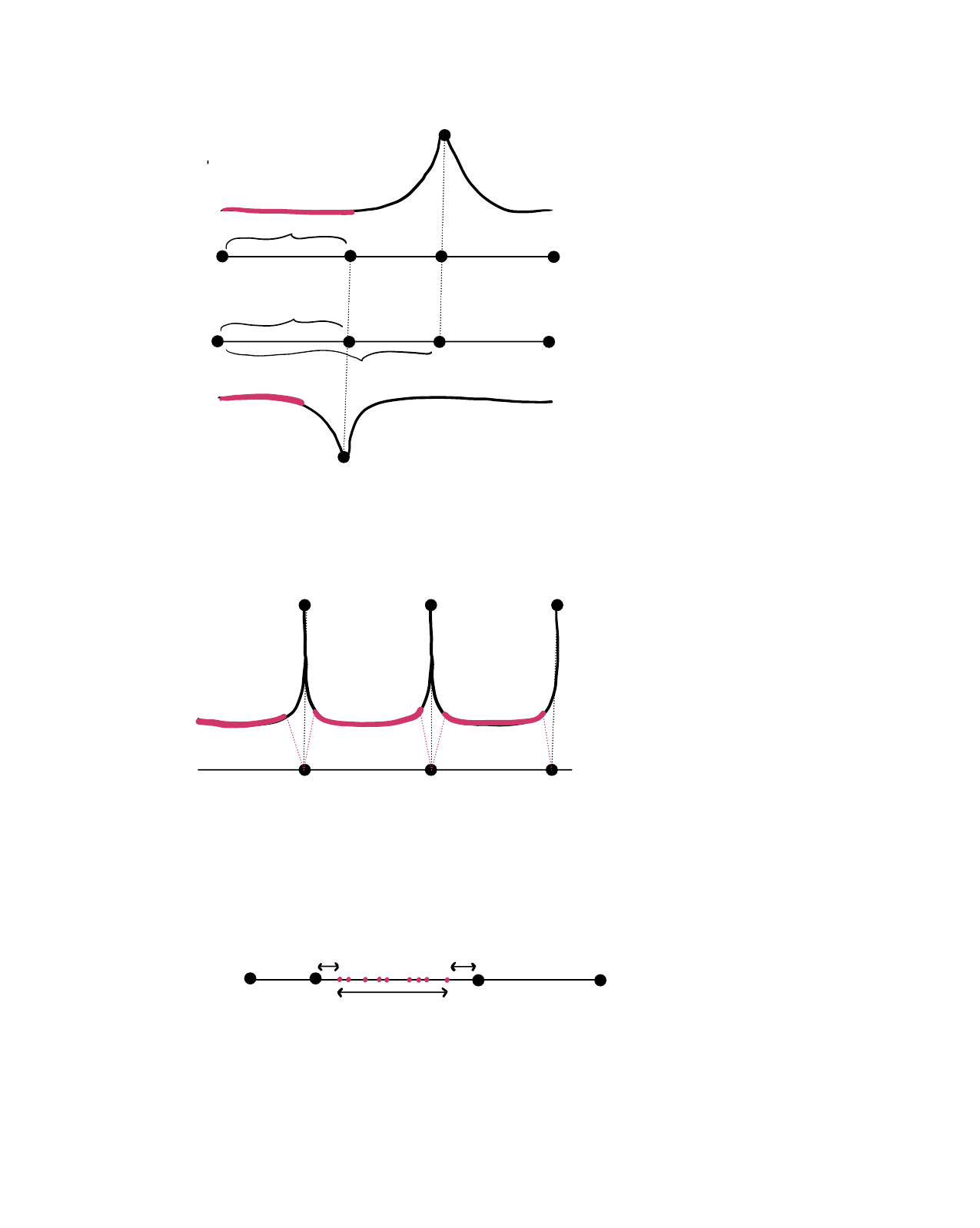}
    \put(8,35){$\sigma_2$}
    \put(25,25){$\eta_2$}
    \put(44,27){$\sigma_3$}
    \put(60,14){$\eta_3$}
    \put(68,65){$\sigma_4$}
    \put(25,3){{\color{gray} $Y_2$}}
    \put(60,3){{\color{gray} $Y_3$}}
    \put(17,84){$d_1$}
    \put(25,84){$d_1'$}
    \put(81,85){$c_2'$}
    \put(94,84){$c_2$}
    \put(29,39){$b_1$}
    \put(77,38){$a_2$}
    \put(28,62){{\color{Cerulean}$\le \Phi$}}
    \put(81,60){{\color{Cerulean}$\le \Phi$}}
    \end{overpic}
    \caption{Using the triangle inequality to obtain the bound \eqref{eqn:sum1} on $\dist_X(b_1,a_2)$ in the proof of \Cref{claim:subpathsofgeod}.  The concatenation $\sigma_2*\eta_2*\sigma_3*\eta_3*\sigma_4$ is the subpath of the de-electrification $\widetilde  \gamma$ from $d_1'$ to $c_2'$.}
        \label{fig:biToai+1}
\end{figure}

\noindent Similarly
    \begin{equation}\label{eqn:sum2}
    \dist_X(b_0,a_1)\le \Phi  + \sum_j \ell_X(\sigma_{0_j}) + \sum_j \ell_X(\eta_{0_j})
    \end{equation}
    and 
    \begin{equation}\label{eqn:sum3}
    \dist_X(b_m,a_{m+1})\le \Phi  + \sum_j \ell_X(\sigma_{m_j}) + \sum_j \ell_X(\eta_{m_j}),
    \end{equation}
    where the sums are taken over indices $i_j$  so that each $\eta_{i_j}$ and $\sigma_{i_j}$ are contained in the subpath of $\widetilde \gamma$ from $x$ to $c_1'$ and from $d_m'$ to $y$, respectively.

    By our choice of $c_i',d_i'$, each subpath $\eta_{i_j}$ in any of the sums in \eqref{eqn:sum1}, \eqref{eqn:sum2}, and \eqref{eqn:sum3} has $X$--length at most $\daleth$.  Moreover, the total number of paths $\sigma_i$ and $\eta_i$ is at most the $\hat X$--length of the $\xi$--quasigeodesic $\gamma$, which is in turn bounded by $\xi \dist_{\hat X}(x,y)+\xi=\xi(r+1)$.  
    In particular, $m\le \xi(r+1)$, and the sums $\sum_{i=1}^m \sum_j \ell_X(\sigma_{i_j})$  and $\sum_{i=1}^m \sum_j \ell_X(\eta_{i_j})$, where the inner sums are as in \eqref{eqn:sum1}--\eqref{eqn:sum3}, each have at most $\xi(r+1)$ terms.  Combining this with the observation that each $\sigma_i$ is a subpath of the $\xi$--quasigeodesic $\gamma$, we obtain
    \begin{align*}
        \sum_{i=0}^m \dist_X(b_i,a_{i+1}) 
        & \le 2l\Phi  +\sum_{i=0}^m \left(\sum_j \ell_X(\sigma_{i_j}) + \sum_j \ell_X(\eta_{i_j}) \right) \\
        & \le2\xi(r+1)\Phi  + \sum_{i=0}^m \left( \sum_j \ell_X(\sigma_{i_j}) + \sum_j \daleth \right) \\
        & \le \xi(r+1)(2\Phi + \daleth) + \sum_{i=0}^m \sum_j\ell_X(\sigma_{i_j}) \\
        & \le \xi(r+1)(2\Phi + \daleth) + \ell_{\hat X}(\gamma) \\
        & \le \xi(r+1)(2\Phi + \daleth+1).
    \end{align*}
    Thus setting $\Xi=\xi(r+1)(2\Phi + \daleth+1)$ completes the proof of the claim.     
\end{claimproof}

\noindent We are now ready to bound the sum of the lengths of the coset representatives.  It follows from \Cref{claim:x_y_smallproj} and \Cref{claim:shortsegs} that for $2\le i\le m$, we have
\begin{align*}
    |w_i|_{\mc T} = \dist_G(d_{i-1},c_{i}) & \le \dist_G(d_{i-1},b_{i-1}) + \dist_G(b_{i-1}, a_i) + \dist_G(a_i, c_i) \\
        & \asymp_{k_1,k_2} \dist_X(d_{i-1},b_{i-1}) + \dist_X(b_{i-1}, a_i) + \dist_X(a_i, c_i) \\
        & \preceq_{k_1,k_2} 2\Phi + 2\Psi + \dist_X(b_{i-1},a_i).
\end{align*}
For $i=1,\ldots,m+1$, a similar argument yields $|w_1|_{\mc T}=d_G(x,c_1)\preceq_{k_1,k_2} \Phi + \Psi + \dist_X(x,a_1)$ and $|w_{m+1}|_{\mc T}=d_G(d_m,y)\preceq_{k_1,k_2} \Phi + \Psi + \dist_X(b_m,y) $. Putting this all together, along with the fact that $m\le \xi(r+1)$ as described in the proof of the previous claim, we have 
\begin{align*}
    \sum_{i=1}^{m+1} |w_i|_{\mc T}& \preceq_{k_1,k_2} \dist_X(x,a_1) + \Phi + \Psi + \sum_{i=2}^{m} \left(2\Phi + 2\Psi + \dist_X(b_{i-1}, a_i)\right)  + \dist_X(b_m,y) + \Phi + \Psi  \\
     & \preceq_{k_1,k_2} \xi(r+1)(2\Phi + 2\Psi) + \dist_X(x,a_1) + \sum_{i=2}^{m} \dist_X(b_{i-1},a_i) + \dist_X(b_m,y)\\
     & \preceq_{k_1,k_2}  \xi(r+1)(2\Phi + 2\Psi) + \Xi ,
\end{align*}
where the final inequality  follows by \Cref{claim:subpathsofgeod}.  This bound is independent of the choice of $x$ and $y$, completing the proof of the uniqueness axiom.

\par\medskip\textbf{\ref{axiom:large_link_lemma} Large links:} By \Cref{lem:passingupisenough}, instead of checking the large link axiom, it suffices to prove that the passing up axiom \ref{axiom:passing_up} (Passing up) holds for $(G/N,\mf S/N)$, for a suitable choice of the hierarchy constant.  To this end we let $t>0$, and we fix the following constants (see \Cref{tableofconstants} for a list of where all involved quantities are defined):
\begin{align*}
&c_0=t+4\aleph+20E+2\beth,\\
&c_1=2A+3E+2C+J(K+D+2E)+\Psi,\\
&c_2=2c_1+2D+2K,\\
&c_3=c_0+c_2+12E+2.
\end{align*}

Set ${\ol{P}}>\max\{P(c_0), (t+1)P(c_3)\}$, where $P\colon \mathbb{R}_{>0}\to \mathbb{N}$ is the function provided by the passing up axiom applied to $(G,\mf S)$ (with hierarchy constant $E$). 

 Let $\ol V\in \mf S/N$ and $\ol x, \ol y\in G/N$, and let $\{\ol U_1, \ol U_2, \dots, \ol U_{\ol{P}}\} \subseteq (\mf S/N)_{\ol V}$ be such that $\dist_{\ol U_i}(\ol x, \ol y)>5E$ for all $i=1\ldots, \ol P$. Our goal is to find a domain $\ol W\nest \ol V$ and an index $i$ such that $\ol{U_i}\propnest \ol W$ and $\dist_{\ol W}(\ol x, \ol y)>t$.

\medskip
\textbf{Case 1: $\ol V\neq \ol S$}. Let $\{x,y,V,U_1,\ldots,U_{\ol{P}}\}$ be pairwise minimal representatives, which exist by \Cref{non_trans_dom_and_two_points_lift}. Since $\dist_{U_i}(x,y)\ge \dist_{\ol U_i}(\ol x, \ol y)$ by \Cref{lem:DistInQuotient}, the passing up axiom for $(G,\mf S)$ provides a domain $W\in \mf S_V$ containing some $U_i$ such that $\dist_W(x,y)> c_0$. 
 Without loss of generality, suppose $U_1\propnest W$. Notice that $\ol U_1\propnest \ol W\propnest \ol V$, as pointed out in \Cref{rem:UniqueReps}.  By \Cref{lem:LiftingTriples} there exist $x'\in \ol x$ and $U_1'\in \ol U_i$ such that $\{x',W,U_1'\}$ are pairwise minimal, and we must have that $U_1=U_1'$ by \Cref{prop:projections_are_welldef}. Then \Cref{prop:LiftsProjCloseInU} applied to $x,x',U,W$ yields that $\dist_W(x,x')\le 2\aleph+9E$. For the same reason, there exists $y'\in \ol y$ with $\{y',W\}$ minimal such that $\dist_W(y,y')\le 2\aleph+9E$. Thus, by \Cref{lem:DistInQuotient} we have that
$$\dist_{\ol W}(\ol x, \ol y)\ge \dist_{W}(x', y')-2\beth\ge \dist_{W}(x, y)-\diam\pi_W(x')-\diam\pi_W(y')-4\aleph-18E-2\beth>t.$$

\textbf{Case 2: $\ol V= \ol S$}. Towards a contradiction, assume that $\dist_{\ol W}(\ol x,\ol y)\le t$ for every $\ol W$ which properly contains some $\ol U_i$, so that, in particular, $\dist_{\ol X}(\ol x, \ol y)\le t$. As in \Cref{lem:lifting_trans_and_points}, let $\{x,y,U_1,\ldots,U_{\ol{P}}\}$ be such that $\{x,y,U_i\}$ is pairwise minimal for each $i$. As ${\ol{P}}> P(c_0)$ by assumption, the passing up axiom for $(G,\mf S)$  provides a domain $W\in \mf S$ properly containing some $U_i$  such that $\dist_W(x,y)> c_0$. If $W\neq S$ then, arguing as in Case 1, we obtain $\dist_{\ol W}(\ol x, \ol y)>t$, contradicting our assumption. 
Hence we must have that $\dist_X(x,y)>c_0$, while $\dist_W(x,y)\le c_0$ for every non-maximal domain $W$ properly containing some $U_i$. Let $[x,y]$ be an $X$--geodesic between $x$ and $y$.  By minimality $\dist_{U_i}(x,y)\ge\dist_{\ol U_i}(\ol x, \ol y)>E$, and so the bounded geodesic image axiom (\Cref{defn:HHS}.\ref{axiom:bounded_geodesic_image}) provides a point $p_i\in [x,y]\cap \mc N_E(\rho^{U_i}_S)$  for every $i=1,\ldots, \ol{P}$. Let $\mf P$ be the collection of such points. The set $\mf P$ must be ``well-distributed'' along $[x,y]$, in the following sense. 

\begin{claim}\label{claim:mfP_well_spread}
    If a subset $\mf P'\subseteq \mf P$ has diameter at most $c_2$, then $|\mf P'|\le P(c_3)$. 
\end{claim}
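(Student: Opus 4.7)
The plan is to argue by contradiction using the passing up axiom in the original HHG $(G, \mf S)$, exploiting that the threshold $c_3$ exceeds the bound $c_0$ established earlier in Case 2 for non-maximal containers.

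Suppose $\mf P' = \{p_{i_1}, \ldots, p_{i_n}\} \subseteq \mf P$ has $n > P(c_3)$ and $\diam_X(\mf P') \le c_2$. By definition of $\mf P$, each associated domain $U_{i_j}$ satisfies $\dist_{U_{i_j}}(x, y) > 5E > E$. Applying the passing up axiom \ref{axiom:passing_up} of $(G, \mf S)$ at threshold $c_3$ to the collection $\{U_{i_1}, \ldots, U_{i_n}\}$ (which has size exceeding $P(c_3)$) yields a domain $W \in \mf S$ and an index $j$ with $U_{i_j} \propnest W$ and $\dist_W(x, y) > c_3$.

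The first and easy case is $W \neq S$. Since $c_3 > c_0$, this gives a non-maximal $W$ properly containing $U_{i_j}$ with $\dist_W(x, y) > c_0$, contradicting the fact (already established just before the claim in Case 2) that $\dist_W(x, y) \le c_0$ for every non-maximal $W$ properly containing some $U_i$. So we must have $W = S$, whence $\dist_X(x, y) > c_3 = c_0 + c_2 + 12E + 2$.

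The hard case is $W = S$, where the strategy is to leverage the clustering condition $\diam_X(\mf P') \le c_2$ together with the coarse cover of $X$ by the family $\mc Y$ (guaranteed by \Cref{hyp:spinning}.\ref{I:cobounded}) to produce a \emph{non-maximal} domain $W' \in \mf S$ still properly containing some $U_{i_j}$ with $\dist_{W'}(x, y) > c_0$, again contradicting the Case 2 bound. Concretely, since each $p_{i_j} \in \mc N_E(\rho^{U_{i_j}}_S)$ and the $p_{i_j}$ lie in a common ball of diameter $c_2$ on the $X$--geodesic $[x,y]$, the relative projections $\{\rho^{U_{i_j}}_S\}_j$ themselves lie in a region of diameter at most $c_2 + 2E$. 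One can then choose a nearby $Y \in \mc Y$ (using the coarse cover) and invoke the cobounded $H_Y$--action to bend an $\hat X$--geodesic realizing $\dist_{\hat X}(x,y) = \dist_{\ol X}(\ol x, \ol y) \le t$ through the cone vertex $v_Y$; the combination of de-electrification (giving slack $D$), quasiconvexity of $Y$ (slack $K$), Lipschitz projections (slack $J$), and the strong bounded geodesic image bound $C$ then produces the required non-maximal container. The constants are engineered precisely for this: $c_1 = 2A + 3E + 2C + J(K + D + 2E) + \Psi$ absorbs the single-step slack from cone-off diameter $A$, the Lipschitz constant $J$, the bounded geodesic image constant $C$, quasiconvexity $K$, de-electrification $D$, and cobounded action $\Psi$; and $c_2 = 2c_1 + 2D + 2K$ doubles this to handle both endpoints of the cluster together with a further de-electrification and quasiconvexity correction.

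The main obstacle will be the $W = S$ case, specifically verifying that the clustering of $\mf P'$ is tight enough to force the existence of the auxiliary non-maximal $W'$: the argument must track how bending at $v_Y$ in $\hat X$ translates into projection estimates in $(G, \mf S)$ without losing more than the $c_2 + 12E + 2$ gap between $c_3$ and $c_0$. The precise form of $c_1$ and $c_2$ is calibrated to ensure this gap is exactly what is consumed by the bending, quasiconvexity, and de-electrification errors that appear in the reduction.
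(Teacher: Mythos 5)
Your proposal has a genuine gap: you apply the passing-up axiom to the global pair $\{x,y\}$, which leaves open the possibility $W = S$, and you have no real argument for that case. The paper's proof avoids the $W=S$ case entirely by first constructing two auxiliary points $a,b$ on the $X$--geodesic $[x,y]$ that are pushed in toward the cluster $\mathfrak P'$ (offset by $6E$ from its extremities), shows via bounded geodesic image that each $U_i$ with $p_i\in\mathfrak P'$ still sees a large gap between $a$ and $b$, and only then applies passing up to $\{a,b\}$. Because $\dist_X(a,b)\le \diam\mathfrak P' + 12E + 2\le c_2+12E+2\le c_3$, the output domain $W$ cannot be $S$ (else $\dist_S(a,b)=\dist_X(a,b)>c_3$, a contradiction). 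A second bounded-geodesic-image/consistency argument then transfers $\dist_W(a,b)>c_3$ back to $\dist_W(x,y)>c_0$, giving the contradiction with the Case~2 hypothesis. Localizing the passing-up to $\{a,b\}$ is the whole point of the claim; applying it to $\{x,y\}$ forfeits the mechanism that rules out $W=S$.

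Your proposed fallback for $W=S$ — invoking the cone-off $\hat X$, the cobounded $H_Y$--action, and a bending argument to manufacture a non-maximal $W'$ with $\dist_{W'}(x,y)>c_0$ — is not a proof sketch that can be completed as stated. The bending machinery in the paper does something structurally different: it selects a single domain $U'$ whose witness $p'$ is far from the de-electrified geodesic, and then shows $\dist_{U'}(x,y')\le 5E$ for a bent representative $y'$, contradicting the assumed $5E$ lower bound — it never produces a non-maximal domain with a large projection of $x,y$. It also \emph{uses} \Cref{claim:mfP_well_spread} as input (to find $p'\notin\mathfrak Q$), so routing the claim's proof through that argument is circular. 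Finally, the role you assign to $c_1,c_2$ in "absorbing slack from bending" is not what they do in the claim: in the claim's proof $c_2$ only serves to bound $\dist_X(a,b)$ and ensure $c_3\ge\dist_X(a,b)$; the de-electrification/quasiconvexity slack encoded in $c_1$ is consumed later, in the part of the passing-up proof that comes after the claim.
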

\begin{claimproof}[Proof of \Cref{claim:mfP_well_spread}]
    If $\min_{z\in \mf{P}'}\dist_X(x,z)\le6E$ let $a=x$.  Else, let $a\in [x,y]$ be the vertex such that $\dist_X(x,a)=\lfloor\min\{\dist_X(x,z)\mid z\in \mf{P'}\}-6E\rfloor$, where $\lfloor \cdot\rfloor$ denotes the integer part. In particular, $\dist(a,\mf{P})\in [6E,6E+1]$. Similarly, if $\min_{z\in \mf{P}'}\dist_X(y,z)\le6E$ let $b=y$; otherwise let $b\in [x,y]$ be the point such that $\dist_X(y,b)=\lfloor\min\{\dist_X(y,z)\mid z\in \mf{P'}\}-6E\rfloor$.  See \Cref{fig:ClaimWellSpread}.

    \begin{figure}[htp]
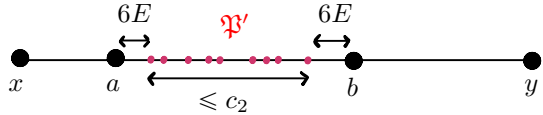

        \centering
        \begin{overpic}[page=1, width=3in, trim={2in 1.8in 3.1in 8in}, clip]{img/HHGFigs}
        \put(3,5){$x$}
        \put(20,5){$a$}
        \put(62,4){$b$}
        \put(93,5){$y$}
        \put(40,15){{\color{red}$\mathfrak P'$}}
        \put(22,17){$6E$}
        \put(57,17){$6E$}
        \put(36,2){$\le c_2$}
        \end{overpic}
        \caption{The construction of the points $a$ and $b$ with respect to the subset $\mathfrak P'$, in the proof of \Cref{claim:mfP_well_spread}.}
        \label{fig:ClaimWellSpread}
    \end{figure}

    Notice that $\dist_{U_i}(x,a)\le E$ for every $i$ such that $p_i\in \mf P'$. This is vacuously true if $a=x$; otherwise $\dist_X([x,a], \rho^{U_i}_S)>E$, or else the segment between $a$ and $\min \mf P'$ would have length at most $3E$. The required inequality now follows from the bounded geodesic image axiom. A symmetric argument holds for $y$ and $b$, so $\dist_{U_i}(a,b)\ge \dist_{U_i}(x,y)-4E>E$. 
    
    If $\mf P'$ contained more than $P(c_3)$ points, then the passing up axiom for $(G,\mf S)$ would imply that $\dist_{W}(a,b)>c_3$ for some $W$ containing some $U_i$; say $U_1\propnest W$. Notice that $W$ cannot be $S$, because \[\dist_X(a,b)\le \diam \mf P'+12E+2\le c_2+12E+2\le c_3.\] Furthermore, if $x\neq a$ then $\dist_X([x,a], \rho^{W}_S)>E$, as otherwise we would have that \[\dist_X(a,\min\mf P')\le 2E+\diam(\rho^W_S\cup \rho^{U_1}_S)\le 5E\] by the consistency axiom \Cref{defn:HHS}.\ref{axiom:consistency}. Thus $\dist_W(x,a)\le E$ by the bounded geodesic image axiom, and the same inequality holds for $b$ and $y$. Hence we again obtain 
    \[\dist_W(x,y)\ge \dist_W(a,b)-4E>c_0,\]
    contradicting our assumption that $x$ and $y$ are $c_0$--close in every domain $W\neq S$.
\end{claimproof}

\noindent Now let $\gamma$ be an $\hat X$--geodesic between $x$ and $y$, let $\widetilde \gamma$ be a de-electrification of $\gamma$, and let $\gamma_0$ be the vertices of $\gamma$ lying in $X$.  Note that $\gamma_0$ consists of at most $t+1$ vertices, since by minimality $\dist_{\hat X}(x,y)=\dist_{\ol X}(\ol x, \ol y)\le t$. Let $\mf Q\subseteq \mf P$ consist of all points in the $(c_2/2)$--neighborhood of $\gamma_0$. which therefore contains at most $(t+1)P(c_3)$ points by \Cref{claim:mfP_well_spread}. Since ${\ol{P}}>(t+1)P(c_3)$, there must be some $U'\in\{U_1,\ldots, U_{\ol{P}}\}$ such that the corresponding vertex $p'\in\mf P$ does not belong to $\mf Q$, that is, $\dist_X(p',\gamma_0)> c_2/2=c_1+D + K$. 

Since $p'\in[x,y]^X$, \Cref{lem:DboundSpriano} produces a point $q'\in \widetilde \gamma$ such that $\dist_X(p',q')\le D$. Furthermore $p'\not \in \mf Q$, which implies that $q'$ lies on a geodesic segment $[c,d]$ of $\widetilde \gamma-\gamma_0$ such that  $\min\{\dist_X(c,q'),\dist_X(d,q')\}>c_1+K$; see \Cref{fig:passingup}. Since $c_1>2A+E$, the vertices $c,d$ do not belong to $\mc N_A(\rho^V_S)$ for any $V\in \mf S$ such that $v_V\in \gamma$. Hence there exists $Y\in \mc Y$ such that $v_Y\in \gamma$ and $c,d\in Y$. Let $r'\in \pi_Y(q')$. Since $Y$ is $K$--quasiconvex, we have $\dist_X(r',q')\le K$, and so  $\min\{\dist_X(c,r'),\dist_X(d,r')\}>c_1$. 

\begin{figure}[htp]
    \centering
    \includegraphics[width=4in]{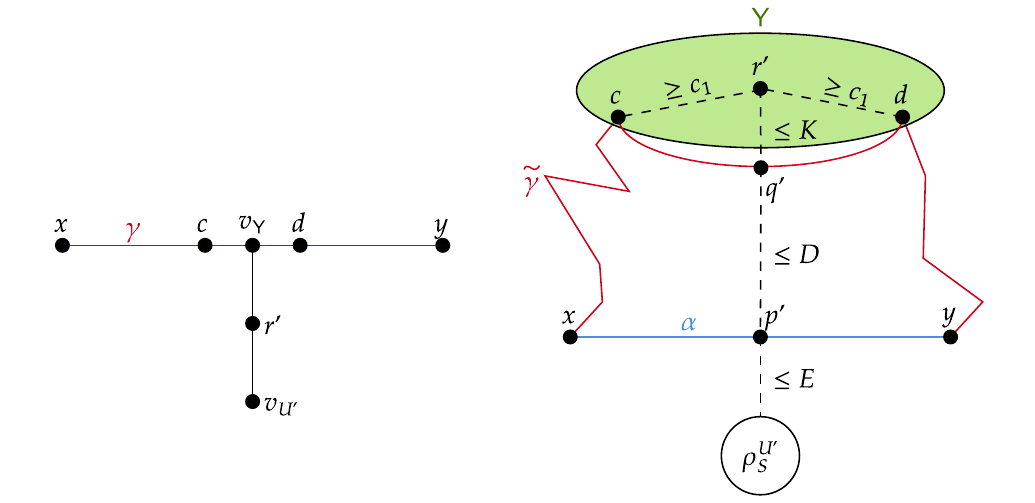}
    \caption{The configuration from the proof of the passing up axiom, as seen from $\hat X$ (on the left) and from $X$ (on the right).}
    \label{fig:passingup}
\end{figure}

Using the triangle inequality and that $\pi_Y$ is $J$--Lipschitz, we have that
\begin{align*}
    \dist^\pi_Y(c,v_{U'})&\ge\dist^\pi_Y(c,r')-\dist^\pi_Y(r',v_{U'})\\
    &\ge \dist_X(c,r')-J\left(\dist_X(r',\rho^{U'}_S)+\diam \rho^{U'}_S\right)\\
    &>c_1-J(K+D+2E)\ge2C. 
\end{align*}
Moreover $\dist^\pi_Y(x,c)\le C$, since the geodesic subsegment of $\gamma$ between $x$ and $c$ does not contain $v_Y$, so by triangle inequality $\dist^\pi_Y(x,v_{U'})>\dist^\pi_Y(c,v_{U'})-\dist^\pi_Y(x,c)>C$.  \Cref{lem:SBGIinHHS} thus yields that every geodesic between $x$ and $v_{U'}$ must pass through $v_Y$. In particular $\dist_{\hat X}(x,v_{U'})=\dist_{\hat X}(x,v_Y)+\dist_{\hat X}(v_Y,v_{U'})$, so the path $\eta_x=\gamma|_{[x,v_Y]}*[v_Y,r']*[r',v_{U'}]$ is a geodesic as it realizes the distance between $x$ and $v_U$. Similarly, the path $\eta_y=\gamma|_{[y,v_Y]}*[v_Y,r']*[r',v_{U'}]$ is also a geodesic. 

Since $H_Y$ acts $\Psi$--coboundedly on $Y$, there exists $h_Y\in H_Y$ such that $d'\coloneq h_Yd$ is at distance at most $\Psi$ from $c$.  Bend the geodesic tripod with sides $\gamma\cup \eta_x\cup \eta_y$ at $v_Y$ by $h_Y$. 
Let $\eta_y'$ be the image of $\eta_y$ after  bending, and let $y'=h_Yy$. Notice that, since we bent a geodesic triangle, $\{x, y',U'\}$ are again minimal, as so were $\{x, y,U'\}$. The situation is as in \Cref{fig:passingup_bended}.

\begin{figure}[htp]
    \centering
    \includegraphics[width=4in]{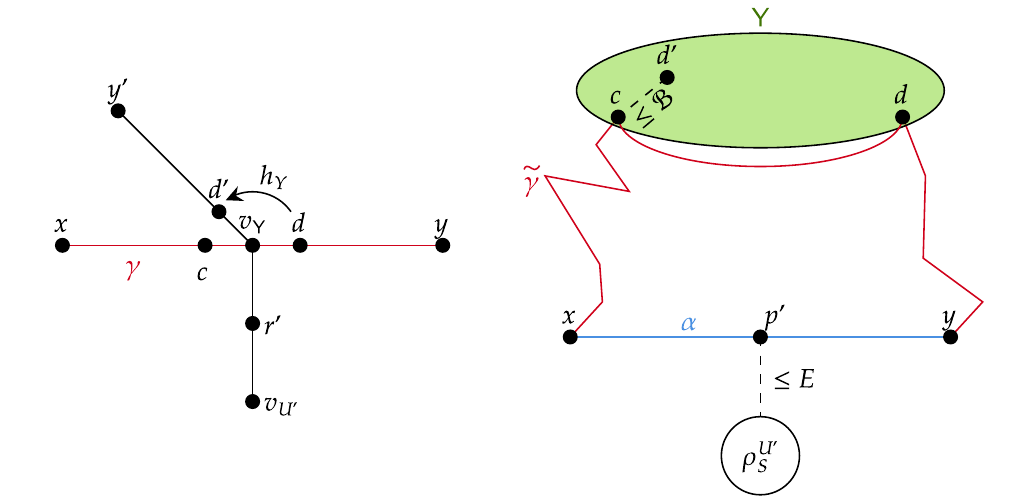}
    \caption{The configuration from the proof of the Passing up axiom, after bending the geodesic connecting $y$ to $v_{U'}$. The points $\{x,y'\}$ are still minimal with $U'$.}
    \label{fig:passingup_bended}
\end{figure}

As $c$ is the last point on the geodesic $\eta_x$ before $v_Y$, \Cref{lem:Unotnearleg_NEW} implies that $\dist_{U'}(x,c)\le E$. The same argument applied to $\eta_y'$ yields that $\dist_{U'}(d',y')\le E$. Finally, we must have that $\dist_{U'}(c,d')\le E$. Indeed, if this was not the case then the bounded geodesic image axiom for $(G, \mf S)$ would give that $\dist_X(\rho^{U'}_S, [c,d'])\le E$ for any $X$--geodesic $[c,d']$. But this would contradict the fact that $p'\not \in \mf Q$, as we would have that
\[\dist_X(p',c)\le \diam \mc N_E(\rho^{U'}_S)+\dist_X(c,d')\le 3E+\Psi<c_1.\]
Combining the above inequalities, we obtain
$$\dist_{U'}(x,y')\le\dist_{U'}(x,c)+\diam\pi_{U'}(c')+\dist_{U'}(c,d')+\diam\pi_{U'}(d')+\dist_{U'}(d',y')\le 5E.$$
However this is a contradiction, because $\{x, y', U'\}$ are pairwise minimal and therefore $\dist_{U'}(x,y')\ge \dist_{\ol U'}(\ol x, \ol y)>5E$. This concludes the proof of the passing up axiom.

\par\medskip\textbf{Relative HHG structure:} In order to complete the proof of \Cref{thm:quotientishhg}, it remains to check that $\mf S/N$ is a relative hierarchically hyperbolic \textit{group} structure on $G/N$. The cofinite $G$--action on $\mf S$ induces a cofinite action of $G/N$ on $\mf S/N$. The action preserves the relations $\nest$ and $\orth$, since  we noticed in \Cref{rem:UniqueReps} that two domains in $\mf S/N$ are nested (resp. orthogonal) if and only if they admit nested (resp. orthogonal) representatives. Furthermore, the isometries $g\colon \mc CU \to \mc C(gU)$ in $(G,\mf S)$ descend to isometries $\ol g \colon \mc C\ol U \to \mc C(\ol g \ol U)$.  That these isometries satisfy the conditions from \Cref{defn:rel_HHG} follows immediately from the fact that they satisfy those conditions in $(G,\mf S)$, along with the fact that all projections and relative projections between domains are defined in terms of minimal representatives, which are permuted by the $G$--action on $\hat X$. 
\end{proof}

\section{Quotients by random walks}\label{sec:RW}

This section introduces background on random walks on acylindrically hyperbolic groups and connections with spinning families. These tools will then be used to provide proofs for \Cref{thm:main_intro}, \Cref{cor:hyp}, and \Cref{cor:relhyp}. 

\subsection{Background on random walks}\label{sec:RWBackground}

Let $\mu$ be a probability distribution on a group $G$. We denote by $\Supp(\mu)$ the \emph{support} of $\mu$, that is, the set of elements $g\in G$ such that $\mu(g)>0$. Let $\Gamma_\mu$ be the semi-group generated by the support of $\mu$. If $\Gamma_\mu$ is, in fact, a subgroup of $G$, then $\mu$ is called \emph{reversible}. We say $\mu$ is \emph{countable} if $\Supp(\mu)$ is countable,  is \emph{finitely supported} if $\Supp(\mu)$ is finite, and  has \emph{full support} if $\Gamma_\mu=G$. Given a fixed acylindrical action of $G$ on a hyperbolic metric space $X$, the probability distribution $\mu$ is \emph{bounded} if some (equivalently, every) orbit of $\Supp(\mu)$ is a bounded subset of $X$ and \emph{non-elementary} if the action of $\Gamma_\mu$ on $X$ is non-elementary.

Given a reversible, non-elementary probability distribution $\mu$ on an acylindrically hyperbolic group $G$, there exists a unique maximal finite subgroup of $G$ normalized by $\Gamma_\mu$ \cite[Lemma 5.5]{H16}. We denote this subgroup by $\mc E_G(\mu)$, or just $\mc E(\mu)$ when $G$ is understood. We note that $\mc E(\mu)$ will always contain the maximal finite normal subgroup of $G$, which we denote by $\mc E(G)$.

\begin{defn}\label{def:permissible}
The measure $\mu$ is \emph{permissible} (with respect to $X$) if it is bounded, countable, reversible, non-elementary, and $\mc E(\mu)=\mc E(G)$. 
\end{defn}

\begin{remark}
    A canonical example of a permissible probability measure is when $G$ is finitely generated and the support of $\mu$ is a finite symmetric generating set of $G$.  In this case $\mu$ will be finitely supported, hence countable and bounded for any action of $G$. In addition, such $\mu$ will have full support and hence be reversible and non-elementary for any non-elementary, acylindrical action of $G$. The fact that $\Gamma_\mu=G$ also implies that $\mc E(\mu)=\mc E(G)$. 
\end{remark} 

\begin{defn}
    $\Gamma_\mu$ acts on $\mc E(G)$ by conjugation, so there exists a map $c_\mu\colon \Gamma_\mu\to \Aut(\mc E(G))$. The \emph{characteristic index} $\sigma_\mu$ is the cardinality of $c_\mu(\Gamma_\mu)$. 
\end{defn}

\begin{remark}
\label{rem:char_index}
    Notice that the characteristic index is $1$ if and only if $\Gamma_\mu$ centralizes $\mc E(G)$, which in particular happens if $\mc E(G)$ coincides with the center of $G$.
\end{remark}

 \begin{hyp}
Throughout this section, we fix a group $G$, a cobounded, acylindrical action of $G$ on a $\delta$--hyperbolic space $(X,\dist)$, and a basepoint $x_0\in X$. Let $\mu_1,\dots, \mu_k$ be $k$ permissible probability measures, and let $w_{1,n},\ldots w_{k,n}$ be independent random walks of length $n$, starting at $x_0$, where the step of each $w_{i,n}$ is chosen according to $\mu_i$.  When $n$ is fixed, we often suppress the $n$ in the notation for a random walk for simplicity, writing $w_i=w_{i,n}$.  If, moreover, a statement applies to all $i$, we may simply write $w$ for $w_{i,n}$. We say that a property holds \emph{asymptotically almost surely}, or \emph{a.a.s}, if it holds with probability approaching 1 as $n\to \infty$.
 \end{hyp}

\begin{prop}[{\cite[Theorem 1.2]{MaherTiozzo18}}]\label{prop:drift}
    For all $i$, there exists $\Delta_i=\Delta_i(G, \mu_i)>0$, called the \emph{drift} of the random walk, such that $\lim_{n\to\infty} \frac{1}{n} \dist(x_0, w_{i,n}x_0)=\Delta_i$ almost surely. 
\end{prop}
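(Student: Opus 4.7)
The plan is essentially to appeal directly to \cite[Theorem~1.2]{MaherTiozzo18}, so the proof reduces to verifying that each $\mu_i$ meets the hypotheses of that result. Since $\mu_i$ is permissible, it is bounded (so every orbit of $\Supp(\mu_i)$ has bounded diameter in $X$, which in particular ensures a finite first moment), countable, and non-elementary with respect to the acylindrical $G$-action on $X$. These are exactly the ingredients required by Maher--Tiozzo, so their theorem applies verbatim to each $\mu_i$.

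If one wishes to reproduce the argument rather than simply cite it, the strategy has two stages. The first is to show that $\frac{1}{n}\dist(x_0, w_{i,n} x_0)$ converges almost surely to a non-negative constant $\Delta_i$. For this, set $f(n) = \dist(x_0, w_{i,n} x_0)$ and use the triangle inequality together with the $G$-invariance of $\dist$ to write
\[
f(n+m) \le \dist(x_0, w_{i,n} x_0) + \dist(x_0, w_{i,n}^{-1} w_{i,n+m} x_0).
\]
Because the increments of the random walk are i.i.d.~with law $\mu_i$, the second summand is distributed as $f(m)$, and the cocycle $(f(n))$ is subadditive in the sense of Kingman. Boundedness of $\mu_i$ furnishes the required integrability, and Kingman's subadditive ergodic theorem yields an almost sure limit $\Delta_i \ge 0$.

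The main obstacle, and the genuinely hard step, is showing that $\Delta_i > 0$ strictly. This is where reversibility, non-elementarity, and the assumption $\mc E(\mu_i) = \mc E(G)$ come in. The Maher--Tiozzo approach is to build a $\mu_i$-stationary measure $\nu$ on the Gromov boundary $\partial X$, prove that sample paths converge almost surely to $\partial X$, and then apply an acylindrical shadow lemma to conclude that shadows based at $x_0$ have uniformly positive $\nu$-mass. This last ingredient, combined with the cocycle identity, forces $\Delta_i$ to be strictly positive. Verifying the shadow lemma in the acylindrical (non-proper, non-cocompact) setting is the heart of \cite{MaherTiozzo18}, and it is this step that I would not attempt to reprove here.
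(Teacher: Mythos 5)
Your proposal takes the same approach as the paper, which simply cites \cite[Theorem~1.2]{MaherTiozzo18} with no further argument; your first paragraph correctly verifies that permissibility supplies exactly the hypotheses (bounded, countable, non-elementary measure, acylindrical action) that Maher--Tiozzo require. The additional sketch via Kingman subadditivity and the shadow lemma is accurate but goes beyond what the paper records, since the paper treats the result entirely as a black box.
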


 For an isometry $g$ of a hyperbolic space $X$, the \emph{asymptotic translation length} of $g$ is 
    \[\tau(g)= \lim_{k\to\infty} \frac1k \dist(x_0,g^kx_0).
    \]
An element with non-zero asymptotic translation length is a loxodromic isometry, and thus the following proposition implies that $w_{i,n}$ is asymptotically almost surely loxodromic.

\begin{prop}[{\cite[Theorem~1.4]{MaherTiozzo18}}] \label{prop:TransLength}
    For each $i$, $\tau(w_{i,n})>\Delta_i n$ a.a.s.
\end{prop}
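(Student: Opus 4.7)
The plan is to bound $\tau(w_n)$ from below by the displacement $\dist(x_0, w_n x_0)$---whose linear growth at rate $\Delta_i$ is already controlled by \Cref{prop:drift}---using the Gromov product $(w_n x_0 \mid w_n^{-1} x_0)_{x_0}$ as an error term. The key geometric input is the classical estimate that in a $\delta$-hyperbolic space, every isometry $g$ satisfies
$$\tau(g) \;\geq\; \dist(x_0, g x_0) - 2\,(g x_0 \mid g^{-1} x_0)_{x_0} - C\delta,$$
for some universal $C$. This follows by iterating the four-point inequality
$$\dist(x_0, g^{k+1} x_0) \;\geq\; \dist(x_0, g x_0) + \dist(g x_0, g^{k+1} x_0) - 2(g x_0 \mid g^{-1} x_0)_{x_0} - O(\delta),$$
dividing by $k$, and letting $k \to \infty$; once $\dist(x_0, g x_0)$ comfortably exceeds $2(g x_0 \mid g^{-1} x_0)_{x_0}$, the element $g$ is automatically loxodromic and the induction closes.

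Applying this to $g = w_n$ reduces the proposition to two tasks. First, by \Cref{prop:drift} we have $\dist(x_0, w_n x_0) \geq (\Delta_i - \epsilon) n$ a.a.s.\ for every fixed $\epsilon > 0$. Second, and this is the crux, the Gromov product $(w_n x_0 \mid w_n^{-1} x_0)_{x_0}$ stays uniformly bounded almost surely. To see the latter, note that since $\mu_i$ is reversible, its reflected measure $\check\mu_i$---defined by $\check\mu_i(g) = \mu_i(g^{-1})$---generates the same group $\Gamma_{\mu_i}$, which acts non-elementarily on $X$, so $\check\mu_i$ is again permissible. By Maher--Tiozzo's convergence theorem for random walks on acylindrical actions on hyperbolic spaces, $w_n x_0$ converges almost surely to a point $\lambda_+ \in \partial X$, and $w_n^{-1} x_0$---which is distributed as the length-$n$ walk driven by $\check\mu_i$---converges almost surely to some $\lambda_- \in \partial X$. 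Coupling the forward and reverse walks as independent, and invoking the non-atomicity of the hitting laws on $\partial X$ (again Maher--Tiozzo), forces $\lambda_+ \neq \lambda_-$ a.s., whence
$$\limsup_{n \to \infty} (w_n x_0 \mid w_n^{-1} x_0)_{x_0} \;\leq\; (\lambda_+ \mid \lambda_-)_{x_0} + O(\delta) \;<\; \infty \text{ a.s.}$$

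Combining the two estimates yields $\tau(w_n) \geq (\Delta_i - \epsilon)n - O(1)$ a.a.s.\ for every $\epsilon > 0$, giving the conclusion (reading $\Delta_i$ in the statement as any constant strictly less than the true drift, after reabsorbing $\epsilon$). The main obstacle is establishing the boundedness of $(w_n x_0 \mid w_n^{-1} x_0)_{x_0}$: it depends crucially on Maher--Tiozzo's boundary-convergence theorem and on the non-atomicity of the hitting measure for acylindrical random walks, neither of which is elementary. Everything else---the hyperbolic-geometry lemma and the appeal to the drift theorem---is routine.
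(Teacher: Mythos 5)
The paper does not prove this proposition; it cites it verbatim from Maher--Tiozzo. Your framework---lower-bound $\tau(w_n)$ by $\dist(x_0, w_n x_0) - 2(w_n x_0 \mid w_n^{-1} x_0)_{x_0} - O(\delta)$, feed in the drift estimate from \Cref{prop:drift}, and control the Gromov product---is indeed the correct one, and your observation that $\Delta_i$ in the statement should be read as a constant just below the true drift, absorbing $\epsilon$, is a fair reading of the paper's notational shorthand.

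The gap is in the justification of the Gromov product bound, and it is a real one. First, the sequence $w_n^{-1}x_0 = g_n^{-1}\cdots g_1^{-1}x_0$ does not converge to a boundary point: each $w_n^{-1}$, for fixed $n$, has the same distribution as the $n$th step of a $\check\mu_i$-walk, but passing from $w_n^{-1}$ to $w_{n+1}^{-1}$ \emph{prepends} the new increment $g_{n+1}^{-1}$ on the left rather than appending it on the right, so each new step applies a fresh random isometry to $w_n^{-1}x_0$ and the sequence does not stabilize in $\partial X$. Second, ``coupling the forward and reverse walks as independent'' is not available: $w_n^{-1}$ is a deterministic function of $w_n$, so the pair $(w_n, w_n^{-1})$ is as far from independent as possible, and one cannot invoke non-atomicity of the hitting law for two limit points of the \emph{same} increment sequence as though they were limits of independent walks. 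What Maher--Tiozzo actually establish is the weaker (but sufficient) statement that $(w_n x_0 \mid w_n^{-1} x_0)_{x_0}$ is bounded \emph{in probability} uniformly in $n$, and this is proved by a genuine decoupling argument---splitting the walk near its midpoint so that the two halves of the increment sequence really are independent, and only then using the non-atomicity of the harmonic measure. With that estimate substituted for your almost-sure boundedness claim, the rest of your argument goes through.
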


Since the $G$-action on $X$ is acylindrical, \cite[Lemma 6.5]{DGO} implies the existence of a maximal virtually cyclic subgroup $\mc E(w)$ of $G$ containing $w$, which consists of all the elements that stabilize any quasi-axis for $w$ up to finite Hausdorff distance. The following proposition states that, for a random element, $\mc E(w)$ is as small as possible:
\begin{prop}[{\cite[Proposition 5.1]{MaherSisto}}]\label{prop:E(w)}
    For every $i$, $\mc E(w_i)=\mc E(G)\rtimes\langle w_i\rangle$ a.a.s.
\end{prop}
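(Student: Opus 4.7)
The plan is to prove two containments. The easy direction $\mc E(G)\rtimes\langle w_i\rangle\subseteq\mc E(w_i)$ follows from general properties: by \Cref{prop:TransLength}, $w_i$ is a.a.s.~loxodromic, so it has infinite order and well-defined fixed points $w_i^\pm\in\partial X$; the finite normal subgroup $\mc E(G)\trianglelefteq G$ has bounded orbits in $X$ and hence stabilizes any quasi-axis of $w_i$ up to finite Hausdorff distance, so $\mc E(G)\subseteq\mc E(w_i)$; and $w_i$ normalizes $\mc E(G)$ because $\mc E(G)$ is characteristic in $G$. The intersection $\mc E(G)\cap\langle w_i\rangle$ is trivial since $\mc E(G)$ is finite while $w_i$ has infinite order, making the product semidirect.

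For the reverse containment, recall that $\mc E(w_i)$ is virtually cyclic with $\langle w_i\rangle$ of finite index and admits a maximal finite normal subgroup $F_i\supseteq\mc E(G)$; the quotient $\mc E(w_i)/F_i$ is either $\mathbb{Z}$ or $D_\infty$. I would aim to show (a)~$F_i=\mc E(G)$, (b)~the quotient is cyclic rather than dihedral, and (c)~its generator is the image of $w_i$ itself. Each is a genericity assertion. For (a), any $f\in F_i\setminus\mc E(G)$ would force $w_i\in N_G(\langle \mc E(G),f\rangle)$; but permissibility gives $\mc E(\mu_i)=\mc E(G)$, so by definition of $\mc E(\mu_i)$ the semigroup $\Gamma_{\mu_i}$ does not normalize any strictly larger finite subgroup, and in particular $N_G(\langle \mc E(G),f\rangle)$ does not contain $\Gamma_{\mu_i}$. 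A standard argument on random walks on countable groups, using reversibility of $\mu_i$, then shows that the probability that $w_i$ lies in a fixed proper subgroup of $G$ decays to zero with $n$. For (b), an orientation-reversing element of $\mc E(w_i)$ would conjugate $w_i$ to $w_i^{-1}$, which is also nongeneric. For (c), $w_i$ being a proper $k$-th power $v^k$ with $k\geq 2$ would force $\tau(v)=\tau(w_i)/k$, which combined with the linear lower bound on $\tau(w_i)$ from \Cref{prop:TransLength} can be ruled out a.a.s.

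The main obstacle is making the genericity in (a) uniform, since $F_i$ itself depends on $w_i$: one cannot independently apply a random-walk bound for each candidate subgroup $F_i$. The resolution leverages acylindricity of $G\curvearrowright X$: any finite subgroup pointwise stabilizing a sufficiently long segment of the quasi-axis of $w_i$ has cardinality at most the acylindricity constant $N(\varepsilon)$, and lies in a finite family up to conjugation by powers of $w_i$. The linear growth of $\tau(w_i)$ from \Cref{prop:TransLength} guarantees a.a.s.~that the axis of $w_i$ is long enough for this bound to apply, after which a single union bound over a uniformly finite collection of candidates closes the argument. This reduction of the uncountable possible $F_i$'s to a controlled finite family is the technical heart of the Maher--Sisto proof, where the hypothesis of permissibility and the acylindrical structure on $X$ are combined.
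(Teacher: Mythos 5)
The paper does not prove this statement itself; it imports it verbatim from \cite[Proposition~5.1]{MaherSisto}, so there is no internal proof to compare against. Your easy containment is fine, and the decomposition of the converse into (a) no extra finite part, (b) not dihedral, (c) $w_i$ generates modulo finite is the right shape, but the argument you give for (c) is incorrect, and the gap you acknowledge in (a) --- which in fact also afflicts (b) --- is the entire content of the result.

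On (c): if $\mc E(w_i)/\mc E(G)\cong\mathbb{Z}$ and the image of $w_i$ is $m$ with $|m|\ge 2$, then the generator $v$ of $\mc E(w_i)$ modulo $\mc E(G)$ has $\tau(v)=\tau(w_i)/|m|$, and the bound $\tau(w_i)>\Delta_i n$ from \Cref{prop:TransLength} only yields $\tau(v)>\Delta_i n/|m|$, which is still unbounded in $n$. There is no contradiction to extract: a \emph{lower} bound on $\tau(w_i)$ cannot preclude $w_i$ from being a proper power (or a proper power modulo $\mc E(G)$, which is the relevant statement), since dividing a large quantity by a fixed $k$ leaves a large quantity. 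What actually rules this out is that such a $v$ would carry a long arc of the sample path $\gamma_i$ onto a shifted long arc of itself, producing a self-match of precisely the kind that \Cref{prop:selfmatch_gamma} and \Cref{prop:GammaGammaMatch_new} forbid a.a.s.

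For (a) and (b) you correctly diagnose that the finite subgroup $F_i$, and the putative conjugator inverting $w_i$, depend on $w_i$, so one cannot invoke a fixed-subgroup random-walk estimate. But the union-bound-over-a-uniformly-finite-family scheme you sketch is a pointer rather than an argument, and it is not evidently how Maher--Sisto close the gap. The mechanism that does the work in their proof --- and the one the present paper has already set up in \Cref{sec:RW} --- is again matching: any $g\in\mc E(w_i)\setminus(\mc E(G)\langle w_i\rangle)$ stabilizes the quasi-axis $\alpha_i$ up to finite Hausdorff distance and therefore produces a nontrivial match of $\alpha_i$ with $g\alpha_i$ (possibly after reversal), which \Cref{prop:Matching_new} makes a.a.s.\ impossible. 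That single tool simultaneously handles (a), (b), and (c). Since your proposal does not supply it, and the translation-length argument offered in its place for (c) is not valid, the proof has a genuine gap at its technical core.
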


\begin{construction}\label{constr_alpha}
    We now produce a $(2,\delta)$--quasi-axis for $w_i$, which we call $\alpha_i$. Let $y\in X$ be such that $\dist(y,w_{i}y)\le \inf_{x\in X}\dist(x,w_{i}x)+\delta$. Given any geodesic $[y,w_{i}y]$, define $$\alpha_i\coloneq \bigcup_{r\in \mathbb{Z}} w_{i}^r[y,w_{i}y].$$
Notice that $\alpha_i$ is $w_i$-invariant by design. Moreover, if the translation length of $w_{i}$ is sufficiently large with respect to $\delta$ (which happens a.a.s.~by \Cref{prop:TransLength}), then $\alpha_i$ is a $7\delta$-quasiconvex $(2,\delta)$-quasigeodesic, by  \cite[Corollary~2.7 and Lemma~3.2]{Coulon}.

It is also useful to fix a geodesic $\gamma_i$ in $X$ from $x_0$ to $w_{i}x_0$. If we are considering a single random walk $w$, we denote the quasi-axis $\alpha_i$ by $\alpha$, and the geodesic $\gamma_i$ by $\gamma$. 
\end{construction} 

 An important tool in the study of random walks on hyperbolic spaces with a $G$--action is matching estimates. We adapt to quasigeodesics the definition of matching from Maher--Sisto \cite{MaherSisto}. 
\begin{defn}
Let $A,B\ge 0$ and $g\in G$. Two quasigeodesics $p$ and $q$ in $X$ have an $(A,B,g)$--\emph{match} if there are subpaths $p'\subseteq p$ and $q'\subseteq q$ of diameter at least $A$ such that $\dist_{Haus}(gp', q')\le B$. If in addition $p=q$, then $p$ has a $(A,B,g)$-\emph{self-match}; in this case, we say that $p$ has a \emph{disjoint} $(A,B,g)$-\emph{self-match} if $p'$ and $q'$ are disjoint. We often drop the element $g$ and/or the constants $(A,B)$ when they are not relevant, and simply speak of a match between $p$ and $q$.
\end{defn}

 For the rest of the section, let $\Delta=\min_i\Delta_i$ be the minimum drift among all random walks.
\begin{prop}[{\cite[Corollary~9.13]{MaherTiozzoCremona}}]\label{prop:selfmatch_gamma}
    Let $w_1, w_2$ be independent random walks of length $n$ with respect to  permissible probability measures.  For any $0<\varepsilon <1$ and any $Q\ge 0$, $\gamma_1$ and $\gamma_2$ do not have a $(\varepsilon \Delta n, Q)$--match a.a.s.
\end{prop}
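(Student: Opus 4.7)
The plan is to reduce any match between $\gamma_1$ and $\gamma_2$ to an alignment between the random walk trajectories themselves, and then invoke independence of the two walks together with shadow/decay estimates for random walks on acylindrical hyperbolic spaces.

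First, by \Cref{prop:drift} we may condition on the a.a.s.\ event that $|\gamma_i|$ lies between $\Delta_i n/2$ and $2\Delta_i n$ for $i=1,2$. Suppose $\gamma_1$ and $\gamma_2$ admit a $(\varepsilon\Delta n, Q, g)$-match, with matched subpaths $\gamma_i'\subseteq \gamma_i$ of endpoints $a_i, b_i$. Up to enlarging $Q$ by a constant depending on $\delta$ (using hyperbolicity to align endpoints), we may assume $\dist(ga_1,a_2)\le Q$ and $\dist(gb_1,b_2)\le Q$.

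Second, we use the standard tracking property: a.a.s.\ each trajectory $\{w_{i,k}x_0 : 0\le k\le n\}$ stays within sublinear distance of $\gamma_i$ (see \cite{MaherTiozzo18,MaherSisto}), and in particular there are indices $0\le s_i<t_i\le n$ with $t_i-s_i$ comparable to $\varepsilon n$ such that $w_{i,s_i}x_0$ is $O(Q)$-close to $a_i$ and $w_{i,t_i}x_0$ is $O(Q)$-close to $b_i$. The match then forces
\[
\dist(gw_{1,s_1}x_0,w_{2,s_2}x_0)=O(Q)\quad\text{and}\quad \dist(gw_{1,t_1}x_0,w_{2,t_2}x_0)=O(Q).
\]
Now apply a union bound over the $O(n^4)$ choices of $(s_1,t_1,s_2,t_2)$ and, for each configuration of endpoints, over the boundedly many conjugators $g$ permitted by the acylindrical $G$-action on $X$. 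Conditioning on the prefixes $w_{1,s_1}$ and $w_{2,s_2}$, the remaining increments $w_{1,s_1}^{-1}w_{1,t_1}$ and $w_{2,s_2}^{-1}w_{2,t_2}$ are independent of one another and of the prefixes; the constraint that the terminal point of one is pinned (up to $O(Q)$) to a specified point determined by the other is an event of probability at most $e^{-c\varepsilon n}$, by exponential decay of shadow probabilities for permissible measures on acylindrical actions (cf.\ \cite{MaherTiozzo18,MaherSisto}). Summing yields a total bound of $O(n^4 e^{-c\varepsilon n})\to 0$.

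The main obstacle is the quantitative shadow-decay step: after conditioning on a long initial segment of the walk, the terminal location must be exponentially unlikely to land in any prescribed bounded neighborhood of a given point. This is exactly the content of the random walk estimates in the cited references, so the conclusion follows. An alternative and perhaps cleaner route is to concatenate $\gamma_1$ and $w_{1,n}\gamma_2$ into a single piecewise geodesic $\beta_n$ driven by a two-block i.i.d.\ process: a genuine match between $\gamma_1$ and $\gamma_2$ produces a long self-match of $\beta_n$ across the two blocks, and one can then directly invoke the self-matching statement of \cite[Corollary~9.13]{MaherTiozzoCremona} after verifying that its proof goes through for two-block walks driven by permissible measures.
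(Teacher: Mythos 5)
The paper itself does not give a detailed proof of this proposition: immediately after the statement it remarks that \cite[Corollary~9.13]{MaherTiozzoCremona} is ``stated for disjoint subpaths of a single random walk, but the same argument, with only the obvious changes, proves our proposition as stated.'' So there is no argument in the paper to compare against; the authors are deferring entirely to the cited work. Your second, alternative route (reducing a $\gamma_1$--$\gamma_2$ match to a disjoint self-match of the concatenated piecewise geodesic from $x_0$ to $w_{1,n}x_0$ to $w_{1,n}w_{2,n}x_0$, then invoking the self-matching argument) is essentially the adaptation the paper has in mind, and your flag that one must verify the argument for a two-block, non-i.i.d.~walk is exactly what the authors mean by ``obvious changes.''

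Your primary sketch, however, has a genuine gap in the handling of the conjugator $g$. You condition on the prefixes $w_{1,s_1}$ and $w_{2,s_2}$, so that $a_1$ and $a_2$ are fixed, and then invoke acylindricity to union-bound over ``boundedly many conjugators.'' But acylindricity bounds the set $\{g : \dist(ga_1,a_2)\le Q,\ \dist(gb_1,b_2)\le Q\}$ only when \emph{both} pairs $(a_1,a_2)$ and $(b_1,b_2)$ are fixed and $\dist(a_1,b_1)$ is large; with only $a_1,a_2$ pinned, the set $\{g : \dist(ga_1,a_2)\le Q\}$ is in general infinite. Worse, for $g$ in that set the candidate landing sites $gb_1$ all lie at roughly the same distance $\dist(a_1,b_1)\approx\varepsilon\Delta n$ from $a_2$, so the shadow-decay bound you apply to each $g$ is uniform, and the naive sum over infinitely many $g$ diverges. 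The standard fix, which you would need to make explicit, is to condition on a medium prefix of $w_2$ as well (say up to time $s_2 + \varepsilon' n$ for a small $\varepsilon'<\varepsilon$) so that the \emph{intermediate} alignment of the putative match pins $g$ by two constraints, after which acylindricity genuinely gives at most $N=N(Q)$ candidates; only then does the constraint on the remaining increment of $w_2$ become a shadow event of probability $\exp(-c(\varepsilon-\varepsilon')\Delta n)$, and the union bound over $O(n^4)$ index choices and $O(N)$ conjugators closes. With that correction the argument you propose is sound, and it matches the strategy of \cite{MaherTiozzoCremona} that the paper is citing.
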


 We note that \cite[Corollary 9.13]{MaherTiozzoCremona} is stated for disjoint subpaths of a single random walk, but the same argument, with only the obvious changes, proves our proposition as stated. We next control the matches of overlapping segments of $\gamma$.

\begin{prop}\label{prop:GammaGammaMatch_new}
    Let $w$ be a random walk of length $n$ with respect to a permissible probability measure. For any $0<\varepsilon <1$ and any $Q\ge 0$, if $\gamma$ has a $(\varepsilon \Delta n, Q,g)$-self-match then $g\in \mc E(G)$ a.a.s.
\end{prop}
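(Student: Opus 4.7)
The strategy is to show that a $(\varepsilon\Delta n, Q, g)$-self-match of $\gamma$ forces the quasi-axis $\alpha$ and its $g$-translate $g\alpha$ to fellow-travel on a subsegment whose length grows linearly in $n$; once $n$ is large enough, standard hyperbolic geometry combined with acylindricity of the $G$-action on $X$ then forces $g$ to preserve the pair of fixed points $\{w^+, w^-\}\subseteq\partial X$ of $w$ at infinity, i.e., $g\in \mc E(w)$ by the characterization of the elementary closure used in \cite[Lemma~6.5]{DGO}.

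The first step is to show that, a.a.s., $\gamma$ fellow-travels $\alpha$ on a subsegment of length at least $(\Delta-\eta)n$ for any fixed $\eta>0$. By \Cref{prop:drift}, a.a.s.~$\dist(x_0, wx_0) < (\Delta+\eta)n$ for $n$ large, while \Cref{prop:TransLength} gives $\tau(w)>\Delta n$. Standard hyperbolic geometry of quasi-axes then yields $\dist(x_0,\alpha) \le (\eta/2)n + O(\delta)$, since the difference between an orbit displacement and the translation length is approximately twice the distance from the basepoint to the axis. Slimness of the quadrilateral with vertices $x_0, wx_0, \pi_\alpha(x_0), \pi_\alpha(wx_0)$ then shows that $\gamma$ lies in a uniform neighborhood of $\alpha$ outside end-pieces of total length at most $\eta n + O(\delta)$. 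We fix $\eta = \varepsilon\Delta/20$.

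Now suppose $\gamma$ has a $(\varepsilon\Delta n, Q, g)$-self-match with subpaths $p',p''\subseteq\gamma$ of length at least $\varepsilon\Delta n$ satisfying $\dist_{Haus}(gp',p'')\le Q$. With our choice of $\eta$, each of $p',p''$ contains a sub-subpath of length at least $(9/10)\varepsilon\Delta n$ inside the fellow-traveled region. Taking closest-point projections onto $\alpha$ yields parameter subintervals $[s,t],[s',t']$ with $t-s, t'-s'\ge \varepsilon\Delta n/4$ (accounting for the $(2,\delta)$-quasi-geodesic parameterization of $\alpha$) such that $\dist_{Haus}(g\alpha|_{[s,t]},\alpha|_{[s',t']}) \le Q'$, where $Q'=Q'(Q,\delta)$ is independent of $n$ and $g$. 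Equivalently, $g\alpha$ and $\alpha$ admit a $(\varepsilon\Delta n/4, Q')$-Hausdorff match in $X$.

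Finally, $g\alpha$ is a bi-infinite $(2,\delta)$-quasi-geodesic, being a quasi-axis for $gwg^{-1}$. A standard Morse-type argument in $\delta$-hyperbolic spaces implies that two bi-infinite $(2,\delta)$-quasi-geodesics with disjoint pairs of endpoints at infinity can share an $(L, Q')$-Hausdorff match only for $L\le L_0(\delta, Q')$, independently of the specific quasi-geodesics. Since $\varepsilon\Delta n/4\to\infty$, for large $n$ we must have $\{gw^+, gw^-\}\cap\{w^+,w^-\}\neq\emptyset$. Acylindricity of the $G$-action on $X$ then ensures that two loxodromic elements sharing one fixed point at infinity share both, so $g\{w^+,w^-\}=\{w^+,w^-\}$ and $g\in\mc E(w)$. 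The main technical point will be the uniformity of the threshold $L_0$ in $n$ and in $g$, which is essential since we do not perform a union bound over the (infinite) set of candidate $g$, but instead deduce membership in $\mc E(w)$ directly from the hyperbolic geometry witnessed by the match.
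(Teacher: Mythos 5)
Your proposed proof has a fatal gap at its central step. You claim that ``two bi-infinite $(2,\delta)$-quasi-geodesics with disjoint pairs of endpoints at infinity can share an $(L, Q')$-Hausdorff match only for $L\le L_0(\delta, Q')$.'' No such $L_0$ exists: in any hyperbolic space, two quasi-geodesic lines with completely disjoint endpoint pairs can fellow-travel on arbitrarily long subsegments and then diverge. (Already in $\mathbb{H}^2$, for each $t$ the geodesic from $-1/t$ to $t$ shares an $(L(t), \epsilon)$-Hausdorff match with the imaginary axis, with $L(t)\to\infty$, while the endpoint pairs remain disjoint.) So there is no purely geometric obstruction to $\alpha$ and $g\alpha$ fellow-traveling on a segment of length $\varepsilon\Delta n/4$ while $g\notin\mc E(w)$, and the deduction ``match $\Rightarrow$ shared endpoint'' does not hold.

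This is not a repairable bookkeeping issue: the quantitative regime is also wrong. You extract a match of length roughly $\varepsilon\Delta n/4$ between $\alpha$ and $g\alpha$, while $\tau(w)\approx\Delta n$, so the fellow-traveled length is \emph{shorter} than a single translation period; even the WPD/acylindricity arguments that deduce $g\in\mc E(w)$ from a long overlap of axes require the overlap to dominate $\tau(w)$, not be a small fraction of it. The conclusion in \Cref{prop:GammaGammaMatch_new} is genuinely probabilistic --- it is the content of \cite[Lemma 2.13]{AbbottHull}, which the paper cites directly. That argument bounds, via the matching estimates of Maher--Tiozzo/Maher--Sisto and a union bound over a controlled set of group elements, the probability that the \emph{random} path $\gamma$ admits a self-match by any $g\notin\mc E(w)$; it does not, and cannot, show that a self-match by such a $g$ is geometrically impossible. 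Your attempt to sidestep the union bound ``by deducing membership in $\mc E(w)$ directly from the hyperbolic geometry'' is precisely where the argument breaks.
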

 \noindent Notice that $\mc E(G)$ might act trivially on $X$, so we cannot forbid self-matches tout-court.
\begin{proof}
    This is \cite[Lemma 2.13]{AbbottHull}, which is stated in the case when $\mc E(G)=\{1\}$ but whose proof runs verbatim in the general case.
\end{proof}

 The following proposition can most likely be extracted from \cite[Section 11]{MaherTiozzoCremona}, but we provide a proof for clarity and self-containment.

\begin{prop}\label{prop:Matching_new}
    Let $w_1, w_2$ be independent random walks of length $n$, with respect to permissible probability measures. For every $0<\varepsilon <1$ and every $Q\ge 0$, the following hold a.a.s.:
    \begin{itemize}
        \item the axes $\alpha_1$ and $\alpha_2$ do not have a $(\varepsilon \Delta n, Q)$--match; and
        \item if $\alpha_1$ has a $(\varepsilon \Delta n, Q, g)$-self-match, then $g\in \mc E(w_1)$.
    \end{itemize}
\end{prop}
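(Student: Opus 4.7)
The plan is to reduce the proposition to the matching estimates for the geodesics $\gamma_1,\gamma_2$ established in \Cref{prop:selfmatch_gamma} and \Cref{prop:GammaGammaMatch_new}, using two geometric facts. First, each $\alpha_i$ is $w_i$--invariant and is a concatenation of geodesic fundamental domains $w_i^r[y_i,w_iy_i]$ of length at least $\tau(w_i)\ge \Delta n$ a.a.s., by \Cref{prop:TransLength}. Second, by Morse stability (\Cref{lem:stability}) applied to the quasi-axis $\alpha_i$ and the geodesic $\gamma_i$ from $x_0$ to $w_ix_0$, after trimming $\dist(x_0,\alpha_i)+O(\delta)$ from each end, $\gamma_i$ fellow-travels at Hausdorff distance $D=D(\delta)$ with a subpath of $\alpha_i$ of length approximately $\tau(w_i)$. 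Combining \Cref{prop:drift} with $\tau(w_i)\le \dist(x_0,w_ix_0)$ and the standard hyperbolic-space estimate $\dist(x_0,w_ix_0)=\tau(w_i)+2\dist(x_0,\alpha_i)+O(\delta)$, one concludes $\dist(x_0,\alpha_i)=o(n)$ a.a.s.

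Suppose $\alpha_1$ and $\alpha_2$ admit an $(\varepsilon\Delta n,Q,g)$--match via subpaths $p\subseteq \alpha_1$ and $q\subseteq \alpha_2$. The first step of the reduction is to shrink $p$ and $q$ to subpaths of single fundamental domains while preserving a sizeable match. Since $\alpha_i$ is a $(2,\delta)$--quasigeodesic, the parameter length of $p$ is at most $2\diam(p)+2\delta$; combined with fundamental domain length $\ge\tau(w_i)\ge \Delta n$, this implies that $p$ a.a.s.\ meets at most two adjacent fundamental domains of $\alpha_1$. Shrinking $p$ to a subpath $p'\subseteq p\cap w_1^{r_1}[y_1,w_1y_1]$ of diameter at least a definite fraction of $\varepsilon\Delta n$, passing to the corresponding sub-match $q'\subseteq q$ via projection of the endpoints of $gp'$ to $q$, and iterating the argument on $q'$, I obtain subpaths contained in single fundamental domains that are still mutually matched (with the same $g$ and an additive increase in $Q$ by a constant). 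Translating by $w_i^{-r_i}$ moves the fundamental domains onto $[y_i,w_iy_i]$ and conjugates the matching element to $g':=w_2^{-r_2}gw_1^{r_1}$.

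The second step is to pass from subpaths of $[y_i,w_iy_i]$ to subpaths of suitable translates $w_i^{s_i}\gamma_i$ via the Morse fellow-travel. Trimming an extra $o(n)$ from each end, which is negligible compared to $\varepsilon\Delta n$ for large $n$, the shrunken subpaths lie within the Hausdorff $D$--neighborhood of subpaths of $w_i^{s_i}\gamma_i$; pulling back by $w_i^{-s_i}$ exhibits an $(\varepsilon'\Delta n,Q',g'')$--match of $\gamma_1,\gamma_2$, where $g''=w_2^{s_2}g'w_1^{-s_1}$ and the constants $\varepsilon',Q'$ are independent of $n$. \Cref{prop:selfmatch_gamma} a.a.s.\ forbids this, proving the first bullet. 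The self-match bullet is handled by the same reduction with $w_1=w_2$; the new input is that, by \Cref{prop:E(w)}, $\mc E(w_1)$ contains $\langle w_1\rangle$ a.a.s., so conjugation by powers of $w_1$ preserves membership in $\mc E(w_1)$. Hence a self-match of $\alpha_1$ via $g\notin\mc E(w_1)$ would force $\gamma_1$ to admit a self-match via $g''\notin\mc E(w_1)$, which \Cref{prop:GammaGammaMatch_new} a.a.s.\ forbids.

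The main obstacle will be securing the sublinear control $\dist(x_0,\alpha_i)=o(n)$ a.a.s., which is exactly what lets the Morse trimming be absorbed without hurting the match length. Once this is in hand, the remaining work is constant bookkeeping and identifying the specific conjugations by elements of $\langle w_1\rangle$ and $\langle w_2\rangle$ that express the induced match parameter in terms of the original $g$.
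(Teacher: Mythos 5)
Your proof is correct and follows essentially the same strategy as the paper: reduce the matching statement for the axes $\alpha_i$ to the already-established matching estimates for $\gamma_1,\gamma_2$ (\Cref{prop:selfmatch_gamma} and \Cref{prop:GammaGammaMatch_new}) by exploiting the Morse relationship between $\alpha_i$ and the translates $w_i^j\gamma_i$, together with a case analysis on how many orbit points the matched subpaths can meet. Two remarks on where the paper's route is cleaner.

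First, the $\dist(x_0,\alpha_i)=o(n)$ control, and the corresponding ``trimming an extra $o(n)$ from each end,'' are not actually needed. The paper establishes directly that $\alpha_i\subseteq\mc N_{2\Phi}\bigl(\bigcup_j w_i^j\gamma_i\bigr)$ for a constant $\Phi$ depending only on $\delta$: letting $y_i\in\pi_{\alpha_i}(x_0)$, the nearest point path $[x_0,y_i]\cup[y_i,w_iy_i]\cup[w_iy_i,w_ix_0]$ is a $(1,\Omega)$-quasigeodesic with the same endpoints as $\gamma_i$ by \Cref{lem:NppGivesQgeo}, so the entire middle segment --- and hence the corresponding segment of $\alpha_i$ --- lies within $2\Phi$ of $\gamma_i$, with no dependence on $\dist(x_0,\alpha_i)$. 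Relatedly, your invocation of \Cref{lem:stability} ``applied to $\alpha_i$ and $\gamma_i$'' is slightly imprecise, since those two paths do not share endpoints; the nearest point path is the correct intermediary. Eliminating the $o(n)$ step also spares you from appealing to the asymptotic equality $\tau(w_{i,n})/n\to\Delta_i$ behind the scenes.

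Second, your ``shrink to a single fundamental domain, pass to the matched subpath, iterate'' is the same case analysis the paper carries out by tracking how many orbit points $w_1^j y_1$ and $gw_2^{j'}y_2$ lie on the matched subpaths (at most one on each, since the match has diameter $\varepsilon\Delta n<\Delta n\leq\tau(w_i)$). Your bookkeeping of the conjugating element $g'=w_2^{-r_2}gw_1^{r_1}$, and the observation that multiplying on either side by powers of $w_1\in\mc E(w_1)$ preserves non-membership in $\mc E(w_1)$ in the self-match case, is correct and matches the paper's use of $g'=w_1^{-j_1}gw_2^{j_2}$.
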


\begin{proof}
Let $\Omega=\Omega(\delta,7\delta)$ be the constant from \Cref{lem:NppGivesQgeo}, and let $\Phi= \Phi(2,\max\{\Omega,  20\delta\},\delta)$ be the Morse constant provided by \Cref{lem:stability}. Note that $\Omega$ and $\Phi$ depend only on $\delta$.

Now, assume the following hold for $i=1,2$.
        \begin{enumerate}[label=(\alph*{})]
            \item $\tau(w_{i,n})>\Delta n$.
            \item\label{item:bound on varepsilons} $\gamma_1$ and $\gamma_2$ do not have a $(\varepsilon'\Delta n, Q+4\Phi)$--match, and $\gamma_1$ does not have a $(\varepsilon'\Delta n, Q+4\Phi,g)$--self-match unless $g\in \mc E(w_1)$.
        \end{enumerate}
These properties hold a.a.s.~by \Cref{prop:TransLength} and, respectively,   \Cref{prop:selfmatch_gamma} and \Cref{prop:GammaGammaMatch_new}. Now choose $\varepsilon'\in (0,\varepsilon/4)$, and fix $n$ sufficiently large so that the following hold.
    \begin{enumerate}[label=(\arabic*{})]
        \item \label{item:tau} $\Delta n > \Omega$.
        \item \label{item:NBd} $\varepsilon\Delta n/4 - 3Q-4\Phi>\varepsilon' \Delta n$.
    \end{enumerate}
The first is possible because $\Omega$ does not depend on $n$, while the second is possible by our choice of $\varepsilon'<\varepsilon/4$ and the fact that $\Phi$ does not depend on $n$.

We shall show that, if $\alpha_1$ and $\alpha_2$ have a $(\varepsilon\Delta n, Q)$--match, then $\gamma_1$ and $\gamma_2$ have a $(\varepsilon' \Delta n, Q+4\Phi)$--match, contradicting \ref{item:bound on varepsilons}. With minimal differences in the proof one can also show that, if $\alpha_1$ has a $(\varepsilon\Delta n, Q,g)$--self-match with $g\not\in \mc E(w_1)$, then $\gamma_1$ has an $(\varepsilon' \Delta n, Q+4\Phi,g)$--self-match. Hence we shall only highlight the parts where the arguments diverge.

First, notice that $\alpha_i$ is contained in $\mc N_{2\Phi}\left(\bigcup_{j\in \mathbb Z} w_i^j\gamma_i\right)$. To see this, let $y_i\in\pi_{\alpha_i}(x_0)$ belong to the closest point projection of $x_0$ onto $\alpha_i$.  Since $\alpha_i$ is invariant under $w_i$, it follows that $w_iy_i\in \pi_{\alpha_i}(w_ix_0)$; see \Cref{fig:gammawInGammaUnion}. Furthermore, since $\alpha_i$ is $7\delta$-quasiconvex and $\dist(y, w_iy)\ge \tau(w_i)>\Delta n>\Omega$,  \Cref{lem:NppGivesQgeo} yields that any nearest point path $[x_0,y_i]\cup [y_i,w_iy_i] \cup [w_iy_i,w_ix_0]$ is a $(1,\Omega)$--quasigeodesic, and therefore lies in the $\Phi$-neighborhood of $\gamma_i$. In turn, the $(2,\delta)$-quasigeodesic $\alpha_i|_{[y_i,w_iy_i]}$ is contained in the $\Phi$--neighborhood of $[y_i,w_iy_i]$, and therefore in the $2\Phi$-neighborhood of $\gamma_i$.  By applying $w_i^j$ for any $j$, we see that $\alpha_i|_{[w_i^jy_i,w_i^{j+1}y_i]}$ is contained in the $2\Phi$--neighborhood of $w_i^j\gamma_i$, as desired.

\begin{figure}[htp]
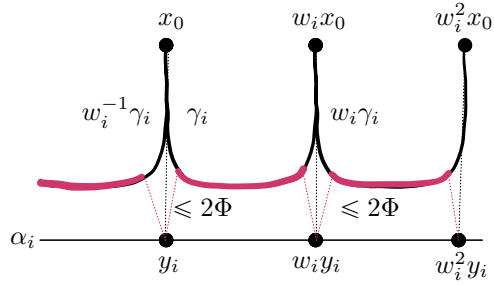

    \centering
    \begin{overpic}[page=1, width=2.5in, trim={1.5in 3.8in 3.5in 5in}, clip] {img/HHGFigs.pdf}
    \put(31,50){$x_0$}
    \put(59,50){$w_ix_0$}
    \put(15,30){$w_i^{-1}\gamma_i$}
    \put(37,30){$\gamma_i$}
    \put(67,30){$w_i\gamma_i$}
    \put(89,50){$w_i^2x_0$}
    \put(31,-1){$y_i$}
    \put(0,4){$\alpha_i$}
    \put(59,-1){$w_iy_i$}
    \put(89,-2){$w_i^2y_i$}
    \put(34,10){$\le 2\Phi$}
    \put(69,10){$\le 2\Phi$}
    \end{overpic}
    \caption{The axis $\alpha_i$ (here, the horizontal line) and the thickened subpaths of $\bigcup_{j\in \mathbb Z} w_i^j\gamma_i$ (in red in the online version) are at Hausdorff distance at most $2\Phi$.}
    \label{fig:gammawInGammaUnion}
\end{figure}

Since $\alpha_1$ and $\alpha_2$ have an $(\varepsilon \Delta n , Q)$--match, there are subpaths $q\subseteq \alpha_{1}$ and $p\subseteq \alpha_{2}$ of diameter $\varepsilon \Delta n$ and $g\in G$ such that $\dist_{Haus}(q,gp)\le Q $. In the case of a self-match, $p,q\subseteq\alpha_1$ and $g\not\in \mc E(w_1)$.  See \Cref{fig:GeomSep}.

We will consider the orbits $\langle w_1\rangle \cdot y_1$ and $\langle w_2^g\rangle \cdot gy_2$.  There are several cases to consider, depending on how these orbits intersect $q$ and $gp$. Note that $w_1$ translates points on $\alpha_1$ by at least $\tau(w_1)\geq \Delta n$. Since $q$ has diameter $\varepsilon \Delta n < \Delta n$, at most one orbit point $w^j_1 y_1$ can lie on $q$. Similarly, the intersection between the orbit $\langle w_2^g\rangle \cdot gy_2$ and $gp$ consists of at most one point.

If no orbit point lies on either $q$ or $gp$, then $q$ and $gp$ are each contained in the $2\Phi$--neighborhood of some $w_1^{j_1}\gamma_1$ and $gw_2^{j_2}\gamma_2$, respectively.  Up to replacing $g$ by $g'=w_1^{-j_1}gw_2^{j_2}$, we can assume that $q\subseteq \mc N_{2\Phi}(\gamma_1)$ and $p\subseteq \mc N_{2\Phi}(\gamma_2)$. In the case of a self-match, use $g'=w_1^{-j_1}gw_1^{j_2}$, which again does not belong to $\mc E(w_1)$. Thus $\gamma_1$ and $\gamma_2$ have an $(\varepsilon\Delta n - 4\Phi, Q+4\Phi,g')$--match.  By \ref{item:NBd}, they therefore have an $(\varepsilon'\Delta n, Q+4\Phi,g')$--match, contradicting \ref{item:bound on varepsilons}.

Now suppose without loss of generality that an orbit point $w_1^jy_1$ lies on $q$. Then $w_1^jy_i$ divides $q$ into two subpaths, $q_1$ and $q_2$, and one of these has diameter at least $\varepsilon\Delta n/2$. Suppose without  loss of generality that it is $q_1$, and let $z\in gp$ be a point at distance at most $Q$ from $w_1^jy_1$.  Consider the subpath $gp_1$ of $gp$ from its initial point to $z$. This subpath is at Hausdorff distance at most $Q$ from $q_1$ and has diameter at least $\varepsilon\Delta n/2 -2Q$.  If no orbit point lies on $gp_1$, then $\gamma_1$ and $\gamma_2$ have a $(\varepsilon\Delta n/2 -2Q-4\Phi, Q + 4\Phi)$--match; in the case of a self-match, this is realized by some $g'\not\in \mc E(w_1)$.  Again by \ref{item:NBd}, this contradicts \ref{item:bound on varepsilons}.  

Finally, suppose that an orbit point $gw_2^{j'}y_2$ lies on $gp_1$. Then this point divides $gp_1$ into two subpaths, $gp_{1,1}$ and $gp_{1,2}$, one of which has diameter at least $\varepsilon\Delta n/4 - Q$.  Without loss of generality, assume it is $gp_{1,1}$; see \Cref{fig:GeomSep}. Let $z'$ be a point on $q_1$ at distance at most $Q$ from $gw_2^{j'}y_2$.  The subpath of $q_1$ from its initial point to $z'$ is at Hausdorff distance at most $Q$ from $gp_{1,1}$, so it has diameter at least $\varepsilon\Delta n/4 - 3Q$.  Again by \ref{item:NBd}, this contradicts \ref{item:bound on varepsilons}.  

In all possible configurations of orbit points, we have reached a contradiction with \ref{item:bound on varepsilons}, so the proof is complete.
\end{proof}

\begin{figure}[htp]
    \centering
    \begin{overpic}[page=1, width=3in, trim={1.5in 6.5in 3.5in 1in}, clip]{img/HHGFigs.pdf}
    \put(100,55){$q\subseteq \alpha_1$}
    \put(100,34){$gp \subseteq \alpha_2$}
    \put(40,72){$w_1^{j}\gamma_1$}
    \put(80,72){$w_1^{j+1}\gamma_1$}
    \put(18,13){$gw_2^{j'}\gamma_2$}
    \put(70,13){$gw_2^{j'+1}\gamma_2$}
    \put(45,26){$gp_1$}
    \put(28,42){$gp_{1,1}$}
    \put(27,63){$q_{1,1}$}
    \put(47,37){$gw_2^{j'}y_2$}
    \put(38,50){$z'$}
    \put(45,45){$\le Q$}
    \put(55,50){$w_1^jy_1$}
    \put(80,59){$q_2$}
    \put(70,45){$\le Q$}
    \put(69,30){$z$}
    \put(50,59){$q_1$}
    \end{overpic}
    \caption{The proof of \Cref{prop:Matching_new} in the case that a cone point lies on both $q$ and $gp$.  The subpaths of translates of $\gamma_1$ and $\gamma_2$ (in red in the online version) form an $(\varepsilon'\Delta n, Q + 4\Phi)$--match.}
    \label{fig:GeomSep}
\end{figure}

\subsection{Random walks and spinning families}\label{sec:RWandSpinning}
In this Section we show that random walks on acylindrically hyperbolic groups and relative HHG fit the framework of spinning families, up to passing to a uniform power. We first set up some notation.
\begin{notation}\label{notation:hyp_main_thm}
    Let $G$ be an acylindrically hyperbolic group. Let $X$ be a hyperbolic space with a cobounded, non-elementary acylindrical $G$-action; up to replacing $X$ via an equivariant quasi-isometry, we can assume that $X$ is a Cayley graph for $G$  with respect to a (possibly infinite) generating set, so that the action is transitive. Let $\mu_1,\ldots, \mu_k$ be permissible probability measures for this action, and let $\sigma=\text{lcm}(\sigma_{\mu_1},\ldots, \sigma_{\mu_n})$ be the least common multiple of all characteristic indices. Consider independent random walks $w_{1,n},\ldots, w_{k,n}$ of length $n$ with respect to these probability measures. For every $i=1,\ldots, k$, let $Y_i=\mc E(G)\cdot \alpha_i$; notice that $\mc E(w_i)$ acts on $Y_i$ a.a.s., since $\mc E(w_i)=\mc E(G)\rtimes \langle w_i\rangle$ a.a.s  by \Cref{prop:E(w)}. Let $\mc Y$ be the union of the $G$-orbits of the $Y_i$, and for every translate $gY_i$ let $H_{gY_i}=g \langle w_i^\sigma\rangle g^{-1}$. Finally, let $N=\llangle H_{Y_i}\rrangle_{i=1,\ldots, k}$. We often drop the indices when they are not relevant.  The exponent $\sigma$ in the definition of $H_{gY_i}$ is crucial for our applications; see \Cref{rem:E_is_trouble} for further discussion.
\end{notation}

\begin{thm}\label{thm:RandomSubgroupIsSpinning}
   In the setting of \Cref{notation:hyp_main_thm}, there are constants $E,K,M_0,R,L$, where $M_0$ and $L$ depend on $n$, such that the collection $(X, \mc Y, G, \{H_Y\}_{Y\in \mc Y})$ a.a.s.~satisfies \Cref{hyp:spinning} and the assumption of \Cref{cor:preserveAH} with respect to $(E,K,M_0,R,L)$. In particular, $G/N$ is a.a.s.~acylindrically hyperbolic.
\end{thm}

\begin{proof}
    \Cref{hyp:complete} extends \Cref{hyp:metric}, so we first verify the latter. First, by assumption $X$ is $E$-hyperbolic for some $E\ge 0$. Moreover, there exists a constant $K=K(E)$ such that every $Y$ is $K$--quasiconvex. To see this, first notice that, as a consequence of, e.g., \cite[Lemma 6.5 \& Theorem 6.14]{DGO}, for every $g\in \mc E(G)$ the translate $g\alpha$ is a $(2,E)$--quasigeodesic with the same ideal endpoints as $\alpha$. Hence any two $\mc E(G)$ translates of $\alpha$ lie at Hausdorff distance at most $\Phi=\Phi(2,E,E)$ by \Cref{lem:stability}. By $E$--slimness of triangles in $X$, this implies that every geodesic $[x,y]$ with $x,y\in Y$ is in the $(E+\Phi)$--neighborhood of a geodesic between points in the same translate of $\alpha$. Since $\alpha$ is $7E$--quasiconvex, it follows that $Y$ is $K$--quasiconvex, where $K=8E+\Phi$.

    Now fix a constant $0<\varepsilon<1$ to be determined later, and let 
    \begin{equation}\label{eq_m_0(e)}
        M_0=\varepsilon \Delta n+4K+4E+2\Phi.
    \end{equation}
    We claim that the family $\mc Y$ is a.a.s.~$M_0$-geometrically separated, as defined in \Cref{lem:geomsep_for_qc}. In other words, we have to show that the diameter of $\mc N_{2K+2E}(Y)\cap Y'$ is at most $M_0$ for every $Y\neq Y'\in \mc Y$. Up to translation and relabeling, we may assume $Y=\mc E(G)\cdot \alpha_1$ and $Y'=g\mc E(G)\cdot\alpha_i$, where if $i=1$ then $g\not\in \mc E(w_1)$. Notice that 
    \[
    \mc N_{2K+2E}(Y)\cap Y'\subseteq \mc N_{2K+2E+\Phi}(\alpha_1) \cap \mc N_{\Phi}(g\alpha_i).
    \]
    If $\diam (\mc N_{2K+2E}(Y)\cap Y')\ge M_0$, then $\alpha_1$ and $g\alpha_i$ would have a $(\varepsilon\Delta n, 2K+2E+2\Phi, g)$-match, and by \Cref{prop:Matching_new} this  a.a.s.~does not occur.

    We now prove \Cref{hyp:spinning}. By assumption the action $G\curvearrowright X$ is transitive, i.e., $R$-cobounded for $R=0$. Furthermore, $G$ acts on $\mc Y$ by construction, and notice that $\Stab_G(Y_i)=\mc E(w_i)$ as every element fixing $Y_i$ preserves the endpoints of $\alpha_i$, up to inversion. In particular, since $\mc E(w_i)=\mc E(G)\rtimes \langle w_i\rangle$ a.a.s., and since $\mc E(G)$ commutes with $w_i^\sigma$ by the definition of the characteristic index, we have that $\Stab_G(Y_i)$ centralizes $H_{Y_i}=\langle w_i^\sigma\rangle$, so that the collection $\{H_Y\}_{Y\in \mc Y}$ is $G$-equivariant. 
    
    The next claim shows that we can choose the spinning constant $L$ greater than any given constant which is bounded linearly in $M_0$. Applying the claim with $\mc L=\ol L$ from \Cref{eq:bound_on_l_spinning} will then prove \Cref{hyp:spinning}.\eqref{I:spinning_bound}.

    \begin{claim}\label{claim:spinning_for_random} Let $\mc L(E,K,M_0)$ be a constant which is bounded linearly in terms of $M_0$. There exist $0<\varepsilon<1$ and $L\ge 0$ such that the following hold a.a.s.:
        \begin{itemize}
            \item $L>\max\{\mc L(E,K,M_0(\varepsilon)),\,\varepsilon \Delta n\}$, where $M_0$ depends on $\varepsilon$ as in \Cref{eq_m_0(e)}; and
            \item for any $Y\in \mc Y$, any $x\neq v_Y\in \hat X$, and any $h\in H_Y-\{1\}$, we have $\dist^\pi_Y(x,hx)>L$.
        \end{itemize}
    \end{claim}
    
    \begin{claimproof}[Proof of \Cref{claim:spinning_for_random}]
        Up to the action of $G$, assume that $Y=Y_i$ for some $i\in \{1,\ldots, k\}$, so that $h=w_{i,n}^{r\sigma}$ for some $r\in \Z-\{0\}$. By \Cref{prop:TransLength}, if $y\in Y_i$ then $\dist(y,w_{i,n}^{r\sigma}y)> \Delta n$. As a consequence, for every $x\in \hat X-\{v_{Y_i}\}$ we have that $\dist_{Y_i}(x,w_{i,n}^{r\sigma}x)>\Delta n-2B$, where $B=B(E,K,M_0)$ is the constant from \Cref{lem:boundedproj_for_qc} that bounds the diameter of $\pi_{Y_i}(x)$. Since $K=K(E)$, and both $B$ and $\mc{L}$ are bounded linearly in $M_0$, we can find constants $a(E)>0$, $b(E)$, and $b'(E)$ such that \[\mc{L}+2B\le a(E)M_0+b(E)=a(E)\varepsilon\Delta n+b'(E).\] Now choose $\varepsilon$ in such a way that $1-a(E)\varepsilon>\varepsilon$, and set $L=\Delta n-2B$. By construction $L\ge \max\{\mc L, \varepsilon\Delta n\}$ for all sufficiently large values of $n$, as required. 
        \end{claimproof}

We now check the assumption of \Cref{cor:preserveAH}. Let $w_{k+1}$ and $w_{k+2}$ be random walks of length $n$ with respect to $\mu_1$ such that $\{w_1,\ldots, w_{k+2}\}$ are pairwise independent. Let $\alpha_{k+1}$ and $\alpha_{k+2}$ be the quasi-axes as in \Cref{sec:RWBackground}, and  let $h,h'\in G$ be such that $x_0\in h\alpha_{k+1}\cap h'\alpha_{k+2}$. Such elements exist because $G$ acts transitively on $X$.  Finally, let $f=hw_{k+1}h^{-1}$ and $g=h'w_{k+2}(h')^{-1}$, which are a.a.s.~loxodromic by \Cref{prop:TransLength} and therefore WPD as the action $G\curvearrowright X$ is acylindrical. 
    
    \begin{claim}\label{claim:fg_indep}
        $f$ and $g$ are a.a.s.~independent.
    \end{claim}  
    \begin{claimproof}
        If $f$ and $g$ share an ideal endpoint $\xi\in \partial X$, then by \Cref{lem:stability} the sub-rays $\eta\subseteq h\alpha_{k+1}$ and $\eta'\subseteq h'\alpha_{k+2}$ connecting $x_0$ to $\xi$ would satisfy $\dist_{Haus}(\eta, \eta')\le \Phi$. In particular, $\alpha_{k+1}$ and $\alpha_{k+2}$ would have a $(\Delta n,\Phi)$--match, contradicting \Cref{prop:Matching_new}. 
    \end{claimproof}

    We now show that the axes of $f$ and $g$ are ``transverse'' to those of the other random walks, in the following sense:

    \begin{claim}\label{claim:shortproj_for_random} 
        $\sup_{Y\in \mc Y}\sup_{m\in \mathbb{Z}}\dist^\pi_Y(x_0,f^mx_0)< L/80$ a.a.s., and similarly for $g$.
    \end{claim} 
    \begin{claimproof}
        Suppose toward a contradiction that there exist $n\in \mathbb{Z}$, $i\in\{1,\ldots, k\}$, and $t\in G$ such that $\dist^\pi_{tY_i}(x_0,f^mx_0)=\diam\pi_{tY_i}(\{x_0,f^mx_0\} )\ge L/80$. Notice that $\diam\pi_{tY_i}(x_0)$ does not depend on $n$, but only on the (uniform) constants $E$ and $K$, while $L$ grows linearly in $n$. In particular, if $n$ is sufficiently large it must be the case that $m\neq 0$. Let $y\in \pi_{tY_i}(x_0)$ and $y'\in \pi_{tY_i}(f^mx_0)$ be such that $\dist(y,y')\ge L/80-1$. By \Cref{lem:NppGivesQgeo}, if $n$ is sufficiently large then the nearest point path $[x_0,y]\cup [y,y']\cup [y', f^mx_0]$ is a $(1,\Omega)$-quasigeodesic, where $\Omega$ only depends on $E$. In turn, since $x_0$ and $ f^mx_0$ belong to $ h\alpha_{k+1}$, \Cref{lem:stability} yields that $y,y'\in \mc N_{\Phi'}(h\alpha_{k+1})$, where $\Phi'=\Phi(2,\Omega,E)$. Let $y''\in Y$ belong to the same translate of $\alpha_i$ as $y$, chosen in such a way that $\dist(y',y'')\le \Phi$.  Then $\dist(y,y'')\ge L/80-1-\Phi$.  Setting $\Phi''=\max\{\Phi, \Phi'\}$, we have $y,y''\in \mc N_{2\Phi''}(h\alpha_{k+1})$. Therefore $\alpha_{k+1}$ and $\alpha_i$ have a $(L/80-5\Phi''-1, 2\Phi'')$--match. Since $L>\varepsilon\Delta n$,  there exists $0<\chi<1$ such that $L/80-5\Phi''-1>\chi \Delta n$ for large enough values of $n$. In particular, $\alpha_{k+1}$ and $\alpha_i$ have a $(\chi \Delta n, 2\Phi'')$--match, contradicting \Cref{prop:Matching_new}.
    \end{claimproof}  
     \noindent As a consequence of \Cref{claim:shortproj_for_random}, $f$ and $g$ are a.a.s.~independent loxodromic WPD elements satisfying 
     \[\sup_{Y\in \mc Y,\,l,m\in \mathbb{Z}}\dist^\pi_Y(f^m x_0,g^lx_0)\le \sup_{Y\in \mc Y,\,m\in \mathbb{Z}}(\dist^\pi_Y(x_0,f^m x_0)+\dist^\pi_Y(x_0,g^m x_0))<L/40.\]
     This shows that the hypotheses of \Cref{cor:preserveAH} are a.a.s.~satisfied, as required.
\end{proof}

\noindent We can specialize \Cref{thm:RandomSubgroupIsSpinning} to the case of relative HHG:
\begin{prop}\label{prop:randomsgr is spinning HHG}
    In the setting of \Cref{thm:RandomSubgroupIsSpinning},
    assume further that $G$ is an acylindrically hyperbolic relative HHG and that $X$ is its top-level coordinate space. Then the collection $(X, \mc Y, G, \{H_Y\}_{Y\in \mc Y})$ a.a.s.~satisfies \Cref{hyp:complete} with respect to some constants $E,K,M_0,R,L$, where $M_0$ and $L$ depend on $n$.
\end{prop}
\begin{proof}
   Since \Cref{hyp:metric} and \Cref{hyp:spinning} hold by \Cref{thm:RandomSubgroupIsSpinning}, we are left to check the remaining assumptions of \Cref{hyp:complete}. It is clear that $G$ acts cofinitely on $\mc Y$.  Furthermore, each $\langle w_i^\sigma\rangle$ acts geometrically on its axis $\alpha_i$, and therefore on $Y_i$, since all $\mc E(G)$-translates of $\alpha_i$ are within finite Hausdorff distance. Finally, if in \Cref{claim:spinning_for_random} we choose $\mc L=\widetilde L$ from \Cref{eq:tildeL}, we can also ensure that $L>\widetilde L$, completing the proof.
\end{proof}

\begin{remark}\label{rem:translength_random} By \Cref{cor:tau}, for every $x\in X$ and every $n\in N-\{1\}$ we have that $\dist_X(x,nx)>\tau=(L/10-2(B+JR))/J=(L/10-2B)/J$; here we used that $R=0$ as $X$ is a Cayley graph for $G$. Since $B$ is bounded linearly in terms of $M_0$, we can choose the constant $\mc L$ from  \Cref{claim:spinning_for_random} to be larger than $20B$ and ensure that $L/10$ grows faster than $2B$ as $n\to \infty$.  In this case, the translation length of the subgroup $N$ will a.a.s.~grow at least linearly in $n$. If there is a single random walk, this answers \cite[Question 1.11.(3)]{MaherTiozzoCremona} in the case of a permissible probability measure.
\end{remark}

\begin{remark}\label{rem:E_is_trouble}
    Taking the $\sigma$-th power of the random walks is crucial to build a spinning family. Indeed, if $w$ is a random walk with respect to a permissible probability measure $\mu$, the probability that $\llangle w\rrangle\cap \mc E(G)\neq \{1\}$ converges to $1-1/\sigma_\mu$ as $n\to \infty$ \cite[Theorem 11.12]{MaherTiozzoCremona}; if this happens, the minimum translation length of $\llangle w\rrangle$ is always zero, contradicting \Cref{rem:translength_random}.  Recall, however, from \Cref{rem:char_index}, that $\sigma=1$ if $\mathcal E(G)$ is central in $G$.
\end{remark}

\begin{remark}\label{rem:varyinglengths}
    In this paper, we have assumed that all of our random walks have the same length.  Some assumptions on the relative lengths of the random walk is necessary for our methods to hold, as we now explain.  The matching estimates introduced in \Cref{sec:RW} are key to the arguments in this section.  If, for example, both $w_1$ and $w_2$ are driven by a uniform measure on the same finite generating set, and the length of $w_{1,n_1}$ is logarithmic in the length of $w_{2,n_2}$, then $w_{1,n_1}$ will a.a.s.~appear as a subword of $w_{2,n_2}$ \cite[Section~4]{SistoTaylor}.  In particular, it will no longer hold that the axes of these two random walks do not have an $(\varepsilon \Delta n_1,Q)$--match a.a.s., as in \Cref{prop:Matching_new}, and so the collection of random walks will not a.a.s.~satisfy \Cref{hyp:complete}.  On the other hand, a straightforward generalization of the techniques in this section should show that \Cref{thm:RandomSubgroupIsSpinning} holds if the lengths of the random walks differ by linear functions.  With more work, it may be possible to extend our methods in the case that $n_2$ is only \emph{polynomial} in $n_1$.  For simplicity and in the interest of space, we chose not to pursue these directions here.
\end{remark}

\subsection{Random quotients of acylindrically hyperbolic groups}\label{sec:RandomQuotients_of_acylhyp}
    In the setting of \Cref{notation:hyp_main_thm}, let $\mc K=\llangle w_{i}\rrangle_{i=1,\ldots, k}$. Notice that if $\sigma=1$ then $N=\mc K$; this happens in particular if $\mc E(G)$ is central in $G$. The following is a more precise version of \Cref{thm:AHQuotient}. 
\begin{thm}\label{thm:rqaH_full}
   In the setting of \Cref{notation:hyp_main_thm}, $G/\mc K$ is acylindrically hyperbolic.
\end{thm}
\begin{proof}
    If $\mc E(G)$ coincides with the center of $G$, then $N=\mc K$ and this is \Cref{thm:RandomSubgroupIsSpinning}, so suppose otherwise. Let $G'=G/\mc E(G)$ and $X'=G/\mc E(G)$. For every $i=1,\ldots, k$, let $\mu_i'$ be the \emph{push-forward} of $\mu_i$ on $ G'$, which, concretely, is defined by mapping every $g'\coloneq g\mc E(G)\in G'$ to $\mu_i'(g')=\sum_{g\in g'}\mu_i(g)$. It is straightforward to check that $G'$ is acylindrically hyperbolic with $\mc E(G')=\{1\}$, and that each $\mu_i'$ is permissible with respect to the action on $X'$.

Let $w_{1},\ldots, w_{k}$ be as in \Cref{notation:hyp_main_thm}, and notice that their images $w_1',\ldots, w_k'$ are independent random walks on $G'$ with respect to $\mu_1', \ldots, \mu_k'$, respectively. By~\Cref{thm:RandomSubgroupIsSpinning}, if we set $\mc K'=\llangle w_1',\ldots, w_k' \rrangle_{G'}$, then $G'/\mc K'$ is acylindrically hyperbolic. Notice that $\mc K$ projects to $\mc K'$ inside $G'$.  Therefore, $G/\mc K$ is a finite extension of the acylindrically hyperbolic group $G'/\mc K'$.  The non-elementary acylindrical action of $G'/\mc K'$ on a hyperbolic space induces a non-elementary acylindrical action of $G/\mc K$ on the same hyperbolic space, and so $G/\mc K$ is acylindrically hyperbolic, as well.
\end{proof}

\subsection{Random quotients of relative HHG}\label{sec:RandomQuotients}
We now apply \Cref{prop:randomsgr is spinning HHG} to prove \Cref{thm:main_intro} and its corollaries. As explained in Remark~\ref{rem:E_is_trouble}, if $G$ is a relative HHG and $\mc E(G)$ is not central in $G$, then we cannot apply our \Cref{prop:randomsgr is spinning HHG} directly, as the quotient appearing there is by the $\sigma$-th power of the random elements. Hence, we first prove every result under the hypothesis that $\mc E(G)$ is central in $G$; then, in order to reduce to the previous case, we invoke \Cref{thm:HHG/E} from the Appendix, which states that $G/\mc E(G)$ is itself a relative HHG.

\begin{proof}[Proof of \Cref{thm:main_intro}]
    Let $(G,\mf S)$ be an acylindrically hyperbolic (relative) HHG, and let $\mu_1,\dots, \mu_k$ be permissible probability measures on $G$.  Let $w_{1,n},\dots, w_{k,n}$ be independent random walks of length $n$ with respect to $\mu_1,\dots, \mu_k$, and let $\mc K=\llangle w_{i}\rrangle_{i=1,\ldots, k}$.

    Suppose first that $\mc E(G)$ is central in $G$. Then \Cref{thm:RandomSubgroupIsSpinning} shows that the $G/\mc K$-action on $\mc C S/\mc K$ is non-elementary, and \Cref{prop:randomsgr is spinning HHG} proves that the assumptions of \Cref{thm:quotientishhg} are satisfied. Therefore $(G/\mc K,\mf S/\mc K)$ is an acylindrically hyperbolic (relative) HHG.

    In the general case, let $G'=G/\mc E(G)$ and $\mc K'=\mc K/(\mc K\cap \mc E(G))$, as in \Cref{thm:rqaH_full}. \Cref{thm:HHG/E} shows that $G'$ is an acylindrically hyperbolic (relative) HHG, and the above arguments yield that $G'/\mc K'$ is an acylindrically hyperbolic (relative) HHG. Finally, $G/\mc K$ is a finite extension of $G'/\mc K'$, so it is an acylindrically hyperbolic (relative) HHG via the action on the same structure.
\end{proof}

\begin{proof}[Proof of \Cref{cor:hyp}]
    Let $G$ be a non-elementary hyperbolic group, and let $w_{1,n},\dots, w_{k,n}$ be independent random walks of length $n$ with respect to permissible probability measures on $G$. Since the class of non-elementary hyperbolic groups is closed under taking finite extensions and quotients by finite subgroups, we can assume that $\mc E(G)=\{1\}$.
    
    Notice that $(G,\mf S)$ is a HHG, where $\mf S=\{S\}$, and $\mc CS$ is the Cayley graph of $G$ with respect to a finite generating set.  By \Cref{thm:main_intro}, if $\mc K=\llangle w_{1,n},\dots, w_{k,n}\rrangle$, then $G/\mc K$ is an acylindrically hyperbolic HHG.  Moreover, the hierarchy structure on $G/\mc K$ is $\mf S/\mc K=\{\ol S\}$, as can be seen from \Cref{constr:HHGStructureQuotient} in the proof of \Cref{thm:quotientishhg}. In particular, $(G/\mc K,\mf S/\mc K)$ has no orthogonality, and so is a rank 1 HHG.  By \cite[Corollary~2.16]{BHS_HHS_Quasiflats}, $G/\mc K$ is a hyperbolic group, and it is non-elementary as it is acylindrically hyperbolic.
\end{proof}

\noindent The proof of \Cref{cor:relhyp} is similar, but uses relative HHG structures instead of HHG structures.
\begin{proof}[Proof of \Cref{cor:relhyp}]
    Let $G$ be non-elementary hyperbolic relative to a finite collection $\mc H$ of infinite, finitely generated subgroups. Let $w_{1,n},\dots, w_{k,n}$ be independent random walks of length $n$ with respect to  permissible probability measures on $G$, and set $\mc K=\llangle w_{1,n},\dots, w_{k,n}\rrangle$.
    
    \textbf{If $\mc E(G)$ is central}: As for \Cref{thm:main_intro}, we first suppose that $\mc E(G)$ is central in $G$. Let $\mc T$ be a finite generating set for $G$, such that $\mc T\cap H$ generates $H$ for every $H\in \mc H$. By \cite[Theorem~9.3]{BHS_HHSII}, there is a relative HHG structure $(G,\mf S)$, where:
    \begin{itemize}
        \item $\mf S=\{S\}\cup G\mc H$;
        \item $\mc C S=\Cay\left(G,\mc T \cup \bigcup_{H\in \mc H} H\right)$, while $\mc C(gH)=g\Cay\left(H,\mc T \cap H\right)$ for every $H\in \mc H$;
        \item every domain $gH$ is nested in $S$ and transverse to every other domain.
    \end{itemize}
    Let $\ol {\mc T}$ be the image of $\mc T$ in $G/\mc K$. For every $H\in H$, let $\ol H$ be the image of $H$ in $G/\mc K$, and let $\ol{\mc H}=\{\ol H\}_{H\in \mc H}$. Notice that every $H\in \mc H$ fixes a set of diameter $1$ in $\mc CS$, while every non-trivial $n\in \mc K$ a.a.s.~has translation length at least $2$ by \Cref{rem:translength_random}. Hence $H\cap \mc K=\{1\}$, and so $H\cong \ol H$, proving the ``moreover'' part of \Cref{cor:relhyp}.\eqref{item:peripherals}.

    By \Cref{thm:main_intro}, the quotient $G/\mc K$ is an acylindrically hyperbolic relative HHG with hierarchy structure $\mf S/\mc K$. The relation between any two domains of $\mf S/\mc K$ is the relation between any minimal representatives, as can be seen from \Cref{constr:HHGStructureQuotient} in the proof of \Cref{thm:quotientishhg}. In particular, $(G/\mc K,\mf S/\mc K)$ has no orthogonality, and so is a rank 1 relative HHG. By combining \cite[Theorem 3.2]{Russell_Rel_Hyp} and \cite[Proposition~5.1]{drutu_relhyp}, we conclude that $G/\mc K$ is non-elementarily hyperbolic relative to a collection $\mc{Q}$ of subgroups with the property that each $Q\in \mc Q$ is at finite Hausdorff distance in $\Cay(G/\mc K, \ol{\mc T})$ from a unique coset of some $\ol{H}\in \ol{\mc H}$. More precisely, the proof of \cite[Proposition 5.1, Step (3)]{drutu_relhyp} shows that one such $Q$ exists for every infinite $\ol H$, and in particular for every $\ol H\in \ol{\mc H}$ by our assumptions.
    
    Let $Q$ and $\ol H\in\ol{\mc H}$ be as above. Up to replacing $Q$ by some conjugate, we can assume that $\dist_{Haus}(Q,\ol H)$ is finite, say, bounded by some $r\ge 0$. We will now show that $\ol H=Q$. Let $K=Q\cap \ol H$. By \cite[Lemma 4.5]{Hruska_Wise} there exists a constant $r'>0$ such that
\[Q\subseteq Q\cap \mc N_r(\ol H)\subseteq \mc N_{r'}(Q\cap \ol H)=\mc N_{r'}(K).\]
Thus, $K$ has finite index in $Q$. The same argument shows that $K$ has finite index in $\ol H$, as well.

Given $\ol g\in \ol H$, the subgroup $K\cap \ol gK\ol g^{-1}$ has finite index in $K$, as it is the intersection of two finite-index subgroups of $\ol H$. Hence $K\cap \ol gK\ol g^{-1}$ has finite index in $Q$ as well, since $K$ has finite index in $Q$. Thus $Q\cap \ol gQ\ol g^{-1}$, which contains $K\cap \ol gK\ol g^{-1}$, also has finite index in $Q$. Now $Q$ is infinite, as it is commensurable to $\ol H\cong H$, and almost malnormal, as it is a peripheral subgroup in a relative hyperbolic structure.  Hence, we must have that $\ol g\in Q$. As $\ol g$ was an arbitrary element of $\ol H$, this proves that $\ol H\le Q$. 
    
For the reverse inclusion, suppose toward a contradiction that there is some $\ol g\in Q-\ol H$. Then $\ol H$ and $\ol g\ol H$  correspond to transverse $\nest$--minimal domains in the relative HHG structure of the quotient. Let $\mathbf P_{\ol H}$ be the product region for $\ol H$ in the quotient structure, as in \Cref{defn:prodreg}, and similarly define $\mathbf P_{\ol g\ol H}$. Since $\ol H$ and $\ol g \ol H$ are $\nest$--minimal, the relative projection $\rho^{\ol H}_{\ol g \ol H}$ is coarsely the nearest point projection of $\mathbf P_{\ol H}$ to $\mathbf P_{\ol g \ol H}$, and similarly for the other relative projection; see \cite[Remark~1.16]{BHS_HHS_AsDim} and \cite[Lemma~3.1]{Russell_Rel_Hyp}.  Since $H$ is infinite,  $\mathbf P_{\ol H}$ and $\mathbf P_{\ol g \ol H}$ cannot be within finite Hausdorff distance, or else we would contradict that the relative projections are bounded diameter sets.   On the other hand, $\mathbf P_{\ol H}$ and $\mathbf P_{\ol g \ol H}$ coarsely coincide with $\ol H$ and $\ol g\ol H$, respectively, and the latter are within finite Hausdorff distance since $\ol g\in Q$.  This is a contradiction, and so we conclude that $\ol H = Q$, as required.

\textbf{General case}: Suppose now that $\mc E(G)$ is not central in $G$, let $G'=G/\mc E(G)$, and let $\mc K'=\mc K/(\mc K\cap \mc E(G))$. By \cite[Lemma 4.4]{AMO_SQ_univ}, every $H\in \mc H$ contains $\mc E(G)$, and $G'$ is hyperbolic relative to $\{H'\coloneq H/\mc E(G)\}_{H\in \mc H}$. In turn, by the above arguments, $G'/\mc K'$ is hyperbolic relative to $\{\widetilde H\}_{H\in \mc H}$, where each $\widetilde H$ is the image of $H'$ in $G'/\mc K'$ and is isomorphic to $H'$ via the quotient map.

Now, $G/\mc K$ is a finite extension of $G'/\mc K'$, and for every $H\in \mc H$ the subgroup $\ol H=H/(H\cap \mc K)$ is the preimage of $\widetilde H$ in $G/\mc K$. Thus $G/\mc K$ is hyperbolic relative to $\{\ol H\}_{H\in \mc H}$: this follows from the general \Cref{lem:relhyp_finite_ext} below, which is surely known to experts and likely follows from \cite{Drutu_Sapir_Rel_hyp}, but we include a proof for completeness. Furthermore, every $\ol H$ is commensurable to $\widetilde H\cong H'$, hence to $H$, and therefore $|H\cap \mc K|<\infty$, as required.
\end{proof}

\begin{lem}\label{lem:relhyp_finite_ext}
    Consider a group extension $1\to E\to G\xrightarrow[]{\pi}G'\to 1$, where $E$ is finite and $G$ is countable. If $G'$ is hyperbolic relative to a finite collection $\{P_i\}_{i=1,\ldots, n}$ of infinite subgroups, then $G$ is hyperbolic relative to $\{\pi^{-1}(P_i)\}_{i=1,\ldots,n}$.
\end{lem}

\begin{proof}
    By Bowditch's equivalent definition of relative hyperbolicity, a countable group is hyperbolic relative to a finite collection $\mc P$ of infinite subgroups if and only if it acts on a fine hyperbolic graph with finite edge stabilizers and finitely many orbits of edges, and $\mc P$ is a collection of representatives of the conjugacy classes of infinite vertex stabilizers (see \cite{Bowditch_relhyp,Hruska_rel_hyp} for further details). Thus let $Y$ be a fine hyperbolic graph on which $G'$ and $\{P_i\}_{i=1,\ldots, n}$ act as above. The quotient map $G\to G'$ induces a $G$-action on $Y$, which again has finite edge stabilizers and finitely many orbits of edges; furthermore, by construction $\{\pi^{-1}(P_i)\}_{i=1,\ldots,n}$ is a collection of representatives of the conjugacy classes of infinite vertex stabilizers. Thus $G$ is hyperbolic relative to $\{\pi^{-1}(P_i)\}_{i=1,\ldots,n}$, as required.
\end{proof}

\section{Common quotients of HHGs}\label{sec:commonquot}

\noindent This section is devoted to the proof of \Cref{thm:commonquot_intro}, which we restate here for the convenience of the reader.
\begin{thm}\label{thm:commonquot}
    If $G_1$ and $G_2$ are acylindrically hyperbolic (relative) HHGs, then there exists $H$ an acylindrically hyperbolic (relative) HHG and surjections $G_1\twoheadrightarrow H$ and $G_2\twoheadrightarrow H$.
\end{thm}
\noindent We split the proof into a series of lemmas. Let $(G_1,\mf S_1)$ and $(G_2, \mf S_2)$ be acylindrically hyperbolic (relative) HHG structures. It is no loss of generality to assume $\mc E(G_i)=\{1\}$ for $i=1,2$, for $G_i\twoheadrightarrow G_i/\mc E(G_i)$ and the latter are acylindrically hyperbolic (relative) HHG by \Cref{thm:HHG/E}.  We also fix finite, symmetric generating sets $\mc{A}_i$ of size $k_i$ for $G_i$, and set $\Gamma_i = \Cay(G_i, \mc{A}_i)$. By \cite[Corollary 2.9]{BHS_HHS_AsDim}, the top level coordinate space $\mc C S_i$ of the (relative) HHG structure for $G_i$ can be taken to be the cone-off $\hat \Gamma_i$ of $\Gamma_i$ with respect to the collection of all proper product regions.

    \begin{lem}\label{claim:freeprodHHG}
        There is an acylindrically hyperbolic (relative) HHG structure $\mf S$ on $G_1*G_2$ whose top-level coordinate space $X$ has the structure of a tree of spaces in which vertex spaces are isometric copies of $\Gamma_1$ and $\Gamma_2$ and edge spaces are points.
    \end{lem}

    \begin{proof}
        The free product $G_1*G_2$ admits a (relative) HHG structure with the following properties (see for example \cite[Theorem~8.24]{BHS_HHSII}, which is stated for HHG but never uses hyperbolicity of coordinate spaces):
        \begin{itemize}
            \item The index set is $\tilde{\mf S}=S\cup  \bigcup_{x\in G_1*G_2/G_1}\{x\mf S_1\}\cup \bigcup_{x\in G_1*G_2/G_2}\{x\mf S_2\}$, where $x\mf S_i$ is a copy of $\mf S_i$, with the same relations;
            \item Every domain is nested in $S$, and any two domains from different copies of $\mf S_i$ are transverse;
            \item $\mc C_{\tilde{\mf S}} S$ is the \emph{Bass-Serre} tree of the splitting, which is the simplicial tree $T$ whose edges are labeled by elements $h \in G_1*G_2$ and vertices are labeled by cosets $hG_i$ for $i = 1,2$;  
            \item For every coset $x\in G_1*G_2/G_i$ and $V\in \mf S_i$, the associated hyperbolic space is $\mc C_{\tilde{\mf S}} (xV)\cong \mc C_{\mf S_i}V$.
        \end{itemize}
        Notice that, for each $i=1,2$, every $U\in \mf S_i$, and every $x\in G_1*G_2$, the product region for $xU$ in $(xG_i,x\mf S_i)$ coarsely coincides with the product region for $xU$ in $(G_1*G_2,\tilde{\mf S})$, as every non-maximal domain outside $x\mf S_i$ is transverse to $xU$ and  all coordinate spaces for $\mf S_i$ are the same as in the corresponding factor.
        
        Arguing as in \cite[Theorem 3.14]{perlmutter}, which is a relative version of \cite[Theorem 3.7]{ABD}, we construct a second (relative) HHG structure on $G_1*G_2$ with the following properties:
        \begin{itemize}
            \item The index set $\mf S$ is obtained from $\tilde{\mf S}$ by removing all translates of $S_1$ and $S_2$;
            \item $\mc C_{\mf S} S$ is the cone-off of $\Cay(G_1*G_2, \mc{A}_1\cup \mc A_2)$ with respect to the product regions of all domains in $\mf S$, excluding $S$ and translates of $S_1$ and $S_2$; and
            \item All other coordinate spaces are unchanged.
        \end{itemize}
        Since $\mc C S_i=\hat \Gamma_i$, and since the product region for $U\in \mf S_i-\{S_i\}$ is the same in $\mf S_i$ and in $\tilde{\mf S}$, up to bounded Hausdorff distance, it follows that $\mc C_{\mf S} S$ is quasi-isometric to the \emph{tree of spaces} $X$ specified by the following data:
        \begin{itemize}
            \item the underlying simplicial tree is $T$;
            \item for every $x\in G_1*G_2$ and $i=1,2$ the vertex space associated to $xG_i$ is a copy of $\hat\Gamma_i$, while the edge space associated to $x$ is $\{x\}$;
            \item for every $x\in G_1*G_2$ and $i=1,2$, the attaching map $\{x\}\to x\hat\Gamma_i$ is the natural inclusion.
        \end{itemize}
        Since $T$ is unbounded, $X$ is unbounded as well. Hence $(G_1*G_2,\mf S)$ is an acylindrically hyperbolic (relative) HHG, as it is not virtually cyclic.
    \end{proof}
    
     For later purposes, we digress to describe geodesics in $X$. Let $p\colon X\to T$ map each vertex space $x\hat\Gamma_i$ to the vertex $xG_i$. For every $x\in G_1*G_2$, denote by $\{x\}$ the edge of $X$ connecting $x\in x\hat\Gamma_1$ and $x\in x\hat\Gamma_2$. Notice that every path in $X$ from $x\hat\Gamma_1$ to $x\hat\Gamma_2$ must pass through $\{x\}$, by the construction of $X$. As a consequence we immediately obtain:
    \begin{lem}\label{claim:X-geo_vs_T-geo}
         A path $\lambda$ in $X$ is an $X$-geodesic if and only if $p(\lambda)$ is a $T$-geodesic and for every $xG_i\in p(\lambda)$, its preimage $\lambda\cap  x\hat\Gamma_i$ is a geodesic in $x\hat\Gamma_i$.
    \end{lem}

    For $i=1,2$ let $\mu_i$ be permissible probability measures on $G_i$ with respect to the action on $\Gamma_i$. Fix $n\in \mathbb N$, and let $w_1,\dots, w_{k_2}$ denote the $n$th steps of $k_2$ independent random walks on $G_1$ chosen according to $\mu_1$, and let $v_1,\dots, v_{k_1}$ denote the $n$th steps of $k_1$ independent  random walks on $G_2$ chosen according to $\mu_2$.  Consider the group
    \[
    H=G_1*G_2/\llangle s_i^{-1}v_i, t_j^{-1}w_j\mid s_i\in \mc{A}_1,t_j\in \mc{A}_2, 1\leq i\leq k_1,1\leq j\leq k_2\rrangle.
    \]
    For convenience, denote the relations by $r_i = s_i^{-1}v_i$ and $R_j = t_j^{-1}w_j$ for $1 \leq i \leq k_1$ and $1 \leq j \leq k_2$; whenever dependencies on $i$ and $j$ are not important, we drop the indices and write $r=s^{-1}v$ and $R=t^{-1}w$.
    
    \begin{lem}
        The groups $G_1$ and $G_2$ both surject onto $H$.
    \end{lem}
    \begin{proof} 
    Given presentations $G_1 = \langle \mc{A}_1 \mid \mf{R}_1 \rangle$ and  $G_2 = \langle \mc{A}_2 \mid \mf{R}_2 \rangle $, straightforward Tietze transformations give the following alternate presentations 
\[
G_1 = \langle \mc{A}_1 \cup \mc A_2 \mid \mf{R}_1 \cup \{ R_j \mid 1 \leq j \leq k_2 \} \rangle \quad\text{and}\quad G_2 = \langle \mc{A}_1 \cup \mc A_2 \mid \mf{R}_2 \cup \{ r_i \mid 1 \leq i \leq k_1 \} \rangle .
\]
    We have the following commutative diagram, where every arrow is a surjection and its edge label denotes the corresponding kernel.
    \begin{center}
    \begin{tikzcd}
             & F_{k_1}*F_{k_2} \ar[dl, "\llangle\mf{R}_1\rrangle"']\ar[dr, "\llangle\mf{R}_2\rrangle"]\ar[dd]\\
        G_1*F(\mc{A}_2) \ar[d, "\llangle \{ R_j \} \rrangle"'] & & F(\mc{A}_1)*G_2 \ar[d, "\llangle \{r_i\} \rrangle"] \\
        G_1 \ar[dr, "\llangle \{r_i\} \rrangle"'] \ar[r, lightgray, hook] & G_1*G_2 \ar[d] & G_2 \ar[dl, "\llangle \{ R_j \} \rrangle"] \ar[l, lightgray, hook] \\
            &  H
    \end{tikzcd}
    \end{center}
It is then immediate that both $G_1$ and $G_2$ surject onto $H$.
    \end{proof}
    
     It remains to prove that $H$ is an acylindrically hyperbolic (relative) HHG. More precisely, we will show that there is a collection of subspaces $\mc Y$ and subgroups $H_Y$ for each $Y\in \mc Y$ such that $(X,\mc Y, G_1*G_2,\{H_Y\}_{Y\in \mc Y})$ a.a.s.~satisfies \Cref{hyp:complete} and the assumption of \Cref{cor:preserveAH}.  We will then conclude by invoking \Cref{thm:quotientishhg}.

   \begin{construction}
   The cyclically reduced word $r=s^{-1}v$ is a loxodromic isometry of $X$, and we now describe an $r$-invariant geodesic of $X$. By \Cref{prop:drift}, the element $v$ is a.a.s.~loxodromic on $\hat\Gamma_2$. Let $\gamma_{v}$ be a geodesic in $\hat\Gamma_2$ connecting $1$ and $v$, and let $e_{s^{-1}}$ be the edge of $\hat\Gamma_1$ connecting $1$ to $s^{-1}$. Hence the concatenation 
    $$\theta=\{1\}*e_{s^{-1}}*\{s^{-1}\}*s^{-1}\gamma_{v}$$
    connects $1\in \hat\Gamma_2$ to $r\in r\hat\Gamma_2$. Now let 
    $$\lambda=\bigcup_{n\in \Z} r^n\theta.$$ 
    Notice that $p(\lambda)$ is the geodesic axis for the action of $r$ on $T$, so $\lambda$ is an $X$--geodesic by \Cref{claim:X-geo_vs_T-geo}.  It is also clear from the construction of $\lambda$ that its intersection with the vertex spaces it passes through alternates between an edge inside a translate of $\Gamma_1$ and a geodesic in a translate of $\Gamma_2$ whose length $\ell(\gamma_{v})$ a.a.s.~goes to infinity as $n\to\infty$.
    
    Similarly, for $R=t^{-1}w$ we can define $\Theta=\{1\}*e_{t^{-1}}*\{t^{-1}\}*t^{-1}\gamma_{w}$, which connects $1\in \hat \Gamma_1$ to $R\in R\hat \Gamma_1$, and then $\Lambda=\bigcup_{n\in \Z} R^n\Theta$  is an $R$-invariant $X$-geodesic.         
   \end{construction}
 
  Let $\mc Y$ be the collection of $G$--orbits of the axes $\lambda_i$ and $\Lambda_j$ for $1\leq i\leq k_1$ and  $1\leq j\leq k_2$.   For each $Y\in \mc Y$, we have $Y=g\lambda_i$ or $Y=g\Lambda_j$ for some $g\in G_1*G_2$ and some $1\leq i\leq k_1$ or  some  $1\leq j\leq k_2$.  If $Y=g\lambda_i$, then let $H_Y:=\langle r_i\rangle^g$, and if $Y=g\Lambda_j$, let $H_Y:=\langle R_j\rangle ^g$.  Finally, let $N=\llangle H_Y\mid Y=\lambda_i \text{ for } 1\leq i\leq k_1 \text{ or }Y=\Lambda_j \text{ for } 1\leq j\leq k_2\rrangle$. 

   \begin{prop}\label{prop:commonquot_checkhyp}
       $(X,\mc Y, G_1*G_2,\{H_Y\}_{Y\in \mc Y})$ a.a.s.~satisfies \Cref{hyp:complete} and the requirements of \Cref{cor:preserveAH}.
   \end{prop}

   \begin{proof}
       The proof is analogous to those of \Cref{thm:RandomSubgroupIsSpinning} and \Cref{prop:randomsgr is spinning HHG}. 

        We first verify \Cref{hyp:metric}.  By assumption, $X$ is $E$--hyperbolic, where $E$ is any HHG constant for $(G_1*G_2,\mf S)$. Each $Y\in \mc Y$ is an $X$-geodesic and hence $K$-quasiconvex for some $K(E)$ given by \Cref{lem:stability}. Let $Y,Y'\in \mc Y$, and suppose without loss of generality that $Y=\lambda_i$ for some $i$. 
        
        \begin{claim}
            If $Y\neq Y'$ then $p(Y)\cap p(Y')$ has diameter at most 1 in $T$.
        \end{claim} 
        \begin{claimproof}
        Recall that $p(Y)$ is the axis in $T$ for the action of $r_i=s_i^{-1}v_i$, which is a product of two elliptic elements on adjacent vertices and therefore has translation length $2$ on $p(Y)$. Suppose that $\diam_T(p(Y)\cap p(Y'))\ge 2$. Up to translation by a power of $r_i$ we can assume that the edge labelled by $1$ belongs to the intersection, so that $Y'$ must be of the form $\lambda_{i'}$ for some $1\le i'\le k_1$, or $\Lambda_j$ for some $1\le j\le k_2$. Since $\diam_T(p(Y)\cap p(Y'))\ge 2$ and the intersection of two lines in a tree is connected, it follows that one of the vertices $r_iG_2$ or $v_i^{-1}G_1$ belongs to the intersection, since these are the vertices of $p(Y)$ which are adjacent to the edge labelled by $1$. There are thus four cases to consider: 
        \begin{itemize}
        \item If $Y'=\lambda_{i'}$ and the vertex $r_iG_2$ belongs to the intersection, then $s^{-1}_i G_2=s^{-1}_{i'} G_2$.  Since $s_i$ and $s_{i'}$ belong to $G_1$, we have $i=i'$. This violates the assumption that $Y\neq Y'$.
        \item If $Y'=\lambda_{i'}$ and the vertex $v_i^{-1}G_1$ belongs to the intersection, then $v^{-1}_iG_1=v^{-1}_{i'}G_1$, which again implies that $i={i'}$ as $v_i$ and $v_{i'}$ do not coincide a.a.s.
        \item If $Y'=\Lambda_j$ and the vertex $r_iG_2$ belongs to the intersection, then $R_jr_iG_2=G_2$, and therefore $R_jr_i=t_j^{-1}w_js_i^{-1}v_i\in G_2$. This would imply that $w_js_i^{-1}=1$, which is a.a.s.~not the case as the translation length of $w_j$ on $\hat\Gamma_1$ is a.a.s.~bigger than that of $s_i^{-1}$.
        \item Finally, if $Y'=\Lambda_j$ and the vertex $v_i^{-1}G_1$ belongs to the intersection, then $R_j^{-1}r_i^{-1}G_1=G_1$, and we get a contradiction as above.\qedhere
        \end{itemize} 
        \end{claimproof}

         We now turn to geometric separation. More precisely, fixing a constant $0<\varepsilon < 1$ sufficiently small,  we will bound the diameter of $\mc N_{2K+2E}(Y) \cap Y'$  by a constant $M_1$ that is on the order of $\varepsilon \Delta n$, where $\Delta$ is the maximum of the drifts of $v_1,\dots, v_{k_1}$ on $\Gamma_2$ and of $w_1,\dots, w_{k_2}$ on $\Gamma_1$.

        Suppose first that $Y'=g\Lambda_j$ for some $g\in G_1*G_2$ and some $1\le j\le k_2$. Then the intersection of any common vertex space with exactly one of $Y$ and $Y'$ is an edge, while its intersection with the other is a geodesic whose length a.a.s.~grows linearly in $n$.  It follows that the diameter of $\mc N_{2K+2E}(Y) \cap Y'$ is bounded by a constant depending only on $E$ and $K$; in particular, this constant is independent of $n$.  Now suppose $Y'=g\lambda_{i'}$ for some $g\in G_1*G_2$ and some $1\le i'\le k_1$.  In this case, we again consider the intersection of $Y$ and $Y'$ with a common vertex space, but now the intersections are either both an edge or both a geodesic segment whose length is a.a.s.~growing with $n$.  For vertex spaces in which the latter holds, the geodesic segments are translates of $\gamma_{v_i}$ and $\gamma_{v_{i'}}$, where we allow the possibility that $i={i'}$.  By \Cref{prop:selfmatch_gamma} and \Cref{prop:GammaGammaMatch_new}, if these are distinct translates of $\gamma_{v_i}$ and $\gamma_{v_{i'}}$, then they do not have an $(\varepsilon\Delta n, 2K+2E)$--match a.a.s.  Therefore the diameter of $\mc N_{2K+2E}(Y) \cap Y'$ is on the order of $\varepsilon \Delta n$.  In either case, we see that $\mc Y$ is $M_1$--geometrically separated  where $M_1$ is on the order of $\varepsilon \Delta n$. This verifies \Cref{hyp:metric}.

        We now verify \Cref{hyp:spinning} and \Cref{hyp:complete} simultaneously. The free product $G_1*G_2$ acts transitively on $X$ and cofinitely on $\mc Y$ by construction, and each $H_Y$ acts geometrically on $Y$. Furthermore, as in the proof of \Cref{claim:spinning_for_random}, we can choose $\varepsilon$ small enough to ensure that both \Cref{hyp:spinning}.\eqref{I:spinning_bound} and \Cref{hyp:complete}.\eqref{I:def_of_widetildeL} hold. Regarding \Cref{hyp:spinning}.\eqref{I:equivariance}, as in the proof of \Cref{thm:RandomSubgroupIsSpinning}, it is enough to check that $\Stab_{G_1*G_2}(Y)=H_Y$. Hence assume without loss of generality that $Y=\lambda_i$ for $1\le i\le k_1$, and let $r_i=s_i^{-1}v_i$. As the argument will not depend on $i$, we drop all indices. By \cite[Lemma 6.5]{DGO}, we have $\Stab_{G_1*G_2}(Y)\le \mc E(r)$, so it suffices to prove the following claim.
        \begin{claim}
            $\mc E(r)=\langle r\rangle$.
        \end{claim}
        
        \begin{claimproof}
            Again by \cite[Lemma 6.5]{DGO}, the group $\mc E(r)$ is also the setwise stabiliser of the axis $p(\lambda)$, since the action on the Bass-Serre tree $T$ is acylindrical. Let $g\in \mc E(r)$.
            Up to multiplying $g$ by a power of $r$, we can assume that $g$ fixes $\Gamma_2$ and therefore lies in $G_2$. Since $g$ preserves the axis $p(\lambda)$, we must have $g\Gamma_1\in \{\Gamma_1,v^{-1}\Gamma_1\}$.  If $g\Gamma_1=\Gamma_1$ then $g\in G_1\cap G_2=\{1\}$. Otherwise, $g\Gamma_2=v^{-1}\Gamma_2$, so $g=v^{-1}$ as they both lie in $G_2$. However, in this case $g$ must be an order 2 element that acts as a reflection on $p(\lambda)$, which is impossible since $v$ has infinite order.
        \end{claimproof}

        Finally, we prove that the quotient $H$ is again a acylindrically hyperbolic (relative) HHG by checking the requirements of \Cref{cor:preserveAH}. Let $z_1,z_2$ be random walks of length $n$ on $\hat \Gamma_1$ with respect to $\mu_1$ such that $\{w_1,\ldots, w_{k_2},z_1,z_2\}$ are pairwise independent. Let $\alpha_{z_1}$ and $\alpha_{z_2}$ be the quasiaxes for $z_1$ and $z_2$, respectively, defined as in \Cref{constr_alpha}, and let $f,g$ be conjugates of $z_1,z_2$, respectively, such that the quasiaxes for $f$ and $g$ both contain $1\in \hat\Gamma_1$. Notice that the ideal endpoints of $f$ and $g$ in $\partial \hat\Gamma_1$ are a.a.s.~disjoint by \Cref{claim:fg_indep}; moreover, since the Gromov boundary of $\hat \Gamma_1$ embeds in that of $X$, $f$ and $g$ are independent loxodromic isometries of $X$ as well. As in the proof of \Cref{thm:RandomSubgroupIsSpinning}, it  now suffices to check the following:
        \begin{claim}
            Denote the vertex $1\in \hat\Gamma_2$ by $x_0$. Then 
        $\sup_{Y\in \mc Y}\sup_{m\in \mathbb{Z}}\dist^\pi_Y(x_0,f^mx_0)< L/80$ a.a.s., and similarly for $g$.
        \end{claim}
        
        \begin{claimproof}
            Since $f$ preserves $G_1$, the projection of $\{f^mx_0\}_{m\in\Z}$ to any $X$-geodesic $\eta$ is a point if $\eta\cap \hat\Gamma_1=\emptyset$, and otherwise it coincides with the projection of $\{f^mx_0\}_{m\in\Z}$ to $\eta_1\coloneq \eta\cap \hat\Gamma_1$. Hence it is enough to show that
        $$\max_{i=1,\ldots, k_2}\sup_{g\in G_1}\sup_{m\in \mathbb{Z}}\dist^\pi_{g\gamma_{w_i}}(x_0,f^mx_0)< L/80.$$
        Arguing as in \Cref{claim:shortproj_for_random}, if the above was not true then $\alpha_{z_1}$ and some the quasiaxis $\alpha_{w_i}$ for $w_i$ (defined as in \Cref{constr_alpha}) would a.a.s.~have a match of length on the order of $\chi \Delta n$ for some $0<\chi<1$, contradicting \Cref{prop:GammaGammaMatch_new}.
        \end{claimproof} 
        \noindent The proof of \Cref{prop:commonquot_checkhyp} is now complete.
   \end{proof}
   \noindent \Cref{thm:commonquot} now follows from \Cref{thm:quotientishhg} and \Cref{prop:commonquot_checkhyp}.

We shall now prove \Cref{cor:common_rel_hyp_quotient_intro} about the existence of common quotients of relatively hyperbolic groups. The result will be implied by the following more explicit and technical result.

\begin{cor}\label{cor:relhyp_common_quotient}
    If $(G_1,\mathcal P_1)$ and $(G_2,\mathcal P_2)$ are non-elementary relatively hyperbolic groups with infinite, finitely generated peripheral subgroups, then there exists a relatively hyperbolic group $(H,\mathcal Q)$ and surjections $G_1\twoheadrightarrow H$ and $G_2\twoheadrightarrow H$ such that each $P\in \mathcal P_1\cup \mathcal P_2$ is commensurable to some $Q\in \mathcal Q$ and each $Q\in \mathcal Q$ is commensurable to some $P\in P_1\cup P_2$.  Moreover, if $\mathcal E(G_1)=\mathcal E(G_2)=\{1\}$, then $\mathcal Q=\mathcal P_1\cup \mathcal P_2$. 
\end{cor}

\begin{proof}
    Consider the groups $G_i'= G_i/\mathcal E(G_i)$, for $i=1,2$, which are relatively hyperbolic groups with peripheral structure $\mathcal P_i'=\{P/(\mathcal E(G_i)\cap P)\mid P\in \mathcal P_i\}$, again by \cite[Lemma 4.4]{AMO_SQ_univ}.  Note that if $\mathcal E(G_i)=\{1\}$, then $G_i'=G_i$ and $\mathcal P_i'=\mathcal P_i$.
    
    Let $\mathcal A_i$ be finite, symmetric generating sets for $G_i'$.  As described in the proof of \Cref{cor:relhyp}, there are relative HHG structures $(G_i', \mathfrak S_i)$, where $\mathfrak S_i=\{S_i\}\cup G_i'\mathcal P_i'$.  Moreover, $G\coloneq G_1'*G_2'$ is hyperbolic relative to $\mathcal P_1'\cup\mathcal P_2'$ and, as a relative HHG, has index set $\mathfrak S=\{S\}\cup G\mathcal P_1' \cup G\mathcal P_2'$. 
    Notice also that $\mathcal E(G)=\{1\}$, as every finite normal subgroup would be contained in one of the free factors.   Following the proof of \Cref{thm:commonquot}, the quotient $H=G_1'*G_2'/K$, where $K=\llangle s_i^{-1}v_i, t_j^{-1}w_j\mid s_i\in \mc{A}_1,t_j\in \mc{A}_2, 1\leq i\leq k_1,1\leq j\leq k_2\rrangle$
    is an acylindrically hyperbolic relative HHG with the property that $G_i$ surjects onto $H$ for $i=1,2$.  The relative HHG structure of $H$ is $\mathfrak S/K$.  Using the same arguments as in the proof of \Cref{cor:relhyp}, $(H,\mathfrak S/K)$ is a rank 1 relative HHG and therefore a non-elementary hyperbolic group relative to $\mathcal Q=\mathcal P_1'\cup\mathcal P_2'$.
\end{proof}

\begin{proof}[Proof of \Cref{cor:common_rel_hyp_quotient_intro}]
Let $(G_1,\mathcal P_1)$ be a non-elementary relatively hyperbolic group with infinite, finitely generated peripheral subgroups.  Take $(G_2,\mathcal P_2)$ to be any hyperbolic group with property (T) such as those constructed in \cite{Generalised_triangle}, where we set $\mathcal P_2=\emptyset$.  \Cref{cor:relhyp_common_quotient} yields a non-elementary relatively hyperbolic common quotient $(H,\mathcal Q)$ with property (T) such that each $Q\in \mathcal Q$ is commensurable to some $P\in\mathcal P_1$.
\end{proof}

\appendix
\section{Quotients of HHG by finite kernels}\label{sec:appendix}

In this appendix, we show that a quotient of an HHG by a finite subgroup is again an HHG. In particular, if $G$ is an acylindrically hyperbolic HHG and $\mc E(G)$ is its maximal finite normal subgroup, then $G/\mc E(G)$ is again an acylindrically hyperbolic HHG, and its maximal finite normal subgroup is trivial. In other words, up to commensurability one can always assume that an acylindrically hyperbolic HHG has trivial maximal finite normal subgroup, circumventing all fine algebraic issues that the latter might cause.
\begin{thm}\label{thm:HHG/E}
    Let $(G,\mf S)$ be a (relative) HHG, and let $K\unlhd G$ be a finite normal subgroup. Then $G/K$ is a (relative) HHG, and if $G$ is  acylindrically hyperbolic, then so is $G/K$.
\end{thm}

\begin{proof} Let $\mc X$ be a Cayley graph for $G$. Consider a (relative) HHG structure $(\mc X, \mf S)$ for $G$, with (relative) HHG constant $E$. Since $G$ acts cofinitely on $\mf S$, there are finitely many isometry types of coordinate spaces, so we can assume that $E$ is larger than the diameter of any bounded domain. We shall prove that $G/K$ admits a (relative) HHG structure where:
\begin{itemize}
    \item the index set is $\ofS\coloneq \mf S'/K$, where  
    $$\mf S'=\{U\in \mf S\mid \exists V\nest U \text{ with } \mc C V \mbox{ unbounded}\};$$
    \item two domains $[V], [W]\in \ofS$ are orthogonal (resp. nested) if they admit orthogonal (resp. nested) representatives; and
    \item for every $W\in \mf S'$, $\mc C W$ and $\mc C [W]$ are uniformly quasi-isometric.  
\end{itemize}
We break the proof into a sequence of steps.

 \par\medskip
\textbf{Step 0: Removing unnecessary bounded domains.} 
We first check that $(G,\mf S')$ is  a (relative) HHG structure. By inspection of \Cref{defn:HHS}, removing the bounded domains without unbounded nested domains can only affect the existence of containers and the validity of the large link axiom, so we must check that both still hold in $\mf S'$.

    \begin{itemize}
\item \textbf{Containers:} Let $V,W\in \mf S'$ be such that $V\propnest W$, and there exists $U\in \mf S'$ which is nested in $W$ and orthogonal to $V$. By the containers axiom in $\mf S$, there exists $T\in \mf S$ such that $T\propnest W$ and $T$ contains all $U$ as above. Since $U\in \mf S'$, it contains an unbounded domain, and in turn this means that $T\in \mf S'$. Hence $T$ is the container for $V$ inside $W$ in $\mf S'$.
\item \textbf{Large links:} The passing up axiom \ref{axiom:passing_up} in $\mf S$ already involved only unbounded domains, since we assumed that the HHG constant $E$ was larger than the diameter of any bounded domain. Hence the passing up axiom holds in $\mf S'$ as well, and this implies the large link axiom by \Cref{lem:passingupisenough}.
\end{itemize}

 \par\medskip
    \textbf{Step 1: The kernel fixes every unbounded domain.} Let $\wfS\subseteq \mf S'$ be the collection of unbounded domains. In this step we show that the $K$-action on $\wfS$ is trivial. Fix $k\in K-\{1\}$, and let $W\in \wfS$. If $W=S$ then clearly $kW=S$, so suppose $W\neq S$.  By assumption, $\mc CW$ is unbounded. For every $g,h\in G$,
    $$\dist_{kW}(h, gh)=\dist_{W}(k^{-1}h, k^{-1}gh).$$
    The triangle inequality and the fact that $K$ is normal in $G$ yield that
    \begin{align*}
        \left|\dist_{W}(k^{-1}h, k^{-1}gh)-\dist_{W}(h, gh)\right|&\le \dist_{W}(h, k^{-1}h)+\dist_{W}(gh, k^{-1}gh)\\
        &\le \max_{k'\in K}\left(\dist_{h^{-1}W}(1, k')+\dist_{(gh)^{-1}W}(1, k')\right)\\
        &\le 2\sup_{V\in \mf S}\max_{k'\in K}\dist_{V}(1, k').
    \end{align*}
    The quantity $M\coloneq 2\sup_{V\in \mf S}\max_{k'\in K}\dist_{V}(1, k')$ is bounded in terms of $\diam_{\mc X}K$, since coordinate projections are uniformly coarsely Lipschitz. Therefore,  distances in $W$ and in $kW$ must agree up to an error of $M$. 
    
    We now show that $kW=W$, by excluding every other possibility.
    \begin{itemize}
        \item If $W\propnest kW$ and the order of $k$ is $m$, then $W\propnest kW\propnest k^2W\ldots \propnest k^m W=W$, a contradiction. The same argument excludes that $kW\propnest W$. 
        \item If $kW\orth W$, then since $\mc C W$ is unbounded, the partial realization axiom~\ref{axiom:partial_realisation} can be used to find an element $g\in G$ such that $\dist_{kW}(1,g)\le E$, while $\dist_{W}(1,g)\ge M+E+1$. This would contradict the fact that $|\dist_W(1,g)-\dist_{kW}(1,g)|\le M$.
        \item Finally, suppose $W\trans kW$. Since $\mc C W$ is unbounded, we can find elements $g,h\in G$ such that $$\min\{\dist_W(\rho^{kW}_W, h), \dist_W(\rho^{kW}_W, gh), \dist_W(h, gh)\}\ge M+3E+1.$$ 
        However the consistency axiom~\ref{axiom:consistency} would give that 
        $$\dist_{kW}(h, gh)\le \dist_{kW}(h, \rho^{W}_{kW})+\dist_{kW}(\rho^{W}_{kW}, gh)+\diam\rho^{W}_{kW}\le 3E, $$
        again contradicting that $|\dist_W(h,gh)-\dist_{kW}(h,gh)|\le M$.
    \end{itemize}

    \par\medskip\textbf{Step 2: The kernel acts with uniformly bounded orbits.} 
    We now prove that there exists a constant $C$ such that, for every $W\in \wfS$ and $p\in \mc C W$, $\diam_{W} (K\cdot p)\le C$. 
    
    As noted above, $\sup_{W\in \wfS}\diam\pi_W(K)$ is bounded in terms of $\diam_{\mc X}K$, since coordinate projections are coarsely Lipschitz. Furthermore, since the latter are also uniformly coarsely surjective, it is enough to uniformly bound $\sup_{g\in G}\sup_{W\in \wfS}\diam\pi_W(Kg)$. To this extent, notice that
    $$\sup_{g\in G}\sup_{W\in \wfS}\diam\pi_W(Kg)=\sup_{g\in G}\sup_{W\in \wfS}\diam\pi_{g^{-1}W}(K)=\sup_{W\in \wfS}\diam\pi_W(K), $$
    where we used that every $g\in G$ normalizes $K$.

    \par\medskip\textbf{Step 3: HHG structure of the quotient.}  We shall now prove that $G/K$ has the following HHG structure. Let $\mc Y=\mc X/K$. Let $\ofS=\mf S'/K$. Given a domain $W\in \mf S'$, let $[W]$ be its image in $\ofS$. Two domains $[U], [V]\in \ofS$ are orthogonal (resp. nested) if they admit orthogonal (resp. nested) representatives $U\in [U]$ and $V\in [V]$. Set 
    $$\mc C [W]=\left(\bigcup_{W\in [W]}\mc C W\right)/K,$$ and for any representative let $q_W\colon \mc C W\to \mc C [W]$ be the quotient projection. This map is $1$-Lipschitz. Moreover, it is a uniform quality quasi-isometry, because $K$-orbits are uniformly bounded in $\mc C W$.
    
    Given $[W]$, if $\mc C W$ is bounded then so is $\mc C [W]$, and projections can be defined arbitrarily. Otherwise there is a single representative $W\in [W]$, and given $[x]\in \mc Y$ let $$\pi_{[W]}([x])=q_W\left(\bigcup_{x\in [x]} \pi_W(x)\right).$$ Similarly, for every $[V]\in \ofS$ which is properly nested in, or transverse to, $[W]$, set 
    $$\rho^{[V]}_{[W]}=q_W\left(\bigcup_{V\in [V]} \rho^V_W\right).$$ 
    
    We now check that $(\mc Y, \ofS)$ satisfies the axioms of a HHG structure. 
    \begin{itemize}
        \item \textbf{Projections~\ref{axiom:projections}.} If $[W]\in \ofS$, then $\pi_{[W]}$ is an $E$-coarsely onto, $E$-coarse map, as the quotient projection $q_W\colon \mc C W\to \mc C[W]$ is $1$-Lipschitz and surjective. Moreover, given $[x],[y]\in \mc Y$, let $x\in [x]$ and $y\in [y]$ realize the distance between $[x]$ and $[y]$, so that \[\dist_{[W]}([x],[y])\le \dist_{W}(x,y)\le E\dist_{\mc X}(x,y)+E=E\dist_{\mc Y}([x],[y])+E.\]
        
        \item \textbf{Nesting~\ref{axiom:nesting}.} By construction, the unique maximal element of $\ofS$ is $[S]$. Moreover, whenever $[V]\propnest [W]$ and $\mc C W$ is unbounded, we have that $\diam\rho^{[V]}_{[W]}\le \diam \bigcup_{k\in K}\rho^{kV}_{W},$ and the latter is uniformly bounded since $K$-orbits in $\mc C W$ are uniformly bounded.
        
        \item \textbf{Finite complexity~\ref{axiom:finite_complexity}.} By how nesting is defined, every chain $[U_1]\propnest\ldots\propnest [U_k]$ in $\ofS$ lifts to a chain $U_1\propnest \ldots\propnest U_k$ in $\mf S'$, whose length is uniformly bounded.
        
        \item \textbf{Orthogonality~\ref{axiom:orthogonal}.} We first prove that $\nest$ and $\orth$ are pairwise exclusive. Indeed, suppose that $[V],[W]\in \ofS$ admit representatives $V,V'\in [V]$ and $W,W'\in [W]$ such that $V\orth W$ and $W'\propnest V'$. Up to the $K$-action, we can assume that $V=V'$. But then, since $W\in \mf S'$, there exists $U\nest W$ with $\mc C U$ unbounded.  But $\mc CU$ is thus fixed by $K$ and hence nested in $W'$, as well. This contradicts the fact that, in $\mf S'$, orthogonal domains have no common nested domain.
        
        Moving to the second requirement, let $[U],[V],[W]\in \ofS$ be such that $[U]\propnest [V]$ and $[V]\orth [W]$. As above, we can find representatives $U\propnest V$ and $V\orth W$, so $U\orth W$ and therefore $[U]\orth [W]$.

        \item \textbf{Containers~\ref{axiom:containers}.} Let $[V]\propnest [W]$, and let $[V']\nest [V]$ be such that $\mc C V'$ is unbounded. Let $\mc U=\{[U]\propnest [W]\mid [U]\orth[V]\}$, and notice that every element of $\mc U$ is orthogonal to $[V']$ as well. Fix a representative $W\in [W]$, and for every $[U]\in \mc U$ let $U\in [U]$ be nested in $W$. Since $V'$ is the unique representative of $[V']$, it must be nested in $W$ and orthogonal to every $U$. Thus the container axiom for $\mf S'$ produces a domain $T\propnest W$ which contains every $U$. Hence $[T]\propnest [W]$ and contains all $[U]\in \mc U$, so it is a container for $[V]$ inside $[W]$.
        
        \item \textbf{Transversality~\ref{axiom:transversality}.} Uniform boundedness of projections follows as for the nesting axiom.
        
        \item \textbf{Consistency, hyperbolicity, bounded geodesic image, and large links~\ref{axiom:consistency}-\ref{axiom:hyperbolicity}-\ref{axiom:bounded_geodesic_image}-\ref{axiom:large_link_lemma}} Since $q_W$ is a uniform quasi-isometry for every $W\in \ofS$, all three axioms hold because they hold in $(\mc X, \mf S')$ and (relative) projections are defined via the original (relative) projections.
        
        \item \textbf{Partial realisation~\ref{axiom:partial_realisation}.} Let $[V_1],\ldots,[V_k]\in \ofS$ be pairwise orthogonal, and for every $i$ let $p_i\in \mc C [V_i]$. For every $i$ let $r_i\in q_{V_i}^{-1}(p_i)$, and let $x\in \mc X$ realize the collection $\{r_i\}_{i=1,\ldots, k}$. It is straightforward to see that $[x]$ realizes $\{p_i\}_{i=1,\ldots, k}$ in $\mc Y$.
        
        \item \textbf{Uniqueness~\ref{axiom:uniqueness}.} Let $[x],[y]\in\mc Y$, and let $r>0$ be a constant such that $\dist_{[W]}([x],[y])\le r$ for all $[W]\in\ofS$. Then any $x\in [x]$ and $y\in [y]$ have uniformly close projections to all $W\in \mf S'$ (again, because every $q_W$ is a uniform quasi-isometry). The uniqueness axiom for $(\mc X, \mf S')$ then yields that $x$ and $y$ are uniformly close in $\mc X$, and therefore $\dist_{\mc Y}([x],[y])\le \dist_{\mc X}(x,y)$ is uniformly bounded.     
        
        \item \textbf{HHG structure.} Since $G$ acts metrically properly and coboundedly on $\mc X$, then so does $G/K$ on $\mc Y$. Moreover, the cofinite $G$-action on $\mf S'$ induces a cofinite $G/K$-action on $\ofS$, so $(\mc Y, \ofS)$ is a HHG structure for $G/K$.
    \end{itemize}
    For the ``furthermore'' part of the statement, suppose that $\mc C S$ is unbounded and $G$ is not virtually cyclic. Then $\mc C [S]=\mc C S/K$ is unbounded as well, since $K$ acts with uniformly bounded orbits, and $G/K$ is not virtually cyclic as it is quasi-isometric to $G$.  Hence $G/K$ is an acylindrically hyperbolic HHG by \Cref{lem:acyl_action_on_CS}.
\end{proof}

\section{List of  constants}\label{tableofconstants}
\noindent All constants depending on $M_0$ are bounded linearly in $M_0$. 
\par\medskip
\makebox[\textwidth][c]{\small{
\begin{tabular}{c|c|c}
\textbf{Name} & \textbf{From} & \textbf{Description}  \\\hline

$\delta$ & \Cref{hyp:metric} & Hyperbolicity constant of $X$\\\hline

$\Phi(\lambda,c,\delta)$ & \Cref{lem:stability} & Morse constant for $(\lambda,c)$-quasigeodesics\\\hline

$K$ & \Cref{hyp:metric} & Quasiconvexity constant of $Y\in \mc Y$\\\hline

$J(K,\delta)$ & \Cref{lem:lipschitzproj} & Lipschitz constant of $\pi_Y$\\\hline

$\Omega(\delta, K)$ & \Cref{lem:NppGivesQgeo} & nearest point path is $(1,\Omega)$-quasigeodesic\\\hline

$\hat\delta(\delta, K)$ &\Cref{lem:hyp_constant_cone_off} & hyperbolicity constant of $\hat X$\\\hline

$D(\delta,K)$ &\Cref{lem:DboundSpriano} & $[x,y]^X\subseteq \mc N_D([x,y]^{\hat X})$ \\\hline 

$M_0$ & \Cref{hyp:metric} & $\forall Y\neq Y'\in \mc Y, \,\diam(Y \cap \mc N_{2K+2\delta}(Y'))\le M_0$\\\hline

$M(t) =M(\delta, K,M_0,t)$ & \Cref{lem:geomsep_for_qc} & $\diam(Y \cap \mc N_{t}(Y'))\le M(t)$ \\\hline

$B(\delta, K, M_0)$ & \Cref{lem:boundedproj_for_qc} & $\diam_Y(U)\le B$ \\\hline

$C(\delta, K,M_0)$ & \Cref{lem:strongBGI} & Strong BGI: $\dist^\pi_Y(x,y)\ge C \Rightarrow v_Y\in [x,y]^{\hat X}$ \\\hline

$\theta(\delta, K, M_0)$ & \Cref{P:hyp-projcplx} & $\mc Y$ satisfies projection axioms wrt $\theta$ \\\hline

$R$ & \Cref{hyp:spinning} & $G$-action on $X$ is $R$-cobounded \\\hline

$\Theta(\delta, K,M_0)$ & \Cref{cor:proj_complex_with_points} & $\hat X$ satisfies projection axioms wrt $\Theta$\\\hline

$\widetilde\Theta(\delta, K,M_0,R)$ &\Cref{hyp:spinning} & projection constant with a $G$-action\\\hline

$\Zhe=33\widetilde\Theta$ & \Cref{def:projcomplex} & In $\mc P=\mc P_\Zhe(\mc Y)$, $W\in\link(W')$ iff $\forall Y\,\dist_Y(W,W')\le \Zhe$\\\hline

$L$ & \Cref{hyp:spinning} & Spinning: $\dist^\pi_Y(x,h_Yx)\ge L$ $\forall x\in \hat X-\{v_Y\}$\\\hline

$L_{hyp}(\widetilde \Theta)$ & \Cref{rem:l_hyp} & If $L>L_{hyp}$, $\mc P/N$ is hyperbolic 
\\\hline

$\ol L(\delta, K,M_0,R)\ge L_{hyp}$ & \Cref{eq:bound_on_l_spinning} & If $L>\ol L$, $\hat X/N$ is hyperbolic 
\\\hline

$\tau(\delta,K,M_0,R,L)$ & \Cref{cor:tau} & $\dist_X(x,nx)\ge \tau$ $\forall n\in N-\{1\}$. \\\hline

$E$ & \Cref{defn:HHS} & relative HHG constant of $(G,\mf S)$\\\hline

$A(K,E)$ & \Cref{eqn:A} & in $\hat X$, $\mc N_A(\rho^U_S)$ is coned off for every $U\propnest S$\\\hline

$\widetilde{L}(E,K,M_0)$ & \Cref{eq:tildeL} & if $L>\widetilde{L}$ then $G/N$ is a relative HHG\\\hline

$L'$ & \Cref{eq:bound_on_L'} & Spinning constant in $X'$\\\hline

$\aleph$ &\Cref{rem:YtoU_is_bounded} & $\forall y\in Y$, $\diam(\pi_U(H_Y\cdot y))\le \aleph$ \\\hline

$\beth(\aleph,E)$ &\Cref{prop:ProjsBdd} & bound on $\diam\pi_{\ol U}(\ol x)$ and $\diam\rho_{\ol U}^{\ol V}$ \\\hline

$\Psi$ &\Cref{thm:quotientishhg}.\ref{axiom:uniqueness} & $H_Y$--action on $Y$ is $\Psi$-cobounded\\\hline

$\Delta$ &\Cref{prop:drift} & minimal drift of random walks\\\hline
\end{tabular}
}
}

\bibliography{RandomHHGQuotients}{}
\bibliographystyle{amsalpha}
\end{document}